\newtheorem{thm}{Theorem}[section]
\newtheorem{cor}[thm]{Corollary}
\newtheorem{lem}[thm]{Lemma}
\newtheorem{prop}[thm]{Proposition}
\theoremstyle{definition}
\theoremstyle{remark}
\newtheorem{rem}[thm]{Remark}
\theoremstyle{example}
\numberwithin{equation}{section}
\newcommand{\del}{\delta}
\newcommand{\B}{{\mathbb B}}
\newcommand{\D}{{\mathbb D}}
\newcommand{\R}{{\mathbb R}}
\newcommand{\SSS}{{\mathbb S}}
\newcommand{\C}{{\mathbb C}}
\newcommand{\N}{{\mathbb N}}
\newcommand{\Z}{{\mathbb Z}}
\newcommand{\re}{{\rm Re}\,}
\newcommand{\calB}{{\mathcal B}}
\newcommand{\calC}{{\mathcal C}}
\newcommand{\calM}{{\mathcal M }}
\newcommand{\calN}{{\mathcal N}}
\begin{document}

\title{$\calN(p, q, s)$-type spaces in the unit ball of $\C^n$}%

\date{\today}%

\author{Bingyang Hu and Songxiao Li$^\dag$}%

\address{Bingyang Hu: Department of Mathematics, University of Wisconsin, Madison, WI 53706-1388, USA.}%
\email{bhu32@wisc.edu}

\address{Songxiao Li:  Institute of Fundamental and Frontier Sciences, University of Electronic Science and Technology of China, 610054, Chengdu, Sichuan, P.R. China.}
\address{Institute System Engineering, Macau University of Science and Technology,  Avenida Wai Long,
Taipa, Macau. }%
\email{jyulsx@163.com}

\subjclass[2010]{32A05, 32A36}%

\keywords{$\calN(p, q, s)$-type spaces, Hadamard gap, atomic decomposition, Carleson measure, Riemann-Stieltjes operator}%
\thanks{$\dag$ Corresponding author.}

\maketitle

\begin{abstract}
 In this paper, we consider a new class of space, called $\calN(p, q, s)$-type spaces, in the unit ball $\B$ of $\C^n$.  We study some basic properties, Hadamard gaps, Hadamard products, Random power series, Korenblum's inequality, Gleason's problem, atomic decomposition of  $\calN(p, q, s)$-type spaces. Moreover, we also establish several equivalent characterizations, including Carleson measure characterization and various derivative characterizations. Finally, we also characterize the distance
  between Bergman-type spaces and $\calN(p, q, s)$-type spaces, Riemann-Stieltjes operators and multipliers on $\calN(p, q, s)$-type spaces.
\end{abstract}

\tableofcontents


\section{Introduction}


Let $\B$ be the open unit ball in $\C^n$ with $\SSS$ as its boundary and $H(\B)$
the collection of all holomorphic functions in $\B$.
$H^\infty$ denotes the Banach space consisting of all bounded
holomorphic functions in $\B$ with the norm $\|f\|_{\infty}=\sup_{z
\in \B} |f(z)|$. For $l>0$, the  Bergman-type space $A^{-l}(\B)$ is the space
of all   $f\in H(\B)$ such that
$$|f|_l=\sup_{z \in \B} |f(z)|(1-|z|^2)^l<\infty.$$
Let $A^{-l}_0(\B)$ denote the closed subspace of  $A^{-l}(\B)$ such
that  $\lim\limits_{|z|\rightarrow 1} |f(z)|(1-|z|^2)^l=0.$

Denote $dV$ the normalized volume measure over $\B$ and for $\alpha>-1$, the weighted Lebesgue measure $dV_\alpha$ is defined by
$$
dV_\alpha(z)=c_\alpha (1-|z|^2)^\alpha dV(z)
$$
where $c_\alpha=\frac{\Gamma(n+\alpha+1)}{n! \Gamma(\alpha+1)}$ is a normalizing constant so that $dV_\alpha$ is a probability measure on $\B$.

For $\alpha>-1$ and $p>0$, the weighted Bergman space $A^p_\alpha$ consists of all $f\in H(\B)$ satisfying
$$
\|f\|_{p, \alpha}=\left[\int_\B |f(z)|^pdV_\alpha(z)\right]^{1/p}<\infty.
$$
It is well-known that when $1 \le p<\infty$, $A^p_\alpha$ is a Banach space and when $0<p<1$, $A^p_\alpha$ becomes a complete metric space. We refer the reader to the books \cite{HKZ, ZZ, Zhu} for more  information.

Let $\Phi_a(z)$  be the automorphism of $\B$ for $a \in \B$, i.e.,
$$ \Phi_a(z)=\frac{a-P_az-s_aQ_a z}{1-\langle z,a\rangle}, $$
where $s_a=  \sqrt{1-|a|^2} $, $P_a$ is the orthogonal projection
into the space spanned by $a$ and $Q_a=I-P_a$ (see, e.g.,
\cite{Zhu}). For $p>0$,  the space $\calN_p$ on $\B$ was studied in
\cite{HK1}, i.e.,
\[ \begin{split}
&\calN_p=\calN_p(\B)\\
&=\left\{f \in H(\B):  \sup_{a \in \B} \left(\int_{\B}
|f(z)|^2(1-|\Phi_a(z)|^2)^pdV(z)\right)^{1/2}<\infty\right\}.\end{split}
\]   The little space of $\calN_p$-space, denoted by
$\calN_p^0$, which consists of all $f \in \calN_p$ such that
$$
  \lim_{|a| \to 1} \int_{\B}
|f(z)|^2(1-|\Phi_a(z)|^2)^pdV(z) =0 .
$$

Let $d\lambda(z)=\frac{dV(z)}{(1-|z|^2)^{n+1}}$. Then $d\lambda$ is M\"obius invariant (see, e.g., \cite{Rud}), which means,
$$
\int_\B f(z)d\lambda(z)=\int_\B f\circ \phi(z)d\lambda(z)
$$
for each $f \in L^1(\lambda)$  and $\phi$ an automorphism of $\B$.

For $p >0$ and $q, s>0$, in this paper we consider the $\calN(p, q, s)$-type space as follows:
$$ \calN(p, q, s):= \left\{ f\in H(\B): \|f\| <\infty \right\},$$
where
$$
\|f\|^p=\sup_{a \in \B} \int_\B |f(z)|^p(1-|z|^2)^q
(1-|\Phi_a(z)|^2)^{ns} d\lambda(z).
$$
The corresponding little space, denoted by $\calN^0(p, q, s) $, is the space of all $f\in \calN(p, q, s)$ such that
\[ \lim_{|a| \to 1} \int_\B
|f(z)|^p(1-|z|^2)^q (1-|\Phi_a(z)|^2)^{ns} d\lambda(z)=0. \]

It should be noted that when $p=2, q=n+1, s>0$, $\calN(2, n+1, s)$ coincides the $\calN_{ns}$-space, as well as the little space, in particular, by \cite[Theorem 2.1]{HK1}, when $p=2, q=n+1, s>1$,
$$
\calN(2, n+1, s)=A^{-\frac{n+1}{2}}(\B).
$$
Moreover, as we will show later (see, e.g., Remark \ref{NpqsFpqsID}), the $\calN(p, q, s)$-spaces coincide with $F(p, q, t)$-spaces by the identification
$$
\calN(p, q, s)=F(p, p+q-n-1, ns).
$$
Moreover,
$$
\calN^0(p, q, s)=F_0(p, p+q-n-1, ns).
$$
Let $0<p<\infty, 0 \le t<\infty, -1<q+t<\infty, -1-n<q<\infty$. Recall that an $f \in H(\B)$ is said to belong to the $F(p, q, t)$-space if
$$
|f(0)|^p+\sup_{a \in\B} \int_\B |\nabla f(z)|^p(1-|z|^2)^q g^t(z, a)dV(z)<\infty,
$$
and  an $f \in H(\B)$ is said to belong to the $F_0(p, q, t)$-space  if
$$
\lim_{|a|\rightarrow1}\int_\B |\nabla f(z)|^p(1-|z|^2)^q g^t(z, a)dV(z)=0,
$$
where  
 $$\nabla f(z)=\left( \frac{\partial f}{\partial z_1}(z), \dots, \frac{\partial f}{\partial z_n}(z) \right) ~~\mbox{and}~~ g(z, a)=\log\frac{1}{|\Phi_a(z)|}.$$
The $F(p, q, t)$-spaces were first introduced by R. Zhao on the unit disk in $\C$ in \cite{RZ}, and later studied on the unit disk and the unit ball $\B$ of $\C^n$ by various authors (see, e.g, \cite{SL, PR, ZHC, XZ} and the reference therein). However, to the best of our knowledge, currently there are few results in the theory of $F(p, q, t)$-spaces in the unit ball due to the complexity of the parameters $p, q$ and $t$, as well as the high dimension.

As is well-known from the literature, the weighted Bergman space $A_\alpha^p$ has a lot of nice properties. Moreover, many concrete operators (including composition operators, Toeplitz operators, Hankel operators, Riemann-Stieltjes operators and etc.) on  weighted Bergman spaces have been completely characterized.  However, the Dirichlet type space $D^p_\alpha$ does not.  Here, we say an $f \in H(\B)$ belongs to the   Dirichlet type space, denoted by $D_\alpha^p$, if
$$
\int_\B |\nabla f(z)|^p(1-|z|^2)^\alpha dV(z)<\infty.
$$
Here $0<p<\infty, \alpha>-1$. Note that when $\alpha=p-n-1, p>n$, the Dirichlet type space becomes the classical Besov space $B_p$. (see, e.g., \cite{SL2, Zhu} for more information on these spaces). Inspired by this fact, it is natural for us to consider the $\calN(p, q, s)$-spaces.

Another important feature of $\calN(p, q, s)$-spaces is that for any $\frac{n-1}{n}<s<\frac{n}{n-1}, n \ge 2$,
$$
Q_s \subseteq \calN(2, 1, s) .
$$
 For detailed studies of $Q_s$ spaces on the unit disk and the unit ball, we refer the readers to the monographs \cite{OYZ2, JX1, JX2}. The above claim follows from a comparison of these two classes of function spaces. More precisely, for $Q_s$ space, we have
\begin{enumerate}
\item[(a).] When $1<s<\frac{n}{n-1}$, $Q_s=\calB$ (the Bloch space);
\item[(b).] When $\frac{n-1}{n}<s \le 1$, $f \in Q_s$ if and only if
$$
\sup_{a \in \B} \int_\B |\widetilde{\nabla} f(z)|^2(1-|\Phi_a(z)|^2)^{ns}d\lambda(z)<\infty,
$$
where $(\widetilde{\nabla}f)(z)=\nabla( f \circ \Phi_z)(0)$ denotes the invariant gradient of $\B$ (see, e.g., \cite{LH}). In fact,  $Q_s=F(2, 2-(n+1), ns)$.
\end{enumerate}
On the other hand, for $\calN(2, 1, s)$-space, we have
\begin{enumerate}
\item[(a').] When $s>1$, $\calN(2, 1, s)=A^{-\frac{1}{2}}(\B)=\calB^{3/2}$ (the $3/2$-Bloch space);
\item[(b').] When $\frac{n-1}{n}<s \le 1$, $f \in \calN(2, 1, s)$ if and only if
$$
\sup_{a \in \B} \int_\B |\widetilde{\nabla} f(z)|^2(1-|z|^2)(1-|\Phi_a(z)|^2)^{ns}d\lambda(z)<\infty.
$$
\end{enumerate}
The assertion (a') follows from Proposition \ref{boundaryineq}, Proposition \ref{Npqsbigs} and Lemma \ref{BlochBertype}, while assertion (b') follows from Corollary \ref{equivalentnorm}. In fact, $\calN(2, 1, s)=F(2, 3-(n+1), ns).$  From these facts, it is easy to see that the desired inclusion holds. 

Moreover, if we allow $q=0$ in the definition of the $\calN(p, q, s)$-spaces, that is the endpoint case, then from the proofs of  Propositions \ref{boundaryineq} and \ref{Npqsbigs}, one can easily check that $\calN(p, 0, s) \subseteq H^\infty, p \ge 1, s>0$, while the equality holds when $s>1$ (see \cite{RZ2} for the case of the unit disk). Combining this observation with the fact that $Q_s=\calB$ when $1<s<\frac{n}{n-1}$ and $Q_s=\{ \textrm{constant functions} \}$ when $s \ge \frac{n}{n-1}$, we can also see that the $\calN(p, q, s)$-spaces are independent of the $Q_s$ spaces and of their own interest.  

The aim of this paper is to  systematically  study $\calN(p, q, s)$-spaces in the unit ball,  and we believe the methods developed in this paper, as well as some new features of $\calN(p, q, s)$-type spaces and $\calN^0(p, q, s)$-type spaces, can be generalized to  $F(p, q, s)$-spaces and $F_0(p, q, s)$-spaces.

 In this paper, we study various properties of $\calN(p, q, s)$-type spaces, including some basic properties, Hadamard gaps, Hadamard products, Gleason's problem, Random power series, Korenblum's inequality, atomic decomposition of   $\calN(p, q, s)$-type spaces. We also establish several equivalent characterizations, including Carleson measure characterization and various derivative characterizations. Finally, we   investigate the distance between Bergman-type spaces and $\calN(p, q, s)$-type spaces, Riemann-Stieltjes operators and multipliers on $\calN(p, q, s)$-type spaces.

Throughout this paper, for $a, b \in \R$, $a \lesssim b$ ($a \gtrsim b$, respectively) means there exists a positive number $C$, which is independent of $a$ and $b$, such that $a \leq Cb$ ($ a \geq Cb$, respectively). Moreover, if both $a \lesssim b$ and $a \gtrsim b$
hold, then we say $a \simeq b$.


\section{Basic properties of $\calN(p, q, s)$-type spaces}


\subsection{Basic structure}

 We first show that $\calN(p, q, s)$ is a functional Banach space (see, e.g., \cite{cowen}) when $p \ge 1$ and $q, s>0$.  For $a \in \B$ and $0<R<1$, define
$$
D(a, R)=\Phi_a\left( \left\{ z \in\B: |z|<R\right\}\right)=\left\{z \in \B: |\Phi_a(z)|<R\right\}.
$$
The following result plays an important role in the sequel.

\begin{prop} \label{boundaryineq}
Let $p \ge 1$ and $q, s>0$. The point evaluation $K_z: f \mapsto f(z)$ is a continuous linear functional on $\calN(p, q, s)$. Moreover, $\calN(p, q, s) \subseteq A^{-\frac{q}{p}}(\B)$.
\end{prop}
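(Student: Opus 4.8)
The plan is to estimate $|f(z)|$ from below against the norm $\|f\|$ by exploiting the sub-mean-value property of $|f|^p$ on a Bergman ball $D(z,R)$ (say with $R=1/2$), together with the standard fact that on $D(z,R)$ the quantities $1-|w|^2$, $1-|z|^2$ and $|1-\langle w,z\rangle|$ are all comparable, and that $V(D(z,R))\simeq (1-|z|^2)^{n+1}$. First I would fix $z\in\B$ and write, using plurisubharmonicity of $|f|^p$,
\[
|f(z)|^p \lesssim \frac{1}{(1-|z|^2)^{n+1}} \int_{D(z,1/2)} |f(w)|^p \, dV(w).
\]
Then on $D(z,1/2)$ we have $1-|w|^2 \simeq 1-|z|^2$, so $(1-|w|^2)^q \simeq (1-|z|^2)^q$, and likewise $d\lambda(w) = (1-|w|^2)^{-(n+1)} dV(w)$, so $dV(w) \simeq (1-|z|^2)^{n+1} d\lambda(w)$ on this set. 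The one remaining factor to control is $(1-|\Phi_a(w)|^2)^{ns}$: choosing $a=z$, we have $|\Phi_z(w)|<1/2$ for $w\in D(z,1/2)$, hence $(1-|\Phi_z(w)|^2)^{ns} \simeq 1$ there, bounded below by a positive constant depending only on $n,s$.

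Combining these comparisons,
\[
|f(z)|^p \lesssim \frac{1}{(1-|z|^2)^{n+1}} \cdot \frac{(1-|z|^2)^{n+1}}{(1-|z|^2)^q} \int_{D(z,1/2)} |f(w)|^p (1-|w|^2)^q (1-|\Phi_z(w)|^2)^{ns} \, d\lambda(w) \le \frac{C}{(1-|z|^2)^q}\, \|f\|^p,
\]
where in the last step I extend the integral over $D(z,1/2)$ to all of $\B$ and take the supremum over $a$ (which is at least the value at $a=z$). This yields $|f(z)|(1-|z|^2)^{q/p} \le C^{1/p}\|f\|$ with $C$ independent of $f$ and $z$, which is exactly the statement $\calN(p,q,s)\subseteq A^{-q/p}(\B)$ with $|f|_{q/p} \lesssim \|f\|$. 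Continuity of the point evaluation $K_z$ then follows immediately, since $|K_z f| = |f(z)| \le C(1-|z|^2)^{-q/p}\|f\|$, a bound linear in $\|f\|$ with constant depending only on $z$ (and $n,p,q,s$).

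The main obstacle — really the only non-routine point — is making sure the sub-mean-value inequality is applied with the right normalization in the Bergman metric and that the choice $a=z$ cleanly neutralizes the Möbius weight $(1-|\Phi_a(w)|^2)^{ns}$; everything else is the familiar package of estimates on Bergman balls (comparability of $1-|w|^2$ with $1-|z|^2$, volume of $D(z,R)$, invariance of $d\lambda$), for which one may cite \cite{Zhu} or \cite{ZZ}. One should also note that $p\ge 1$ is not actually needed for this particular inclusion — the argument works for all $p>0$ — but it is the hypothesis under which $\calN(p,q,s)$ is being treated as a Banach space, so we keep it in the statement.
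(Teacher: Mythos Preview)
Your proof is correct and follows essentially the same approach as the paper's: both restrict to the Bergman ball $D(z,1/2)$, choose $a=z$ to make the M\"obius weight $(1-|\Phi_z(w)|^2)^{ns}$ comparable to $1$, and then invoke the sub-mean-value property of $|f|^p$ together with the standard comparabilities $1-|w|^2\simeq 1-|z|^2$ and $V(D(z,1/2))\simeq(1-|z|^2)^{n+1}$. The only cosmetic difference is that the paper carries out the change of variables $w\mapsto\Phi_z(w)$ explicitly (reducing to an integral over $\B_{1/2}$ and applying subharmonicity of $|f\circ\Phi_z|^p$ at the origin), whereas you package all of this into a direct citation of the Bergman-ball sub-mean-value estimate; the underlying mechanism is identical.
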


\begin{proof}
Denote $\B_{1/2}:=\{z: |z|<\frac{1}{2}\}$. For each $f \in \calN(p, q, s)$ and $a_0 \in \B$, we have
\begin{eqnarray*}
\|f\|^p%
&=& \sup_{a \in \B} \int_\B |f(z)|^p(1-|z|^2)^q (1-|\Phi_a(z)|^2)^{ns} d\lambda(z)\\
&\ge& \int_{D(a_0, 1/2)} |f(z)|^p(1-|z|^2)^q (1-|\Phi_{a_0}(z)|^2)^{ns} d\lambda(z)\\
&\ge& C_{n, s}  \int_{D(a_0, 1/2)} |f(z)|^p(1-|z|^2)^q  d\lambda(z)\\
&=& C_{n, s}  \int_{\B_{1/2}} |f(\Phi_{a_0}(w))|^p(1-|\Phi_{a_0}(w)|^2)^q  d\lambda(w)\\
&& (\textrm{change variable}\ z=\Phi_{a_0}(w))\\
&=&  C_{n, s}  \int_{\B_{1/2}} |f(\Phi_{a_0}(w))|^p \frac{(1-|a_0|^2)^q(1-|w|^2)^{q-n-1}}{\left |1- \langle a_0, w \rangle \right|^{2q}}  dV(w)\\
&\ge& C_{q, s, n} (1-|a_0|^2)^q \int_{\B_{1/2}}|f(\Phi_{a_0}(w))|^pdV(w) \\
&\ge& C'_{q, s, n} (1-|a_0|^2)^q |f(a_0)|^p \quad  (|f \circ \Phi_{a_0}(\cdot)|^p \ \textrm{is subharmonic}),
\end{eqnarray*}
where $C_{n, s}$ is a constant depending on $n$ and $s$ and $C_{q, s, n}$ and $C'_{q, s, n}$ are some constants depending on $q, s$ and $n$. Hence, for any $z \in \B$, we have
$$
|f(z)| \lesssim \frac{\|f\|}{(1-|z|^2)^{\frac{q}{p}}},
$$
 which implies the point evaluation is a continuous linear functional, as well as, $\calN(p, q, s) \subseteq A^{-\frac{q}{p}}(\B)$.
\end{proof}

Letting $q=n+1$ and $p=2$ in Proposition \ref{boundaryineq}, we get \cite[Theorem 2.1, (a)]{HK1} as a particular case.

\begin{cor}
For $p>0$,  $\calN_p(\B) \subseteq A^{-\frac{n+1}{2}}(\B)$.
\end{cor}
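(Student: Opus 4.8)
The plan is to derive the corollary directly from Proposition~\ref{boundaryineq} by specializing the parameters. Recall that Proposition~\ref{boundaryineq} asserts $\calN(p,q,s)\subseteq A^{-q/p}(\B)$ for $p\ge 1$ and $q,s>0$. The key observation, already noted in the text immediately after the definition of the $\calN(p,q,s)$-spaces, is that the choice $p=2$, $q=n+1$, $s>0$ makes $\calN(2,n+1,s)$ coincide (as a set, with equivalent norms) with the space $\calN_{ns}(\B)$ studied in \cite{HK1}; indeed, $(1-|z|^2)^{n+1}d\lambda(z)=dV(z)$, so the defining integral $\int_\B |f|^2(1-|z|^2)^{n+1}(1-|\Phi_a(z)|^2)^{ns}\,d\lambda$ reduces exactly to $\int_\B |f|^2(1-|\Phi_a(z)|^2)^{ns}\,dV$.

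First I would observe that, given $p>0$ in the statement of the corollary, one sets $s := p/n > 0$ so that $ns = p$, and then $\calN_p(\B)=\calN(2,n+1,s)$ with this value of $s$. Next I would apply Proposition~\ref{boundaryineq} with the parameters $p\leftarrow 2$ (which satisfies $2\ge 1$), $q\leftarrow n+1$, and $s\leftarrow p/n>0$. The conclusion of the proposition then reads $\calN(2,n+1,p/n)\subseteq A^{-(n+1)/2}(\B)$, since $q/p = (n+1)/2$ with the proposition's exponents. Combining this with the identification $\calN_p(\B)=\calN(2,n+1,p/n)$ yields $\calN_p(\B)\subseteq A^{-(n+1)/2}(\B)$, which is the claim.

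There is essentially no obstacle here: the corollary is a pure specialization, and the only thing to be careful about is matching the two uses of the letter "$p$" — the $p$ appearing in $\calN_p(\B)$ plays the role of $ns$ in the $\calN(p,q,s)$ framework, not the role of the integrability exponent (which is fixed at $2$). I would make this substitution explicit to avoid confusion, and then simply invoke the estimate $|f(z)|\lesssim \|f\|(1-|z|^2)^{-(n+1)/2}$ that Proposition~\ref{boundaryineq} provides in this case. The remark in the text ("Letting $q=n+1$ and $p=2$ in Proposition~\ref{boundaryineq}, we get \cite[Theorem 2.1, (a)]{HK1} as a particular case") already signals that this is the intended one-line argument.
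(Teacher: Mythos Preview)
Your proposal is correct and matches the paper's own approach exactly: the corollary is obtained simply by specializing Proposition~\ref{boundaryineq} with $p=2$, $q=n+1$, and $s$ chosen so that $ns$ equals the parameter of $\calN_p(\B)$, using the identification $\calN_p(\B)=\calN(2,n+1,s)$ noted earlier. Your careful distinction between the two uses of the symbol $p$ is a helpful clarification that the paper leaves implicit.
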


\begin{thm} \label{functionalBS}
Let $p \ge 1$ and $q, s>0$. $\calN(p, q, s)$ is a functional Banach space.
\end{thm}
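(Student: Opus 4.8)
The plan is to verify the axioms of a functional Banach space in the sense of \cite{cowen}: namely that $\calN(p, q, s)$ is a vector space of holomorphic functions on which $\|\cdot\|$ is a norm, that it is complete with respect to this norm, and that the point evaluations $K_z$ are bounded linear functionals. The last of these has already been established in Proposition \ref{boundaryineq}, so the real work is to check the vector-space, norm, and completeness properties.

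First I would check that $\calN(p, q, s)$ is a linear subspace of $H(\B)$ and that $\|\cdot\|$ is a norm for $p \ge 1$. Homogeneity $\|cf\| = |c| \|f\|$ is immediate from the definition, and positivity together with $\|f\| = 0 \Rightarrow f \equiv 0$ follows from the pointwise estimate $|f(z)| \lesssim \|f\|/(1-|z|^2)^{q/p}$ proved in Proposition \ref{boundaryineq}. The triangle inequality is the one place the hypothesis $p \ge 1$ is used: for each fixed $a \in \B$, the quantity $\left( \int_\B |f(z)|^p (1-|z|^2)^q (1-|\Phi_a(z)|^2)^{ns}\, d\lambda(z) \right)^{1/p}$ is an $L^p$ norm of $f$ against the positive measure $d\mu_a(z) = (1-|z|^2)^q (1-|\Phi_a(z)|^2)^{ns}\, d\lambda(z)$, hence satisfies Minkowski's inequality; then $\|f + g\| = \sup_a \|f+g\|_{L^p(\mu_a)} \le \sup_a \left( \|f\|_{L^p(\mu_a)} + \|g\|_{L^p(\mu_a)} \right) \le \|f\| + \|g\|$, and in particular the sum of two functions in the space is again in the space.

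The main step is completeness. Let $\{f_k\}$ be a Cauchy sequence in $\calN(p, q, s)$. By the point-evaluation estimate from Proposition \ref{boundaryineq}, $|f_k(z) - f_j(z)| \lesssim \|f_k - f_j\|/(1-|z|^2)^{q/p}$, so $\{f_k\}$ is uniformly Cauchy on every compact subset of $\B$; hence it converges uniformly on compacta to some $f \in H(\B)$. It remains to show $f \in \calN(p, q, s)$ and $\|f_k - f\| \to 0$. Fix $\eps > 0$ and choose $N$ so that $\|f_k - f_j\| < \eps$ for $k, j \ge N$. For any fixed $a \in \B$, apply Fatou's lemma to the integral defining $\|f_k - f_j\|_{L^p(\mu_a)}^p$, letting $j \to \infty$ (using pointwise convergence $f_j \to f$): this gives $\int_\B |f_k(z) - f(z)|^p\, d\mu_a(z) \le \liminf_j \|f_k - f_j\|^p \le \eps^p$ for all $k \ge N$. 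Taking the supremum over $a \in \B$ yields $\|f_k - f\| \le \eps$ for $k \ge N$; in particular $f - f_N \in \calN(p, q, s)$, so $f = (f - f_N) + f_N \in \calN(p, q, s)$, and $\|f_k - f\| \to 0$.

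The only mild obstacle is making the Fatou's lemma step fully rigorous — one must apply it for each fixed $a$ before taking the supremum, since Fatou does not interact directly with the sup, but this ordering of quantifiers is exactly as above and causes no genuine difficulty. I would also remark that for $0 < p < 1$ the same argument shows $\calN(p, q, s)$ is a complete metric space under the metric $d(f, g) = \|f - g\|^p$, with the triangle inequality for $\|\cdot\|^p$ replacing Minkowski's inequality, paralleling the situation for $A^p_\alpha$ recalled in the introduction.
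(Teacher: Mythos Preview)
Your proof is correct and follows essentially the same approach as the paper: both use the point-evaluation bound from Proposition \ref{boundaryineq} to get locally uniform convergence of a Cauchy sequence, and then apply Fatou's lemma for each fixed $a \in \B$ before taking the supremum. Your version is in fact somewhat more thorough than the paper's, since you explicitly verify the triangle inequality via Minkowski and explicitly establish norm convergence $\|f_k - f\| \to 0$, whereas the paper only shows $f \in \calN(p,q,s)$ and takes the rest as clear.
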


\begin{proof}
It is clear that $\calN(p, q, s)$ is a normed vector space with respect to the norm $\| \cdot \|$. It suffices to show the completeness of $\calN(p, q, s)$. Let $\{f_m\}$ be a Cauchy sequence in $\calN(p, q, s)$. From this, by Proposition \ref{boundaryineq}, it follows that $\{f_m\}$ is a Cauchy sequence in the space $H(\B)$, and hence it converges to some $f \in H(\B)$. It remains to show that $f \in \calN(p, q, s)$. Indeed, there exists a $\ell_0 \in \N$ such that for all $m, \ell \ge \ell_0$, it holds $\|f_m-f_\ell\| \le 1$. Take and fix an arbitrary $a \in \B$, by Fatou's lemma, we have
\begin{eqnarray*}
&&\int_\B |f(z)-f_{\ell_0}(z)|^p(1-|z|^2)^q (1-|\Phi_a(z)|^2)^{ns}d\lambda(z)\\
&\le& \lim_{\ell \to \infty} \int_\B |f_{\ell}(z)-f_{\ell_0}(z)|^p(1-|z|^2)^q (1-|\Phi_a(z)|^2)^{ns}d\lambda(z)\\
&\le & \lim_{\ell \to \infty} \|f_{\ell}-f_{\ell_0}\|^p \le 1,
\end{eqnarray*}
which implies that
$$
\|f-f_{\ell_0}\|=\sup_{a \in \B} \int_\B |f(z)-f_{\ell_0}(z)|^p(1-|z|^2)^q (1-|\Phi_a(z)|^2)^{ns}d\lambda(z) \le 1,
$$
and hence $\|f\| \le 1+\|f_{\ell_0}\|< \infty$. Thus, combining this with the fact that the point evaluation on $\calN(p, q, s)$ is a continuous linear functional, we conclude that $\calN(p, q, s)$ is a functional Banach space.
\end{proof}

Next we show that when $p \ge 1$, $q>0$ and $s>1$, $\calN(p, q, s)=A^{-\frac{q}{p}}(\B)$. More precisely, we have the following result.

\begin{prop} \label{Npqsbigs}
Let $p \ge 1$, $q,s>0$. If $s>1-\frac{q-kp}{n}, k \in \left(0, \frac{q}{p}\right]$, then $A^{-k}(\B) \subseteq \calN(p, q, s)$. In particular, when $s>1, \calN(p, q, s)=A^{-\frac{q}{p}}(\B)$.
\end{prop}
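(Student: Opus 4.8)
The plan is to start from the pointwise estimate defining $A^{-k}(\B)$ and reduce everything to a classical Forelli--Rudin type integral estimate. Fix $f \in A^{-k}(\B)$, so that $|f(z)|^p \le |f|_k^p (1-|z|^2)^{-kp}$ for every $z \in \B$. Substituting this into the definition of $\|f\|^p$, recalling that $d\lambda(z)=(1-|z|^2)^{-n-1}\,dV(z)$, and using the Möbius identity
$$
1-|\Phi_a(z)|^2=\frac{(1-|a|^2)(1-|z|^2)}{|1-\langle z,a\rangle|^2},
$$
one obtains, for each $a\in\B$,
$$
\int_\B |f(z)|^p(1-|z|^2)^q(1-|\Phi_a(z)|^2)^{ns}\,d\lambda(z)\le |f|_k^p\,(1-|a|^2)^{ns}\int_\B \frac{(1-|z|^2)^{q-kp+ns-n-1}}{|1-\langle z,a\rangle|^{2ns}}\,dV(z).
$$

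Next I would invoke the well-known estimates for integrals of the form $\int_\B (1-|z|^2)^t|1-\langle z,a\rangle|^{-c}\,dV(z)$ (see, e.g., \cite{Rud, Zhu}), which apply precisely when $t>-1$. Here $t=q-kp+ns-n-1$, and $t>-1$ is exactly equivalent to $ns>n-q+kp$, i.e. to the hypothesis $s>1-\frac{q-kp}{n}$; this is the one and only place the hypothesis enters. Comparing $c=2ns$ with $n+1+t=q-kp+ns$ splits the argument into three cases: if $ns>q-kp$ the integral is comparable to $(1-|a|^2)^{q-kp-ns}$, so the whole expression is $\lesssim |f|_k^p(1-|a|^2)^{q-kp}$; if $ns<q-kp$ the integral is bounded, so the expression is $\lesssim |f|_k^p(1-|a|^2)^{ns}$; and if $ns=q-kp$ the integral is comparable to $\log\frac{1}{1-|a|^2}$, so the expression is $\lesssim |f|_k^p(1-|a|^2)^{ns}\log\frac{1}{1-|a|^2}$. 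Since $k\le q/p$ forces $q-kp\ge 0$, and since $n\ge1$ and $s>0$, in each of the three cases the right-hand side is bounded uniformly in $a\in\B$ (in the borderline case $ns=q-kp$ one uses in addition that $(1-|a|^2)^{ns}\log\frac{1}{1-|a|^2}\to 0$ as $|a|\to1$, noting that there $ns=q-kp>0$). Taking the supremum over $a\in\B$ gives $\|f\|^p\lesssim |f|_k^p<\infty$, hence $f\in\calN(p,q,s)$ and $A^{-k}(\B)\subseteq\calN(p,q,s)$.

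For the "in particular" statement, I would take $k=q/p$, which is admissible since $k\in(0,q/p]$, and for which the hypothesis $s>1-\frac{q-kp}{n}$ collapses to $s>1$. This yields $A^{-q/p}(\B)\subseteq\calN(p,q,s)$, and combining with the reverse inclusion $\calN(p,q,s)\subseteq A^{-q/p}(\B)$ already proved in Proposition \ref{boundaryineq} gives the equality $\calN(p,q,s)=A^{-q/p}(\B)$ when $s>1$. I expect the only genuine point of care to be the bookkeeping among the three regimes of the Forelli--Rudin estimate, and in particular checking that the borderline logarithmic regime still produces a bounded quantity; no step looks genuinely difficult.
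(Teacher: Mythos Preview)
Your proposal is correct and follows essentially the same approach as the paper's own proof: both reduce the question to the Forelli--Rudin estimate (cited as \cite[Theorem 1.12]{Zhu}) applied to the integral $\int_\B (1-|z|^2)^{q+ns-n-1-kp}|1-\langle a,z\rangle|^{-2ns}\,dV(z)$, observe that the hypothesis $s>1-\frac{q-kp}{n}$ is exactly the condition $q+ns-n-1-kp>-1$ needed for that estimate, and then check that $(1-|a|^2)^{ns}$ times the resulting bound is uniformly bounded in $a$. The paper handles all three regimes in one line via a displayed case split leading to the inequality \eqref{eq04}, whereas you spell out each case explicitly; the content is identical, and your treatment of the borderline logarithmic case is fine.
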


\begin{proof}
Suppose $p \ge 1$, $q>0$ and $s>1-\frac{q-kp}{n}$ for some $k \in \left(0, \frac{q}{p}\right]$. Since $s>1-\frac{q-kp}{n}$, we have $q+ns-n-1-kp>-1$, then by \cite[Theorem 1.12]{Zhu},  for each $a \in \B$, we have
$$
\int_\B \frac{(1-|z|^2)^{q+ns-n-1-pk}}{|1-\langle a, z \rangle|^{2ns}}dV(z) \simeq \begin{cases}
\textrm{bounded in} \ \B, & \textrm{if} \ ns+pk<q;\\
\log \frac{1}{1-|a|^2}, &  \textrm{if} \ ns+pk=q;\\
(1-|a|^2)^{q-ns-pk}, & \textrm{if}\ ns+pk>q,
\end{cases}
$$
which implies, there exists a positive constant $C$ such that
\begin{equation} \label{eq04}
\sup_{a \in \B}  (1-|a|^2)^{ns} \int_\B \frac{(1-|z|^2)^{q+ns-n-1-pk}}{|1-\langle a, z \rangle|^{2ns}}dV(z)  \le C.
\end{equation}
Let $f \in A^{-k}(\B)$. By \eqref{eq04}, we have
\begin{eqnarray*}
\|f\|^p
&=& \sup_{a \in \B} \int_\B |f(z)|^p(1-|z|^2)^q(1-|\Phi_a(z)|^2|)^{ns}d\lambda(z) \\
&=& \sup_{a \in \B} \int_\B |f(z)|^p (1-|z|^2)^{pk} (1-|z|^2)^{q-pk} (1-|\Phi_a(z)|^2)^{ns} d\lambda(z)\\
&\le& |f|_k^p \sup_{a \in \B}   (1-|a|^2)^{ns} \int_\B \frac{(1-|z|^2)^{q+ns-n-1-pk}}{|1-\langle a, z \rangle|^{2ns}}dV(z) \\
&\le& C|f|_k^p,
\end{eqnarray*}
which implies $A^{-k}(\B) \subseteq \calN(p, q, s)$.

Now if $s>1$, then in particular, we can take $k=\frac{q}{p}$ and hence by the above argument, we have $A^{-\frac{q}{p}}(\B) \subseteq \calN(p, q, s)$. Combing this fact with Proposition \ref{boundaryineq}, we get the desired result.
\end{proof}

Letting $q=n+1$ and $p=2$ in Proposition \ref{Npqsbigs}, we get \cite[Theorem 2.1, (b)]{HK1} as a particular case.


\medskip

\subsection{The closure of all polynomials in $\calN(p, q, s)$-type spaces}


\begin{prop} \label{littleNpqs}
Let $p \ge 1$ and $q, s>0$. Then $\calN^0(p, q, s)$ is a closed subspace of $\calN(p, q, s)$ and hence $\calN^0(p, q, s)$ is a Banach space.
\end{prop}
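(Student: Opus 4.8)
The plan is to establish the two claims separately: first that $\calN^0(p,q,s)$ is a linear subspace that is \emph{closed} in the norm of $\calN(p,q,s)$, and then to conclude that it is a Banach space because a closed subspace of a Banach space is itself complete (Theorem \ref{functionalBS} already gives that $\calN(p,q,s)$ is a Banach space). Linearity of $\calN^0(p,q,s)$ is routine: if $f,g$ satisfy the vanishing condition as $|a|\to 1$, then for the sum one uses the elementary inequality $|f(z)+g(z)|^p \le 2^{p-1}(|f(z)|^p+|g(z)|^p)$ (valid for $p\ge 1$) inside the defining integral, so each of the two resulting integrals tends to $0$ as $|a|\to 1$; scalar multiples are immediate. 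So the content is the closedness.

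For closedness, I would take a sequence $\{f_m\}\subseteq \calN^0(p,q,s)$ with $f_m \to f$ in the norm of $\calN(p,q,s)$, and show $f\in \calN^0(p,q,s)$. Fix $\eps>0$. Choose $m$ so that $\|f-f_m\|^p < \eps/2^p$ (or, being slightly careful with the quasi-triangle constant, $\|f-f_m\|$ small enough). Then for every $a\in\B$, writing $I_a(h) := \int_\B |h(z)|^p(1-|z|^2)^q(1-|\Phi_a(z)|^2)^{ns}\,d\lambda(z)$, the pointwise inequality $|f|^p \le 2^{p-1}(|f-f_m|^p + |f_m|^p)$ gives
$$
I_a(f) \le 2^{p-1} I_a(f-f_m) + 2^{p-1} I_a(f_m) \le 2^{p-1}\|f-f_m\|^p + 2^{p-1} I_a(f_m).
$$
Since $f_m \in \calN^0(p,q,s)$, there is $\delta\in(0,1)$ such that $I_a(f_m) < \eps$ whenever $|a|>\delta$; hence $\limsup_{|a|\to1} I_a(f) \le 2^{p-1}\|f-f_m\|^p + 2^{p-1}\eps$, and by choosing $\|f-f_m\|$ small first and then letting $\eps\to0$ we get $\lim_{|a|\to1} I_a(f) = 0$, i.e. $f\in\calN^0(p,q,s)$. (One also notes $f\in\calN(p,q,s)$ automatically since it is the limit in that norm and $\calN(p,q,s)$ is complete.)

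Finally, a closed subspace of a Banach space is a Banach space in the restricted norm, so $\calN^0(p,q,s)$ is a Banach space. I do not anticipate a serious obstacle here; the only mild care needed is the bookkeeping with the constant $2^{p-1}$ from the power inequality when $p>1$ (for $p=1$ it is just the triangle inequality under the integral), and making sure the choice of $m$ is done uniformly in $a$ — which it is, precisely because the norm $\|\cdot\|$ is a supremum over $a\in\B$, so $\|f-f_m\|$ controls $I_a(f-f_m)$ for \emph{all} $a$ simultaneously. That uniformity is the whole point and is exactly what makes the $\limsup$ argument go through.
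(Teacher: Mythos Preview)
Your proof is correct and is essentially the same as the paper's: both use the inequality $|f|^p \le 2^{p-1}(|f-f_m|^p+|f_m|^p)$ to bound $I_a(f)$ by $2^{p-1}\|f-f_m\|^p + 2^{p-1}I_a(f_m)$, the first term being small uniformly in $a$ by choice of $m$ and the second vanishing as $|a|\to 1$. The only cosmetic difference is that the paper frames it as verifying completeness of $\calN^0(p,q,s)$ (a Cauchy sequence there has its $\calN(p,q,s)$-limit back in $\calN^0(p,q,s)$) while you frame it directly as closedness; the underlying estimate is identical.
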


\begin{proof}
First we note that it is trivial to see that $\calN^0(p, q, s)$ is a subspace of $\calN(p, q, s)$ and hence it suffice to show that $\calN^0(p, q, s)$ is complete. Suppose $\{f_n\}$ is a Cauchy sequence in $\calN^0(p, q, s)$, by Theorem \ref{functionalBS}, there is a limit $f \in \calN(p, q, s)$ of $\{f_n\}$. For any $\varepsilon>0$, there exists a $N \in \N$, such that when $n>N, \|f-f_n\|<\left(\frac{\varepsilon}{2^p}\right)^{1/p}$. Take $n_0>N$, since $f_{n_0} \in \calN^0(p, q, s)$, there exists a $\del \in (0, 1)$ such that when $\del<|a|<1$,
$$
\sup_{\del<|a|<1} \int_\B |f_{n_0}(z)|^p(1-|z|^2)^q(1-|\Phi_a(z)|^2)^{ns}d\lambda(z)<\frac{\varepsilon}{2^p}.
$$
Hence, we have
\begin{eqnarray*}
&&\sup_{\del<|a|<1} \int_\B |f(z)|^p(1-|z|^2)^q(1-|\Phi_a(z)|^2)^{ns}d\lambda(z)\\
&\le &  2^{p-1} \|f-f_{n_0}\|^p\\
&&  +2^{p-1} \sup_{\del<|a|<1} \int_\B |f_{n_0}(z)|^p(1-|z|^2)^q(1-|\Phi_a(z)|^2)^{ns}d\lambda(z)<\varepsilon,
\end{eqnarray*}
which implies that $f \in \calN^0(p, q, s)$.
\end{proof}

\begin{thm} \label{thm01}
Suppose $f \in \calN(p, q, s)$ with $ns+q>n$. Then $f \in \calN^0(p, q, s)$ if and only if
$\|f_r-f\| \to 0,  \ r\to 1,$ where $f_r(z)=f(rz)$ for all $z \in \B$.
\end{thm}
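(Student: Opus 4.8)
The plan is to prove the two implications separately, with the ``if'' direction being essentially routine and the ``only if'' direction being the substantive part.

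\medskip

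\noindent\textbf{The easy direction.} Suppose $\|f_r - f\| \to 0$ as $r \to 1$. Each dilate $f_r$ is holomorphic on a neighborhood of $\overline{\B}$, hence bounded, and one checks directly that bounded holomorphic functions lie in $\calN^0(p,q,s)$: indeed for $g \in H^\infty$,
$$
\int_\B |g(z)|^p (1-|z|^2)^q (1-|\Phi_a(z)|^2)^{ns}\, d\lambda(z) \le \|g\|_\infty^p \int_\B (1-|z|^2)^q (1-|\Phi_a(z)|^2)^{ns}\, d\lambda(z),
$$
and after the change of variable $z = \Phi_a(w)$ the right-hand integral becomes $\int_\B (1-|\Phi_a(w)|^2)^q (1-|w|^2)^{ns}\, d\lambda(w)$, which tends to $0$ as $|a| \to 1$ by dominated convergence (here one uses $q > 0$ together with $ns + q > n$ to guarantee integrability, via \cite[Theorem 1.12]{Zhu}). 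So each $f_r \in \calN^0(p,q,s)$, and since $\calN^0(p,q,s)$ is closed in $\calN(p,q,s)$ by Proposition \ref{littleNpqs} and $f_r \to f$ in norm, we get $f \in \calN^0(p,q,s)$.

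\medskip

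\noindent\textbf{The hard direction.} Assume $f \in \calN^0(p,q,s)$; we must show $\|f_r - f\| \to 0$. Fix $\eps > 0$. Split the defining supremum over $a \in \B$ into the region $|a| \le \del$ and the region $\del < |a| < 1$ for a $\del$ to be chosen. On the outer annulus, since both $f$ and $f_r$ lie in $\calN^0(p,q,s)$ uniformly (one needs $\|f_r\|$ controlled independently of $r$ on the outer annulus — this follows because $|f_r(z)| \le \sup_{|w| \le r|z|}|f(w)|$ and a comparison of the integrals, or more cleanly because the little-oh condition for $f$ transfers to $f_r$ with a uniform $\del$), the contribution is $\lesssim \eps$ by the triangle inequality in the form used in Proposition \ref{littleNpqs}. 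On the compact region $|a| \le \del$, the weights $(1-|\Phi_a(z)|^2)^{ns}$ and the measure are controlled, and one is reduced to showing
$$
\sup_{|a| \le \del} \int_\B |f_r(z) - f(z)|^p (1-|z|^2)^q (1-|\Phi_a(z)|^2)^{ns}\, d\lambda(z) \to 0, \quad r \to 1.
$$
For this, split the $z$-integral at $|z| = \rho$: on $|z| \le \rho$ use $f_r \to f$ uniformly on compacta, and on $\rho < |z| < 1$ use the pointwise bound $|f(z)|, |f_r(z)| \lesssim \|f\|/(1-|z|^2)^{q/p}$ from Proposition \ref{boundaryineq} together with the fact that, by $f \in \calN^0(p,q,s)$ and $ns+q>n$, the ``tail'' integral $\int_{\rho<|z|<1}(\cdots)$ is uniformly small in $a$ when $\rho$ is close to $1$.

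\medskip

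\noindent\textbf{Main obstacle.} The delicate point is the uniform (in $r$) control of the dilates $f_r$: one must ensure that the little-oh decay governing $f$ is inherited by the whole family $\{f_r\}_{r \text{ near } 1}$ with constants not blowing up as $r \to 1$. The natural tool is the substitution $z = \Phi_a(w)$ combined with the observation $|f_r(\Phi_a(w))|^p \le$ (average of $|f|^p$ over a slightly larger ball), reducing everything to the already-established behavior of $f$ itself; the hypothesis $ns + q > n$ is exactly what makes the relevant integrals $\int_\B (1-|\Phi_a(w)|^2)^q (1-|w|^2)^{ns}\,d\lambda(w)$ converge and vanish as $|a|\to 1$, so that the decomposition above closes up. Once this uniformity is in hand, the $\eps/3$-type splitting (outer annulus in $a$, plus inner-core and tail in $z$) delivers the conclusion.
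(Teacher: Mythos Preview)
Your easy direction is correct and essentially matches the paper.

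The hard direction has the right global architecture (split in $a$, then handle $|a|\le\delta$ and $|a|>\delta$ separately), but both halves, as you describe them, contain genuine gaps.

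\medskip
\noindent\textbf{Inner region $|a|\le\delta$.} Your tail argument fails when $s\le 1$. Using the pointwise growth bound $|f_r(z)-f(z)|^p\lesssim (1-|z|^2)^{-q}$ leaves you with
\[
\int_{\rho<|z|<1}(1-|\Phi_a(z)|^2)^{ns}\,d\lambda(z)
\;\lesssim_\delta\;
\int_{\rho<|z|<1}(1-|z|^2)^{ns-n-1}\,dV(z),
\]
and this diverges as soon as $ns\le n$, which is precisely the interesting range (for $s>1$ the space collapses to $A^{-q/p}(\B)$ anyway). The hypothesis $ns+q>n$ does not help here because you have already spent the factor $(1-|z|^2)^q$ on the pointwise bound. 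The paper avoids this by observing that for $|a|\le\delta_0$ one has the crude domination $(1-|\Phi_a(z)|^2)^{ns}\le (1-\delta_0)^{-2ns}(1-|z|^2)^{ns}$, after which the whole integral is controlled by $\|f_r-f\|^p_{A^p_{ns+q-n-1}}$; convergence of dilations in this weighted Bergman space is then quoted directly from \cite[Proposition~2.6]{Zhu}. No split in $z$ is needed.

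\medskip
\noindent\textbf{Outer region $|a|>\delta$.} You correctly flag uniform-in-$r$ control of the $f_r$ integral as the crux, but your proposed mechanism (subharmonic averaging after the substitution $z=\Phi_a(w)$) is too vague to close; it is not clear how averaging $|f(r\Phi_a(w))|^p$ over a Bergman ball produces something comparable to an $\calN(p,q,s)$ integral of $f$ at a point near the boundary. The paper's device is clean and different: the Schwarz--Pick inequality $|\Phi_{ra}(rz)|\le|\Phi_a(z)|$ (see \cite[Theorem~8.1.4]{Rud}) gives, together with $1-|z|^2\le 1-|rz|^2$,
\[
\int_\B |f(rz)|^p(1-|z|^2)^q(1-|\Phi_a(z)|^2)^{ns}\,d\lambda(z)
\le
\int_\B |f(rz)|^p(1-|rz|^2)^q(1-|\Phi_{ra}(rz)|^2)^{ns}\,d\lambda(z),
\]
and the change of variable $w=rz$ turns the right side into at most $4^n$ times the $\calN(p,q,s)$ integral of $f$ at the basepoint $ra$. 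Choosing $\delta_0\in(\delta,1)$ and $r>\delta/\delta_0$ forces $|ra|>\delta$, so the little-oh hypothesis on $f$ itself makes this small. This is the missing idea in your outline.
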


\begin{proof} {\it Necessity.}  Suppose $f \in \calN^0(p, q, s)$. This implies that for any $\varepsilon>0$, there exists a $\del>0$, such that with $\del<|a|<1$, we have
\begin{equation} \label{eq02}
\int_\B |f(z)|^p(1-|z|^2)^q(1-|\Phi_a(z)|^2)^{ns} d\lambda(z)<\frac{\varepsilon}{3 \cdot 2^{2n+p-1}}.
\end{equation}
Furthermore, by Schwarz-Pick Lemma (see, e.g., \cite[Theorem 8.1.4]{Rud}), we have
\begin{equation} \label{eq03}
|\Phi_{ra}(rz)| \le |\Phi_a(z)|, \ \textrm{for all} \ r \in (0,1) \  \textrm{and} \ a, z \in \B.
\end{equation}
Now take and fix $\del_0 \in (\del, 1)$. Consider $r$ satisfying $\max\left\{ \frac{1}{2}, \frac{\del}{\del_0}\right\}<r<1$. In this case, for all $a \in \B$ with $|a| \in (\del_0, 1)$, by \eqref{eq02} and \eqref{eq03}, we have
\begin{eqnarray*}
&& \int_\B |f(rz)|^p(1-|z|^2)^q (1-|\Phi_a(z)|^2)^{ns}d\lambda(z)\\
&\le & \int_\B |f(rz)|^p(1-|rz|^2)^q (1-|\Phi_{ra}(rz)|^2)^{ns} d\lambda(z)\\
&= &\left( \frac{1}{r}\right)^{2n} \int_\B |f(w)|^p(1-|w|^2)^q(1-|\Phi_{ra}(w)|^2)^{ns}d\lambda(w)\\
&\le & 4^n \int_\B |f(w)|^p(1-|w|^2)^q(1-|\Phi_{ra}(w)|^2)^{ns}d\lambda(w)<\frac{\varepsilon}{3 \cdot 2^{p-1}}.
\end{eqnarray*}

On the other hand, since $ns+q>n$, we have $ns+q-n-1>-1$. By \cite[Propostion 2.6]{Zhu}, $f_r$ converges to $f$ as $r \to 1$, in the norm topology of the weighted Bergman space $A^p_{ns+q-n-1}(\B)$. This implies that there exists a $r_1 \in (0, 1)$, such that for $r_1<r<1$, we have $$\int_\B |f(rz)-f(z)|^p(1-|z|^2)^{ns+q-n-q}dV(z)<\frac{(1-\del_0)^{2ns} \cdot \varepsilon}{3}.$$

Hence, for $|a| \le \del_0$ and $r_1<r<1$, we have
\begin{eqnarray*}
&&\sup_{|a| \le \del_0} \int_\B |f(rz)-f(z)|^p(1-|z|^2)^q(1-|\Phi_a(z)|^2)^{ns}d\lambda(z)\\
&= &\sup_{|a| \le \del_0} \left\{  (1-|a|^2)^{ns} \int_\B |f(rz)-f(z)|^p \frac{(1-|z|^2)^{ns+q-n-1}}{|1-\langle a, z \rangle|^{2ns}}dV(z)\right\}\\
&\le& \sup_{|a| \le \del_0}\int_\B |f(rz)-f(z)|^p \frac{(1-|z|^2)^{ns+q-n-1}}{|1-\langle a, z \rangle|^{2ns}}dV(z) \\
&\le& \frac{1}{(1-\del_0)^{2ns}} \int_\B |f(rz)-f(z)|^p (1-|z|^2)^{ns+q-n-1}dV(z)<\frac{\varepsilon}{3}.
\end{eqnarray*}
Consequently, for all $r$ with $\max\left\{\frac{1}{2}, \frac{\del}{\del_0}, r_1\right\}<r<1$, we have
\begin{eqnarray*}
&&\|f_r-f\|^p=\sup_{a \in \B} \int_\B |f(rz)-f(z)|^p(1-|z|^2)^q(1-|\Phi_a(z)|^2)^{ns} d\lambda(z)\\
&\le & \left(\sup_{|a| \le \del_0}+\sup_{\del_0<|a|<1}\right) \int_\B |f(rz)-f(z)|^p(1-|z|^2)^q(1-|\Phi_a(z)|^2)^{ns} d\lambda(z)\\
&\le& \frac{\varepsilon}{3}+2^{p-1}\sup_{\del_0<|a|<1} \int_\B (|f(rz)|^p+|f(z)|^p)(1-|z|^2)^q(1-|\Phi_a(z)|^2)^{ns} d\lambda(z)\\
& <&\frac{\varepsilon}{3}+2^{p-1} \left(\frac{\varepsilon}{3\cdot 2^{p-1}}+\frac{\varepsilon}{3 \cdot 2^{2n+p-1}}\right)
\\
&<&\frac{\varepsilon}{3}+2^{p-1} \left(\frac{\varepsilon}{3\cdot 2^{p-1}}+\frac{\varepsilon}{3 \cdot 2^{p-1}}\right)=\varepsilon,
\end{eqnarray*}
which shows that $\|f_r-f\| \to 0$ as $r \to 1$.

{\it Sufficiency.}  First we show that for each $r \in (0, 1)$ and each $f \in \calN(p, q, s)$, then $f_r \in \calN^0(p, q, s)$.
By \cite[Theorem 1.12]{Zhu}, we have
$$
\int_\B \frac{(1-|z|^2)^{q+ns-n-1}}{|1-\langle a, z \rangle|^{2ns}}dV(z)\simeq\begin{cases}
\textrm{bounded in} \ \B, & \textrm{if} \ ns<q;\\
\log \frac{1}{1-|a|^2}, &  \textrm{if} \ ns=q;\\
(1-|a|^2)^{q-ns}, & \textrm{if}\ ns>q,
\end{cases}
$$
which implies
$$
\sup_{a \in \B} (1-|a|^2)^{ns} \int_\B \frac{(1-|z|^2)^{q+ns-n-1}}{|1-\langle a, z \rangle|^{2ns}}dV(z)<\infty.
$$
Hence, we have
\begin{eqnarray*}
\|f_r\|^p
&=& \sup_{a \in \B} \int_\B |f_r(z)|^p(1-|z|^2)^q(1-|\Phi_a(z)|^2)^{ns} d\lambda(z)\\
&\le& \left( \sup_{z \in \B} |f(rz)|\right)^{p} \cdot  \sup_{a \in \B} \int_\B (1-|z|^2)^q(1-|\Phi_a(z)|^2)^{ns} d\lambda(z)\\
&=& \left( \sup_{z \in \B} |f(rz)|\right)^{p} \cdot \sup_{a \in \B} (1-|a|^2)^{ns} \int_\B \frac{(1-|z|^2)^{q+ns-n-1}}{|1-\langle a, z \rangle|^{2ns}}dV(z),
\end{eqnarray*}
which is finite. Thus, $f_r \in \calN(p, q, s)$. Moreover, we  have
\begin{eqnarray*}
&&\int_\B |f_r(z)|^p(1-|z|^2)^q(1-|\Phi_a(z)|^2)^{ns} d\lambda(z)\\
& \le&  M \int_\B (1-|z|^2)^q(1-|\Phi_a(z)|^2)^{ns} d\lambda(z),
\end{eqnarray*}
for some $M>0$ (this is due to the boundedness of the function $f(z)$ on $\{|z| \le r\}$).

Noting that $ns+q>n$ and by the estimation on the term

$\int_\B \frac{(1-|z|^2)^{q+ns-n-1}}{|1-\langle a, z \rangle|^{2ns}}dV(z)$ above, we have
$$
\int_\B (1-|z|^2)^q(1-|\Phi_a(z)|^2)^{ns} d\lambda(z) \to 0, \ \textrm{as} \ |a| \to 1,
$$
which implies that $f_r \in \calN^0(p, q, s)$ for all $r \in (0, 1)$.

Now suppose that $\|f_r-f\| \to 0$ as $r \to 1$. Then by the fact that $\calN^0(p, q, s)$ is a closed subspace of $\calN(p, q, s)$, it follows that $f \in \calN^0(p, q, s)$.
\end{proof}

As a corollary of Theorem \ref{thm01}, we obtain the following result.

\begin{cor} \label{cor01}
The set of polynomials is dense in $\calN^0(p, q, s)$ when $ns+q>n$.
\end{cor}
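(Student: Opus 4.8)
The plan is to deduce Corollary \ref{cor01} directly from Theorem \ref{thm01} together with Proposition \ref{littleNpqs}. The key observation is that dilations can be approximated by polynomials inside the norm $\|\cdot\|$, so it suffices to show two things: first, that for every $f \in \calN^0(p,q,s)$ the dilates $f_r$ converge to $f$ in norm (which is exactly the necessity direction of Theorem \ref{thm01}, valid under the hypothesis $ns+q>n$), and second, that each $f_r$ can itself be norm-approximated by polynomials within $\calN(p,q,s)$. Granting these, given $\eps>0$ we first pick $r$ close to $1$ with $\|f_r-f\|<\eps/2$, then pick a polynomial $P$ (a partial sum of the Taylor series of $f_r$) with $\|P-f_r\|<\eps/2$, and conclude $\|P-f\|<\eps$. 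Since $f\in\calN^0(p,q,s)$ was arbitrary and polynomials lie in $\calN^0(p,q,s)$ (being bounded, they sit in $A^{-k}(\B)\subseteq\calN(p,q,s)$ for all $k$, and the defining integral tends to $0$ as $|a|\to1$ exactly as in the proof of Theorem \ref{thm01}), this proves density.

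For the second ingredient, I would fix $r\in(0,1)$ and work with $g:=f_r$, which is holomorphic on a neighborhood of $\overline{\B}$, hence its Taylor series about the origin converges to $g$ uniformly on $\overline{\B}$. Let $P_N$ denote the $N$-th partial sum. Then $\|g-P_N\|_\infty\to 0$ as $N\to\infty$. Using the elementary estimate established already in the proof of Theorem \ref{thm01}, namely
$$
\sup_{a\in\B}(1-|a|^2)^{ns}\int_\B \frac{(1-|z|^2)^{q+ns-n-1}}{|1-\langle a,z\rangle|^{2ns}}\,dV(z)=:C<\infty
$$
(which holds whenever $q+ns-n-1>-1$, i.e. under our standing hypothesis $ns+q>n$), we get for any $h\in H^\infty$
$$
\|h\|^p=\sup_{a\in\B}\int_\B |h(z)|^p(1-|z|^2)^q(1-|\Phi_a(z)|^2)^{ns}\,d\lambda(z)\le \|h\|_\infty^p\, C.
$$
Applying this with $h=g-P_N$ gives $\|g-P_N\|\le C^{1/p}\|g-P_N\|_\infty\to 0$, so polynomials approximate $g=f_r$ in the $\calN(p,q,s)$-norm.

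Finally I would assemble the pieces: given $f\in\calN^0(p,q,s)$ and $\eps>0$, use the necessity part of Theorem \ref{thm01} to choose $r\in(0,1)$ with $\|f_r-f\|<\eps/2$, then choose $N$ large enough that the Taylor partial sum $P_N$ of $f_r$ satisfies $\|P_N-f_r\|<\eps/2$; then $P_N$ is a polynomial with $\|P_N-f\|\le\|P_N-f_r\|+\|f_r-f\|<\eps$. Hence the polynomials are dense in $\calN^0(p,q,s)$.

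I do not anticipate a serious obstacle here: all the analytic work has been done in Theorem \ref{thm01}, and the remaining steps are the standard ``dilate, then truncate'' argument. The only point requiring a little care is making explicit that the estimate on $\int_\B (1-|z|^2)^{q+ns-n-1}|1-\langle a,z\rangle|^{-2ns}\,dV(z)$ from the proof of Theorem \ref{thm01} yields the uniform bound $\|h\|\lesssim\|h\|_\infty$ on $H^\infty$, so that uniform convergence of the Taylor partial sums of $f_r$ upgrades to $\calN(p,q,s)$-norm convergence; this is exactly where the hypothesis $ns+q>n$ is used (beyond its role in Theorem \ref{thm01}).
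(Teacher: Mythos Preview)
Your proposal is correct and follows essentially the same approach as the paper: invoke the necessity direction of Theorem~\ref{thm01} to get $\|f_r-f\|\to 0$, then use the uniform approximation of $f_r$ by Taylor polynomials together with the estimate $\|h\|\lesssim \|h\|_\infty$ (established in the proof of Theorem~\ref{thm01} under the hypothesis $ns+q>n$) to pass from uniform convergence to $\calN(p,q,s)$-norm convergence. The paper's proof is just a more compressed version of exactly this argument.
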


\begin{proof}
By Theorem \ref{thm01}, for any $f \in \calN^0(p, q, s)$, we have
$$
\lim_{r \to 1} \|f_r-f\|=0.
$$
Since each $f_r$ can be uniformly approximated by polynomials, and moreover, by the proof of Theorem \ref{thm01}, the sup-norm  in $\B$ dominates the $\calN(p, q, s)$-norm, we conclude that every $f \in \calN^0(p, q, s)$ can be approximated in the $\calN(p, q, s)$-norm by polynomials.
\end{proof}

However, generally, it is not true that $\calN(p, q, s)$ contains all the polynomials. Precisely, we have the following result.

\begin{prop} \label{prop02}
Let $p \ge 1$ and $q, s>0$. Then the set of polynomials are contained in $\calN(p, q, s)$ if and only if $ns+q>n$.
\end{prop}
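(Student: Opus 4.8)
The plan is to prove the two implications separately, and to reduce everything to the case of monomials $z^\beta$ (equivalently, since the norm is rotation-invariant up to constants in the relevant estimates, to powers of a single coordinate $z_1^m$), because a polynomial lies in $\calN(p,q,s)$ if and only if each of its monomial components does. For the ``if'' direction — assuming $ns+q>n$ — I would simply invoke Corollary \ref{cor01} (or Proposition \ref{Npqsbigs} directly): when $ns+q>n$ the constant function $1$ lies in $\calN(p,q,s)$ since, by \cite[Theorem 1.12]{Zhu}, $\sup_{a\in\B}(1-|a|^2)^{ns}\int_\B \frac{(1-|z|^2)^{q+ns-n-1}}{|1-\langle a,z\rangle|^{2ns}}dV(z)<\infty$; and any polynomial $P$ satisfies $|P(z)|\le \|P\|_\infty$ on $\B$, so the same supremum bound gives $\|P\|^p\le \|P\|_\infty^p\cdot C<\infty$. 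In fact this last observation was already recorded inside the proof of Theorem \ref{thm01}, where it is shown that the sup-norm dominates the $\calN(p,q,s)$-norm precisely when $ns+q>n$; so the ``if'' direction is essentially free.

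For the ``only if'' direction I would argue by contraposition: suppose $ns+q\le n$, i.e. $q+ns-n-1\le -1$, and show that already the coordinate function $f(z)=z_1$ fails to lie in $\calN(p,q,s)$. The key is to test the defining supremum against points $a=re_1$ with $r\to 1$ and bound the integral from below. Writing the integral over $\B$ and restricting to a Carleson-type region near $e_1$ — say $z$ with $|z_1|\ge 1/2$ and $|z|$ close to $1$, where $|f(z)|^p=|z_1|^p\gtrsim 1$ — the integral is bounded below by a constant times
$$
(1-r^2)^{ns}\int_{E_r}\frac{(1-|z|^2)^{q+ns-n-1}}{|1-\bar r z_1|^{2ns}}\,dV(z),
$$
and on the appropriate nonisotropic ``top half'' of the Carleson box $Q(e_1,1-r)$ one has $|1-\bar r z_1|\simeq 1-r$ and $1-|z|^2\simeq 1-r$ with $V(E_r)\simeq (1-r)^{n+1}$. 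Plugging in gives a lower bound of order $(1-r)^{ns}(1-r)^{q+ns-n-1}(1-r)^{n+1}\cdot(1-r)^{-2ns}=(1-r)^{q-n}$, which does not tend to $0$ — indeed blows up — as $r\to 1$ exactly when $q-n\le 0$, i.e. $q+ns\le n+ns$, hmm; I need to be more careful and instead split off the weight differently so the residual exponent is $q+ns-n-1<0$ causing genuine divergence. The cleanest route is: integrate in the ``radial'' variable along the region where $z$ ranges over a full dyadic family of Carleson boxes shrinking to $e_1$, summing the contributions; the tail sum diverges precisely when $q+ns-n-1\le -1$. Alternatively, one can quote a known integral estimate: $\int_\B \frac{(1-|z|^2)^{c}}{|1-\langle a,z\rangle|^{b}}dV(z)$ with $c\le -1$ is $+\infty$, and localize this to the region $|z_1|\ge 1/2$ to keep the factor $|f|^p$ bounded below.

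I expect the main obstacle to be making the lower bound in the ``only if'' direction fully rigorous: one must choose the test region near the boundary point so that simultaneously (a) $|f(z)|^p\gtrsim 1$ there, (b) the kernel $|1-\langle a,z\rangle|^{-2ns}$ and the weight $(1-|z|^2)^{q+ns-n-1}$ are comparable to their boundary-box sizes, and (c) the resulting quantity genuinely diverges as $|a|\to 1$ when $q+ns-n-1\le -1$ (and not merely when $q\le n$). Getting the bookkeeping of the nonisotropic balls right — the volume of $Q(e_1,\delta)$ is $\simeq \delta^{n+1}$, while $|1-\langle re_1,z\rangle|\simeq \delta$ on it — is the delicate point; once that is in hand, the divergence of $\int_0^1 \delta^{q+ns-n-1-1}\,d\delta$ for $q+ns-n-1\le -1$ finishes it. I would also remark that this simultaneously shows $z_1\notin\calN(p,q,s)$, hence no polynomial with a nonconstant part can belong, and since constants do belong iff $ns+q>n$ as well (from the same kernel estimate), the dichotomy is sharp.
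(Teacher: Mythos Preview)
Your ``if'' direction is correct and matches the paper: any polynomial is bounded on $\B$, and once $ns+q>n$ the quantity $\sup_{a\in\B}(1-|a|^2)^{ns}\int_\B \frac{(1-|z|^2)^{q+ns-n-1}}{|1-\langle a,z\rangle|^{2ns}}\,dV(z)$ is finite by \cite[Theorem~1.12]{Zhu}, as you say.

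For the ``only if'' direction your plan is substantially more complicated than necessary, and the bookkeeping difficulty you flag is entirely self-inflicted. You test the function $z_1$ against the moving point $a=re_1$, which forces you into nonisotropic Carleson-box geometry and produces the wrong exponent $(1-r)^{q-n}$ on your first pass (as you notice). The paper's argument avoids all of this: it tests the \emph{constant} function $F\equiv 1$ against the single point $a=0$. Since $\Phi_0(z)=-z$, the M\"obius kernel collapses to $(1-|z|^2)^{ns}$ and one is left with the purely radial integral
\[
\|F\|^p \ge \int_\B (1-|z|^2)^{q+ns-n-1}\,dV(z)
\simeq \int_{1/2}^1 (1-r)^{q+ns-n-1}\,dr,
\]
which diverges precisely when $q+ns-n-1\le -1$, i.e.\ $ns+q\le n$. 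No limit $r\to 1$, no Carleson boxes, no dyadic sums.

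You actually brush against this twice --- once in your ``alternatively, quote the known integral estimate with $c\le -1$'' remark, and once in your closing parenthetical that constants belong iff $ns+q>n$ --- but you relegate both to side comments rather than recognizing that this \emph{is} the proof. There is no need to handle $z_1$ separately or to argue that ``no polynomial with a nonconstant part can belong''; showing that $1\notin\calN(p,q,s)$ already shows the set of polynomials is not contained in $\calN(p,q,s)$.
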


\begin{proof}
The sufficiency clearly follows from Theorem \ref{thm01} and Corollary \ref{cor01}. Next we prove the necessity.  Assume $ns+q \le n$ and denote $\alpha=n-ns-q, \alpha \ge 0$. We claim that in this case, the constant function $F(z)=1, z \in \B$ does not belong to $\calN(p, q, s)$. Indeed,
\begin{eqnarray*}
\|F\|^p%
&=& \sup_{a \in \B} \int_\B (1-|z|^2)^q(1-|\Phi_a(z)|^2)^{ns}d\lambda(z)\\
&=&\sup_{a \in \B} (1-|a|^2)^{ns} \int_\B \frac{(1-|z|^2)^{q+ns-n-1}}{|1-\langle a, z \rangle|^{2ns}}dV(z)\\
&\ge& \int_\B \frac{1}{(1-|z|^2)^{\alpha+1}}dV(z)\quad (\textrm{Put} \ a=0) \\
&\simeq& \int_0^1 \frac{r^{2n-1}}{(1-r^2)^{1+\alpha}}dr \gtrsim \int_{1/2}^1 \frac{1}{(1-r)^{1+\alpha}}dr\\
&=& \infty,
\end{eqnarray*}
which is a contradiction. Hence, we get the desired result.
\end{proof}


\medskip

\subsection{Description given by Green's function}


In \cite{MS} and \cite{DU}, the invariant Green's function is defined as $G(z, a)=g(\Phi_a(z))$, where
$$
g(z)=\frac{n+1}{2n} \int_{|z|}^1 (1-t^2)^{n-1}t^{-2n+1}dt.
$$
The following property of $g$ is important (see, e.g., \cite{OYZ}).

\begin{prop} \label{gproperty}
Let $n \ge 2$  be an integer. Then there are positive constants $C_1$ and $C_2$ such that for all $z \in \B \backslash \{0\}$,
$$
C_1(1-|z|^2)^n |z|^{-2(n-1)} \le g(z) \le C_2 (1-|z|^2)^n |z|^{-2(n-1)}.
$$
\end{prop}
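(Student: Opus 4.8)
The plan is to estimate the function $g(z) = \frac{n+1}{2n}\int_{|z|}^1 (1-t^2)^{n-1} t^{-2n+1}\, dt$ by understanding the behavior of the integrand and splitting the range of $|z|$ into the two regimes $|z|$ near $1$ and $|z|$ bounded away from $1$. Write $r = |z| \in (0,1)$. First I would substitute $u = t^2$, which turns the integral into a constant times $\int_{r^2}^1 (1-u)^{n-1} u^{-n}\, du$; this makes the comparison to $(1-r^2)^n r^{-2(n-1)}$ more transparent, since that target quantity is (up to constants) $(1-r^2)^n (r^2)^{-(n-1)}$.

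The core of the argument is a two-sided bound on $\int_{r^2}^1 (1-u)^{n-1} u^{-n}\, du$. For the regime $r \ge 1/2$ (equivalently $u \in [1/4, 1]$), the factor $u^{-n}$ is bounded above and below by absolute constants depending only on $n$, so the integral is comparable to $\int_{r^2}^1 (1-u)^{n-1}\, du = \frac{1}{n}(1-r^2)^n$; and on this range $r^{-2(n-1)}$ is also comparable to a constant, so $g(z) \simeq (1-r^2)^n \simeq (1-r^2)^n r^{-2(n-1)}$, as desired. For the regime $0 < r \le 1/2$, I would split $\int_{r^2}^1 = \int_{r^2}^{1/4} + \int_{1/4}^1$. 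The second piece is a fixed positive constant. For the first piece, on $u \in [r^2, 1/4]$ the factor $(1-u)^{n-1}$ is comparable to a constant, so $\int_{r^2}^{1/4}(1-u)^{n-1}u^{-n}\,du \simeq \int_{r^2}^{1/4} u^{-n}\, du$, which (for $n \ge 2$) evaluates to $\frac{1}{n-1}\big((r^2)^{-(n-1)} - 4^{n-1}\big) \simeq r^{-2(n-1)}$ since $r^{-2(n-1)} \ge 2^{2(n-1)} > 4^{n-1}$... actually $r^{-2(n-1)} \ge 4^{n-1}$ with room to spare when $r \le 1/2$, so the dominant term is $r^{-2(n-1)}$ and the whole integral is $\simeq r^{-2(n-1)}$. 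Since on $r \le 1/2$ we also have $(1-r^2)^n \simeq 1$, we conclude $g(z) \simeq r^{-2(n-1)} \simeq (1-r^2)^n r^{-2(n-1)}$, again as desired. Combining the two regimes (with the constants $C_1, C_2$ obtained by taking the worst of the finitely many comparisons) gives the claim for all $z \in \B\setminus\{0\}$.

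The main obstacle, such as it is, is purely bookkeeping: one has to be careful that the implied constants in each regime depend only on $n$, and that at the cut-point $r = 1/2$ the two families of estimates are mutually consistent so that a single pair $(C_1, C_2)$ works uniformly. The case $n \ge 2$ is used precisely to guarantee $\int u^{-n}\, du$ produces a genuine power singularity $r^{-2(n-1)}$ at the origin rather than a logarithm (which would occur at $n=1$); I would flag this explicitly. No deep input is needed beyond elementary calculus, so I expect the write-up to be short.
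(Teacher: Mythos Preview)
The paper does not supply its own proof of this proposition; it is quoted as a known estimate with a reference to \cite{OYZ}. Your direct calculus argument is correct and is the standard way to verify such an estimate, so there is nothing to compare against.

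One small wrinkle in your write-up: the intermediate claim that $\frac{1}{n-1}\big(r^{-2(n-1)} - 4^{n-1}\big) \simeq r^{-2(n-1)}$ for $r \le 1/2$ fails at the endpoint $r = 1/2$, where the left side is zero (and $2^{2(n-1)} = 4^{n-1}$, not strictly greater). This does not damage the final conclusion, because the fixed positive constant $\int_{1/4}^1 (1-u)^{n-1}u^{-n}\,du$ you already split off supplies the lower bound near $r = 1/2$, while $r^{-2(n-1)}$ is itself bounded there. In the clean version either make that absorption explicit, or shift the cut-point (e.g.\ split at $r = 1/4$) so that on the small-$r$ regime one genuinely has $r^{-2(n-1)} \ge 2\cdot 4^{n-1}$ and the subtraction is harmless. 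Your closing remark about consistency at the cut-point already anticipates this, so the fix is purely cosmetic.
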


For $p \ge 1, q, s>0$ and $n \ge 2$, we define the following $\calN_*(p, q, s)$-type space:
\[ \begin{split}
&\calN_*(p, q, s):=\\
&\left\{ f\in H(\B): \|f\|_*^p=\sup_{a \in \B} \int_\B
|f(z)|^p(1-|z|^2)^q G^s(z, a) d\lambda(z)<\infty \right\},
\end{split}
\]
where the corresponding little space is defined as
\[ \begin{split}
&\calN_*^0(p, q, s):=\\
&\left\{ f\in \calN_*(p, q, s): \lim_{|a| \to 1} \int_\B |f(z)|^p(1-|z|^2)^q G^s(z, a) d\lambda(z)=0\right\}.
\end{split}
\]
Noting that by Proposition \ref{gproperty}, it is clear that $\| \cdot\| \lesssim \| \cdot\|_*$, that is, $\calN_*(p, q, s) \subseteq \calN(p, q, s)$. Combining this fact with the proof of Proposition \ref{boundaryineq}, Theorem \ref{functionalBS} and Proposition \ref{littleNpqs}, we have the following result.\

\begin{thm}
For $p \ge 1, q,s>0$ and $n \ge 2$. $\calN_*(p, q, s)$ is a functional Banach space and  $\calN_*^0(p, q, s)$ is a closed subspace of $\calN_*(p, q, s)$. Moreover, $\calN_*(p, q, s) \subseteq A^{-\frac{q}{p}}(\B)$.
\end{thm}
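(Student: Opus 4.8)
The plan is to derive all three assertions from the corresponding results for $\calN(p,q,s)$ proved above, using only the comparison $\calN_*(p,q,s)\subseteq\calN(p,q,s)$ with $\|\cdot\|\lesssim\|\cdot\|_*$ already noted before the statement. Recall the reason: by Proposition \ref{gproperty}, since $n\ge 2$ forces $|w|^{-2(n-1)}\ge 1$ for $0<|w|<1$, one has $g(w)\gtrsim(1-|w|^2)^n$, hence $G^s(z,a)=g(\Phi_a(z))^s\gtrsim(1-|\Phi_a(z)|^2)^{ns}$ for all $z,a\in\B$, and integrating gives $\|f\|^p\lesssim\|f\|_*^p$. Note that no reverse comparison of the two norms is needed (and none is available in general), because all three conclusions are ``upper'' statements inherited downward along this inclusion, together with the internal structure of $\|\cdot\|_*$.

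Granting this inclusion, the assertion $\calN_*(p,q,s)\subseteq A^{-q/p}(\B)$ and the continuity of the point evaluations $K_z$ on $(\calN_*(p,q,s),\|\cdot\|_*)$ follow at once from Proposition \ref{boundaryineq}: for $f\in\calN_*(p,q,s)$,
\[|f(z)|\lesssim\frac{\|f\|}{(1-|z|^2)^{q/p}}\lesssim\frac{\|f\|_*}{(1-|z|^2)^{q/p}},\qquad z\in\B.\]
(Alternatively, one could rerun the chain of inequalities in the proof of Proposition \ref{boundaryineq} directly, using that by Proposition \ref{gproperty} the function $g$ is bounded below on $\{0<|w|\le 1/2\}$, so $G^s(z,a_0)\gtrsim 1$ on $D(a_0,1/2)$.) That $\|\cdot\|_*$ is a norm is checked exactly as for $\|\cdot\|$ in Theorem \ref{functionalBS}: Minkowski's inequality in $L^p\bigl((1-|z|^2)^qG^s(z,a)\,d\lambda\bigr)$ for each fixed $a$ gives the triangle inequality after taking $\sup_a$, homogeneity is clear, and $\|f\|_*=0$ forces $f\equiv 0$ by the displayed estimate.

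For completeness I would copy the argument of Theorem \ref{functionalBS}. A $\|\cdot\|_*$-Cauchy sequence $\{f_m\}$ is, by the displayed estimate, uniformly Cauchy on compact subsets of $\B$, hence converges locally uniformly to some $f\in H(\B)$; fixing $a\in\B$ and $\ell_0$ with $\|f_m-f_\ell\|_*\le 1$ for $m,\ell\ge\ell_0$, Fatou's lemma gives
\[\int_\B|f(z)-f_{\ell_0}(z)|^p(1-|z|^2)^qG^s(z,a)\,d\lambda(z)\le\liminf_{\ell\to\infty}\|f_\ell-f_{\ell_0}\|_*^p\le 1,\]
and taking $\sup_a$ shows $f-f_{\ell_0}\in\calN_*(p,q,s)$, so $f\in\calN_*(p,q,s)$; running the same estimate with $1$ replaced by an arbitrary $\varepsilon$ gives $f_m\to f$ in $\|\cdot\|_*$. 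Thus $\calN_*(p,q,s)$ is a functional Banach space. Finally, for $\calN_*^0(p,q,s)$ I would transcribe Proposition \ref{littleNpqs}: it is trivially a subspace, and if $\{f_n\}\subset\calN_*^0(p,q,s)$ is Cauchy with limit $f\in\calN_*(p,q,s)$, pick $n_0$ with $\|f-f_{n_0}\|_*^p<\varepsilon/2^p$ and $\delta\in(0,1)$ with $\sup_{\delta<|a|<1}\int_\B|f_{n_0}|^p(1-|z|^2)^qG^s(z,a)\,d\lambda<\varepsilon/2^p$; then $|x+y|^p\le 2^{p-1}(|x|^p+|y|^p)$ yields $\sup_{\delta<|a|<1}\int_\B|f|^p(1-|z|^2)^qG^s(z,a)\,d\lambda<\varepsilon$, i.e.\ $f\in\calN_*^0(p,q,s)$.

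No step is a serious obstacle; the only place requiring attention is the one-sided estimate $G^s(z,a)\gtrsim(1-|\Phi_a(z)|^2)^{ns}$ and the role of the hypothesis $n\ge 2$ there, everything else being a verbatim adaptation of the proofs of Proposition \ref{boundaryineq}, Theorem \ref{functionalBS} and Proposition \ref{littleNpqs} with $(1-|\Phi_a(z)|^2)^{ns}$ replaced by $G^s(z,a)$.
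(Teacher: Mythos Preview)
Your proposal is correct and matches the paper's own approach exactly: the paper also notes that Proposition \ref{gproperty} gives $\|\cdot\|\lesssim\|\cdot\|_*$ (hence $\calN_*(p,q,s)\subseteq\calN(p,q,s)$) and then simply says the result follows by combining this with the proofs of Proposition \ref{boundaryineq}, Theorem \ref{functionalBS}, and Proposition \ref{littleNpqs}. You have spelled out in detail precisely what the paper leaves implicit, and your handling of the one-sided estimate and the role of $n\ge 2$ is accurate.
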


However, generally, it is not true that $\calN_*(p, q, s)$ contain all the polynomials, for example, by Proposition \ref{prop02} and the fact that $\|\cdot\| \lesssim \|\cdot\|_*$, we know that when $ns+q \le n$, $F \notin \calN_*(p, q, s)$, where $F(z)=1,  z \in \B$. We are interested in the following question: when does $\calN_*(p, q ,s)$ contain the set of polynomials?  We have the following result.

\begin{prop} \label{N*poly}
Let $p \ge 1, q, s>0$ and $n \ge 2$. Then the set of polynomials are contained in $\calN_*(p, q, s)$ if and only if $ns+q>n$ and $s<\frac{n}{n-1}$.
\end{prop}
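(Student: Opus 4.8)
The plan is to reduce the claim to a single estimate for the constant function $F\equiv 1$, and then to evaluate that estimate by separating a small ball around the origin — where the invariant Green's function has its pole — from the rest of $\B$. Since $\B$ is bounded, every polynomial $P$ has $\sup_{\B}|P|<\infty$, and directly from the definition of $\|\cdot\|_*$ one gets $\|P\|_*^p\le(\sup_{\B}|P|)^p\,\|F\|_*^p$; hence the set of polynomials lies in $\calN_*(p,q,s)$ if and only if $F\in\calN_*(p,q,s)$, i.e.\ if and only if $\sup_{a\in\B}\int_\B(1-|z|^2)^q G^s(z,a)\,d\lambda(z)<\infty$. Using the M\"obius invariance of $d\lambda$ with the substitution $z=\Phi_a(w)$, the identity $G(z,a)=g(\Phi_a(z))$, and $1-|\Phi_a(w)|^2=\frac{(1-|a|^2)(1-|w|^2)}{|1-\langle a,w\rangle|^2}$, this integral equals
\[
(1-|a|^2)^q\int_\B\frac{(1-|w|^2)^{q-n-1}\,g^s(w)}{|1-\langle a,w\rangle|^{2q}}\,dV(w),
\]
and by Proposition \ref{gproperty} one may replace $g^s(w)$ by $(1-|w|^2)^{ns}|w|^{-2s(n-1)}$ up to two-sided constants, so everything comes down to the uniform boundedness in $a\in\B$ of
\[
I(a):=(1-|a|^2)^q\int_\B\frac{(1-|w|^2)^{q+ns-n-1}|w|^{-2s(n-1)}}{|1-\langle a,w\rangle|^{2q}}\,dV(w).
\]

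For sufficiency I would split $I(a)$ over $\{|w|<1/2\}$ and $\{1/2\le|w|<1\}$. On the first set $1-|w|^2\simeq1$ and $|1-\langle a,w\rangle|\simeq1$, so that piece is $\simeq(1-|a|^2)^q\int_{|w|<1/2}|w|^{-2s(n-1)}\,dV(w)$, which by polar coordinates in $\R^{2n}$ is finite exactly when $2s(n-1)<2n$, that is $s<\frac{n}{n-1}$, and is then bounded above independently of $a$. On the second set $|w|^{-2s(n-1)}\simeq1$, so that piece is $\simeq(1-|a|^2)^q\int_{1/2\le|w|<1}\frac{(1-|w|^2)^{q+ns-n-1}}{|1-\langle a,w\rangle|^{2q}}\,dV(w)$; when $ns+q>n$ the exponent $q+ns-n-1>-1$ and \cite[Theorem 1.12]{Zhu} (applied exactly as in the proof of Theorem \ref{thm01}) bounds this by a constant times $(1-|a|^2)^q\max\{1,\,(1-|a|^2)^{ns-q},\,\log\frac{1}{1-|a|^2}\}$, which is bounded on $\B$. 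Hence $s<\frac{n}{n-1}$ together with $ns+q>n$ yields $\sup_{a}I(a)<\infty$.

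For necessity: if $ns+q\le n$, then $F\notin\calN(p,q,s)$ by Proposition \ref{prop02}, so $F\notin\calN_*(p,q,s)$ since $\|\cdot\|\lesssim\|\cdot\|_*$. If $s\ge\frac{n}{n-1}$, I would simply evaluate at $a=0$: because $\Phi_0(z)=-z$ and $g$ is radial, $G(z,0)=g(z)$, so by Proposition \ref{gproperty}
\[
\|F\|_*^p\ \ge\ \int_\B(1-|z|^2)^q g^s(z)\,d\lambda(z)\ \gtrsim\ \int_{|z|<1/2}|z|^{-2s(n-1)}\,dV(z)=\infty .
\]
Combining the two directions gives the equivalence.

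The main point — there is no deep obstacle here — is that, unlike in the $\calN(p,q,s)$ case treated in Proposition \ref{prop02}, the weight $G^s(\cdot,a)$ carries the extra factor $|\Phi_a(z)|^{-2s(n-1)}$ coming from the pole of $g$ at $0$, so after the M\"obius change of variables one must isolate a neighbourhood of $w=0$; this is precisely where the new constraint $s<\frac{n}{n-1}$ appears. Keeping the estimate on that neighbourhood uniform in $a$ is the only place to be slightly careful, and it is easy since that neighbourhood is bounded away from $\partial\B$, so both $1-|w|^2$ and $|1-\langle a,w\rangle|$ are comparable to $1$ there.
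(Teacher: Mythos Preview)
Your proof is correct and follows essentially the same approach as the paper: reduce to the constant function, apply Proposition~\ref{gproperty} and the M\"obius change of variables to reach the integral $I(a)$, then split at $|w|=1/2$ and handle the two pieces exactly as the paper does. The only cosmetic difference is that for the necessity case $ns+q\le n$ you invoke Proposition~\ref{prop02} together with $\|\cdot\|\lesssim\|\cdot\|_*$, whereas the paper recomputes the divergent integral directly; both are fine.
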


\begin{proof} {\it Necessity.} We prove it by contradiction. Consider the constant function $F(z)=1, z\in \B$. By Proposition \ref{gproperty}, we have
\begin{eqnarray*}
\|F\|_*^p%
&=& \sup_{a \in \B} \int_\B (1-|z|^2)^q G^s(z, a)d\lambda(z) \ge \int_\B (1-|z|^2)^q G^{s}(z) d\lambda(z)\\
& \simeq& \int_\B (1-|z|^2)^q \frac{(1-|z|^2)^{ns}}{|z|^{2(n-1)s}} d\lambda(z)= \int_\B \frac{(1-|z|^2)^{q+ns-n-1}}{|z|^{2(n-1)s}}dV(z)\\
&=& I_1+I_2,
\end{eqnarray*}
where
$$
I_1=\int_{\B_{1/2}} \frac{(1-|z|^2)^{q+ns-n-1}}{|z|^{2(n-1)s}}dV(z)
$$
and
$$
 I_2=\int_{\B \backslash \B_{1/2}} \frac{(1-|z|^2)^{q+ns-n-1}}{|z|^{2(n-1)s}}dV(z).
$$
We consider two cases.\\
{\it Case I: $ns+q \le n$.} We have $I_2=\infty$, which is a contradiction. Indeed,
$$
I_2 \simeq \int_{\B \backslash \B_{1/2}} (1-|z|^2)^{q+ns-n-1}dV(z) \simeq \int_{1/2}^1 \frac{1}{(1-r)^{n+1-q-ns}}dr=\infty.
$$
{\it Case II: $s \ge \frac{n}{n-1}$.} We claim in this case, $I_1=\infty$, which contradicts to the assumption that $F \in \calN_*(p, q, s)$. Indeed, we have
$$
I_1 \simeq \int_{\B_{1/2}} \frac{1}{|z|^{2(n-1)s}}dV(z) \simeq \int_0^{1/2} \frac{1}{r^{2(n-1)s-2n+1}}dr=\infty.
$$

{\it Sufficiency.} Suppose $ns+q>n, s<\frac{n}{n-1}$ and $P$ is a polynomial defined on $\B$. Then we have
\begin{eqnarray*}
\|P\|_*^p%
&=& \sup_{a \in \B} \int_\B |P(z)|^p(1-|z|^2)^q G^s(z, a)d\lambda(z)\\
&\lesssim& \sup_{a \in \B} \int_\B  (1-|z|^2)^q G^s(z, a)d\lambda(z) \quad (\textrm{$P$ is bounded on} \ \B)\\
&\simeq&  \sup_{a \in \B} \int_\B (1-|z|^2)^p \frac{(1-|\Phi_a(z)|^2)^{ns}}{|\Phi_a(z)|^{2(n-1)s}}d\lambda(z) \\
&=& \sup_{a \in \B} \int_\B (1-|\Phi_a(w)|^2)^q \frac{(1-|w|^2)^{ns}}{|w|^{2(n-1)s}}d\lambda(w)\\
&& \quad (\textrm{change variable} \  w=\Phi_a(z)) \\
&=& \sup_{a \in \B} (1-|a|^2)^q \int_\B \frac{(1-|w|^2)^{q+ns-n-1}}{|w|^{2(n-1)s}|1-\langle a, w\rangle|^{2q}}dV(w). \\
\end{eqnarray*}
For each $a \in \B$, consider the term
$$
\int_\B \frac{(1-|w|^2)^{q+ns-n-1}}{|w|^{2(n-1)s}|1-\langle a, w\rangle|^{2q}}dV(w)=I_{1, a}+I_{2,a},
$$
where
$$
I_{1, a}=\int_{\B_{1/2}} \frac{(1-|w|^2)^{q+ns-n-1}}{|w|^{2(n-1)s}|1-\langle a, w\rangle|^{2q}}dV(w)
$$
and
$$
I_{2, a}=\int_{\B \backslash \B_{1/2}} \frac{(1-|w|^2)^{q+ns-n-1}}{|w|^{2(n-1)s}|1-\langle a, w\rangle|^{2q}}dV(w).
$$

For $I_{1, a}$, by the proof of necessary part, we have
$$
I_{1, a} \simeq \int_{\B_{1/2}} \frac{1}{|w|^{2(n-1)s}}dV(w)<M,
$$
for some $M>0$, which is independent of the choice of $a$. Thus, we have $\sup\limits_{a \in \B} (1-|a|^2)^q I_{1, a}<\infty$.

For $I_{2, a}$, by \cite[Theorem 1.12]{Zhu}, we have
\begin{eqnarray*}
I_{2, a}%
&\simeq& \int_{\B \backslash \B_{1/2}} \frac{(1-|w|^2)^{q+ns-n-1}}{|w|^{2(n-1)s}|1-\langle a, w\rangle|^{2q}}dV(w)\\
& \lesssim&  \int_{\B} \frac{(1-|w|^2)^{q+ns-n-1}}{|1-\langle a, w\rangle|^{2q}}dV(w)\\
&\simeq&\begin{cases}
\textrm{bounded in} \ \B, & \textrm{if} \ ns>q;\\
\log \frac{1}{1-|a|^2}, &  \textrm{if} \ ns=q;\\
(1-|a|^2)^{ns-q}, & \textrm{if}\ ns<q,
\end{cases}
\end{eqnarray*}
which implies $\sup\limits_{a \in \B} (1-|a|^2)^q I_{2, a}<\infty$.

Combing the above estimations, we see that $\|P\|_*<\infty$, which implies the desired result.
\end{proof}

The above proposition provide us a hint on describing the $\calN(p, q, s)$-spaces by using the invariant Green's function. More precisely, we have the following result.

\begin{thm} \label{relationGreen}
Let $p \ge 1, q, s>0$ and $n \ge 2$. If $s<\frac{n}{n-1}$, then $\calN(p, q, s)=\calN_*(p, q, s)$. In particular, if $1<s<\frac{n}{n-1}$, then $\calN(p, q, s)=\calN_*(p, q, s)=A^{-\frac{q}{p}}(\B)$.
\end{thm}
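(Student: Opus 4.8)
Since the lower estimate in Proposition \ref{gproperty} gives $g^s(\Phi_a(z)) \gtrsim (1-|\Phi_a(z)|^2)^{ns}$, hence $\|\cdot\| \lesssim \|\cdot\|_*$, the inclusion $\calN_*(p,q,s) \subseteq \calN(p,q,s)$ is automatic, and the plan is to prove the reverse inclusion under the hypothesis $s<\frac{n}{n-1}$, that is, to show $\|f\|_* \lesssim \|f\|$ for every $f \in \calN(p,q,s)$. Fix such an $f$ and $a \in \B$. Applying the upper estimate of Proposition \ref{gproperty} at the point $\Phi_a(z)$, one gets $G^s(z,a)=g^s(\Phi_a(z)) \lesssim (1-|\Phi_a(z)|^2)^{ns}\,|\Phi_a(z)|^{-2(n-1)s}$, so it suffices to bound
$$
J(a):=\int_\B |f(z)|^p(1-|z|^2)^q (1-|\Phi_a(z)|^2)^{ns}|\Phi_a(z)|^{-2(n-1)s}\,d\lambda(z)
$$
by a constant multiple of $\|f\|^p$, uniformly in $a$. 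I would split this integral over $D(a,1/2)=\{z:|\Phi_a(z)|<1/2\}$ and its complement.

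On $\B \setminus D(a,1/2)$ the factor $|\Phi_a(z)|^{-2(n-1)s}$ is bounded by $2^{2(n-1)s}$, so that part of $J(a)$ is $\lesssim \int_\B |f(z)|^p(1-|z|^2)^q(1-|\Phi_a(z)|^2)^{ns}\,d\lambda(z) \le \|f\|^p$, straight from the definition of the $\calN(p,q,s)$-norm. On $D(a,1/2)$ I would change variables $w=\Phi_a(z)$; since $\Phi_a\circ\Phi_a=\mathrm{id}$, the set $D(a,1/2)$ is carried onto $\B_{1/2}$, and by the M\"obius invariance of $d\lambda$,
\[
\begin{split}
&\int_{D(a,1/2)} |f(z)|^p(1-|z|^2)^q (1-|\Phi_a(z)|^2)^{ns}|\Phi_a(z)|^{-2(n-1)s}\,d\lambda(z)\\
&\qquad =\int_{\B_{1/2}} |f(\Phi_a(w))|^p(1-|\Phi_a(w)|^2)^q (1-|w|^2)^{ns}|w|^{-2(n-1)s}\,d\lambda(w).
\end{split}
\]
By Proposition \ref{boundaryineq}, $\calN(p,q,s)\subseteq A^{-\frac{q}{p}}(\B)$, so $|f(\Phi_a(w))|^p(1-|\Phi_a(w)|^2)^q \lesssim \|f\|^p$ uniformly in $a$ and $w$; since moreover $(1-|w|^2)^{ns}\simeq 1$ on $\B_{1/2}$, the last integral is
$$
\lesssim \|f\|^p \int_{\B_{1/2}} |w|^{-2(n-1)s}\,dV(w) \simeq \|f\|^p \int_0^{1/2} r^{\,2n-1-2(n-1)s}\,dr,
$$
and the radial integral converges exactly because $s<\frac{n}{n-1}$ forces $2n-1-2(n-1)s>-1$. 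Taking the supremum over $a$ yields $\|f\|_*^p \lesssim \|f\|^p$, hence $\calN(p,q,s)\subseteq\calN_*(p,q,s)$, and together with the trivial inclusion this gives $\calN(p,q,s)=\calN_*(p,q,s)$. For the last assertion, when $1<s<\frac{n}{n-1}$ Proposition \ref{Npqsbigs} already identifies $\calN(p,q,s)=A^{-\frac{q}{p}}(\B)$, so all three spaces coincide.

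I do not expect a genuine obstacle: the heart of the matter is to separate the Green-function weight $G^s(z,a)$ into its boundary factor $(1-|\Phi_a(z)|^2)^{ns}$, which is absorbed by the very definition of $\calN(p,q,s)$, and its interior singular factor $|\Phi_a(z)|^{-2(n-1)s}$, which after the automorphism change of variable becomes the fixed, $a$-independent radial weight $|w|^{-2(n-1)s}$ near the origin. The only points requiring care are that the interior contribution be estimated uniformly in $a$ --- which is precisely what the embedding $\calN(p,q,s)\subseteq A^{-\frac{q}{p}}(\B)$ provides --- and that $s<\frac{n}{n-1}$ is exactly the condition making the resulting radial integral finite, so that the hypothesis is sharp.
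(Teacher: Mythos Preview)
Your proof is correct and follows essentially the same approach as the paper's: split the Green-weighted integral into the piece near the singularity $|\Phi_a(z)|<1/2$ and its complement, handle the complement directly by the $\calN(p,q,s)$-norm, and on the singular piece change variables $w=\Phi_a(z)$, invoke the embedding $\calN(p,q,s)\subseteq A^{-q/p}(\B)$ from Proposition~\ref{boundaryineq}, and use $s<\frac{n}{n-1}$ for the convergence of the radial integral. The only cosmetic difference is that the paper changes variables first and then splits over $\B_{1/2}$ and $\B\setminus\B_{1/2}$; note also that when you pass from $d\lambda(w)$ to $dV(w)$ you are using that $(1-|w|^2)^{ns-n-1}$ (not just $(1-|w|^2)^{ns}$) is $\simeq 1$ on $\B_{1/2}$, which is of course true.
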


\begin{proof}
Clearly, $\calN_*(p, q, s) \subseteq \calN(p, q, s)$ and hence it suffices to show $\calN(p, q, s) \subseteq \calN_*(p, q, s)$ when $s<\frac{n}{n-1}$. Take $f \in \calN_*(p, q, s)$. For each $a \in \B$, we have
\begin{eqnarray*}
&&\int_\B |f(z)|^p(1-|z|^2)^q G^s(z, a)d\lambda(z)\\
&\simeq& \int_\B |f(z)|^p(1-|z|^2)^q \frac{(1-|\Phi_a(z)|^2)^{ns}}{|\Phi_a(z)|^{2(n-1)s}}d \lambda(z)\\
&= &\int_\B |f(\Phi_a(w))|^p (1-|\Phi_a(w)|^2)^q \frac{(1-|w|^2)^{ns}}{|w|^{2(n-1)s}}d\lambda(w)\\
&&  (\textrm{change variable} \  w=\Phi_a(z)) \\
& =&J_{1, a}+J_{2, a},
\end{eqnarray*}
where
$$
J_{1, a}=\int_{\B_{1/2}} |f(\Phi_a(w))|^p (1-|\Phi_a(w)|^2)^q \frac{(1-|w|^2)^{ns}}{|w|^{2(n-1)s}}d\lambda(w)
$$
and
$$
J_{2, a}=\int_{\B \backslash \B_{1/2}} |f(\Phi_a(w))|^p (1-|\Phi_a(w)|^2)^q \frac{(1-|w|^2)^{ns}}{|w|^{2(n-1)s}}d\lambda(w).
$$
For $J_{2, a}$, we have
\begin{eqnarray*}
J_{2, a}%
&\lesssim& \int_{\B \backslash \B_{1/2}} |f(\Phi_a(w))|^p (1-|\Phi_a(w)|^2)^q (1-|w|^2)^{ns}d\lambda(w)\\
&\le& \int_\B |f(\Phi_a(w))|^p (1-|\Phi_a(w)|^2)^q (1-|w|^2)^{ns}d\lambda(w)\\
&=& \int_\B |f(z)|^p(1-|z|^2)^q(1-|\Phi_a(z)|^2)^{ns} d\lambda(z) \le \|f\|^p.
\end{eqnarray*}
For $J_{1, a}$, by Proposition \ref{boundaryineq}, $s<\frac{n}{n-1}$ and the proof of Proposition \ref{N*poly}, we have
\begin{eqnarray*}
J_{1, a}%
&\le& |f|_{q/p}^p \int_{\B_{1/2}} \frac{(1-|w|^2)^{ns-n-1}}{|w|^{2(n-1)s}}dV(w) \\
&\lesssim& \|f\|^p \int_{\B_{1/2}} \frac{1}{|w|^{2(n-1)s}}dV(w) \le M \|f\|^p,
\end{eqnarray*}
for some $M>0$.

Hence, combining the estimations on $J_{1, a}$ and $J_{2, a}$, we have, for each $a \in \B$,
$$
\int_\B |f(z)|^p(1-|z|^2)^q G^s(z, a)d\lambda(z) \lesssim \|f\|^p,
$$
which implies $\|f\|_* \lesssim \|f\|$, that is, $\calN(p, q, s) \subseteq \calN_*(p, q, s)$ if $s<\frac{n}{n-1}$.

Finally, when $1<s<\frac{n}{n-1}$, the desired result follows from the above argument and Proposition \ref{Npqsbigs}.
\end{proof}

The following result is straightforward from Theorem \ref{relationGreen}.

\begin{cor}
Let $n \ge 2$ and $0<s<\frac{n}{n-1}$. Then
$$
\calN_{ns}=\calN(2, n+1, s)=\calN_*(2, n+1, s).
$$
Moreover, if $1<s<\frac{n}{n-1}$, then $A^{-\frac{n+1}{2}}=\calN(2, n+1, s)=\calN_*(2, n+1, s)$.
\end{cor}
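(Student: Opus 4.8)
The plan is simply to specialize Theorem~\ref{relationGreen} to the parameters $p=2$ and $q=n+1$, and to combine the resulting identity with the identification of $\calN_{ns}$ with $\calN(2,n+1,s)$ already recorded after the definition of the $\calN(p,q,s)$-spaces. First I would check that the hypotheses of Theorem~\ref{relationGreen} are met in our situation: here $p=2\ge 1$, $q=n+1>0$, $s>0$, $n\ge 2$, and $0<s<\frac{n}{n-1}$, so the theorem applies verbatim and gives $\calN(2,n+1,s)=\calN_*(2,n+1,s)$.

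Next I would recall why $\calN_{ns}=\calN(2,n+1,s)$. With $p=2$ and $q=n+1$ the weight appearing in the definition of the $\calN(p,q,s)$-norm collapses, since
$$
(1-|z|^2)^{q}\,d\lambda(z)=(1-|z|^2)^{n+1}\,\frac{dV(z)}{(1-|z|^2)^{n+1}}=dV(z),
$$
so that
$$
\|f\|^2=\sup_{a\in\B}\int_\B |f(z)|^2(1-|\Phi_a(z)|^2)^{ns}\,dV(z),
$$
which is precisely the quantity defining $\calN_{ns}$ (and this identification was already noted, via \cite[Theorem~2.1]{HK1}). This yields the first chain of equalities $\calN_{ns}=\calN(2,n+1,s)=\calN_*(2,n+1,s)$.

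Finally, for the ``moreover'' part, if in addition $s>1$, then the ``in particular'' clause of Theorem~\ref{relationGreen} (with $p=2$, $q=n+1$) gives $\calN(2,n+1,s)=\calN_*(2,n+1,s)=A^{-\frac{q}{p}}(\B)=A^{-\frac{n+1}{2}}(\B)$, which is the second assertion. There is essentially no obstacle here: the corollary is a direct substitution into Theorem~\ref{relationGreen}. The only minor point worth stating carefully is the matching of normalizations in the two ways of writing $\calN_{ns}$—one with $dV$ and one with $(1-|z|^2)^{n+1}\,d\lambda$—but as displayed above these are literally the same measure, so no further work is needed.
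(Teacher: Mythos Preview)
Your proposal is correct and matches the paper's approach exactly: the paper states that the corollary is ``straightforward from Theorem~\ref{relationGreen}'' and gives no further proof, and you have simply written out the specialization to $p=2$, $q=n+1$ together with the already-noted identification $\calN_{ns}=\calN(2,n+1,s)$.
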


Our next result shows that  $\calN_*(p, q, s)$ is trivial if $s \ge \frac{n}{n-1}$.

\begin{prop} \label{bigscondition}
Let $p \ge 1, q, s>0$ and $n \ge 2$. If $s \ge\frac{n}{n-1}$, then $\calN_*(p, q, s)$ only contains the zero function.
\end{prop}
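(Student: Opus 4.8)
The plan is to argue by contraposition: I will show that if $f \in H(\B)$ does not vanish identically, then $\|f\|_* = \infty$, so that $f \notin \calN_*(p,q,s)$. The mechanism is the one already seen in Case II of the proof of Proposition \ref{N*poly}, but localized at an arbitrary center: when $s \ge \frac{n}{n-1}$ the exponent $2(n-1)s$ of the singularity of $G^s(\cdot,a)$ at the center $z = a$ satisfies $2(n-1)s \ge 2n$, hence $|w|^{-2(n-1)s}$ is not integrable in real dimension $2n$, while a holomorphic $f$ with $f(a)\neq 0$ is bounded below near $a$ and so cannot absorb this singularity.

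So suppose $f(a_0) \neq 0$ for some $a_0 \in \B$. Using the Möbius invariance of $d\lambda$, the identity $G(z,a_0) = g(\Phi_{a_0}(z))$, and the change of variable $z = \Phi_{a_0}(w)$, we have
$$
\int_\B |f(z)|^p (1-|z|^2)^q G^s(z, a_0)\, d\lambda(z) = \int_\B |f(\Phi_{a_0}(w))|^p (1-|\Phi_{a_0}(w)|^2)^q\, g(w)^s\, d\lambda(w).
$$
Restricting to a ball $\{|w| < \delta\}$ with $\delta \in (0, 1/2]$, and invoking Proposition \ref{gproperty} (which applies since $n \ge 2$), on this region $g(w)^s \simeq |w|^{-2(n-1)s}$ and $d\lambda(w) \simeq dV(w)$, so the displayed quantity is bounded below by a positive multiple of
$$
\int_{|w| < \delta} |f(\Phi_{a_0}(w))|^p (1-|\Phi_{a_0}(w)|^2)^q\, \frac{dV(w)}{|w|^{2(n-1)s}}.
$$
Thus it suffices to see that this integral diverges for a suitable $\delta$.

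Since $w \mapsto f(\Phi_{a_0}(w))$ is continuous at $0$ with value $f(a_0) \neq 0$, there is $\delta \in (0, 1/2]$ with $|f(\Phi_{a_0}(w))| \ge \frac{1}{2}|f(a_0)|$ for $|w| < \delta$; moreover $\Phi_{a_0}(\{|w| \le 1/2\}) = \overline{D(a_0,1/2)}$ is a compact subset of $\B$, so $(1-|\Phi_{a_0}(w)|^2)^q$ is bounded below by a positive constant there. Hence the last integral is bounded below by a positive constant times $\int_{|w| < \delta} |w|^{-2(n-1)s}\, dV(w) \simeq \int_0^\delta r^{2n-1-2(n-1)s}\, dr$, and this diverges because $2(n-1)s \ge 2(n-1)\cdot\frac{n}{n-1} = 2n$ forces $2n-1-2(n-1)s \le -1$. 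Consequently $\|f\|_* \ge \int_\B |f(z)|^p(1-|z|^2)^q G^s(z,a_0)\, d\lambda(z) = \infty$, contradicting $f \in \calN_*(p,q,s)$; therefore every $f \in \calN_*(p,q,s)$ vanishes at every point of $\B$, i.e. $\calN_*(p,q,s) = \{0\}$. The only step needing care is the passage from divergence of the pure radial integral to divergence of the weighted one, handled by the elementary lower bounds on $|f\circ\Phi_{a_0}|$ and on $(1-|\Phi_{a_0}|^2)^q$ over the fixed relatively compact set $D(a_0,1/2)$; I do not anticipate any real obstacle.
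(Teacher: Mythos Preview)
Your proof is correct and follows essentially the same approach as the paper: pick a point $a_0$ where $f$ does not vanish, change variables via $z=\Phi_{a_0}(w)$ to center the Green singularity at the origin, invoke Proposition~\ref{gproperty} to replace $g(w)^s$ by $|w|^{-2(n-1)s}$ near $0$, bound $|f\circ\Phi_{a_0}|$ and $(1-|\Phi_{a_0}|^2)^q$ from below by continuity on a small ball, and conclude from the divergence of $\int_0^\delta r^{2n-1-2(n-1)s}\,dr$ when $s\ge \frac{n}{n-1}$. The only cosmetic slip is that the final displayed inequality should read $\|f\|_*^p \ge \int_\B |f|^p(1-|z|^2)^q G^s(z,a_0)\,d\lambda$ rather than $\|f\|_*$, and the phrase ``contradicting $f\in\calN_*(p,q,s)$'' should be ``so $f\notin\calN_*(p,q,s)$'' since you framed the argument as contraposition rather than contradiction.
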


\begin{proof}
We prove it by contradiction. Assume there exists a $a_0 \in \B$, such that $|f(a_0)| \ge \del>0$. Then there exists a $r>0$, such that
$$
|f(\Phi_a(w))| \ge \frac{\del}{2}, \quad |w|<r.
$$
 Therefore we have
\begin{eqnarray*}
\|f\|_*^p%
&=&  \sup_{a \in \B} \int_\B |f(z)|^p (1-|z|^2)^q G^s(z, a) d\lambda(z)\\
& \ge& \int_\B |f(z)|^p(1-|z|^2)^q G^s(z, a_0) d\lambda(z) \\
& \gtrsim& \int_\B |f(z)|^p(1-|z|^2)^q \frac{(1-|\Phi_{a_0}(z)|^2)^{ns}}{|\Phi_{a_0}(z)|^{2(n-1)s}}d\lambda(z)\\
&=& \int_\B |f(\Phi_{a_0}(w)|^p(1-|\Phi_{a_0}(w)|^2)^q \frac{(1-|w|^2)^{ns-n-1}}{|w|^{2(n-1)s}}dV(w)\\
&& (\textrm {change variable} \ z=\Phi_a(w))\\
&\ge& \int_{|w|<r} |f(\Phi_{a_0}(w)|^p(1-|\Phi_{a_0}(w)|^2)^q \frac{(1-|w|^2)^{ns-n-1}}{|w|^{2(n-1)s}}dV(w)\\
&\ge& \left(\frac{\del}{2}\right)^p (1-|a_0|^2)^q \int_{|w|<r} \frac{(1-|w|^2)^{ns+q-n-1}}{|w|^{2(n-1)s} |1-\langle a, w \rangle|^{2q}}dV(w) \\
& \gtrsim& \int_{|w|<r} \frac{1}{|w|^{2(n-1)s}}dV(w) \gtrsim \int_0^r \frac{1}{t^{2(n-1)s-2n+1}}dt=\infty,
\end{eqnarray*}
which is a contradiction.
\end{proof}

\begin{rem}
By Theorem \ref{relationGreen} and Proposition \ref{bigscondition}, it is clear that $\calN_*(p, q, s)$-type space is a special case of $\calN(p, q, s)$-type space. Hence, in the sequel, we will focus our interest on $\calN(p, q, s)$-type spaces.
\end{rem}


\bigskip

\section{Hadamard gaps in $\calN(p, q, s)$-type spaces}


A holomorphic function $f$ on $\B$ written in the form
$$
f(z)=\sum_{k=0}^\infty P_{n_k}(z),
$$
where $P_{n_k}$ is a homogeneous polynomial of degree $n_k$, is said to have \emph{Hadamard gaps} if  for some $c>1$ (see, e.g., \cite{SS}),
$$
n_{k+1}/n_k \ge c, \forall k \ge 0.
$$

Given a Hadamard gap series, we are interested in the following question: for $p \ge 1$ and $q, s>0$, when does this Hadamard gap series belongs to the $\calN(p, q, s)$ spaces?

Observing that a constant function has Hadamard gaps, and hence by Proposition \ref{prop02} or Proposition \ref{N*poly}, we always assume that the condition $ns+q>n$ holds, that is, $s>1-\frac{q}{n}$. Moreover, note that  by Proposition \ref{Npqsbigs}, if $s>1$, then $\calN(p, q, s)=A^{-\frac{q}{p}}(\B)$ and for this case, it was already studied by the authors in \cite[Theorem 2.5]{HL}. Hence, we also assume that $s \le 1$ in this section.

To formulate our main result in this section, we denote
$$
M_k=\sup_{\xi \in \SSS} |P_{n_k}(\xi)| \quad \textrm{and} \quad L_{k, p}=\left( \int_\SSS |P_{n_k}(\xi)|^pd\sigma(\xi) \right)^{1/p}, \ p \ge 1,
$$
where $d\sigma$ is the normalized surface measure on $\SSS$, that is, $\sigma(\SSS)=1$. Clearly for each $k \ge 0$ and $p \ge 1$, $M_k$ and $L_{k, p}$ are well-defined.

We have the following result.

\begin{thm} \label{HadamardNpqs}
Let $p \ge 1, q>0$ and $\max\left\{0, 1-\frac{q}{n}\right\}<s \le 1$ and $f(z)=\sum_{k=0}^\infty P_{n_k}(z)$ with Hadamard gaps. Consider the following statements
\begin{enumerate}
\item[(a)] $\sum\limits_{k=0}^\infty \frac{1}{2^{k(ns+q-n)}}\Big( \sum\limits_{2^k \le n_j<2^{k+1}} M_j^p\Big)<\infty$;
\item[(b)] $f \in \calN^0(p, q, s)$;
\item[(c)] $f \in \calN(p, q, s)$;
\item[(d)] $\sum\limits_{k=0}^\infty \frac{1}{2^{k(ns+q-n)}}\Big( \sum\limits_{2^k \le n_j<2^{k+1}} L_{j, p}^p\Big)<\infty$.
\end{enumerate}
We have $(a) \Longrightarrow (b) \Longrightarrow (c) \Longrightarrow (d)$.
\end{thm}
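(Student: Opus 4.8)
The plan is to prove the chain of implications $(a) \Rightarrow (b) \Rightarrow (c) \Rightarrow (d)$ separately, with the middle implication being trivial since $\calN^0(p,q,s) \subseteq \calN(p,q,s)$ by definition. The two substantive implications are $(a) \Rightarrow (b)$ and $(c) \Rightarrow (d)$, and both should be reduced to estimating the integral $\int_\B |f(z)|^p (1-|z|^2)^q (1-|\Phi_a(z)|^2)^{ns} d\lambda(z)$ by a dyadic decomposition of $\B$ into annuli $A_k = \{z : 1 - 2^{-k} \le |z| < 1 - 2^{-k-1}\}$ (so that $1 - |z|^2 \simeq 2^{-k}$ on $A_k$) together with the standard growth/decay estimates for Hadamard gap series. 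First I would record the two classical facts I expect to need: for a Hadamard gap series, on the sphere of radius $\rho$ one has the two-sided control $\int_\SSS |f(\rho\xi)|^p d\sigma(\xi) \simeq \sum_k L_{k,p}^p \rho^{p n_k}$ (upper bound needs $p\ge1$ and Minkowski; the relevant sharp form here is actually the sum over dyadic blocks), and the pointwise bound $\sup_{|z|=\rho} |f(z)| \lesssim \sum_k M_k \rho^{n_k}$, whose dyadic-block version gives $\lesssim \sum_m \big(\sum_{2^m \le n_j < 2^{m+1}} M_j\big)\rho^{2^m}$. I would also need that $\int_\B \frac{(1-|z|^2)^{q+ns-n-1}}{|1-\langle a,z\rangle|^{2ns}} dV(z) \simeq (1-|a|^2)^{q-ns}$ (using $ns+q > n$ from the hypothesis $s > 1 - q/n$, and $s \le 1$ so that $ns \le n < ns + q$, hence $ns < q$; wait — more carefully, one needs the estimate from \cite[Theorem 1.12]{Zhu} in whichever of its three cases applies), so that the weight $(1-|\Phi_a(z)|^2)^{ns} d\lambda(z)$ behaves, after integration against a radial function, essentially like $(1-|a|^2)^{ns}$ times that kernel integral.

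For $(a) \Rightarrow (b)$: I would show that $(a)$ forces the partial sums (or the dilates $f_r$) to converge, but more directly I would estimate $\int_\B |f(z)|^p (1-|z|^2)^q (1-|\Phi_a(z)|^2)^{ns} d\lambda(z)$ uniformly in $a$ and show it tends to $0$ as $|a|\to 1$. Using subharmonicity or the submean-value property one reduces the integrand's $|f(z)|^p$ to an average over spheres; after integrating in the radial variable one is left with controlling, for each dyadic annulus $A_k$, the quantity $2^{-k(ns+q-n)} \big(\sum_{2^k \le n_j < 2^{k+1}} M_j^p\big)$ times a factor $\lesssim 1$ coming from $\sup_a (1-|a|^2)^{ns}\int_\B |1-\langle a,z\rangle|^{-2ns}(1-|z|^2)^{ns+q-n-1}dV$ restricted to $A_k$. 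Summing over $k$ gives finiteness by $(a)$; to get membership in the \emph{little} space one splits the sum into a finite head (which $\to 0$ as $|a|\to1$ because each annulus contributes $\int_\B (1-|\Phi_a(z)|^2)^{ns}(\cdots)d\lambda \to 0$ by the computation already used in Theorem \ref{thm01}) and a tail made small by $(a)$.

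For $(c) \Rightarrow (d)$: here I would test the defining supremum on the specific points $a$ lying on a ray, or rather use the lower bound coming from restricting the integral to a region $D(a,R)$ and changing variables $z = \Phi_a(w)$, exactly as in the proof of Proposition \ref{boundaryineq}, to get $\|f\|^p \gtrsim (1-|a|^2)^q \int_{\B_{1/2}} |f(\Phi_a(w))|^p dV(w)$; but to bring in the $L_{j,p}$ I would instead integrate $|f|^p$ against the weight over a full dyadic annulus and use the lower bound $\int_\SSS |f(\rho\xi)|^p d\sigma \gtrsim \sum_{2^m \le n_j < 2^{m+1}} L_{j,p}^p \rho^{2^{m+1}p}$ valid for $p \ge 1$ (Hadamard gap lower bound, e.g. via Khinchine-type or direct orthogonality-type arguments on the sphere). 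Choosing $a = 0$ already gives $\|f\|^p \gtrsim \int_\B |f(z)|^p (1-|z|^2)^{q+ns-n-1} dV(z) \simeq \sum_m 2^{-m(ns+q-n)}\big(\sum_{2^m \le n_j < 2^{m+1}} L_{j,p}^p\big)$ after the dyadic decomposition and radial integration, which is exactly $(d)$. The main obstacle I anticipate is the sphere-integral lower bound for Hadamard gap series when $p \ne 2$: one does not have orthogonality, so establishing $\int_\SSS |f(\rho\xi)|^p d\sigma(\xi) \gtrsim \sum_m (\sum_{2^m \le n_j<2^{m+1}} L_{j,p}^p) \rho^{c 2^m}$ requires the sharp version of the Hadamard-gap $L^p$ equivalence on $\SSS$ (a known but somewhat delicate estimate, cf. the literature on lacunary series in several variables); getting the dyadic blocking right — both that $n_{k+1}/n_k \ge c$ lets one group exponents into dyadic blocks with comparably many terms, and that the block sums are what appear in $(a)$ and $(d)$ — is the part that needs genuine care rather than routine computation.
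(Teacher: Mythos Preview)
Your proposal is correct and follows essentially the same route as the paper: for $(a)\Rightarrow(b)$ the paper also reduces to polar coordinates, bounds $|P_{n_k}(r\xi)|\le M_k r^{n_k}$, controls the spherical integral $\int_\SSS |1-\langle a,r\xi\rangle|^{-2ns}d\sigma(\xi)$ uniformly in $a$ via \cite[Theorem 1.12]{Zhu}, and then converts the remaining radial integral $\int_0^1(\sum_k M_k r^{n_k})^p(1-r^2)^{ns+q-n-1}dr$ into the dyadic block sum by citing \cite[Theorem 1]{MM} (Mateljevi\'c--Pavlovi\'c) rather than writing out the annulus decomposition explicitly; for $(c)\Rightarrow(d)$ the paper likewise sets $a=0$ and invokes the lacunary $L^p$ lower bound on the sphere (citing \cite[Theorem 1]{SS}), exactly the ``delicate estimate'' you flagged. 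The only cosmetic difference is that for the little-space membership the paper shows $\|f_m-f\|\to 0$ for the partial sums and appeals to closedness of $\calN^0(p,q,s)$, whereas you propose a direct head/tail split of the integral as $|a|\to 1$; both arguments work.
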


\begin{proof} $   (a) \Longrightarrow (b).$  Suppose that $(a)$ holds. First, we prove that $f \in \calN(p, q, s)$. For $f(z)=\sum_{k=0}^\infty P_{n_k}(z)$, using the polar coordinates and \cite[Lemma 1.8]{Zhu}, we have
\begin{eqnarray*}
&&\|f\|^p= \sup_{a \in \B} \int_\B \Big|\sum_{k=0}^\infty P_{n_k}(z)\Big|^p(1-|z|^2)^q(1-|\Phi_a(z)|^2)^{ns}d\lambda(z)\\
&\le &\sup_{a \in \B} \int_\B \Big(\sum_{k=0}^\infty |P_{n_k}(z)|\Big)^p(1-|z|^2)^q(1-|\Phi_a(z)|^2)^{ns}d\lambda(z)\\
&= &\sup_{a \in \B}  \int_\B \Big (\sum_{k=0}^\infty |P_{n_k}(z)|\Big)^p \frac{(1-|a|^2)^{ns}(1-|z|^2)^{ns+q-n-1}}{|1-\langle a, z \rangle|^{2ns}}dV(z) \\
&\lesssim& \sup_{a \in \B}   \int_0^1 (1-|r|^2)^{ns+q-n-1}  \int_\SSS  \frac{(1-|a|^2)^{ns}\Big( \sum\limits_{n=0}^\infty |P_{n_k}(r\xi)|\Big)^p}{|1-\langle a, r\xi \rangle|^{2ns}}d\sigma(\xi)  dr .
\end{eqnarray*}
Since for each $k \in \N$, $P_{n_k}$ is homogeneous, we have for each $\xi \in \SSS$,
\begin{equation} \label{Hadamard01}
\sum_{k=0}^\infty |P_{n_k}(r\xi)|=\sum_{k=0}^\infty |P_{n_k}(\xi)r^{n_k}| \le \sum_{k=0}^\infty M_{n_k}r^{n_k}.
\end{equation}
Moreover, for each $a \in \B$, by \cite[Theorem 1.12]{Zhu}, we have
\begin{eqnarray*}
 &&\int_\SSS \frac{1}{|1-\langle r\xi, a \rangle|^{2ns}}d\sigma(\xi)=\int_\SSS \frac{1}{|1-\langle \xi, ra \rangle|^{2ns}}d\sigma(\xi)\\
&\simeq&\begin{cases}
\textrm{bounded in} \ \B, & \textrm{if} \ s<\frac{1}{2};\\
\log \frac{1}{1-r^2|a|^2} \le \log \frac{1}{1-|a|^2}, &  \textrm{if} \ s=\frac{1}{2};\\
(1-r^2|a|^2)^{n-2ns} \le(1-|a|^2)^{n-2ns}, & \textrm{if}\ \frac{1}{2}<s<1,
\end{cases}
\end{eqnarray*}
which implies, there exists a positive constant $C$ such that
\begin{equation} \label{Hadamard02}
\sup_{a \in \B}  (1-|a|^2)^{ns} \int_\SSS \frac{1}{|1-\langle r\xi, a \rangle|^{2ns}}dV(z)  \le C.
\end{equation}
Thus, by \eqref{Hadamard01}, \eqref{Hadamard02} and \cite[Theorem 1]{MM}, we have
\begin{eqnarray*}
\|f\|^p%
&\lesssim& \int_0^1  \Big(\sum_{k=0}^\infty M_{n_k}r^{n_k}\Big)^p(1-|r|^2)^{ns+q-n-1}dr\\
&\simeq& \sum_{k=0}^\infty \frac{1}{2^{k(ns+q-n)}} \Big(\sum_{2^k \le n_j<2^{k+1}} M_j \Big)^p.
\end{eqnarray*}
Since $f$ is in the Hadamard gaps class, there exists a constant $c>1$ such that $n_{j+1} \le cn_j$ for all $j \ge 0$. Hence, the maximum number of $n_j$'s between $2^k$ and $2^{k+1}$ is less or equal to $[\log_c 2]+1$ for $k=0, 1, 2, \dots$.

Since for every $k \ge 0$, by H\"older inequality,
$$
\Big( \sum_{2^k \le n_j<2^{k+1}} M_j \Big)^p \le \left( [\log_c 2]+1\right)^{p-1} \Big( \sum_{2^k \le n_j<2^{k+1}} M_j^p\Big),
$$
Thus, we have
\begin{equation} \label{Hadamard03}
\|f\|^p \lesssim \sum_{k=0}^\infty  \frac{1}{2^{k(ns+q-n)}}\Big( \sum_{2^k \le n_j<2^{k+1}} M_j^p\Big)<\infty,
\end{equation}
which implies $f \in \calN(p, q, s)$.

Next, we prove that $f \in \calN(p, q, s)$. Put $$f_m(z)=\sum_{k=0}^m P_{n_k}(z), m \in \N,$$
  which is bounded in $\overline{\B}$. Thus, by the proof of Theorem \ref{thm01}, we know that for each $m \in \N, f_m \in \calN^0(p, q, s)$. Moreover, by Corollary \ref{cor01}, $\calN^0(p, q, s)$ is closed and the set of all polynomials is dense in $\calN^0(p, q, s)$, and hence it suffices to show that $\|f_m-f\| \to 0$ as $ \to \infty$. By \eqref{Hadamard03}, we have
\begin{equation} \label{Hadamard04}
\|f_m-f\|^p \lesssim \sum_{k=m'}^\infty \Big( \frac{1}{2^{k(ns+q-n)}} \sum_{2^k \le n_j<2^{k+1}}M_j^p\Big),
\end{equation}
where $m'=\left[ \frac{m+1}{[\log_c 2]+1}\right]$. The result follows from condition $(a)$ and \eqref{Hadamard04}.

\medskip

 $   (b) \Longrightarrow (c)$.  It is obvious.

\medskip

 $    (c) \Longrightarrow (d)$.  Suppose $f \in \calN(p, q, s)$. As the proof in \cite[Theorem 1]{SS}, we have
\begin{eqnarray*}
\|f\|^p%
&=&\sup_{a \in \B} \int_\B \left| \sum_{k=0}^\infty P_{n_k}(z)\right|^p(1-|z|^2)^q(1-|\Phi_a(z)|^2)^{ns}d\lambda(z)\\
&\ge& \int_\B  \left| \sum_{k=0}^\infty P_{n_k}(z)\right|^p(1-|z|^2)^{q+ns-n-1}dV(z)\\
&\simeq& \int_\SSS \Big(\sum_{k=0}^\infty \frac{1}{2^{k(ns+q-n)}} \sum_{2^k \le n_j<2^{k+1}} |P_{n_k}(\xi)|^p\Big)d\sigma(\xi) \\
&=&\sum_{k=0}^\infty \frac{1}{2^{k(ns+q-n)}}\Big( \sum_{2^k \le n_j<2^{k+1}} L_j^p\Big),
\end{eqnarray*}
which implies the desired result.
\end{proof}

Letting $q=n+1, p=2$ and $0<s<1$ in Theorem \ref{HadamardNpqs}, we get \cite[Theorem 2.1]{HL} as a particular case. Generally, condition $(d)$ does not imply condition $(a)$, an example can be found in \cite[Remark 2.2]{HL}.

Next we consider some special cases when all the conditions in Theorem \ref{HadamardNpqs} are equivalent.

In \cite[Corollary 1]{DU2}, for $p>0$, the authors constructed a sequence of homogeneous polynomials $\{W_k\}_{k \in \N}$ satisfying $\deg(W_k)=k$,
$$
\sup_{\xi \in \SSS} |W_k(\xi)|=1 \quad \textrm{and} \quad \int_\SSS |W_k(\xi)|^p d\sigma(\xi) \ge C(p, n),
$$
where $C(p, n)$ is a positive constant depending on $p$ and $n$.

An immediate corollary of Theorem \ref{HadamardNpqs} is stated as follows.

\begin{cor} \label{RWsequence}
Let $p \ge 1$, $q>0$ and $\max\left\{0, 1-\frac{q}{n} \right\}<s \le 1$ and $f(z)=\sum_{k=0}^\infty a_kW_{n_k}(z)$ with Hadamard gaps, where $a_k \in \C, k \ge 0$. Then the following statements are equivalent.
\begin{enumerate}
\item[(a)] $\sum\limits_{k=0}^\infty \frac{1}{2^{k(ns+q-n)}}\Big( \sum\limits_{2^k \le n_j<2^{k+1}} |a_j|^p\Big)<\infty$;
\item[(b)] $f \in \calN^0(p, q, s)$;
\item[(c)] $f \in \calN(p, q, s)$.
\end{enumerate}
\end{cor}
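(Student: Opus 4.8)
The plan is to derive Corollary \ref{RWsequence} directly from Theorem \ref{HadamardNpqs} by exploiting the special properties of the polynomials $W_{n_k}$. Writing $P_{n_k}(z) = a_k W_{n_k}(z)$, the homogeneity degree is $\deg(P_{n_k}) = n_k$, and the quantities $M_k$ and $L_{k,p}$ appearing in Theorem \ref{HadamardNpqs} become
\begin{eqnarray*}
M_k &=& \sup_{\xi \in \SSS} |a_k W_{n_k}(\xi)| = |a_k| \sup_{\xi \in \SSS} |W_{n_k}(\xi)| = |a_k|,\\
L_{k,p}^p &=& \int_\SSS |a_k W_{n_k}(\xi)|^p \, d\sigma(\xi) = |a_k|^p \int_\SSS |W_{n_k}(\xi)|^p \, d\sigma(\xi).
\end{eqnarray*}
So $M_k = |a_k|$ exactly, while $L_{k,p}^p$ is comparable to $|a_k|^p$: the upper bound $L_{k,p}^p \le |a_k|^p$ is immediate since $\sigma(\SSS)=1$ and $|W_{n_k}| \le 1$ on $\SSS$, and the lower bound $L_{k,p}^p \ge C(p,n) |a_k|^p$ is precisely the content of the cited construction in \cite[Corollary 1]{DU2}. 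Hence $M_k^p \simeq L_{k,p}^p \simeq |a_k|^p$ with constants depending only on $p$ and $n$.

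Next I would substitute these equivalences into the four conditions of Theorem \ref{HadamardNpqs}. Condition (a) of Theorem \ref{HadamardNpqs} reads $\sum_k 2^{-k(ns+q-n)} \big( \sum_{2^k \le n_j < 2^{k+1}} M_j^p \big) < \infty$, which in view of $M_j^p = |a_j|^p$ is literally condition (a) of the corollary. Condition (d) of Theorem \ref{HadamardNpqs}, namely $\sum_k 2^{-k(ns+q-n)} \big( \sum_{2^k \le n_j < 2^{k+1}} L_{j,p}^p \big) < \infty$, is equivalent to the same series with $L_{j,p}^p$ replaced by $|a_j|^p$ because of the two-sided comparison $L_{j,p}^p \simeq |a_j|^p$ (the comparison constants factor out of the finite inner sums and the outer sum alike, so convergence is unaffected). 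Therefore condition (d) of Theorem \ref{HadamardNpqs} is also equivalent to condition (a) of the corollary. Conditions (b) and (c) are verbatim the same in both statements.

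Finally, I would assemble the implications. Theorem \ref{HadamardNpqs} gives $(a) \Rightarrow (b) \Rightarrow (c) \Rightarrow (d)$ for the statements phrased in terms of $M_j$ and $L_{j,p}$; translating through the equivalences above, this becomes $(a') \Rightarrow (b') \Rightarrow (c') \Rightarrow (d')$ where $(a')$ and $(d')$ are both the single condition $\sum_k 2^{-k(ns+q-n)} \big( \sum_{2^k \le n_j < 2^{k+1}} |a_j|^p \big) < \infty$ and $(b'), (c')$ are conditions (b), (c) of the corollary. Since the first and last link of the chain now coincide, the chain closes into a cycle of equivalences, giving $(a) \Leftrightarrow (b) \Leftrightarrow (c)$ as claimed. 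There is no real obstacle here; the only point requiring care is to invoke the correct direction of the inequalities from \cite[Corollary 1]{DU2} — the sup-norm normalization $\sup_\SSS |W_{n_k}| = 1$ to identify $M_k$ with $|a_k|$, and the lower $L^p$-bound $\int_\SSS |W_{n_k}|^p d\sigma \ge C(p,n)$ to upgrade the a priori one-sided estimate $L_{k,p} \le M_k$ into the two-sided comparison that makes (d) as strong as (a).
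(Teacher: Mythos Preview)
Your proof is correct and takes essentially the same approach as the paper: the paper's proof is the single sentence ``The desired result follows from the fact that for each $k \ge 0$, $M_k \simeq L_{k,p}$,'' and you have simply spelled out why this comparison holds (via $\sup_\SSS |W_{n_k}| = 1$ and $\int_\SSS |W_{n_k}|^p d\sigma \ge C(p,n)$) and how it collapses the chain $(a)\Rightarrow(b)\Rightarrow(c)\Rightarrow(d)$ of Theorem~\ref{HadamardNpqs} into a cycle.
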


\begin{proof}
The desired result follows form the fact that for each $k \ge 0, M_k \simeq L_{k, p}$.
\end{proof}

Let $p \ge 1, q>0$ and $\max\left\{0, 1-\frac{q}{n}\right\}<s_1<s_2 \le 1$. It is clear that
\begin{equation} \label{embeddingrelation}
\calN(p, q, s_1) \subseteq \calN(p, q, s_2) \subseteq A^{-\frac{q}{p}}(\B).
\end{equation}
The second application of our main result in this section is to show that the inclusions in \eqref{embeddingrelation} is strict.

\begin{cor} \label{Hadamard10}
Let $p \ge 1, q>0$ and $\max\left\{0, 1-\frac{q}{n}\right\}<s_1<s_2 \le 1$. Then
$$
\calN(p, q, s_1) \subsetneq \calN(p, q, s_2) \subsetneq A^{-\frac{q}{p}}(\B).
$$
\end{cor}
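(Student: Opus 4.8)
The plan is to exhibit, for each of the two inclusions, an explicit Hadamard gap series that lies in the larger space but not the smaller one, using Corollary \ref{RWsequence} as the bookkeeping device. Throughout I will work with the lacunary sequence $n_k=2^k$ and the Ryll--Wojtaszczyk polynomials $W_{n_k}$ from \cite{DU2}, so that for a series $f(z)=\sum_{k=0}^\infty a_k W_{n_k}(z)$ membership in $\calN(p,q,s)$ (or its little version) is governed precisely by convergence of $\sum_{k=0}^\infty 2^{-k(ns+q-n)}|a_k|^p$, since with $n_k=2^k$ exactly one $n_j$ falls in $[2^k,2^{k+1})$.

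For the second (strict) inclusion $\calN(p,q,s_2)\subsetneq A^{-q/p}(\B)$, I would choose coefficients $a_k$ so that $\sum_k 2^{-k(ns_2+q-n)}|a_k|^p=\infty$ while the resulting function still satisfies a growth bound $|f(z)|\lesssim (1-|z|^2)^{-q/p}$. The natural choice is to make the series barely divergent: take $|a_k|^p = 2^{k(ns_2+q-n)} / (k+1)$, i.e. $a_k = 2^{k(ns_2+q-n)/p}(k+1)^{-1/p}$. Then $(c)$ of Corollary \ref{RWsequence} fails for $s_2$, so $f\notin\calN(p,q,s_2)$. To see $f\in A^{-q/p}(\B)$, I estimate $|f(z)|\le\sum_k|a_k|\,|W_{n_k}(z)|\le\sum_k|a_k||z|^{2^k}$ using $\sup_\SSS|W_{n_k}|=1$ and homogeneity; with $|z|=r$ and $|a_k|\simeq 2^{k(ns_2+q-n)/p}(k+1)^{-1/p}$, a standard comparison of the lacunary sum $\sum_k 2^{k(ns_2+q-n)/p}r^{2^k}$ with the integral / with known growth rates of lacunary series (e.g. the estimate used via \cite{MM} in the proof of Theorem \ref{HadamardNpqs}) gives $\sum_k|a_k|r^{2^k}\lesssim (1-r)^{-(ns_2+q-n)/p}$. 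Since $s_2\le 1$ we have $(ns_2+q-n)/p\le q/p$, so $|f(z)|\lesssim(1-|z|^2)^{-q/p}$ and $f\in A^{-q/p}(\B)$, which separates the two spaces.

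For the first (strict) inclusion $\calN(p,q,s_1)\subsetneq\calN(p,q,s_2)$, the same template applies at the exponent $s_1$: pick $a_k$ so that $\sum_k 2^{-k(ns_1+q-n)}|a_k|^p$ diverges but $\sum_k 2^{-k(ns_2+q-n)}|a_k|^p$ converges. Because $s_1<s_2$ the gap $n(s_2-s_1)>0$ in the two exponents gives us room: take $|a_k|^p=2^{k(ns_1+q-n)}$, i.e. $a_k=2^{k(ns_1+q-n)/p}$. Then $\sum_k 2^{-k(ns_1+q-n)}|a_k|^p=\sum_k 1=\infty$ so $f\notin\calN(p,q,s_1)$ by Corollary \ref{RWsequence}, while $\sum_k 2^{-k(ns_2+q-n)}|a_k|^p=\sum_k 2^{-kn(s_2-s_1)}<\infty$ so $f\in\calN^0(p,q,s_2)\subseteq\calN(p,q,s_2)$. (One should check the hypothesis $ns_1+q-n>0$, i.e. $s_1>1-q/n$, which is part of the assumption $\max\{0,1-q/n\}<s_1$, so the exponent $ns_1+q-n$ is positive and the series is genuinely lacunary with positive growth rate.)

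The only real obstacle is the growth estimate $\sum_k 2^{k\gamma}r^{2^k}\simeq (1-r)^{-\gamma}$ for $\gamma>0$ as $r\to 1^-$, needed in the second part; but this is exactly the type of lacunary estimate already invoked (via \cite{MM} and \cite[Lemma 1.8]{Zhu}) inside the proof of Theorem \ref{HadamardNpqs}, so it can be quoted rather than reproved. Everything else is a direct application of Corollary \ref{RWsequence} together with the trivial containment \eqref{embeddingrelation}, so once the two witnesses are in hand the proof is complete. I would present the argument as: (i) recall \eqref{embeddingrelation} gives the non-strict inclusions; (ii) construct the witness for $\calN(p,q,s_1)\subsetneq\calN(p,q,s_2)$ as above; (iii) construct the witness for $\calN(p,q,s_2)\subsetneq A^{-q/p}(\B)$ as above, invoking the lacunary growth estimate; conclude.
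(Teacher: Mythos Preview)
Your proof is correct and uses essentially the same approach as the paper: both construct Hadamard gap series from the Ryll--Wojtaszczyk polynomials and appeal to Corollary \ref{RWsequence}. Your witness for $\calN(p,q,s_1)\subsetneq\calN(p,q,s_2)$ coincides exactly with the paper's choice $a_k=2^{k(ns_1+q-n)/p}$. For $\calN(p,q,s_2)\subsetneq A^{-q/p}(\B)$ the paper takes the simpler witness $a_k=2^{kq/p}$ and invokes \cite[Theorem 2.5]{HL} directly for membership in $A^{-q/p}(\B)$, which avoids the lacunary growth estimate you flag as the one obstacle; note also that your $(k+1)^{-1/p}$ damping factor is unnecessary, since already $\sum_k 2^{-k(ns_2+q-n)}\cdot 2^{kq}=\sum_k 2^{kn(1-s_2)}=\infty$ whenever $s_2\le 1$.
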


\begin{proof} First we prove that $\calN(p, q, s_2) \subsetneq A^{-\frac{q}{p}}(\B).
$ Consider the series $f_1(z)=\sum\limits_{k=0}^\infty 2^{\frac{kq}{p}}W_{2^k}(z)$. On one hand, by \cite[Theorem 2.5]{HL}, $f_1 \in A^{-\frac{q}{p}}$. On the other hand,
$$
\sum_{k=0}^\infty \frac{1}{2^{k(ns_2+q-n)}} \Big(\sum_{2^k \le n_j<2^{k+1}} \left|2^{\frac{kq}{p}}\right|^p\Big)=\sum_{k=0}^\infty \frac{1}{2^{k(ns_2-n)}}=\infty,
$$
which, by Corollary \ref{RWsequence}, implies $f_1 \notin \calN(p, q, s_2)$.

Next we show that $ \calN(p, q, s_1) \subsetneq \calN(p, q, s_2).$ Consider the series $$f_2(z)=\sum\limits_{k=0}^\infty 2^{\frac{k(ns_1+q-n)}{p}}W_{2^k}(z).$$  On one hand, by Corollary \ref{RWsequence},
$$
\sum_{k=0}^\infty \frac{1}{2^{k(ns_2+q-n)}} \Big(\sum_{2^k \le n_j<2^{k+1}} \left|2^{\frac{k(ns_1+q-n)}{p}}\right|^p\Big)=\sum_{k=0}^\infty \frac{1}{2^{kn(s_2-s_1)}}<\infty,
$$
which implies that $f_2 \in \calN(p, q, s_2)$. On the other hand,
$$
\sum_{k=0}^\infty \frac{1}{2^{k(ns_1+q-n)}} \Big(\sum_{2^k \le n_j<2^{k+1}} \left|2^{\frac{k(ns_1+q-n)}{p}}\right|^p\Big)=\sum_{k=0}^\infty 1=\infty,
$$
which, again by Corollary \ref{RWsequence}, implies $f_2 \notin \calN(p, q, s_1)$.
\end{proof}

Letting $q=n+1, p=2$ and $0<s \le 1$ in Corollary \ref{Hadamard10}, we get the corresponding results in \cite{HKL} as a particular case.

\begin{rem}
From the above corollary, it is also straightforward to see that for any $k \in \left(0, \frac{q+ns-n}{p} \right)$, we have
$$
A^{-k}(\B) \subsetneq \calN(p, q, s).
$$
Indeed, we can find an $s'$ satisfying $k< \frac{q+ns'-n}{p}<\frac{q+ns-n}{p}$. By the above corollary and Corollary \ref{RWsequence}, it follows that
$$
A^{-k}(\B) \subseteq \calN(p, q, s') \subsetneq \calN(p, q, s),
$$
which implies the desired claim.
\end{rem}


\bigskip

\section{Carselon measure, Hadamard products and Random power series}


\subsection{Carselon measure}


First we give an equivalent expression of $\calN(p, q, s)$-norm by Carleson measures. Recall that for $\xi\in\SSS$ and $r>0$, a Carleson tube at $\xi$ is defined as (see, e.g., \cite{Zhu})
$$
Q_r(\xi)=\{z\in\B: |1-\langle z,\xi\rangle|<r\}.
$$
Moreover, we denote $Q(\xi, r)=\{w \in \SSS: |1-\langle w, \xi \rangle|<r\}$.

A positive Borel measure $\mu$ in $\B$ is called a \emph{$p$-Carleson measure} if there exists a constant $C>0$ such that
$$
\mu(Q_r(\xi)) \le Cr^{p}
$$
for all $\xi\in\SSS$ and $r>0$. Moreover, if
$$
\lim_{r\to0} \frac{\mu(Q_r(\xi))}{r^p}=0
$$
uniformly for $\xi \in \SSS$, then $\mu$ is called a \emph{vanishing $p$-Carleson measure}.

The following result describes a relationship between functions in $\calN(p, q, s)$ as well as $\calN^0(p, q, s)$ and Carleson measures.

\begin{prop} \label{Carleson01}
Let $f \in H(\B)$ and $p \ge 1, q, s>0$, and $d\mu_{f, p,q,s}(z)=|f(z)|^p(1-|z|^2)^{q+ns}d\lambda(z)$. The following assertions hold.
\begin{enumerate}
\item $f \in \calN(p, q, s)$ if and only if $d\mu_{f, p, q, s}$ is an $(ns)$-Carleson measure;
\item $f \in \calN^0(p, q, s)$ if and only if $d\mu_{f, p, q, s}$ is a vanishing $(ns)$-Carleson measure.
\end{enumerate}
Moreover,
\begin{equation} \label{Carleson}
 \begin{split}
\|f\|^p &\simeq \sup_{0<r<1, \xi \in \SSS} \frac{\mu_{f, p, q, s}(Q_r(\xi))}{r^{ns}}\\
&=\sup_{0<r<1, \xi \in \SSS} \frac{1}{r^{ns}} \int_{Q_r(\xi)}|f(z)|^p(1-|z|^2)^{q+ns}d\lambda(z).
\end{split}
\end{equation}
\end{prop}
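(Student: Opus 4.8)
The plan is to convert the M\"obius--invariant integral defining $\|f\|$ into an ordinary integral against $d\mu_{f,p,q,s}$ and then invoke the classical description of Carleson measures on $\B$. Using $1-|\Phi_a(z)|^2=\frac{(1-|a|^2)(1-|z|^2)}{|1-\langle a,z\rangle|^2}$ and $d\lambda(z)=(1-|z|^2)^{-n-1}dV(z)$, a direct computation shows that for every $a\in\B$,
$$
\int_\B|f(z)|^p(1-|z|^2)^q(1-|\Phi_a(z)|^2)^{ns}\,d\lambda(z)=\int_\B\left(\frac{1-|a|^2}{|1-\langle a,z\rangle|^2}\right)^{ns}d\mu_{f,p,q,s}(z).
$$
Hence $\|f\|^p=\sup_{a\in\B}\int_\B\big(\frac{1-|a|^2}{|1-\langle a,z\rangle|^2}\big)^{ns}d\mu_{f,p,q,s}(z)$, and $f\in\calN^0(p,q,s)$ is precisely the statement that this integral tends to $0$ as $|a|\to1$. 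Since $d\mu_{f,p,q,s}=|f(z)|^p(1-|z|^2)^{q+ns-n-1}dV(z)$ has a density continuous on $\B$, the measure $\mu_{f,p,q,s}$ is finite on compact subsets of $\B$.

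Thus everything reduces to the following well-known fact (see, e.g., \cite{Zhu}), which I would record as a lemma: for $t>0$ and a positive Borel measure $\mu$ on $\B$ finite on compact subsets,
$$
\sup_{a\in\B}\int_\B\left(\frac{1-|a|^2}{|1-\langle a,z\rangle|^2}\right)^{t}d\mu(z)\simeq\sup_{0<r<1,\ \xi\in\SSS}\frac{\mu(Q_r(\xi))}{r^{t}}=:A,
$$
and, moreover, the integral on the left tends to $0$ as $|a|\to1$ if and only if $\mu(Q_r(\xi))/r^{t}\to0$ uniformly in $\xi\in\SSS$ as $r\to0$. Granting the lemma with $t=ns$ and $\mu=\mu_{f,p,q,s}$, both equivalences and the comparison \eqref{Carleson} follow at once. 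The inequality $\gtrsim$ in the lemma is the easy one: testing the left side at $a=(1-r)\xi$ (with $0<r<1$, $\xi\in\SSS$) one has $1-|a|^2\ge r$ and $|1-\langle a,z\rangle|<2r$ on $Q_r(\xi)$, hence $\frac{1-|a|^2}{|1-\langle a,z\rangle|^2}\ge\frac1{4r}$ there and so $\int_\B(\cdots)^{t}d\mu\ge 4^{-t}\,\mu(Q_r(\xi))/r^{t}$. Letting $r\to1^-$ in the same estimate also gives $\mu(\{z:|1-\langle z,\xi\rangle|<1\})\le A$, so covering $\B\setminus\{0\}$ by finitely many such sets (the single point $\{0\}$ contributing only boundedly to the integral) yields $\mu(\B)\lesssim A$.

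For the inequality $\lesssim$, fix $a\in\B$; the range $|a|\le\frac12$ is immediate since then $|1-\langle a,z\rangle|\ge\frac12$ and $\int_\B(\cdots)^{t}d\mu\le 4^{t}\mu(\B)\lesssim A$. For $\frac12<|a|<1$ put $\xi_a=a/|a|$ and split $\B$ into $E_0=\{|1-\langle a,z\rangle|<1-|a|^2\}$ and the rings $E_j=\{2^{j-1}(1-|a|^2)\le|1-\langle a,z\rangle|<2^{j}(1-|a|^2)\}$, $j\ge1$, which cover $\B$ because $|1-\langle a,z\rangle|\ge\frac12(1-|a|^2)$ always. On each $E_j$ one checks $|1-\langle\xi_a,z\rangle|\le 2^{j+2}(1-|a|^2)$; for the $j$ with $2^{j+2}(1-|a|^2)<1$ this gives $E_j\subseteq Q_{2^{j+2}(1-|a|^2)}(\xi_a)$ and $\mu(E_j)\le A\,(2^{j+2}(1-|a|^2))^{t}$, whence, using $|1-\langle a,z\rangle|^{-2t}\lesssim(2^{j-1}(1-|a|^2))^{-2t}$ on $E_j$, the contribution of $E_j$ to $\int_\B(\cdots)^{t}d\mu$ is $\lesssim A\,2^{-jt}$; the finitely many remaining rings all lie in $\{|1-\langle a,z\rangle|\gtrsim1\}$ and together contribute $\lesssim(1-|a|^2)^{t}\mu(\B)\lesssim A$. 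Summing the geometric series $\sum_j 2^{-jt}<\infty$ gives $\int_\B(\cdots)^{t}d\mu\lesssim A$ uniformly in $a$. The vanishing statement comes from the same decomposition after first discarding the part of the integral over $\{|1-\langle a,z\rangle|\ge\eta\}$, which is $O\big((1-|a|^2)^{t}\mu(\B)\big)\to0$ as $|a|\to1$ for each fixed $\eta$, while the part over $\{|1-\langle a,z\rangle|<\eta\}$ is dominated by $\sup_{0<r<C\eta,\ \xi}\mu(Q_r(\xi))/r^{t}$, which is small for small $\eta$ by the vanishing hypothesis.

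The only genuinely delicate point is the bookkeeping in the last paragraph: one must keep the auxiliary Carleson tubes of radius below $1$ so that the hypothesis (stated only for $0<r<1$) applies, and absorb the finitely many oversized rings through the crude bound $\mu(\B)\lesssim A$; once this is arranged, everything else is the change of variable of the first paragraph followed by a convergent geometric sum.
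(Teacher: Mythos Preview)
Your proof is correct and follows essentially the same route as the paper: rewrite $\|f\|^p$ as $\sup_{a\in\B}\int_\B\big(\frac{1-|a|^2}{|1-\langle a,z\rangle|^2}\big)^{ns}d\mu_{f,p,q,s}(z)$ and then appeal to the standard equivalence between this quantity and the Carleson condition $\sup_{r,\xi}\mu(Q_r(\xi))/r^{ns}$. The only difference is that the paper simply cites this equivalence (\cite[Theorem 45]{ZZ}) as a black box for both the ordinary and the vanishing case, whereas you reprove it via the dyadic ring decomposition; your extra detail is sound but not needed given the existing reference.
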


\begin{proof}
(1) Note tht for $f \in \calN(p, q, s)$, we can write
\begin{eqnarray*}
\|f\|^p%
&=& \sup_{a \in \B} \int_\B |f(z)|^p \frac{(1-|a|^2)^{ns}(1-|z|^2)^{q+ns-n-1}}{|1-\langle a, z \rangle|^{2ns}}dV(z)\\
&=& \sup_{a \in \B} \int_\B \left( \frac{1-|a|^2}{|1-\langle a, z \rangle|^2}\right)^{ns} d\mu_{f, p, q, s}(z).
\end{eqnarray*}
Then (1) is obtained, by \cite[Theorem 45]{ZZ}. Moreover, the equation \eqref{Carleson} also follows from \cite[Theorem 45]{ZZ}.

(2) This is a consequence of the ``little-oh version" of \cite[Theorem 45]{ZZ}.
\end{proof}


\medskip

\subsubsection{Embedding relationship with weighted Bergman space}


For $k>0$ and $f$ holomorphic in $\B$, we denote
$$
M_k(r, f)=\left( \int_\SSS |f(r\xi)|^k d\sigma(\xi)\right)^{1/k}, 0 \le r<1.
$$
By using this expression, we can rewrite the norm of $\| \cdot \|_{k, \rho}$ of the weighted Bergman space $A^{k}_\rho$ with $k \ge 1$ and $\rho>-1$ as follows.
\begin{eqnarray*}
\|f\|_{k, \rho}%
&\simeq& \left(\int_0^1 r^{2n-1}(1-r^2)^{\rho} M^k_k(r, f) dr \right)^{1/k} \\
&\simeq&  \left(\int_0^1 r^{2n-1}(1-r)^{\rho} M^k_k(r, f) dr \right)^{1/k}.
\end{eqnarray*}
As an application of Proposition \ref{Carleson01}, we establish the following embedding relation between $A^k_\rho$ and $\calN(p, q, s)$ with some proper condition on $k$ and $\rho$. Note that, by Proposition \ref{prop02} and the fact that the set of all polynomials belongs to $A^k_\rho$, it is natural for us to assume that $ns+q>n$.

 We have the following result.

\begin{prop} \label{Bergmanembedding}
Let $p \ge 1, q>0$ and $s>\max\left\{0, 1-\frac{q}{n}\right\}$. Then the following assertions hold.
\begin{enumerate}
\item If $0<s<1$, then for $\max\{0, q-n\}<\rho<\frac{q+ns-n}{1-s}$, we have $\|f\| \lesssim \|f\|_{\frac{p(n+\rho)}{q}, \rho-1}$, that is, $A^{\frac{p(n+\rho)}{q}}_{\rho-1} \subseteq \calN(p, q, s)$;
\item If $s \ge 1$, then for $\rho>\max\{0, q-n\}$, we have $\|f\| \lesssim \|f\|_{\frac{p(n+\rho)}{q}, (\rho-1)}$, that is, $A^{\frac{p(n+\rho)}{q}}_{\rho-1} \subseteq \calN(p, q, s)$.
\end{enumerate}
\end{prop}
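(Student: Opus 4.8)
The plan is to use the Carleson-measure characterization from Proposition \ref{Carleson01}, which reduces the claim $\|f\|\lesssim\|f\|_{k,\rho-1}$ (with $k=\frac{p(n+\rho)}{q}$) to showing that for every $\xi\in\SSS$ and $0<r<1$,
$$
\frac{1}{r^{ns}}\int_{Q_r(\xi)}|f(z)|^p(1-|z|^2)^{q+ns}\,d\lambda(z)\lesssim\|f\|_{k,\rho-1}^p .
$$
Since $d\lambda(z)=(1-|z|^2)^{-n-1}dV(z)$, the integrand is $|f(z)|^p(1-|z|^2)^{q+ns-n-1}$. First I would apply H\"older's inequality on $Q_r(\xi)$ with exponents $\frac{k}{p}$ and its conjugate (this is where $k\ge p$, equivalently $\rho\ge q-n$, is used, and one checks $k\ge 1$ so that $A^k_{\rho-1}$ is a genuine Bergman space); this splits the integral into $\left(\int_{Q_r(\xi)}|f|^k(1-|z|^2)^{?}dV\right)^{p/k}$ times a purely geometric factor $\left(\int_{Q_r(\xi)}(1-|z|^2)^{?}dV\right)^{1-p/k}$, where the weights are chosen so that the $|f|^k$ part is dominated by $\|f\|_{k,\rho-1}^p$ after one verifies the exponent on $(1-|z|^2)$ in that factor equals $\rho-1$. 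The arithmetic that forces the relation $k=\frac{p(n+\rho)}{q}$ comes precisely from matching these exponents.

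Next I would estimate the two remaining pieces. The $|f|^k$-integral over $Q_r(\xi)$ is trivially bounded by the integral over all of $\B$, i.e.\ by $\|f\|_{k,\rho-1}^k$ (using the equivalent $M_k(r,f)$ form of the Bergman norm recalled just before the statement); raising to the power $p/k$ gives $\|f\|_{k,\rho-1}^p$. The geometric factor $\int_{Q_r(\xi)}(1-|z|^2)^{\beta}dV(z)$ is controlled by the standard estimate $V(Q_r(\xi))\lesssim r^{n+1}$ together with $(1-|z|^2)\lesssim r$ on $Q_r(\xi)$, giving $\int_{Q_r(\xi)}(1-|z|^2)^{\beta}dV(z)\lesssim r^{n+1+\beta}$ whenever $\beta>-1$; then $\left(r^{n+1+\beta}\right)^{1-p/k}$ must match $r^{ns}$ after accounting for the $(1-|z|^2)^{q+ns-n-1}$ already split off. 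Tracking the exponent bookkeeping produces the constraint on $\rho$: one needs the relevant power of $(1-|z|^2)$ to exceed $-1$ so the tube integral converges, which is exactly $\rho>q-n$ (hence the hypothesis $\rho>\max\{0,q-n\}$, the $\rho>0$ part ensuring $k>0$ and $\rho-1>-1$), and in the case $0<s<1$ an \emph{upper} bound $\rho<\frac{q+ns-n}{1-s}$ is additionally needed to make the power of $r$ coming out of the geometric factor at least $ns$ (so that dividing by $r^{ns}$ still leaves a nonnegative power of $r\le 1$, hence bounded); when $s\ge 1$ that upper constraint is vacuous since $1-s\le 0$, which is why case (2) only requires $\rho>\max\{0,q-n\}$.

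I expect the main obstacle to be purely the exponent bookkeeping: one must choose the H\"older split of the weight $(1-|z|^2)^{q+ns-n-1}$ into a part that pairs with $|f|^p$ to reconstruct the $A^k_{\rho-1}$-norm and a part that, after H\"older, yields exactly $r^{ns}$ (or a higher power of $r$) from the tube volume estimate — and verify that all these simultaneous demands are consistent precisely under the stated hypotheses on $k$, $\rho$, and $s$. A secondary point to be careful about is the borderline convergence of $\int_{Q_r(\xi)}(1-|z|^2)^\beta dV$: one should confirm $\beta>-1$ rather than $\beta\ge -1$ and handle the endpoint honestly, and also double-check that $k=\frac{p(n+\rho)}{q}\ge 1$ under $\rho>\max\{0,q-n\}$ so that Proposition \ref{Carleson01} and the Bergman-norm identities genuinely apply. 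Once the exponents are reconciled, the estimate $\|f\|^p\lesssim\|f\|_{k,\rho-1}^p$ follows immediately and gives the claimed inclusion $A^{k}_{\rho-1}\subseteq\calN(p,q,s)$.
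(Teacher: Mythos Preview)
Your proposal is correct and follows essentially the same route as the paper: invoke the Carleson characterization of Proposition~\ref{Carleson01}, apply H\"older on $Q_r(\xi)$ with exponent $\frac{k}{p}=\frac{n+\rho}{q}$, bound the $|f|^k$-factor by $\|f\|_{k,\rho-1}^p$, and evaluate the remaining geometric integral. One small correction: your justification of $\int_{Q_r(\xi)}(1-|z|^2)^\beta\,dV(z)\lesssim r^{n+1+\beta}$ via ``$V(Q_r)\lesssim r^{n+1}$ together with $(1-|z|^2)\lesssim r$'' is only valid when $\beta\ge 0$; for $-1<\beta<0$ (which does occur under the stated hypotheses) one needs the standard tube estimate \cite[Corollary~5.24]{Zhu}, which is what the paper cites. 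With that in hand, the exponent arithmetic in fact yields \emph{exactly} $r^{ns}$ (not merely a power $\ge ns$), and the proof concludes as you describe.
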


\begin{proof}
The proof for (2) is a simple modification of (1) and  hence we omit the proof for (2) here. Suppose $0<s<1$. Note that since $\rho<\frac{q+ns-n}{1-s}$, we have
$$
(q+ns-n-\rho) \cdot \frac{n+\rho}{n+\rho-q}+\rho>0.
$$

By \cite[Corollary 5.24]{Zhu} and H\"older's inequality, for fixed $\xi \in \SSS$ and $0<r<1$, we have
\begin{eqnarray*}
I_{r, \xi}%
&=& \frac{1}{r^{ns}} \int_{Q_r(\xi)} |f(z)|^p(1-|z|^2)^{q+ns}d\lambda(z)\\
&\simeq& \frac{1}{r^{ns}} \int_{Q_r(\xi)} |f(z)|^p(1-|z|^2)^{q+ns-n-\rho} dV_{\rho-1}(z)
\\
&\le& \frac{1}{r^{ns}} \left(\int_{Q_r(\xi)} |f(z)|^{\frac{p(n+\rho)}{q}} dV_{\rho-1}(z) \right)^{\frac{q}{n+\rho}}\\
&& \cdot \left(\int_{Q_r(s)} (1-|z|^2)^{(q+ns-n-\rho) \cdot \frac{n+\rho}{n+\rho-q}} dV_{\rho-1}(z)\right)^{\frac{n+\rho-q}{n+\rho}} \\
\end{eqnarray*}
\begin{eqnarray*}
&\lesssim& \frac{\|f\|_{\frac{p(n+\rho)}{q}, \rho-1}^p}{r^{ns}} \cdot \left(\int_{Q_r(s)} (1-|z|^2)^{(q+ns-n-\rho) \cdot \frac{n+\rho}{n+\rho-q}+\rho-1} dV(z)\right)^{\frac{n+\rho-q}{n+\rho}}\\
&\simeq& \frac{\|f\|_{\frac{p(n+\rho)}{q}, \rho-1}^p}{r^{ns}} \cdot \left(r^{n+1+(q+ns-n-\rho) \cdot \frac{n+\rho}{n+\rho-q}+\rho-1}\right)^{\frac{n+\rho-q}{n+\rho}} \\
&=& \|f\|_{\frac{p(n+\rho)}{q}, \rho-1}^p,
\end{eqnarray*}
which, by Proposition \ref{Carleson01}, implies
$$
\|f\|^p \simeq \sup_{\xi \in \SSS, 0<r<1} I_{r, \xi} \lesssim\|f\|_{\frac{p(n+\rho)}{q}, \rho-1}^p.
$$
Hence, we prove the desired result.
\end{proof}

\begin{cor} \label{embeddingBercor}
Let $p \ge 1, q>0$ and $s>\max\left\{0, 1-\frac{q}{n}\right\}$. If $q>n$,  then we have $\|f\| \lesssim \|f\|_{p, q-n-1}$, that is $A^p_{q-n-1} \subseteq \calN(p, q, s)$.
\end{cor}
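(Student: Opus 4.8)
The plan is to read off Corollary \ref{embeddingBercor} directly from the Carleson-measure description of the norm in Proposition \ref{Carleson01}, rather than from Proposition \ref{Bergmanembedding}. The reason is that the value of the parameter that would turn the weighted Bergman space $A^{p(n+\rho)/q}_{\rho-1}$ of Proposition \ref{Bergmanembedding} into $A^p_{q-n-1}$ is $\rho=q-n$, which is exactly the endpoint excluded there (the exponent $(q+ns-n-\rho)\frac{n+\rho}{n+\rho-q}$ in that proof blows up as $\rho\to (q-n)^+$). So instead I would argue in a self-contained way. Note first that when $q>n$ we have $\max\{0,1-\frac{q}{n}\}=0$, so the hypothesis is simply $p\ge 1$, $q>n$, $s>0$; in particular $q+ns-n-1>-1$ and $q-n-1>-1$. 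By the norm equivalence \eqref{Carleson} in Proposition \ref{Carleson01}, together with $d\lambda(z)=(1-|z|^2)^{-n-1}dV(z)$ and the definition of $\mu_{f,p,q,s}$,
$$
\|f\|^p\simeq \sup_{0<r<1,\ \xi\in\SSS}\frac{1}{r^{ns}}\int_{Q_r(\xi)}|f(z)|^p(1-|z|^2)^{q+ns-n-1}\,dV(z).
$$

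The key elementary step is that on a Carleson tube the weight is well controlled: for $\xi\in\SSS$ and $z\in Q_r(\xi)$ one has $1-|z|\le |1-\langle z,\xi\rangle|<r$ (since $|\langle z,\xi\rangle|\le |z|$), hence $1-|z|^2<2r$. As $ns>0$, this gives $(1-|z|^2)^{ns}<(2r)^{ns}$ on $Q_r(\xi)$, and therefore
$$
\frac{1}{r^{ns}}\int_{Q_r(\xi)}|f(z)|^p(1-|z|^2)^{q+ns-n-1}\,dV(z)\le 2^{ns}\int_{Q_r(\xi)}|f(z)|^p(1-|z|^2)^{q-n-1}\,dV(z)\le 2^{ns}\int_{\B}|f(z)|^p(1-|z|^2)^{q-n-1}\,dV(z).
$$
Because $q-n-1>-1$, the last integral is a fixed constant multiple of $\|f\|_{p,q-n-1}^p$; taking the supremum over $r\in(0,1)$ and $\xi\in\SSS$ and comparing with the displayed equivalence yields $\|f\|^p\lesssim \|f\|_{p,q-n-1}^p$, that is, $A^p_{q-n-1}\subseteq \calN(p,q,s)$.

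For completeness, the statement can also be presented as a genuine consequence of Proposition \ref{Bergmanembedding}: fix any admissible $\rho$ there with $\rho>q-n$ (such $\rho$ exists, since for $0<s<1$ one computes $\frac{q+ns-n}{1-s}-(q-n)=\frac{sq}{1-s}>0$, and for $s\ge 1$ the range is $\rho>q-n$), and use the standard inclusion of weighted Bergman spaces $A^p_{q-n-1}\subseteq A^{p(n+\rho)/q}_{\rho-1}$, which holds because $p(n+\rho)/q\ge p$ and $\frac{(\rho-1)+n+1}{p(n+\rho)/q}=\frac{q}{p}=\frac{(q-n-1)+n+1}{p}$; chaining this with Proposition \ref{Bergmanembedding} again gives $A^p_{q-n-1}\subseteq \calN(p,q,s)$. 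There is no serious obstacle in either route — the only point deserving care is precisely the endpoint degeneracy noted above, which is why the direct Carleson-tube estimate is the cleaner argument and the one I would write up.
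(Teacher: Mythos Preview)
Your primary argument is correct and is essentially identical to the paper's proof: both use the Carleson-measure norm equivalence from Proposition~\ref{Carleson01}, observe that $1-|z|<r$ on $Q_r(\xi)$ so that $(1-|z|^2)^{ns}\lesssim r^{ns}$, and then bound the remaining integral by $\|f\|_{p,q-n-1}^p$. Your additional remark that the corollary can also be recovered from Proposition~\ref{Bergmanembedding} via the standard inclusion $A^p_{q-n-1}\subseteq A^{p(n+\rho)/q}_{\rho-1}$ for $\rho>q-n$ is a valid alternative not given in the paper, but the direct tube estimate is indeed the cleaner route and is the one the paper uses.
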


\begin{proof}

Note that for fixed $\xi \in \SSS$ and $0<r<1$, if $z \in Q_r(\xi)$, then we have
\begin{equation} \label{weighted22}
r>|1-\langle z,\xi \rangle| \ge 1-|\langle z, \xi \rangle| \ge 1-|z||\xi|=1-|z|.
\end{equation}

For fixed $\xi \in \SSS$ and $0<r<1$, by \eqref{weighted22}, we have
\begin{eqnarray*}
I_{r, \xi}%
&=& \frac{1}{r^{ns}} \int_{Q_r(\xi)} |f(z)|^p(1-|z|^2)^{q+ns}d\lambda(z)\\
&\simeq& \frac{1}{r^{ns}} \int_{Q_r(\xi)} |f(z)|^p (1-|z|^2)^{ns} dV_{q-n-1}(z)\\
&\le& \frac{1}{r^{ns}} \cdot \sup_{z \in Q_r(\xi)} (1-|z|^2)^{ns} \cdot \int_{Q_r(\xi)} |f(z)|^p dV_{q-n-1}(z)\\
&\lesssim& \frac{1}{r^{ns}} \cdot \sup_{z \in Q_r(\xi)} (1-|z|)^{ns} \cdot \|f\|^p_{p, q-n-1}\\
& \le& \|f\|^p_{p, q-n-1},
\end{eqnarray*}
which implies that $A^p_{q-n-1} \subseteq \calN(p, q, s)$.
\end{proof}

\begin{rem} \label{Bergremark}
In Proposition \ref{Bergmanembedding}, in particular, putting $\rho=q+ns-n$, we get $A^{\frac{p(q+ns)}{q}}_{q+ns-n-1} \subseteq \calN(p, q, s)$. Moreover, it is clear that $\| \cdot \|_{p, q+ns-n-1} \le \| \cdot\|$, that is, $\calN(p, q, s) \subset A^p_{q+ns-n-1}$. Combining this fact with Proposition \ref{Bergmanembedding}, we  have, for $p \ge 1, q>0$ and $s>\max \left\{0, 1-\frac{q}{n}\right\}$, the following embedding relation holds
$$
A^{\frac{p(q+ns)}{q}}_{q+ns-n-1} \subseteq \calN(p, q, s) \subseteq  A^p_{q+ns-n-1}.
$$
\end{rem}


\medskip

\subsubsection{Embedding relationship with weighted Hardy space}

Recall that for $t>0$, the \emph{Hardy space} $H^t$ consists of all holomorphic functions $f$ in $\B$ such that (see, e.g., \cite{Zhu})
$$
\|f\|_{H^t}=\sup_{0<r<1} M_t(r, f)<\infty.
$$
 It is well-known that when $1 \le t<\infty$, $H^t$ is a Banach space with the norm $\| \cdot \|_{H^t}$; if  $0<t<1$, $H^t$ is a complete metric space.

More generally, for $\alpha>0$ and $\beta \ge 0$, the \emph{weighted Hardy space} $H^\alpha_\beta$ is defined as follows.
$$
H^\alpha_\beta=\left\{ f \in H(\B): \|f\|_{H^{\alpha}_\beta}=\sup_{0<r<1} (1-r)^\beta M_\alpha(r, f)<\infty \right\}.
$$
It is known that when $\alpha \ge 1$, $H^\alpha_\beta$ is a Banach space with the norm $\|\cdot\|_{H^\alpha_\beta}$ (see, e.g., \cite{SU}). Moreover, the little weighted Hardy space $H^\alpha_{\beta, 0}$ is the space of all $f\in H^\alpha_\beta$ such that   $$\lim_{r \to 1 } (1-r)^{\beta} M_\alpha(r, f)=0 .$$
It is easy to see that $H^\alpha_{\beta, 0} \subseteq H^\alpha_\beta$.

As an application of Proposition \ref{Carleson01}, we have the following result.

\begin{prop} \label{Hardyembedding}
Let $p \ge 1, q>0$ and $s>\max\left\{0, 1-\frac{q}{n}\right\}$. The following statements hold.
\begin{enumerate}
\item If $0<s<1$, then $H_{\frac{q}{p}-\frac{n}{\alpha}}^\alpha \subseteq \calN(p, q, s)$, where $\max\left\{p, \frac{np}{q}\right\} \le \alpha<\frac{p}{1-s}$;
\item If $s \ge 1$, then  $H_{\frac{q}{p}-\frac{n}{\alpha}}^\alpha \subseteq \calN(p, q, s)$, where $\alpha \ge \max\left\{p, \frac{np}{q}\right\}$. In particular, if $\alpha \ge \max\left\{p, \frac{np}{q}\right\}$, then $H_{\frac{q}{p}-\frac{n}{\alpha}}^\alpha \subseteq A^{-\frac{q}{p}}$.
\end{enumerate}
\end{prop}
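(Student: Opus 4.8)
The plan is to run everything through the Carleson-measure description of the norm furnished by Proposition \ref{Carleson01}. Since $d\lambda(z)=dV(z)/(1-|z|^2)^{n+1}$, that proposition says $\|f\|^p\simeq\sup_{\xi\in\SSS,\,0<r<1}r^{-ns}\int_{Q_r(\xi)}|f(z)|^p(1-|z|^2)^{q+ns-n-1}\,dV(z)$, so it suffices to bound this supremum by a constant times $\|f\|_{H^\alpha_\beta}^p$, where $\beta:=\frac{q}{p}-\frac{n}{\alpha}$. Note that the requirement $\alpha\ge\frac{np}{q}$ is exactly what makes $\beta\ge0$, so that $H^\alpha_\beta$ is a legitimate weighted Hardy space as defined above.

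The first step is to describe $Q_r(\xi)$ geometrically. If $z=\rho\eta$ with $\rho=|z|$, $\eta\in\SSS$, lies in $Q_r(\xi)$, then by \eqref{weighted22} we have $1-\rho\le|1-\langle z,\xi\rangle|<r$, and since $\langle z,\xi\rangle=\rho\langle\eta,\xi\rangle$ the triangle inequality gives $|1-\langle\eta,\xi\rangle|\le|1-\langle z,\xi\rangle|+(1-\rho)|\langle\eta,\xi\rangle|<2r$. Hence $Q_r(\xi)\subseteq\{\rho\eta:\,1-r<\rho<1,\ \eta\in Q(\xi,2r)\}$. Writing the integral in polar coordinates, bounding $\rho^{2n-1}\le1$ and using $1-\rho^2\simeq1-\rho$, we obtain
\[
r^{-ns}\int_{Q_r(\xi)}|f(z)|^p(1-|z|^2)^{q+ns-n-1}\,dV(z)\ \lesssim\ r^{-ns}\int_{1-r}^1(1-\rho)^{q+ns-n-1}\Big(\int_{Q(\xi,2r)}|f(\rho\eta)|^p\,d\sigma(\eta)\Big)\,d\rho.
\]

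Now I would estimate the inner integral. Because $\alpha\ge p$, Hölder's inequality with exponent $\alpha/p$ gives $\int_{Q(\xi,2r)}|f(\rho\eta)|^p\,d\sigma(\eta)\le M_\alpha(\rho,f)^p\,\sigma(Q(\xi,2r))^{1-p/\alpha}\lesssim\|f\|_{H^\alpha_\beta}^p(1-\rho)^{-\beta p}\,r^{n(1-p/\alpha)}$, using $M_\alpha(\rho,f)\le\|f\|_{H^\alpha_\beta}(1-\rho)^{-\beta}$ and the standard estimate $\sigma(Q(\xi,\delta))\lesssim\delta^n$. Substituting, the supremand is $\lesssim\|f\|_{H^\alpha_\beta}^p\,r^{n(1-p/\alpha)-ns}\int_{1-r}^1(1-\rho)^{q+ns-n-1-\beta p}\,d\rho$. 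The crucial computation is $q+ns-n-1-\beta p=n\big(s-1+\tfrac{p}{\alpha}\big)-1$, and the exponent $n\big(s-1+\tfrac{p}{\alpha}\big)$ is strictly positive exactly when $\alpha<\tfrac{p}{1-s}$ in case $0<s<1$, and automatically when $s\ge1$ — which is precisely the range of $\alpha$ allowed in (1) and (2). Under that hypothesis $\int_{1-r}^1(1-\rho)^{n(s-1+p/\alpha)-1}\,d\rho\simeq r^{n(s-1+p/\alpha)}$, and the powers of $r$ collapse: $r^{n(1-p/\alpha)-ns}\cdot r^{n(s-1+p/\alpha)}=r^{0}=1$. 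Thus the supremand is $\lesssim\|f\|_{H^\alpha_\beta}^p$ uniformly in $\xi$ and $r$, and Proposition \ref{Carleson01} yields $\|f\|\lesssim\|f\|_{H^\alpha_\beta}$, which is (1) and the inclusion in (2). For the last assertion of (2), apply the inclusion just established with $s=1$ (admissible for every $q>0$ and imposing no upper bound on $\alpha$) together with $\calN(p,q,1)\subseteq A^{-q/p}$ from Proposition \ref{boundaryineq}.

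The only delicate point is the exponent bookkeeping: one must check that convergence of the radial integral is governed by exactly $\alpha<\tfrac{p}{1-s}$ when $s<1$ and is unconstrained when $s\ge1$, that the surface-measure exponent $1-\tfrac{p}{\alpha}$ is nonnegative so that Hölder applies (forcing $\alpha\ge p$), and that $\beta\ge0$ (forcing $\alpha\ge\tfrac{np}{q}$); together these produce precisely the stated ranges $\max\{p,\tfrac{np}{q}\}\le\alpha<\tfrac{p}{1-s}$ and $\alpha\ge\max\{p,\tfrac{np}{q}\}$. Beyond tracking these constraints, the argument is routine once the Carleson-measure characterization of Proposition \ref{Carleson01} is in hand.
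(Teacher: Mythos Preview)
Your argument is correct and follows essentially the same route as the paper's proof: both reduce to the Carleson-measure characterization of Proposition \ref{Carleson01}, pass to polar coordinates over $Q_r(\xi)$, apply H\"older on the sphere to pass from $L^p$ to $L^\alpha$, and then check that the resulting radial integral and the powers of $r$ balance exactly. The only cosmetic difference is that the paper splits into the cases $\alpha=p$ and $\alpha>p$, whereas you treat them uniformly (H\"older with exponent $\alpha/p=1$ being trivial); you are also slightly more careful about the inclusion $Q_r(\xi)\subseteq\{\rho\eta:1-r<\rho<1,\ \eta\in Q(\xi,2r)\}$, which the paper handles implicitly.
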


\begin{proof}
The proof for (2) is a simple modification of (1) and  hence we omit the proof for (2) here.

Note that for fixed $\xi \in \SSS$ and $0<r<1$, if $z \in Q_r(\xi)$, by the argument in Corollary \ref{embeddingBercor}, we have
\begin{equation} \label{weighted01}
1-r<|z|<1.
\end{equation}

We consider two different cases.

{\it Case I: $\alpha=p$.}  First we note that by condition, $p=\alpha \ge \frac{np}{q}$, which implies $q \ge n$. For each $\xi \in \SSS$ and $0<r<1$, by \eqref{weighted01}, we have

\begin{eqnarray*}
I_{r, \xi}%
&=&\frac{1}{r^{ns}} \int_{Q_r(\xi)} |f(z)|^p(1-|z|^2)^{q+ns}d\lambda(z)\\
&=& \frac{1}{r^{ns}} \int_{\{z \in \B: |1-\langle z, \xi \rangle|<r\}} |f(z)|^p(1-|z|^2)^{q+ns-n-1}dV(z)\\
&\lesssim& \frac{1}{r^{ns}} \int_{1-r}^1 (1-t^2)^{q+ns-n-1} \left(\int_{Q(\xi, r)} |f(\gamma t)|^p d\sigma(\gamma)\right) dt\\
&=& \frac{1}{r^{ns}} \int_{1-r}^1 \frac{(1-t^2)^{q+ns-n-1}}{(1-t)^{q-n}} (1-t)^{q-n} M_p^p(t,f) dt \\
&\le& \|f\|^p_{H^p_{\frac{q-n}{p}}} \cdot \frac{1}{r^{ns}} \int_{1-r}^1 (1-t)^{ns-1}dt \simeq  \|f\|^p_{H^p_{\frac{q-n}{p}}}.
\end{eqnarray*}
Thus, we have
$$
\|f\|^p=\sup_{0<r<1, \xi \in \SSS} I_{r, \xi} \lesssim  \|f\|^p_{H^p_{\frac{q-n}{p}}},
$$
which implies the desired result.

{\it Case II: $\alpha>p$.} For each $\xi \in \SSS$ and $0<r<1$, by H\"older inequality and \cite[Lemma 4.6]{Zhu}, we have
\begin{eqnarray*}
\int_{Q(\xi, r)} |f(\gamma t)|^p d\sigma(\gamma)%
&\le& \sigma(Q(\xi, r))^{1-\frac{p}{\alpha}}\left(\int_{Q_(\xi, r)} |f(\gamma t)|^\alpha d\sigma(\gamma)\right)^{\frac{p}{\alpha}} \\
&\lesssim& \frac{ r^{n-\frac{np}{\alpha}} (1-t)^{q-\frac{np}{\alpha}} M_\alpha^p(t, f)}{ (1-t)^{q-\frac{np}{\alpha}}}\\
 &\le& \frac{r^{n-\frac{np}{\alpha}} \|f\|^p_{H_{\frac{q}{p}-\frac{n}{\alpha}}^\alpha}}{{ (1-t)^{q-\frac{np}{\alpha}}}}.
\end{eqnarray*}
Then, by \eqref{weighted01} and previous calculation, we have
\begin{eqnarray*}
I_{r,\xi}%
&\lesssim& \frac{1}{r^{ns}} \int_{1-r}^1 (1-t^2)^{q+ns-n-1} \left(\int_{Q(\xi, r)} |f(\gamma t)|^p d\sigma(\gamma)\right) dt\\
&\lesssim& r^{n-\frac{np}{\alpha}-ns} \|f\|^p_{H_{\frac{q}{p}-\frac{n}{\alpha}}} \int_{1-r}^1 \frac{(1-t)^{q+ns-n-1}}{{ (1-t)^{q-\frac{np}{\alpha}}}}dt\\
&=&  r^{n-\frac{np}{\alpha}-ns} \|f\|^p_{H_{\frac{q}{p}-\frac{n}{\alpha}}} \int_{1-r}^1  (1-t)^{ns+\frac{np}{\alpha}-n-1}dt \\
&\simeq&  \|f\|^p_{H_{\frac{q}{p}-\frac{n}{\alpha}}}.
\end{eqnarray*}
Thus,
$$
\|f\|^p \simeq \sup_{0<r<1, \xi \in \SSS} I_{r, \xi} \lesssim \|f\|^p_{H_{\frac{q}{p}-\frac{n}{\alpha}}},
$$
and hence the desired result follows.
\end{proof}

The following result describes the behavior of Hadamard gap series in $H^\alpha_\beta$, whose idea comes from \cite[Theorem 1]{LS}.

\begin{prop} \label{weightedHadamard}
Let $\alpha>0$, $\beta>0$ and $f(z)=\sum_{k=1}^\infty P_{n_k}(z)$ with Hadamard gaps. Then the following statements hold true:
\begin{enumerate}
\item $f \in H^\alpha_\beta$ if and only if $\sup_{k \ge 1} \frac{L_{k, \alpha}}{n_k^\beta}<\infty$.
\item $f \in H^\alpha_{\beta, 0}$ if and only if $\lim_{k \to \infty}  \frac{L_{k, \alpha}}{n_k^\beta}=0$.
\end{enumerate}
Here $L_{k, \alpha}= \left( \int_\SSS |P_{n_k}(\xi)|^{\alpha}d\sigma(\xi)\right)^{\frac{1}{\alpha}}$.
\end{prop}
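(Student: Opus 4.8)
The plan is to reduce everything to a careful two-sided estimate of $M_\alpha(r,f)$ for a Hadamard gap series and then to compare it with the single-block contributions $L_{k,\alpha}$. The starting point is the classical fact (the ball analogue of the Hardy--Littlewood inequality for lacunary series; this is \cite[Theorem 1]{MM}, already invoked in the proof of Theorem \ref{HadamardNpqs}) that for $f(z)=\sum_{k\ge1}P_{n_k}(z)$ with Hadamard gaps one has, for every $r\in(0,1)$,
\begin{equation*}
M_\alpha(r,f)\simeq \left(\sum_{k=1}^\infty L_{k,\alpha}^2\, r^{2n_k}\right)^{1/2},
\end{equation*}
with constants depending only on $\alpha$, $n$ and the gap constant $c$. (If one prefers to avoid the $\ell^2$-form one may instead work with $\sum_k L_{k,\alpha}^{\,\alpha}r^{\alpha n_k}$ when $\alpha\ge 2$ and so on, but the $\ell^2$ version is uniform in $\alpha$ and is the most convenient.) Granting this, the problem is purely one about the size of the sequence $\{L_{k,\alpha}\}$.

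For the sufficiency in (1), suppose $L_{k,\alpha}\le C\,n_k^\beta$ for all $k$. Then $M_\alpha^2(r,f)\lesssim \sum_k n_k^{2\beta}r^{2n_k}$, and since $n_{k+1}/n_k\ge c>1$ the exponents $n_k$ grow geometrically, so a standard block estimate (group the $k$'s for which $n_k\in[2^j,2^{j+1})$, of which there are at most $[\log_c 2]+1$) gives $\sum_k n_k^{2\beta}r^{2n_k}\lesssim \sum_{j\ge0} 2^{2j\beta}r^{2^{j+1}}$, and the elementary inequality $\sup_{0<r<1}(1-r)^{2\beta}\sum_{j}2^{2j\beta}r^{2^j}<\infty$ (which follows by splitting the $j$-sum at $2^j\simeq 1/(1-r)$) yields $(1-r)^\beta M_\alpha(r,f)\lesssim 1$, i.e. $f\in H^\alpha_\beta$. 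For the necessity, note that for any fixed index $k_0$, choosing $r=r_{k_0}:=1-1/n_{k_0}$ forces $r^{2n_{k_0}}\simeq 1$, hence $M_\alpha^2(r_{k_0},f)\gtrsim L_{k_0,\alpha}^2 r_{k_0}^{2n_{k_0}}\gtrsim L_{k_0,\alpha}^2$, so $(1-r_{k_0})^\beta M_\alpha(r_{k_0},f)\gtrsim n_{k_0}^{-\beta}L_{k_0,\alpha}$; since the left side is bounded by $\|f\|_{H^\alpha_\beta}$ uniformly in $k_0$, we get $\sup_k L_{k,\alpha}/n_k^\beta<\infty$.

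Part (2) follows the same scheme, tracking the ``little-oh'' throughout. If $L_{k,\alpha}/n_k^\beta\to0$, fix $\varepsilon>0$, pick $N$ with $L_{k,\alpha}\le\varepsilon n_k^\beta$ for $k>N$, split $M_\alpha^2(r,f)$ into the finite head (indices $\le N$), which is a polynomial and contributes $o((1-r)^{-2\beta})$ as $r\to1$, and the tail, which is $\lesssim \varepsilon^2(1-r)^{-2\beta}$ by the sufficiency estimate above; letting $r\to1$ then $\varepsilon\to0$ gives $f\in H^\alpha_{\beta,0}$. Conversely, if $f\in H^\alpha_{\beta,0}$, the lower bound $(1-r_{k})^\beta M_\alpha(r_k,f)\gtrsim n_k^{-\beta}L_{k,\alpha}$ with $r_k=1-1/n_k\to1$ forces $n_k^{-\beta}L_{k,\alpha}\to0$.

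The main obstacle is the two-sided estimate of $M_\alpha(r,f)$: on the unit disc this is the classical Hardy--Littlewood/Kac result, but on $\B$ in $\C^n$ one must quote the homogeneous-polynomial version \cite[Theorem 1]{MM} and be a little careful that the comparison constants depend only on $\alpha,n,c$ and not on $r$ or on the particular polynomials $P_{n_k}$; once that uniform comparison is in hand, the rest is the routine geometric-series bookkeeping sketched above, identical in spirit to the computation already carried out in the proof of Theorem \ref{HadamardNpqs}.
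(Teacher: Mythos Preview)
Your overall strategy matches the paper's---get two-sided control of $M_\alpha(r,f)$ in terms of the blocks $L_{k,\alpha}r^{n_k}$, use $r=1-1/n_k$ for the lower bound, and a geometric-series estimate on $(1-r)^\beta$ times the block sum for the upper bound. The paper does exactly this, including the split into a finite head and a tail for the little-oh version.

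The gap is in the two-sided estimate you take as your starting point. The equivalence
\[
M_\alpha(r,f)\ \simeq\ \Bigl(\sum_k L_{k,\alpha}^2\,r^{2n_k}\Bigr)^{1/2}
\]
is \emph{not} what \cite{MM} gives (that reference is about $\int_0^1(\cdot)(1-r)^\gamma\,dr$ for power series with positive coefficients, which is how it is used in Theorem~\ref{HadamardNpqs}), and in the ball it is not true uniformly in $\alpha$. What the rotation trick (\cite[Proposition~1.4.7]{Rud} together with the one-variable lacunary lemma, as in \cite[Lemma~1]{LS}) actually yields is
\[
M_\alpha^\alpha(r,f)\ \simeq\ \int_{\SSS}\Bigl(\sum_k |P_{n_k}(\xi)|^2 r^{2n_k}\Bigr)^{\alpha/2}\,d\sigma(\xi),
\]
with the $\ell^2$-sum \emph{inside} the sphere integral. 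From here the upper bound you want, $M_\alpha(r,f)\lesssim(\sum_k L_{k,\alpha}^2 r^{2n_k})^{1/2}$, follows by Minkowski only when $\alpha\ge 2$; for $\alpha<2$ Minkowski reverses and the inequality fails in general (think of $Q_k$'s with essentially disjoint supports: then $\|(\sum Q_k^2)^{1/2}\|_\alpha^\alpha=\sum\|Q_k\|_\alpha^\alpha$, and $(\sum x_k^\alpha)^{1/\alpha}\ge(\sum x_k^2)^{1/2}$ when $\alpha<2$). Your parenthetical remark has the cases inverted: it is for $\alpha\le 2$ that one must work with $\sum_k L_{k,\alpha}^\alpha r^{\alpha n_k}$, using $(\sum a_k)^{\alpha/2}\le\sum a_k^{\alpha/2}$; for $\alpha>2$ one uses Minkowski to get the $\ell^2$ bound. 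This case split is precisely what the paper does, and in each case the remaining estimate is $(1-r)^{\gamma}\sum_k n_k^{\gamma} r^{c n_k}\lesssim 1$ (with $\gamma=\alpha\beta$ or $2\beta$), which the paper proves via Stirling/binomial asymptotics and which your dyadic-block argument also handles. Once you correct the upper bound accordingly, the rest of your outline goes through unchanged.
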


\begin{proof} (1) {\it Necessity.}  Let $f \in H^\alpha_\beta$. By \cite[Proposition 1.4.7]{Rud} and \cite[Lemma 1]{LS}, we have, for each $k \in \N$,
\begin{eqnarray} \label{weightedeq01}
\int_\SSS |f(r\xi)|^\alpha d\sigma(\xi)%
&=& \int_\SSS \left(\int_0^{2\pi} |f(r\xi e^{i\theta})|^\alpha \frac{d\theta}{2\pi} \right)d\sigma(\xi) \nonumber\\
&=& \int_\SSS \left( \int_0^{2\pi} \left| \sum_{k=0}^\infty P_{n_k} (r\xi e^{i\theta}) \right|^\alpha \frac{d\theta}{2\pi} \right)d\sigma(\xi) \nonumber\\
& \simeq&  \int_\SSS \left( \sum_{k=0}^\infty |P_{n_k}(\xi)|^2r^{2n_k}\right)^{\alpha/2} d\sigma(\xi)  \\
&\ge& r^{\alpha n_k} \int_\SSS |P_{n_k}(\xi)|^\alpha d\sigma (\xi) \nonumber.
\end{eqnarray}
Hence,
\begin{equation} \label{weighted05}
(1-r)^\beta r^{n_k} L_{k, \alpha} \le (1-r)^\beta M_\alpha(r, f) \le  \|f\|_{H^\alpha_\beta}.
\end{equation}
Choosing $r=1-\frac{1}{n_k}$ and using the well-known inequality $(1+\frac{1}{m})^{m+1} \le 4, m \in \N$, we obtain
$$
\sup_{k \in \N} \frac{L_{k, \alpha}}{n_k^\beta} \le C\|f\|_{H^\alpha_\beta},
$$
as desired.

  {\it Sufficiency.}  Suppose that $\sup_{k \in \N} \frac{L_{k, \alpha}}{n_k^\beta}<\infty$. For a fixed $r \in (0, 1)$, we have
$$
\frac{\sum\limits_{k=0}^\infty r^{\alpha n_k} n_k^{\alpha \beta}}{1-r^\alpha}=\left( \sum_{k=0}^\infty r^{\alpha n_k} n_k^{\alpha \beta}\right) \cdot \left( \sum_{s=0}^\infty r^{\alpha s} \right) =\sum_{t=0}^\infty \left( \sum_{n_j \le t} n_j^{\alpha \beta} \right) r^{\alpha t}.
$$
Since,
$$
\lim_{k \to \infty} \frac{k^{\alpha \beta} k!}{(\alpha \beta) (\alpha \beta+1) \dots (\alpha \beta+k)}=\Gamma (\alpha \beta),  \alpha \beta>0,
$$
we have
$$
\sup_{k \in \N} \left(\frac{k^{\alpha \beta} k!}{(k+\alpha \beta)(k+\alpha \beta-1) \dots (\alpha \beta+1)}\right) \leq M,
$$
where $M$ is some positive number depending on $\alpha$ and $\beta$. Hence, for each $k \ge 0$,
\begin{eqnarray} \label{ineq009}
\frac{k^{\alpha \beta}}{(-1)^k {-\alpha \beta-1 \choose k}}%
&=&\frac{k^{\alpha \beta} k!}{(-1)^k (-\alpha \beta-1)(-\alpha \beta-2) \dots (-\alpha \beta-k)} \nonumber\\
&=&\frac{k^{\alpha \beta} k!}{(k+\alpha \beta)(k+\alpha \beta-1) \dots (\alpha \beta+1)} \leq M,
\end{eqnarray}
where ${\gamma \choose k}=\frac{\gamma(\gamma-1)\dots (\gamma-k+1)}{k!}, \gamma \in \R$.

Moreover, since $f$ is in Hadamard gaps class, there exists a constant $c>1$ such that $n_{j+1} \geq cn_j$ for all $j \ge 0$. Hence
\begin{equation} \label{ineq0091}
 \frac{1}{t^{\alpha \beta}} \left(\sum_{n_j \leq t} n^{\alpha \beta}_j \right) \leq \sum_{m=0}^\infty \left(\frac{1}{c^{\alpha \beta}}\right)^m =\frac{c^{\alpha \beta}}{c^{\alpha \beta}-1}.
\end{equation}
Combining \eqref{ineq009} and \eqref{ineq0091}, we have
$$
\frac{t^{\alpha \beta}}{(-1)^t {-{\alpha \beta}-1 \choose t}} \cdot \frac{1}{t^{\alpha \beta}} \left(\sum_{n_j \leq t} n_j^{\alpha \beta}\right)  \leq \frac{Mc^{\alpha \beta}}{c^{\alpha \beta}-1},
$$
which implies
\begin{equation} \label{ineq010}
\sum_{n_j \leq t} n_j^{\alpha \beta} \le (-1)^t {-{\alpha \beta}-1 \choose t} \frac{Mc^{\alpha \beta}}{c^{\alpha \beta}-1}.
\end{equation}

Hence, by \eqref{ineq010}, we have
$$
\frac{\sum\limits_{k=0}^\infty r^{\alpha n_k} n_k^{\alpha \beta}}{1-r^\alpha} \lesssim \sum_{r=0}^{\infty}(-1)^t {-{\alpha \beta}-1 \choose t} r^{\alpha t}=\frac{1}{(1-r^{\alpha})^{\alpha \beta+1}},
$$
which implies for $\alpha, \beta>0$,
\begin{equation} \label{weighted02}
(1-r^\alpha)^{\alpha \beta} \sum_{k=0}^\infty r^{\alpha n_k} n_k^{\alpha \beta} \lesssim 1.
\end{equation}

Now we consider two different cases.

{\it Case I: $\alpha \in (0, 2]$}. From \eqref{weightedeq01}, \eqref{weighted02} and by using the known inequality
$$
\left(\sum\limits_{k=1}^\infty a_k \right)^q \le \sum_{k=1}^\infty a_k^q,
$$
where $a_k \ge 0, k \in \N, q \in [0, 1]$, we have that
\begin{eqnarray*}
\|f\|_{H^\alpha_\beta}^{\alpha}%
&\simeq& \sup_{0<r<1}  (1-r)^{\alpha \beta} \int_\SSS  \left( \sum_{k=0}^\infty |P_{n_k}(\xi)|^2r^{2n_k}\right)^{\alpha/2} d\sigma(\xi)  \\
&\le& \sup_{0<r<1}  (1-r)^{\alpha \beta} \int_\SSS \left(\sum_{k=0}^\infty |P_{n_k}(\xi)|^\alpha r^{\alpha n_k} \right) d\sigma(\xi)   \\
&=& \sup_{0<r<1}   (1-r)^{\alpha \beta} \cdot \sum_{k=0}^\infty r^{ \alpha n_k} L_{k, \alpha}^\alpha   \\
&\lesssim& \sup_{0<r<1}   (1-r)^{\alpha \beta} \cdot \sum_{k=0}^\infty r^{\alpha n_k} n_k^{\alpha \beta}  \\
&\lesssim& \sup_{0<r<1} \left(\frac{1-r}{1-r^\alpha}\right)^{\alpha \beta}<\infty,
\end{eqnarray*}
which implies the desired result.

{\it Case II: $\alpha>2$.} For each $r \in (0, 1)$, by Minkowski's inequality and \eqref{weighted02}, we have
\begin{eqnarray} \label{weighted10}
\left[\int_\SSS  \left( \sum_{k=0}^\infty |P_{n_k}(\xi)|^2r^{2n_k}\right)^{\alpha/2} d\sigma(\xi)\right]^{\frac{2}{\alpha}}
&\le& \sum_{k=0}^\infty \left( \int_\SSS \left(|P_{n_k}(\xi)|^2 r^{2n_k}\right )^{\frac{\alpha}{2}}d\sigma(\xi) \right)^{\frac{2}{\alpha}} \nonumber \\
&=& \sum_{k=0}^\infty r^{2n_k} L_{k, \alpha}^2 \lesssim \sum_{k=0}^\infty r^{2n_k} n_k^{2\beta}.
\end{eqnarray}
Thus, by \eqref{weightedeq01} and \eqref{weighted02},
\begin{eqnarray} \label{weighted11}
\|f\|_{H^\alpha_\beta}^\alpha%
&\simeq&  \sup_{0<r<1}  (1-r)^{\alpha \beta} \int_\SSS  \left( \sum_{k=0}^\infty |P_{n_k}(\xi)|^2r^{2n_k}\right)^{\alpha/2} d\sigma(\xi)
\nonumber\\
& \le& \sup_{0<r<1}   (1-r)^{\alpha \beta} \left(\sum_{k=0}^\infty r^{2n_k} L_{k, \alpha}^2 \right)^{\frac{\alpha}{2}} \\
&\lesssim& \sup_{0<r<1}  (1-r)^{\alpha \beta} \left(\sum_{k=0}^\infty r^{2n_k} n_k^{2\beta} \right)^{\frac{\alpha}{2}}   \nonumber\\
&\lesssim&  \sup_{0<r<1} \left(\frac{1-r}{1-r^2}\right)^{\alpha \beta}<\infty \nonumber,
\end{eqnarray}
and hence $f \in H_\beta^\alpha$.

\medskip

(2)  {\it Necessity.}  Let $f \in H^\alpha_{\beta, 0}$. Then for every $\varepsilon>0$, there is a $\del>0$, such that
\begin{equation} \label{weighted06}
(1-r)^\beta M_\alpha(r, f)<\varepsilon
\end{equation}
whenever $\del<r<1$. From the first inequality in \eqref{weighted05} and \eqref{weighted06}, we have
$$
(1-r)^\beta r^{n_k} L_{k, \alpha}<\varepsilon
$$
for each $k \in \N$ and $r \in (\del, 1)$. Choosing $r=1-\frac{1}{n_k}$, where $n_k>\frac{1}{1-\del}$, we obtain
$$
\frac{L_{k, \alpha}}{n_k^\beta}<4\varepsilon.
$$
From this and since $\varepsilon$ is an arbitrary positive number, it follows that
$$
\lim_{k \to \infty}\frac{L_{k, \alpha}}{n_k^\beta}=0.
$$

{\it  Sufficiency. } Suppose $\lim_{k \to \infty} \frac{L_{k, \alpha}}{n_k^\beta}=0$. Take and fix a $\varepsilon>0$, there is a $k_0 \in \N$ such that
$$
L_{k, \alpha}<\varepsilon^{\frac{1}{\alpha}} n_k^\beta, \quad \textrm{for} \ k \ge k_0.
$$
Fix the $k_0$ chosen above. Then, there exists a $\del>0$, such that when $\del<r<1$,
$$
(1-r)^{\alpha \beta} \sum_{k=0}^{k_0} L_{k, \alpha}^\alpha<\varepsilon.
$$

Again, we consider two cases as the proof in part (1).

{\it Case I: $\alpha \in (0, 2]$.} By \eqref{weighted05} and the proof in part (1), for $r \in (\del, 1)$, we have
\begin{eqnarray*}
(1-r)^{\alpha \beta} M^\alpha_\alpha(r, f)%
&\simeq& (1-r)^{\alpha \beta} \int_\SSS \left(\sum_{k=0}^\infty |P_{n_k}(\xi)|^2 r^{2n_k}\right)^{\alpha/2} d\sigma(\xi)\\
&\le& (1-r)^{\alpha \beta} \cdot \left[ \left(\sum_{k=0}^{k_0}+\sum_{k=k_0+1}^\infty\right) r^{\alpha n_k} L_{k, \alpha}^\alpha\right]\\
&\le& \varepsilon+(1-r)^{\alpha \beta} \cdot \sum_{k=k_0+1}^\infty r^{\alpha n_k} L_{k, \alpha}^\alpha \\
&\le& \varepsilon+\varepsilon (1-r)^{\alpha \beta} \cdot \sum_{k=k_0+1}^\infty r^{\alpha n_k} n_k^{\alpha \beta}\\
&\lesssim& \varepsilon \cdot \left(1+\sup_{0<r<1}  \left(\frac{1-r}{1-r^2}\right)^{\alpha \beta} \right) \lesssim \varepsilon,
\end{eqnarray*}
whichi implies
$$
\lim_{r \to 1^{-}} (1-r)^{\beta} M_\alpha(r, f)=0
$$
and hence $f \in H^\alpha_{\beta, 0}$ as desired.

{\it Case II:  $\alpha>2$.} The implication for case $p \ge 2$ follows similarly, from \eqref{weighted10}, \eqref{weighted11}
and the known inequality
$$
(a+b)^{\frac{\alpha}{2}} \le 2^{\frac{\alpha}{2}-1} \left(a^{\frac{\alpha}{2}}+b^{\frac{\alpha}{2}}\right), \quad a, b \ge 0.
$$
Hence, we omit the detail here.
\end{proof}

As a corollary of Proposition \ref{weightedHadamard}, we can show that when $\frac{q}{p}-\frac{n}{\alpha}>0$, the inclusion
in Proposition \ref{Hardyembedding} is strict.

\begin{cor}
Let $p, q, s$ and $\alpha$ satisfy the condition in Proposition \ref{Hardyembedding}. If
$$
\frac{q}{p}-\frac{n}{\alpha}>0,
$$
then the inclusion in Proposition \ref{Hardyembedding} is strict.
\end{cor}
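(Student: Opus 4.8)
The plan is to prove strictness by exhibiting, under the hypothesis $\frac{q}{p}-\frac{n}{\alpha}>0$, a Hadamard gap series that belongs to $\calN(p,q,s)$ but not to the weighted Hardy space $H^{\alpha}_{\beta}$ with $\beta=\frac{q}{p}-\frac{n}{\alpha}$; this shows the inclusion $H^{\alpha}_{\beta}\subseteq\calN(p,q,s)$ of Proposition \ref{Hardyembedding} is proper. The candidate is
$$
f(z)=\sum_{k=0}^{\infty}2^{k\gamma}W_{2^{k}}(z),
$$
where $\{W_m\}$ is the sequence of homogeneous polynomials from \cite{DU2} (so $\deg W_m=m$, $\sup_{\SSS}|W_m|=1$, and $\int_{\SSS}|W_m|^{t}\,d\sigma\ge C(t,n)>0$ for every $t>0$), and $\gamma$ is a real number to be chosen in the open interval $\left(\beta,\ \frac{ns+q-n}{p}\right)$. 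Note $f$ has Hadamard gaps with $n_k=2^k$.

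First I would check that this interval is nonempty. A direct computation gives
$$
\frac{ns+q-n}{p}-\beta=\frac{ns+q-n}{p}-\frac{q}{p}+\frac{n}{\alpha}=\frac{n\bigl(\alpha(s-1)+p\bigr)}{p\alpha},
$$
so the interval is nonempty precisely when $\alpha(1-s)<p$. When $s\ge 1$ this is automatic, and when $0<s<1$ it is exactly the standing assumption $\alpha<\frac{p}{1-s}$ in Proposition \ref{Hardyembedding}(1). Thus such a $\gamma$ exists in both cases, and I fix one; note $\gamma>\beta>0$.

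Next I would verify membership in $\calN(p,q,s)$. Since $f$ is a Hadamard gap series of the form $\sum_k a_kW_{n_k}$ with $a_k=2^{k\gamma}$ and $n_k=2^k$, Corollary \ref{RWsequence} reduces this to the convergence of
$$
\sum_{k=0}^{\infty}\frac{1}{2^{k(ns+q-n)}}\Bigl(\sum_{2^k\le n_j<2^{k+1}}|a_j|^{p}\Bigr)=\sum_{k=0}^{\infty}2^{k\left(p\gamma-(ns+q-n)\right)},
$$
which is a convergent geometric series because $p\gamma<ns+q-n$. Hence $f\in\calN(p,q,s)$ (in fact $f\in\calN^{0}(p,q,s)$). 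For the Hardy side, write $f=\sum_k P_{n_k}$ with $P_{2^k}=2^{k\gamma}W_{2^k}$; then
$$
L_{k,\alpha}=\Bigl(\int_{\SSS}|P_{2^k}(\xi)|^{\alpha}\,d\sigma(\xi)\Bigr)^{1/\alpha}=2^{k\gamma}\Bigl(\int_{\SSS}|W_{2^k}(\xi)|^{\alpha}\,d\sigma(\xi)\Bigr)^{1/\alpha}\simeq 2^{k\gamma},
$$
since $\sup_{\SSS}|W_{2^k}|=1$ forces the $L^{\alpha}(\sigma)$-norm to lie in $[\,C(\alpha,n)^{1/\alpha},1\,]$ (here one uses $\alpha\ge p$, so the lower bound also follows from monotonicity of $L^{r}(\sigma)$-norms together with the property of $\{W_m\}$). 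By Proposition \ref{weightedHadamard}, $f\in H^{\alpha}_{\beta}$ if and only if $\sup_{k}\frac{L_{k,\alpha}}{n_k^{\beta}}<\infty$, i.e. $\sup_{k}2^{k(\gamma-\beta)}<\infty$; since $\gamma>\beta$ this supremum is infinite, so $f\notin H^{\alpha}_{\beta}$. Combining the two assertions yields $H^{\alpha}_{\beta}\subsetneq\calN(p,q,s)$, as desired.

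I do not anticipate a genuine obstacle: the only point requiring care is the elementary inequality $\beta<\frac{ns+q-n}{p}$, whose verification is precisely where the upper restriction on $\alpha$ in Proposition \ref{Hardyembedding} is used (it is what leaves room for the exponent $\gamma$); the rest is an immediate application of Corollary \ref{RWsequence} and Proposition \ref{weightedHadamard}.
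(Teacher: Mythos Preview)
Your approach is the same as the paper's---construct a Hadamard gap series $\sum a_k W_{2^k}$ using the Ryll--Wojtaszczyk polynomials, then verify membership in $\calN(p,q,s)$ via Corollary \ref{RWsequence} and non-membership in $H^\alpha_\beta$ via Proposition \ref{weightedHadamard}. Your unification of the cases via a single parameter $\gamma\in(\beta,\frac{ns+q-n}{p})$ is a nice streamlining of the paper's separate Cases II and III.

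There is one gap, however: Corollary \ref{RWsequence} (and Theorem \ref{HadamardNpqs}) is stated only for $\max\{0,1-\frac{q}{n}\}<s\le 1$, so you cannot invoke it when $s>1$. This restriction is not merely cosmetic: the key estimate \eqref{Hadamard02} in the proof of Theorem \ref{HadamardNpqs} genuinely fails for $s>1$, and indeed for $s>1$ one has $\calN(p,q,s)=A^{-q/p}(\B)$, for which the correct gap-series criterion is $\sup_k |a_k|\,2^{-kq/p}<\infty$, i.e.\ $\gamma\le q/p$, \emph{not} $\gamma<\frac{ns+q-n}{p}$. Since $\frac{ns+q-n}{p}>\frac{q}{p}$ when $s>1$, your interval $(\beta,\frac{ns+q-n}{p})$ contains values $\gamma>q/p$ for which $f\notin\calN(p,q,s)$. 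The fix is immediate: either treat $s>1$ separately as the paper does (using the $A^{-q/p}$ characterization from \cite[Theorem 2.5]{HL} with, say, $\gamma=q/p$), or simply choose $\gamma\in\bigl(\beta,\min\{\frac{q}{p},\frac{ns+q-n}{p}\}\bigr)$, which is nonempty in all cases since $\beta<q/p$.
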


\begin{proof}
First we take a sequence of homogeneous polynomials $\{W_k\}_{k \in \N}$ satisfying $\deg(W_k)=k$,
$$
\sup_{\xi \in \SSS} |W_k(\xi)|=1, \quad \textrm{and} \quad \int_\SSS |W_k(\xi)|^pd\sigma(\xi) \ge C(p, n)>0.
$$
Note that since $p \le \alpha$, by H\"older's inequality, we have
$$
\int_\SSS |W_k(\xi)|^pd\sigma(\xi) \le \left( \int_\SSS |W_k(\xi)|^\alpha d\sigma(\xi) \right)^{\frac{p}{\alpha}},
$$
that is
\begin{equation} \label{weighted20}
\int_\SSS |W_k(\xi)|^\alpha d\sigma(\xi) \ge \left(\int_\SSS |W_k(\xi)|^pd\sigma(\xi) \right)^{\frac{\alpha}{p}} \ge C^{\frac{\alpha}{p}}(p, n).
\end{equation}

Note that when $s>1$, $\calN(p, q, s)=A^{-\frac{q}{p}}$, and hence we consider three cases.

{\it Case I: $s>1$.} Consider the series $f_1(z)=\sum\limits_{k=0}^\infty 2^{\frac{kq}{p}}W_{2^k}(z)$, which by \cite[Theorem 2.5]{HL}, belongs to $A^{-\frac{q}{p}}$. On the other hand, for each $k \in \N$, by \eqref{weighted20},
$$
\frac{L_{k, \alpha}}{2^{k\left(\frac{q}{p}-\frac{n}{\alpha} \right)}}=\frac{\left(\int_\SSS \left|2^{\frac{kq}{p}}W_{2^k}(z)\right|^\alpha d\sigma(\xi) \right)^{\frac{1}{\alpha}}}{2^{k\left(\frac{q}{p}-\frac{n}{\alpha} \right)}} \ge C^{\frac{1}{p}}(p, n) 2^{\frac{kn}{\alpha}}.
$$
Hence, we have
$$
\sup_{k \ge 0} \frac{L_{k, \alpha}}{2^{k\left(\frac{q}{p}-\frac{n}{\alpha} \right)}}=\infty,
$$
which implies that $f_1 \notin H^\alpha_{\frac{q}{p}-\frac{n}{\alpha}}$.

{\it Case II: $s=1$.} Take any $\varepsilon \in \left(0, \frac{np}{\alpha}\right)$ and consider the series $f_2(z)=\sum\limits_{k=0}^\infty 2^{\frac{k(q-\varepsilon)}{p}} W_{2^k}(z)$. On one hand,  we have
$$
\sum_{k=0}^\infty \frac{1}{2^{kq}} \cdot \left|2^{\frac{k(q-\varepsilon)}{p}}\right|^p=\sum_{k=0}^\infty \frac{1}{2^{k\varepsilon}}<\infty,
$$
which by  Corollary \ref{RWsequence}, implies that $f_2 \in \calN(p, q, 1)$. However, for each $k \in \N$, by \eqref{weighted20},
$$
\frac{L_{k, \alpha}}{2^{k\left(\frac{q}{p}-\frac{n}{\alpha} \right)}}=\frac{\left(\int_\SSS \left| 2^{\frac{k(q-\varepsilon)}{p}} W_{2^k}(z)\right|^\alpha d\sigma(\xi) \right)^{\frac{1}{\alpha}}}{2^{k\left(\frac{q}{p}-\frac{n}{\alpha} \right)}} \ge C^{\frac{1}{p}}(p, n) 2^{k \left(\frac{n}{\alpha}-\frac{\varepsilon}{p}\right)}.
$$
Hence, we have
$$
\sup_{k \ge 0} \frac{L_{k, \alpha}}{2^{k\left(\frac{q}{p}-\frac{n}{\alpha} \right)}}=\infty,
$$
which implies that $f_2 \notin H^\alpha_{\frac{q}{p}-\frac{n}{\alpha}}$.

{\it Case III: $0<s<1$.} Since $\alpha<\frac{p}{1-s}$, we have $\frac{s-1}{p}+\frac{1}{\alpha}>0$.  Take any $\varepsilon \in \left(0,  n(s-1)+\frac{np}{\alpha} \right)$ and consider the series $f_3(z)=\sum\limits_{k=0}^\infty 2^{\frac{k(ns+q-n-\varepsilon)}{p}} W_{2^k}(z)$. On one hand, we have
$$
\sum_{k=0}^\infty \frac{1}{2^{k(ns+q-n)}} \left|2^{\frac{k(ns+q-n-\varepsilon)}{p}}\right|^p=
\sum_{k=0}^\infty \frac{1}{2^{k\varepsilon}}<\infty,
$$
which by  Corollary \ref{RWsequence}, implies that $f_3 \in \calN(p, q, s)$. However, for each $k \in \N$, by \eqref{weighted20},

$$
\frac{L_{k, \alpha}}{2^{k\left(\frac{q}{p}-\frac{n}{\alpha} \right)}}=\frac{\left(\int_\SSS \left|2^{\frac{k(ns+q-n-\varepsilon)}{p}} W_{2^k}(z)\right|^\alpha d\sigma(\xi) \right)^{\frac{1}{\alpha}}}{2^{k\left(\frac{q}{p}-\frac{n}{\alpha} \right)}} \ge C^{\frac{1}{p}}(p, n) 2^{k \left(\frac{n(s-1)}{p}+\frac{n}{\alpha}-\frac{\varepsilon}{p}\right)}.
$$
Hence, we have
$$
\sup_{k \ge 0} \frac{L_{k, \alpha}}{2^{k\left(\frac{q}{p}-\frac{n}{\alpha} \right)}}=\infty,
$$
which implies that $f_3 \notin H^\alpha_{\frac{q}{p}-\frac{n}{\alpha}}$.
\end{proof}


\medskip

\subsection{Hadamard products}


An important application of those Carleson property of $\calN(p, q, s)$-type spaces is to study Hadamard product in them.

We first set up some basic notations which shall be used in this subsection. For $z=(z_1, \dots, z_n) \in \C^n, \eta=(\eta_1, \dots, \eta_n) \in \Z^n_+$, $\xi=(\xi_1, \dots, \xi_n) \in \C^n$, we let
\begin{eqnarray*}
&& \overline{z}=(\overline{z_1}, \dots, \overline{z_n}), \quad z^\eta=z_1^{\eta_1} \cdots z_n^{\eta_n}, \\
&& |\eta|=\eta_1+\dots+\eta_n, \quad \eta!=\eta_1! \cdot \eta_n!, \\
&& \partial_j=\frac{\partial}{\partial_j},  \ 1 \le j \le n, \quad \partial^{\eta}=\partial_1^{\eta_1} \dots \partial_n^{\alpha_n}.
 \end{eqnarray*}
The Bloch space on the unit ball, denoted by $\calB$, is defined as the space of $H(\B)$ for which
$$
\|f\|_{\calB}=|f(0)|+\sup_{z \in \B} (1-|z|^2)|\nabla f(z)|<\infty.
$$

Since $\B$ is a complete Rheinhardt domain in $\C^n$, i.e. $z \in \B$ implies $z \cdot \xi=(z_1\xi_1, \dots, z_n\xi_n) \in \B$ for every $\xi \in \overline{U}$, where $U$ is the unit polydisc in $\C^n$. Then any $f \in H(\B)$ has a unique power series
$$
f(z)=\sum_\eta a_\eta z^\eta, \quad  z \in \B,
$$
with $a_\eta=\frac{\partial^\eta f(0)}{\eta !}, \eta \in \Z^n_+$. So $H(\B)$ may be regarded as a space of multi-index sequence $\{a_\eta\}$.

For $f(z)=\sum_\eta a_\eta z^\eta, g(z)=\sum_\eta b_\eta z^\eta \in H(\B)$ and $d>0$, the $d$-Hadamard product of $f$ and $g$ is defined as follows
 (see, e.g., \cite{BL, LW}).
$$
(f*g)_d(z)=\sum_\eta \omega_\eta(d) a_\eta b_\eta z^\eta,
$$
where
$$
\omega_\eta (d)=\frac{\eta! \Gamma(n+d)}{\Gamma(n+d+|\eta|)}, \quad \eta \in \Z^n_+.
$$

Let us denote $H(U)$ the collection of all holomorphic functions in $U$ and $H^\infty(U)$ the Banach space consisting of all bounded holomorphic functions in $U$. The interesting feature of this product is that it is lying not only in $H(\B)$ but also in $H(U)$ and for $f, g \in H(\B)$ and any $0 \le r<1$,
\begin{equation} \label{Hintegral}
(f*g)_d(rz)=\langle f_r, g^*_{\overline{z}} \rangle_d:=\int_\B f(rw) g(z \cdot \overline{w})dV_{d-1}(w),
\end{equation}
for any $z \in U$, where
$$
f_r(z)=f(rz), \quad  g^*(z)=\overline{g(\overline{z})} \quad \textrm{and} \quad f_\xi(z)=f(\xi \cdot z), \ \xi \in \B,
$$
(see, e.g., \cite[Proposition 3.2]{BL}).

The following lemma proved in \cite[Proposition 3.3]{BL} plays an important role in this subsection.

\begin{lem}  \label{Hproduct}
Let $d>0$, $1 \le p_1, p_2, p_3 \le \infty$ with $1+\frac{1}{p_3}=\frac{1}{p_1}+\frac{1}{p_2}$, and let $f, g \in H(\B)$. Then $(f*g)_d \in H(U)$ with $$\|(f*g)_d\|_{p_3, d-1} \le \|f\|_{p_1, d-1} \|g\|_{p_2, d-1}.$$
 Moreover, if also $p_3=\infty$, i.e. $p_2=p'_1=p_1/(p_1-1)$, then $\|(f*g)_d\|_\infty \le \|f\|_{p_1, {d-1}} \|g\|_{p'_1, {d-1}}$. In particular, $(f*g)_d \in H^\infty (U)$ if $f \in A^{p_1}_{d-1}$ and $g \in A^{p'_1}_{d-1}$.
\end{lem}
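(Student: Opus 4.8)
The plan is to derive every estimate from the integral representation \eqref{Hintegral}: for $z\in U$ and $0\le r<1$,
\[
(f*g)_d(rz)=\int_\B f(rw)\,g(z\cdot\overline w)\,dV_{d-1}(w),
\]
whose integrand is bounded on $\B$ since $r<1$ and $|z\cdot\overline w|\le(\max_j|z_j|)\,|w|<1$; in particular the right-hand side is holomorphic in $z$ on $U$, which already gives $(f*g)_d\in H(U)$. The idea is to prove the norm inequalities for the dilates $(f*g)_d(r\,\cdot\,)$ with a bound independent of $r$, and then let $r\to1^-$.

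The one genuinely analytic ingredient is the monotonicity of the weighted Bergman norm under coordinatewise contractions: for every $\zeta\in\overline U$ and $h\in H(\B)$,
\begin{equation}\label{dilest}
\|h_\zeta\|_{p_2,d-1}\le\|h\|_{p_2,d-1},\qquad h_\zeta(w):=h(\zeta\cdot w),
\end{equation}
of which the radial special case is standard. To prove \eqref{dilest}, observe that a diagonal torus rotation preserves $dV_{d-1}$, so one may take $\zeta=(t_1,\dots,t_n)\in[0,1]^n$; contracting one coordinate at a time, on each complex line parallel to the $j$-th coordinate axis the weight $(1-|w|^2)^{d-1}$ is a radial function of $w_j$, and since $|h|^{p_2}$ is subharmonic its circular means are nondecreasing, which gives the one-variable estimate slice by slice; Fubini and induction finish it (this is essentially \cite[\S3]{BL}). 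Writing $g(z\cdot\overline w)=g_z(\overline w)$ and using that $w\mapsto\overline w$ preserves $dV_{d-1}$, \eqref{dilest} yields, for every fixed $z\in\overline U$,
\[
\int_\B|g(z\cdot\overline w)|^{p_2}\,dV_{d-1}(w)=\|g_z\|_{p_2,d-1}^{p_2}\le\|g\|_{p_2,d-1}^{p_2},
\]
and symmetrically $\int_\B|g(z\cdot\overline w)|^{p_2}\,dV_{d-1}(z)\le\|g\|_{p_2,d-1}^{p_2}$ for every fixed $w\in\overline U$ (this time via $z\mapsto g_{\overline w}(z)$).

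With these bounds, the endpoint $p_3=\infty$ (so $p_2=p_1'$) is immediate: H\"older in $w$ gives, for every $z\in U$ and $r<1$,
\[
|(f*g)_d(rz)|\le\|f_r\|_{p_1,d-1}\Big(\int_\B|g(z\cdot\overline w)|^{p_1'}\,dV_{d-1}(w)\Big)^{1/p_1'}\le\|f\|_{p_1,d-1}\,\|g\|_{p_1',d-1},
\]
and letting $r\to1^-$ and then taking the supremum over $z\in U$ gives $\|(f*g)_d\|_\infty\le\|f\|_{p_1,d-1}\|g\|_{p_1',d-1}$, hence the ``in particular'' clause. For general $1\le p_3<\infty$ I would invoke the generalized Young inequality: the kernel $K(z,w):=g(z\cdot\overline w)$ satisfies $\sup_z\|K(z,\cdot)\|_{p_2,d-1}\le\|g\|_{p_2,d-1}$ and $\sup_w\|K(\cdot,w)\|_{p_2,d-1}\le\|g\|_{p_2,d-1}$ by the two displays above, and the exponents obey $1+\tfrac1{p_3}=\tfrac1{p_1}+\tfrac1{p_2}$, so the operator $\varphi\mapsto\int_\B\varphi(w)K(\cdot,w)\,dV_{d-1}(w)$ maps $L^{p_1}(dV_{d-1})$ into $L^{p_3}(dV_{d-1})$ with norm $\le\|g\|_{p_2,d-1}$. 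Applying this to $\varphi=f_r$ and using $\|f_r\|_{p_1,d-1}\le\|f\|_{p_1,d-1}$ gives $\|(f*g)_d(r\,\cdot\,)\|_{p_3,d-1}\le\|f\|_{p_1,d-1}\|g\|_{p_2,d-1}$, and Fatou's lemma as $r\to1^-$ finishes the proof. (The proof of the generalized Young estimate is a single three-exponent H\"older inequality in $w$, with exponents $p_3,\,\tfrac{p_1p_3}{p_3-p_1},\,\tfrac{p_2p_3}{p_3-p_2}$, followed by Fubini in $z$, and it uses precisely the two mixed-norm bounds on $K$ noted above.)

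The main obstacle is the contraction estimate \eqref{dilest}: it is the only step that genuinely uses holomorphy, although it reduces to one-variable subharmonicity once one notices the slicewise-radial structure of $(1-|w|^2)^{d-1}$. A secondary point requiring care is the interplay between the polydisc $U$ (where $z$ ranges in \eqref{Hintegral}) and the ball $\B$ (over which one integrates): at $p_3=\infty$ the bound is obtained on all of $U$, whereas for finite $p_3$ the relevant weighted Bergman norm of $(f*g)_d$ is the one taken over $\B\subseteq U$.
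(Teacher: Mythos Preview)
The paper does not supply its own proof of this lemma: it is quoted verbatim as \cite[Proposition 3.3]{BL} and used as a black box. Your proposal is a correct self-contained proof, and the route you take---the integral representation \eqref{Hintegral}, the coordinatewise contraction estimate $\|h_\zeta\|_{p_2,d-1}\le\|h\|_{p_2,d-1}$ for $\zeta\in\overline U$ obtained by slicewise one-variable subharmonicity, and then the generalized Young inequality for the kernel $K(z,w)=g(z\cdot\overline w)$---is exactly the argument in Burbea--Li \cite[\S3]{BL} that the paper is citing. Two minor remarks: your three-exponent H\"older step tacitly assumes $p_1<p_3$ and $p_2<p_3$, so the endpoint cases $p_2=1$ (where $p_1=p_3$) and $p_1=1$ should be handled separately by Minkowski's integral inequality, which is routine; and it is worth stating explicitly that for finite $p_3$ the Young argument is run with both $z$ and $w$ ranging over $(\B,dV_{d-1})$, which is consistent because $\overline w\in\B\subset\overline U$ so the contraction estimate applies in both variables.
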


\begin{prop}
Let $p, r \ge 1, q, d>0$ and $s>\max\left\{0, 1-\frac{q}{n}\right\}$. Then
\begin{enumerate}
\item $(f*g)_d \in \calN(p, q, s)$ if $f \in A^r_{d-1}$ and $g \in A^{r'}_{d-1}$, where $r>1$ and $r'=\frac{r}{r-1}$;
\item $(f*g)_d \in \calN(p, q, s)$ if $f \in A^1_{d-1}$ and $g \in \calB$.
\end{enumerate}
\end{prop}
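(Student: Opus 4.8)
The plan is to derive both parts from one elementary observation. Since the standing hypothesis $s>\max\{0,1-q/n\}$ is equivalent to $q+ns-n>0$, i.e.\ $q+ns-n-1>-1$, \cite[Theorem 1.12]{Zhu} gives
\[
\sup_{a\in\B}(1-|a|^2)^{ns}\int_\B\frac{(1-|z|^2)^{q+ns-n-1-\eps}}{|1-\langle a,z\rangle|^{2ns}}\,dV(z)<\infty
\]
for every sufficiently small $\eps\ge0$ (the same computation appears in the proof of Theorem \ref{thm01}). Since $(1-|z|^2)^q(1-|\Phi_a(z)|^2)^{ns}\,d\lambda(z)=(1-|a|^2)^{ns}(1-|z|^2)^{q+ns-n-1}|1-\langle a,z\rangle|^{-2ns}\,dV(z)$ and $\big(1+\log\tfrac1{1-|z|^2}\big)^p\lesssim(1-|z|^2)^{-\eps}$, it follows that \emph{every} $h\in H(\B)$ with at most logarithmic growth, $|h(z)|\lesssim 1+\log\tfrac1{1-|z|^2}$, already lies in $\calN(p,q,s)$; in particular $H^\infty(\B)\subseteq\calN(p,q,s)$ and $\calB\subseteq\calN(p,q,s)$. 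So the task reduces to showing that $(f*g)_d$ has at most logarithmic growth on $\B$.

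For \textbf{(1)} I would apply Lemma \ref{Hproduct} with $(p_1,p_2,p_3)=(r,r',\infty)$: since $\tfrac1r+\tfrac1{r'}=1$, this is exactly the case $1+\tfrac1{p_3}=\tfrac1{p_1}+\tfrac1{p_2}$ with $p_3=\infty$ and $p_2=p_1'$, so $(f*g)_d\in H^\infty(U)$. As $\B\subset U$, the restriction to $\B$ is bounded, hence $(f*g)_d\in H^\infty(\B)\subseteq\calN(p,q,s)$. This part is essentially immediate.

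For \textbf{(2)} Lemma \ref{Hproduct} is not available since $g$ is only Bloch, not bounded, so I would instead use the integral representation \eqref{Hintegral}, $(f*g)_d(rz)=\int_\B f(rw)\,g(z\cdot\bar w)\,dV_{d-1}(w)$. The standard Bloch growth bound gives $|g(\zeta)|\lesssim\|g\|_{\calB}\big(1+\log\tfrac1{1-|\zeta|^2}\big)$; combined with the elementary inequality $|z\cdot\bar w|^2=\sum_j|z_j|^2|w_j|^2\le\big(\sum_j|z_j|^2\big)\big(\sum_j|w_j|^2\big)=|z|^2|w|^2\le|z|^2$ for $z,w\in\B$, this bounds $|g(z\cdot\bar w)|$ by $C\|g\|_{\calB}\big(1+\log\tfrac1{1-|z|^2}\big)$ \emph{uniformly} in $w\in\B$. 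Pulling this factor out of the integral and using $\|f_r\|_{1,d-1}\le\|f\|_{1,d-1}$ (monotonicity of the integral means $M_1(\cdot,f)$) yields
\[
|(f*g)_d(rz)|\lesssim\|f\|_{1,d-1}\,\|g\|_{\calB}\Big(1+\log\tfrac1{1-|z|^2}\Big),\qquad 0<r<1,\ z\in\B,
\]
and letting $r\to1$ (using pointwise convergence $(f*g)_d(rz)\to(f*g)_d(z)$ on $\B$, valid because $(f*g)_d\in H(U)$) shows that $(f*g)_d$ has at most logarithmic growth, whence the first paragraph concludes.

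The step I expect to be the main obstacle is Part (2): one must genuinely establish that $(f*g)_d$ grows no faster than $\log\tfrac1{1-|z|^2}$, which hinges on the uniform-in-$w$ Bloch estimate, the contraction bound $|z\cdot\bar w|\le|z|$, and the dilation inequality $\|f_r\|_{1,d-1}\le\|f\|_{1,d-1}$ legitimizing the passage $r\to1$. The two appeals to \cite[Theorem 1.12]{Zhu} and all of Part (1) are routine.
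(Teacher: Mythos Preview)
Your argument is correct. Part~(1) is identical to the paper's: both invoke Lemma~\ref{Hproduct} with $p_3=\infty$ to land in $H^\infty(U)\subset H^\infty(\B)\subset\calN(p,q,s)$.

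Part~(2) takes a genuinely different route. The paper simply cites \cite[Theorem~1]{LW} to obtain the stronger bound $|(f*g)_d(z)|\lesssim\|g\|_{\calB}\|f\|_{1,d-1}$, i.e.\ $(f*g)_d\in H^\infty(\B)$, and then concludes as in Part~(1). You instead give a self-contained argument via the integral representation \eqref{Hintegral}, the Bloch growth estimate, and the contraction $|z\cdot\bar w|\le|z|$, obtaining only logarithmic growth; you then compensate by proving the auxiliary fact that functions of logarithmic growth lie in $\calN(p,q,s)$ whenever $q+ns>n$, using the $\eps$-perturbation of the standard integral estimate. The paper's route is shorter but relies on an external reference and actually yields a sharper intermediate statement (boundedness rather than logarithmic growth). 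Your route avoids the citation entirely and is fully elementary; the price is the extra step verifying that logarithmic growth suffices, which is straightforward but not recorded explicitly in the paper.
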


\begin{proof} (1) Since $s>1-\frac{q}{n}$, by Proposition \ref{prop02} and Lemma \ref{Hproduct}, we have
$$
\|(f*g)_d\| \lesssim \|(f*g)_d\|_\infty \le \|f\|_{r, d-1} \|g\|_{r', d-1}<\infty,
$$
which implies the desired result.

(2) By the proof of \cite[Theorem 1]{LW}, we have
$$
|(f*g)_d(z)| \lesssim \|g\|_\calB \|f\|_{1, d-1},
$$
which, again, by the condition $s>1-\frac{q}{n}$, implies
$$
\|(f*g)_d\| \lesssim \|(f*g)_d\|_\infty \lesssim \|g\|_\calB \|f\|_{1, d-1}.
$$
Hence the desired result follows.
\end{proof}

By using the embedding relation between $\calN(p, q, s)$ and $A^k_\rho$, we have the following result.

\begin{prop} \label{NpqsHadamard}
Let $p, q, s, \rho$ satisfy the conditions in Proposition \ref{Bergmanembedding}. Then for $r_1, r_2 \ge 1$ with satisfying
$$
1+\frac{q}{p(n+\rho)}=\frac{1}{r_1}+\frac{1}{r_2},
$$
we have $(f*g)_\rho \in \calN(p, q, s)$ if $f \in A^{r_1}_{\rho-1}$ and $g \in A^{r_2}_{\rho-1}$.
\end{prop}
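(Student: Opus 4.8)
The plan is to combine the Hadamard‑product bound of Lemma \ref{Hproduct} with the Bergman‑space embedding of Proposition \ref{Bergmanembedding}; the statement is in fact tailored so that the output space of the former coincides with the input space of the latter, so the proof reduces to matching up exponents.

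First I would apply Lemma \ref{Hproduct} with $d=\rho$, $p_1=r_1$, $p_2=r_2$. Define $p_3$ by $1+\frac{1}{p_3}=\frac{1}{r_1}+\frac{1}{r_2}$; the hypothesis $1+\frac{q}{p(n+\rho)}=\frac{1}{r_1}+\frac{1}{r_2}$ then forces $\frac{1}{p_3}=\frac{q}{p(n+\rho)}$, i.e. $p_3=\frac{p(n+\rho)}{q}$. Before invoking the lemma one must check $1\le p_3\le\infty$: the conditions of Proposition \ref{Bergmanembedding} include $\rho>\max\{0,q-n\}$, whence $n+\rho>q$ and therefore $p_3=\frac{p(n+\rho)}{q}>p\ge1$. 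Lemma \ref{Hproduct} then gives $(f*g)_\rho\in H(U)$ together with
$$
\|(f*g)_\rho\|_{\frac{p(n+\rho)}{q},\,\rho-1}\le \|f\|_{r_1,\rho-1}\,\|g\|_{r_2,\rho-1}<\infty,
$$
the right‑hand side being finite by $f\in A^{r_1}_{\rho-1}$ and $g\in A^{r_2}_{\rho-1}$. Since $\B\subseteq U$, the restriction of $(f*g)_\rho$ to $\B$ is holomorphic on $\B$, and the displayed inequality says precisely that this restriction lies in $A^{\frac{p(n+\rho)}{q}}_{\rho-1}(\B)$.

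Finally I would invoke Proposition \ref{Bergmanembedding}: under the stated hypotheses on $p,q,s,\rho$ (which are exactly ``the conditions in Proposition \ref{Bergmanembedding}'') one has $A^{\frac{p(n+\rho)}{q}}_{\rho-1}(\B)\subseteq\calN(p,q,s)$ with $\|\cdot\|\lesssim\|\cdot\|_{\frac{p(n+\rho)}{q},\rho-1}$. Chaining the two estimates yields
$$
\|(f*g)_\rho\|\lesssim \|(f*g)_\rho\|_{\frac{p(n+\rho)}{q},\rho-1}\le \|f\|_{r_1,\rho-1}\,\|g\|_{r_2,\rho-1}<\infty,
$$
so $(f*g)_\rho\in\calN(p,q,s)$, as claimed.

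There is no real obstacle here: the substantive content is already packaged in Lemma \ref{Hproduct} and Proposition \ref{Bergmanembedding}, and what remains is bookkeeping — identifying $p_3$ with $\frac{p(n+\rho)}{q}$, checking $p_3\ge1$, and the routine observation that a function holomorphic on the polydisc $U$ restricts to one holomorphic on the smaller ball $\B$, so that the $\B$‑Bergman norm appearing in Lemma \ref{Hproduct} (and the membership $(f*g)_\rho\in\calN(p,q,s)\subseteq H(\B)$) is meaningful. For completeness one may also note that $\frac{1}{r_1}+\frac{1}{r_2}=1+\frac{q}{p(n+\rho)}\in(1,2)$, again using $q<n+\rho$, so the hypothesis on $r_1,r_2\ge1$ is non‑vacuous.
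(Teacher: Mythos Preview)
Your proposal is correct and follows exactly the paper's approach: apply Lemma \ref{Hproduct} with $p_3=\frac{p(n+\rho)}{q}$ to bound $\|(f*g)_\rho\|_{\frac{p(n+\rho)}{q},\rho-1}$, then invoke Proposition \ref{Bergmanembedding} to pass to the $\calN(p,q,s)$-norm. The paper's proof is the single chain of inequalities $\|(f*g)_\rho\| \lesssim \|(f*g)_\rho\|_{\frac{p(n+\rho)}{q},\rho-1} \le \|f\|_{r_1,\rho-1}\|g\|_{r_2,\rho-1}$; your version merely spells out the verification $p_3\ge 1$ and the $\B\subseteq U$ restriction remark that the paper leaves implicit.
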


\begin{proof}
By Proposition \ref{Bergmanembedding} and Lemma \ref {Hproduct}, we have
$$
\|(f*g)_\rho\| \lesssim \|(f*g)_\rho\|_{\frac{p(n+\rho)}{q}, (\rho-1)} \le \|f\|_{r_1, (\rho-1)} \|g\|_{r_2, (\rho-1)},
$$
which implies the desired result.
\end{proof}

\begin{cor}
Let $p \ge 1, q>0$ and $s>\max\left\{0, 1-\frac{q}{n}\right\}$. Then $(f*g)_{q+ns-n} \in \calN(p, q, s)$ if $f \in \calN(p, q, s)$ and $g \in A_{q+ns-n-1}^{\frac{p(q+ns)}{p(q+ns)-ns}}$.
\end{cor}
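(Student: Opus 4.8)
The plan is to deduce this directly from Proposition \ref{NpqsHadamard}, together with the inclusion $\calN(p,q,s)\subseteq A^p_{q+ns-n-1}$ recorded in Remark \ref{Bergremark}. Set $\rho=q+ns-n$, so that $n+\rho=q+ns$. First I would check that this choice of $\rho$ meets the hypotheses of Proposition \ref{Bergmanembedding}, hence of Proposition \ref{NpqsHadamard}: since $s>\max\left\{0,1-\frac{q}{n}\right\}$ we have $\rho=q+ns-n>0$, and moreover $\rho-(q-n)=ns>0$, so $\rho>\max\{0,q-n\}$; when $0<s<1$ we additionally need $\rho<\frac{q+ns-n}{1-s}=\frac{\rho}{1-s}$, which is immediate because $0<1-s<1$ and $\rho>0$. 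When $s\ge 1$ only $\rho>\max\{0,q-n\}$ is required, which has already been verified.

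Next I would choose the exponents $r_1,r_2$ in Proposition \ref{NpqsHadamard}. By Remark \ref{Bergremark}, $\calN(p,q,s)\subseteq A^p_{q+ns-n-1}=A^p_{\rho-1}$, so the hypothesis $f\in\calN(p,q,s)$ gives $f\in A^p_{\rho-1}$, and I take $r_1=p$. The constraint $1+\frac{q}{p(n+\rho)}=\frac{1}{r_1}+\frac{1}{r_2}$ then forces
\[
\frac{1}{r_2}=1-\frac{1}{p}+\frac{q}{p(q+ns)}=\frac{p(q+ns)-ns}{p(q+ns)},
\]
that is, $r_2=\frac{p(q+ns)}{p(q+ns)-ns}$. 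One checks $r_1=p\ge 1$ and, since $ns>0$ and $p\ge 1$ yield $p(q+ns)-ns=pq+(p-1)ns>0$ together with $p(q+ns)-ns<p(q+ns)$, also $r_2>1$; thus $r_1,r_2\ge 1$ as required by Proposition \ref{NpqsHadamard}.

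Finally, with these choices the hypothesis $g\in A_{q+ns-n-1}^{\frac{p(q+ns)}{p(q+ns)-ns}}=A^{r_2}_{\rho-1}$ is precisely what Proposition \ref{NpqsHadamard} asks for, so it yields $(f*g)_\rho=(f*g)_{q+ns-n}\in\calN(p,q,s)$, as claimed. I do not expect a genuine obstacle here: the entire content is bookkeeping of parameters—verifying that $\rho=q+ns-n$ lands in the admissible range of Proposition \ref{Bergmanembedding} (most delicately the upper bound $\rho<(q+ns-n)/(1-s)$ in the regime $0<s<1$) and that the Hölder-type exponent identity produces $r_1,r_2\ge 1$. Once these are in place, the result is immediate from the cited proposition.
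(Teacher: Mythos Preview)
Your proposal is correct and follows exactly the same route as the paper's proof: set $\rho=q+ns-n$, take $r_1=p$ (using $\calN(p,q,s)\subseteq A^p_{\rho-1}$ from Remark \ref{Bergremark}), solve for $r_2=\frac{p(q+ns)}{p(q+ns)-ns}$, and invoke Proposition \ref{NpqsHadamard}. The paper's proof is simply a compressed version of what you have written, with the parameter checks left implicit.
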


\begin{proof}
By Proposition \ref{NpqsHadamard}, we have
\begin{eqnarray*}
\|(f*g)_{(q+ns-n)}\|%
&\lesssim& \|f\|_{p, (q+ns-n-1)} \|g\|_{\frac{p(q+ns)}{p(q+ns)-ns}, q+ns-n-1}\\
&\lesssim& \|f\| \|g\|_{\frac{p(q+ns)}{p(q+ns)-ns}, q+ns-n-1}.
\end{eqnarray*}
Hence, we get the desired result.
\end{proof}

The following lemma gives an estimation of the term $M_\alpha(r, (f*g)_d)$, which gives us another description of Hadamard products via the embedding relation between $\calN(p, q, s)$ and $H^\alpha_\beta$.

\begin{lem} \label{Malphaesti}
Let $\alpha \ge 1, r \in [0, 1),  d>0$ and $f, g \in H(\B)$. Then
$$
M_\alpha(r, (f*g)_d) \le  M_\alpha(\sqrt{r}, g)\|f\|_{1, d-1}.
$$
\end{lem}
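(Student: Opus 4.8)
The plan is to use the integral representation \eqref{Hintegral} after the splitting $r=\sqrt r\cdot\sqrt r$, then reduce matters to a subordination-type inequality for diagonal contractions of $\C^n$, and finish with the monotonicity of dilation on the weighted Bergman space $A^1_{d-1}$. One may assume $r\in(0,1)$, since for $r=0$ one has $(f*g)_d(0)=f(0)g(0)$ with $|f(0)|\le\|f\|_{1,d-1}$ by the sub-mean value property and $|g(0)|=M_\alpha(0,g)$. First I would fix $\xi\in\SSS$; as $|\sqrt r\,\xi|_\infty\le\sqrt r<1$, the point $\sqrt r\,\xi$ lies in $U$, so \eqref{Hintegral} (with $\sqrt r$ in the role of $r$ and $\sqrt r\,\xi$ in the role of $z$) gives
\[
(f*g)_d(r\xi)=\int_\B f(\sqrt r\,w)\,g\big(\sqrt r\,\xi\cdot\overline w\big)\,dV_{d-1}(w).
\]
Since $\alpha\ge1$, Minkowski's integral inequality then yields
\[
M_\alpha\big(r,(f*g)_d\big)\le\int_\B|f(\sqrt r\,w)|\Big(\int_\SSS\big|g(\sqrt r\,\xi\cdot\overline w)\big|^\alpha\,d\sigma(\xi)\Big)^{1/\alpha}dV_{d-1}(w).
\]

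The key step, and the main obstacle, is to show that for every $w\in\B$
\[
\int_\SSS\big|g(\sqrt r\,\xi\cdot\overline w)\big|^\alpha\,d\sigma(\xi)\le M_\alpha^\alpha(\sqrt r,g);
\]
that is, composition with the diagonal contraction $z\mapsto z\cdot\overline w$ does not increase the $L^\alpha(\sigma)$-means of a holomorphic function. Writing $\tilde g(z)=g(\sqrt r\,z)$, which is holomorphic on $\frac1{\sqrt r}\B\supset\overline\B$, the left side equals $\int_\SSS|\tilde g(\overline{w_1}\xi_1,\dots,\overline{w_n}\xi_n)|^\alpha\,d\sigma(\xi)$, and I would prove the one-variable reduction: if $h$ is holomorphic on a neighbourhood of $\overline\B$ and $|\mu|\le1$, then $\int_\SSS|h(\mu\xi_1,\xi_2,\dots,\xi_n)|^\alpha\,d\sigma(\xi)\le\int_\SSS|h(\xi)|^\alpha\,d\sigma(\xi)$, and likewise with any single coordinate in place of the first. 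For this, note that for fixed $\xi\in\SSS$ the map $\lambda\mapsto h(\lambda\xi_1,\xi_2,\dots,\xi_n)$ is holomorphic on a neighbourhood of $\overline\D$, so $\lambda\mapsto|h(\lambda\xi_1,\xi_2,\dots,\xi_n)|^\alpha$ is subharmonic there and the sub-mean value inequality gives $|h(\mu\xi_1,\xi_2,\dots,\xi_n)|^\alpha\le\frac1{2\pi}\int_0^{2\pi}|h(e^{i\theta}\xi_1,\xi_2,\dots,\xi_n)|^\alpha\,d\theta$; integrating this over $\SSS$, applying Fubini, and using that $\xi\mapsto(e^{i\theta}\xi_1,\xi_2,\dots,\xi_n)$ is a unitary transformation of $\C^n$ (hence $\sigma$-preserving) yields the reduction. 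Applying it successively in the $n$ coordinates to $\tilde g$, with $\mu=\overline{w_j}$ at the $j$-th step and $|w_j|\le|w|<1$, collapses the left side down to $\int_\SSS|\tilde g(\xi)|^\alpha\,d\sigma(\xi)=M_\alpha^\alpha(\sqrt r,g)$, which is the claim.

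Finally, combining the two displays in the first paragraph with this key estimate gives $M_\alpha(r,(f*g)_d)\le M_\alpha(\sqrt r,g)\int_\B|f(\sqrt r\,w)|\,dV_{d-1}(w)$, and passing to polar coordinates together with the monotonicity of $\rho\mapsto M_1(\rho,f)$ shows $\int_\B|f(\sqrt r\,w)|\,dV_{d-1}(w)\le\int_\B|f(w)|\,dV_{d-1}(w)=\|f\|_{1,d-1}$, i.e.\ dilation is a contraction on $A^1_{d-1}$, completing the proof. The $\sqrt r$-splitting in the first step is precisely what makes the relevant slice functions holomorphic past $\overline\B$ while at the same time producing the factor $M_\alpha(\sqrt r,g)$ on the right; the delicate point remains the diagonal-contraction estimate, which I handle by descending to one complex variable and marrying subharmonicity of $|h|^\alpha$ with the rotation invariance of $\sigma$.
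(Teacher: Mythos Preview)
Your overall plan matches the paper's exactly: split $r=\sqrt r\cdot\sqrt r$, use the integral representation \eqref{Hintegral}, apply Minkowski, then the diagonal-contraction estimate on the $\sigma$-means of $g$, and finish with contractivity of dilation on $A^1_{d-1}$.

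There is, however, a genuine gap in the step you flag as the ``key step''. You assert the \emph{pointwise} inequality
\[
|h(\mu\xi_1,\xi_2,\dots,\xi_n)|^\alpha\le\frac1{2\pi}\int_0^{2\pi}|h(e^{i\theta}\xi_1,\xi_2,\dots,\xi_n)|^\alpha\,d\theta
\]
for each fixed $\xi$ and each $|\mu|<1$, calling it ``the sub-mean value inequality''. This is false. The sub-mean value property of a subharmonic function $v$ on $\overline{\D}$ only compares $v(0)$ to the average of $v$ over a circle centered at $0$; it says nothing about $v(\mu)$ for arbitrary $|\mu|<1$ versus the average on $|\lambda|=1$. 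A concrete counterexample already in $n=1$: take $h(\lambda)=e^{\lambda}$ and $\alpha=1$, so $v(\lambda)=|e^\lambda|=e^{\mathrm{Re}\,\lambda}$; then $v(0.9)=e^{0.9}\approx 2.46$ while $\frac{1}{2\pi}\int_0^{2\pi}e^{\cos\theta}\,d\theta=I_0(1)\approx 1.27$.

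The fix is small but essential: use rotation invariance \emph{first} and then the monotonicity of \emph{circular means} of subharmonic functions (not a pointwise bound). Since $\xi\mapsto(e^{i\theta}\xi_1,\xi_2,\dots,\xi_n)$ preserves $\sigma$, one has
\[
\int_\SSS|h(\mu\xi_1,\xi')|^\alpha d\sigma(\xi)=\int_\SSS\frac{1}{2\pi}\int_0^{2\pi}|h(|\mu|e^{i\theta}\xi_1,\xi')|^\alpha d\theta\,d\sigma(\xi),
\]
and now for each fixed $\xi$ the inner integral is the circular mean at radius $|\mu|$ of the subharmonic function $\lambda\mapsto|h(\lambda\xi_1,\xi')|^\alpha$, which is nondecreasing in the radius; comparing radii $|\mu|\le 1$ and unwinding the rotation gives the inequality you need. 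This is precisely the content of the fact the paper invokes (``the integral mean of a subharmonic function over spheres is an increasing function of multi-radius''). With this correction in place, the rest of your argument goes through and coincides with the paper's proof.
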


\begin{proof}
Without the loss of generality, we assume that $\|f\|_{1, d-1}<\infty$. Thus, by \eqref{Hintegral}, \cite[Proposition 3.1, (i)]{BL} and the fact that the integral mean of subharmonic function over sphere is an increasing function of multi-radius, we have
\begin{eqnarray*}
M_\alpha(r, (f*g)_d)%
&=& \left( \int_\SSS |(f*g)_d(r\xi)|^\alpha d\sigma(\xi) \right)^{\frac{1}{\alpha}}\\
&=& \left( \int_\SSS |(f*g)_d(\sqrt{r} \cdot \sqrt{r}\xi)|^\alpha d\sigma(\xi) \right)^{\frac{1}{\alpha}}\\
&=& \left( \int_\SSS \left| \int_\B f(\sqrt{r} w) g(\sqrt{r}\xi \cdot \overline{w})dV_{d-1}(w) \right|^\alpha d\sigma(\xi) \right)^{\frac{1}{\alpha}}\\
&\le&  \int_\B  |f(\sqrt{r} w)|\left( \int_\SSS | g(\sqrt{r}\xi \cdot \overline{w})|^\alpha d\sigma(\xi) \right)^{\frac{1}{\alpha}}dV_{d-1}(w)\\
&& (\textrm{By Minkowski's inequality})\\
&\le& \int_\B  |f(\sqrt{r} w)|\left( \int_\SSS | g(\sqrt{r}\xi)|^\alpha d\sigma(\xi) \right)^{\frac{1}{\alpha}}dV_{d-1}(w)\\
&& (\textrm{Since} \ |g(z)|^\alpha \ \textrm{is subharmonic})\\
&\le&  M_\alpha(\sqrt{r}, g) \|f\|_{1, d-1}.
\end{eqnarray*}
\end{proof}

\begin{prop}
Let $d>0$ and $p, q, s, \alpha$ satisfy the conditions in Proposition \ref{Hardyembedding}. Then we have $(f*g)_d \in \calN(p, q, s)$ if $f \in A^1_{d-1}$ and $g \in H^\alpha_{\frac{q}{p}-\frac{n}{\alpha}}$.
\end{prop}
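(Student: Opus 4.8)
The plan is to prove that $(f*g)_d$ belongs to the weighted Hardy space $H^\alpha_{q/p-n/\alpha}$ and then apply the embedding $H^\alpha_{q/p-n/\alpha}\subseteq\calN(p,q,s)$ supplied by Proposition \ref{Hardyembedding}, whose hypotheses coincide exactly with the standing assumptions on $p,q,s,\alpha$. Since $d>0$ forces $d-1>-1$, the space $A^1_{d-1}$ is a bona fide weighted Bergman space and $\|f\|_{1,d-1}<\infty$; also $(f*g)_d\in H(U)$ by \eqref{Hintegral}, so its restriction to $\B$ lies in $H(\B)$ and the means $M_\alpha(r,(f*g)_d)$ are well defined for $0\le r<1$ (they are integrals over $\SSS\subset\B\subset U$).

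First I would apply Lemma \ref{Malphaesti} with the given exponent $\alpha$, which satisfies $\alpha\ge\max\{p,np/q\}\ge1$, to get
$$
M_\alpha(r,(f*g)_d)\le M_\alpha(\sqrt r,g)\,\|f\|_{1,d-1},\qquad 0\le r<1.
$$
Next, set $\beta=q/p-n/\alpha$; the condition $\alpha\ge np/q$ guarantees $\beta\ge0$. Using the elementary inequality $1-r=(1-\sqrt r)(1+\sqrt r)\le2(1-\sqrt r)$ together with $\beta\ge0$, I multiply through by $(1-r)^\beta$ to obtain
$$
(1-r)^\beta M_\alpha(r,(f*g)_d)\le 2^\beta(1-\sqrt r)^\beta M_\alpha(\sqrt r,g)\,\|f\|_{1,d-1}\le 2^\beta\|g\|_{H^\alpha_\beta}\,\|f\|_{1,d-1}.
$$
Taking the supremum over $r\in(0,1)$ shows $(f*g)_d\in H^\alpha_\beta$ with $\|(f*g)_d\|_{H^\alpha_\beta}\lesssim\|g\|_{H^\alpha_{q/p-n/\alpha}}\|f\|_{1,d-1}$. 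Finally, Proposition \ref{Hardyembedding} yields $(f*g)_d\in\calN(p,q,s)$, and in fact $\|(f*g)_d\|\lesssim\|f\|_{1,d-1}\|g\|_{H^\alpha_{q/p-n/\alpha}}$.

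I do not expect a genuine obstacle here: Lemma \ref{Malphaesti} and Proposition \ref{Hardyembedding} carry the argument, and the only hand computation is the comparison $(1-r)^\beta\le 2^\beta(1-\sqrt r)^\beta$, valid precisely because $\beta\ge0$ under the standing hypotheses. The single subtlety worth recording explicitly is that $(f*g)_d$ is a priori only holomorphic on the polydisc $U$, so one must note that the Hardy means involved are integrals over $\SSS$ and are therefore unaffected by this; everything else is routine bookkeeping.
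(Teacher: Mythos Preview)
Your proof is correct and follows essentially the same route as the paper: apply Lemma \ref{Malphaesti} to bound $M_\alpha(r,(f*g)_d)$ by $M_\alpha(\sqrt{r},g)\|f\|_{1,d-1}$, use $(1-r)^\beta\lesssim(1-\sqrt{r})^\beta$ to conclude $(f*g)_d\in H^\alpha_{q/p-n/\alpha}$, and then invoke Proposition \ref{Hardyembedding}. Your explicit check that $\alpha\ge1$ (needed for Lemma \ref{Malphaesti}) and that $\beta\ge0$ (needed for the weight comparison) are exactly the right bookkeeping; the paper suppresses these verifications but they are implicit in its chain of $\lesssim$ inequalities.
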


\begin{proof}
For any $f \in A^1_{d-1}$ and $g \in H^\alpha_{\frac{q}{p}-\frac{n}{\alpha}}$, by Proposition  \ref{Hardyembedding} and Lemma \ref{Malphaesti}, we have
\begin{eqnarray*}
\|(f*g)_d\|%
&\lesssim& \sup_{0<r<1} (1-r)^{\frac{q}{p}-\frac{n}{\alpha}} M_\alpha(r, (f*g)_d) \\
&\le& \|f\|_{1, d-1} \cdot \sup_{0<r<1} (1-r)^{\frac{q}{p}-\frac{n}{\alpha}} M_\alpha(\sqrt{r}, g) \\
&\lesssim& \|f\|_{1, d-1} \cdot \sup_{0<r<1} (1-\sqrt{r})^{\frac{q}{p}-\frac{n}{\alpha}} M_\alpha(\sqrt{r}, g) \\
&=& \|f\|_{1, d-1} \|g\|_{H^\alpha_{\frac{q}{p}-\frac{n}{\alpha}}},
\end{eqnarray*}
which implies $(f*g)_d \in \calN(p, q, s)$.
\end{proof}


\subsection{Random power series}


A second application of the above characterization of $\calN(p, q, s)$-type spaces by Carleson measure is to study the random power series. The behavior of the random power series on $Q_s$ spaces was studied in \cite{LO2}.

Let $\{\varepsilon_\alpha(w)\}$ be a Bernoulli sequence of random variables on a probability space $(\Omega, \mathcal A, P)$. In particular,  this sequence is independent, and each $\varepsilon_\alpha(w)$ takes the values $1$ and $-1$ with probability $\frac{1}{2}$ each. A well-known example of such a Bernoulli sequence is the Rademacher functions, which are defined as
$$
\{r_j(t)\}_{j \in \N}=\{\textrm{sgn}(\sin(2^j \pi t))\}_{j \in \N}, \quad t \in [0,1].
$$
It is easy to check that the $r_j$'s are mutually independent random variables on $[0, 1]$. We refer the readers to the excellent book \cite{LG} for some details for Rademacher functions. For $f$ a holomorphic function in $\B$ with Taylor expansion $f(z)=\sum_\alpha a_\alpha z^\alpha$, the randomization of $f$ is defined as
$$
f_\omega(z)=\sum_\alpha \varepsilon_\alpha(\omega) a_\alpha z^\alpha.
$$
The following result gives a sufficient condition for $f_\omega$ belonging to the $\calN(p, q, s)$-type spaces.

\begin{prop}
Let $p \ge 1, q>0, s>\max\left\{0, 1-\frac{q}{n} \right\}$, $\rho$ be some positive number satisfying
\[ \begin{cases}
\max\{0, q-n\}<\rho<\frac{q+ns-n}{1-s}, & 0<s<1;\\
\rho>\max\{0, q-n\}, & s \ge 1
\end{cases} \]
and $k=\frac{p(n+\rho)}{q}$. Let further, $f(z)=\sum\limits_\alpha a_\alpha z^\alpha \in H(\B)$. If
$$
\{|a_\alpha| \omega_{\alpha, k} \}_\alpha \in \ell^{\min\{2, k\}},
$$
then $f_\omega \in \calN(p, q, s)$ for almost every $\omega \in \Omega$, where for each multi-index $\alpha$,
$$
\omega_{\alpha, k}=\left(\int_\SSS |\xi^\alpha|^k d\sigma(\xi) \right)^{\frac{1}{k}}.
$$
\end{prop}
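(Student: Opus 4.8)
The plan is to deduce the result from the Bergman-space embedding of Proposition \ref{Bergmanembedding} together with Khinchine's inequality. Under the hypotheses imposed on $\rho$, Proposition \ref{Bergmanembedding} gives $A^k_{\rho-1} \subseteq \calN(p, q, s)$ with $k = \frac{p(n+\rho)}{q}$ (note $k > p \ge 1$ since $n+\rho > q$), so it suffices to show that $f_\omega \in A^k_{\rho-1}$ for $P$-almost every $\omega \in \Om$. Since the integrand is nonnegative, Tonelli's theorem reduces this to the estimate
\[
\int_\Om \int_\B |f_\omega(z)|^k \, dV_{\rho-1}(z)\, dP(\omega) = \int_\B \Big( \int_\Om |f_\omega(z)|^k\, dP(\omega) \Big) dV_{\rho-1}(z) < \infty.
\]

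For the inner integral I would apply Khinchine's inequality for the Bernoulli sequence $\{\varepsilon_\alpha\}$ (see, e.g., \cite{LG}): for each fixed $z \in \B$,
\[
\int_\Om |f_\omega(z)|^k \, dP(\omega) = \int_\Om \Big| \sum_\alpha \varepsilon_\alpha(\omega) a_\alpha z^\alpha \Big|^k dP(\omega) \simeq \Big( \sum_\alpha |a_\alpha|^2 |z^\alpha|^2 \Big)^{k/2},
\]
with constants depending only on $k$. It then remains to bound $\int_\B \big( \sum_\alpha |a_\alpha|^2 |z^\alpha|^2 \big)^{k/2}\, dV_{\rho-1}(z)$. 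Passing to polar coordinates and computing the radial Beta integral gives
\[
\int_\B |z^\alpha|^k \, dV_{\rho-1}(z) = c_{\rho-1}\cdot 2n \int_0^1 r^{2n-1+k|\alpha|}(1-r^2)^{\rho-1}\,dr \cdot \int_\SSS |\xi^\alpha|^k\,d\sigma(\xi) \le C\,\omega_{\alpha,k}^k,
\]
for a constant $C$ independent of $\alpha$, since the radial factor equals $\tfrac{1}{2}\frac{\Gamma(n+k|\alpha|/2)\Gamma(\rho)}{\Gamma(n+\rho+k|\alpha|/2)}$, which is uniformly bounded in $\alpha$ (indeed it tends to $0$ because $\rho>0$).

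Now I would split into two cases according to the position of $k$ relative to $2$. If $k \ge 2$, then $\frac{k}{2} \ge 1$ and Minkowski's inequality in $L^{k/2}(dV_{\rho-1})$ gives
\[
\int_\B \Big( \sum_\alpha |a_\alpha|^2 |z^\alpha|^2 \Big)^{k/2} dV_{\rho-1}(z) \le \Big( \sum_\alpha |a_\alpha|^2 \Big( \int_\B |z^\alpha|^k dV_{\rho-1}(z)\Big)^{2/k} \Big)^{k/2} \lesssim \Big( \sum_\alpha |a_\alpha|^2 \omega_{\alpha,k}^2\Big)^{k/2},
\]
which is finite since $\{|a_\alpha|\omega_{\alpha,k}\}_\alpha \in \ell^2 = \ell^{\min\{2,k\}}$. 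If $1 < k < 2$, then $\frac{k}{2} < 1$ and the elementary inequality $(\sum b_\alpha)^{k/2} \le \sum b_\alpha^{k/2}$ for $b_\alpha \ge 0$ yields
\[
\int_\B \Big( \sum_\alpha |a_\alpha|^2|z^\alpha|^2\Big)^{k/2} dV_{\rho-1}(z) \le \sum_\alpha |a_\alpha|^k \int_\B |z^\alpha|^k\,dV_{\rho-1}(z) \lesssim \sum_\alpha |a_\alpha|^k \omega_{\alpha,k}^k < \infty,
\]
since now $\{|a_\alpha|\omega_{\alpha,k}\}_\alpha \in \ell^k = \ell^{\min\{2,k\}}$. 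In either case the double integral is finite, so $f_\omega \in A^k_{\rho-1} \subseteq \calN(p, q, s)$ for almost every $\omega$, as claimed. The computations are essentially routine; the only points needing genuine care are the correct formulation of Khinchine's inequality for the multi-indexed Bernoulli system and the uniformity in $\alpha$ of the estimate $\int_\B |z^\alpha|^k\,dV_{\rho-1} \lesssim \omega_{\alpha,k}^k$, after which the dichotomy (Minkowski versus subadditivity of $t \mapsto t^{k/2}$) completes the argument.
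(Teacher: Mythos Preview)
Your proof is correct and follows essentially the same approach as the paper: reduce to $f_\omega \in A^k_{\rho-1}$ via Proposition~\ref{Bergmanembedding}, apply Tonelli/Fubini and Khinchine's inequality, then split into the cases $k\ge 2$ (Minkowski) and $k<2$ (subadditivity of $t\mapsto t^{k/2}$). The only cosmetic differences are that the paper writes the Bergman norm in polar form via $M_k^k(r,f_\omega)$ and simply bounds $r^{k|\alpha|}\le 1$ instead of evaluating the radial Beta integral, and that it labels the two cases in the opposite order.
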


\begin{proof}
 By Proposition \ref{Bergmanembedding}, it suffices to show that $f_w \in A^k_{\rho-1}$ for almost every $\omega \in \Omega$, that is, it suffices to show that
\begin{equation} \label{randompower001}
P\left\{ \omega \in \Omega: \int_0^1 M_k^k(r, f_\omega) (1-r)^{\rho-1}dr<\infty \right\}=1.
\end{equation}

Indeed, by Fubini's theorem, we have
\begin{eqnarray*}
&& E\left( \int_0^1 M_k^k(r, f_\omega) (1-r)^{\rho-1}dr \right)\\ \nonumber
&=&\int_0^1  (1-r)^{\rho-1} \left[\int_\Omega \int_\SSS |f_\omega(r\xi)|^kd\sigma(\xi)dP\right]dr\\ \nonumber
&=&\int_0^1 (1-r)^{\rho-1} \left[\int_\SSS \int_\Omega  |f_\omega(r\xi)|^kdPd\sigma(\xi)\right]dr\\ \nonumber
&=&\int_0^1 (1-r)^{\rho-1} \left[\int_\SSS \int_\Omega  \left| \sum_\alpha \varepsilon_\alpha(\omega) a_\alpha r^{|\alpha|} \xi^\alpha    \right|^kdPd\sigma(\xi)\right]dr\\ \nonumber
&\lesssim& \int_0^1 (1-r)^{\rho-1} \left[ \int_\SSS \left( \sum_\alpha |a_\alpha|^2r^{2|\alpha|} |\xi^{\alpha}|^2 \right)^{\frac{k}{2}}d\sigma(\xi) \right]dr, \nonumber
\end{eqnarray*}
where the last inequality follows from Khintchine's inequality.\\

\textit{Case I: $0<k \le2$.} Since $\frac{k}{2} \le 1$, we have
\begin{eqnarray*}
&& E\left( \int_0^1 M_k^k(r, f_\omega) (1-r)^{\rho-1}dr \right)\\
&\lesssim & \int_0^1 (1-r)^{\rho-1} \left[ \int_\SSS \left( \sum_\alpha |a_\alpha|^k r^{k|\alpha|} |\xi^\alpha|^k \right) d\sigma \right] dr\\
&\lesssim& \sum_\alpha |a_\alpha|^k \omega_{\alpha, k}^k\\
&<&\infty.
\end{eqnarray*}

\textit{Case II: $k>2$.}  By Minkowski's inequality, we have
\begin{eqnarray*}
 && E\left( \int_0^1 M_k^k(r, f_\omega) (1-r)^{\rho-1}dr \right)\\
&\lesssim& \int_0^1 (1-r)^{\rho-1} \left[ \sum_\alpha \left( \int_\SSS |a_\alpha|^kr^{k|\alpha|} |\xi^\alpha|^kd\sigma(\xi) \right)^{\frac{2}{k}} \right]^{\frac{k}{2}}dr\\
&= & \int_0^1 (1-r)^{\rho-1} \left( \sum_\alpha |a_\alpha|^2r^{2|\alpha|} \omega_{\alpha, k}^2 \right)^{\frac{k}{2}} dr\\
&\le& \left( \sum_\alpha |a_\alpha|^2 \omega_{\alpha, k}^2 \right)^{\frac{k}{2}}<\infty.
\end{eqnarray*}

Thus, for both cases, we have
$$
E\left( \int_0^1 M_k^k(r, f_\omega) (1-r)^{\rho-1}dr \right)<\infty,
$$
which implies \eqref{randompower001} clearly.
\end{proof}

Conversely, we have the following result.

\begin{prop}
Let $p \ge 1, q>0$ and $s>\max\left\{0, 1-\frac{q}{n} \right\}$. Let further, $f(z)=\sum\limits_\alpha a_\alpha z^\alpha \in H(\B)$. If there exists some $a_0, \varepsilon>0$ such that for any $a>a_0$, the following decay condition
$$
P\{\omega: \|f_\omega\|>a\} \lesssim a^{-1-\varepsilon}
$$
holds, then
$$
\left\{|a_\alpha| w_{\alpha, p} B^{\frac{1}{p}}(2n+p|\alpha|, q+ns-n)\right\}_\alpha \in \ell^\infty.
$$
Here $B(\cdot, \cdot)$ is the Beta function.
\end{prop}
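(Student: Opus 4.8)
The plan is to reduce everything to the single deterministic estimate
\[
|a_\beta|\,\|z^\beta\|_{\calN(p,q,s)}\ \le\ \int_\Omega\|f_\omega\|\,dP\ =:\ C_1\ <\ \infty ,
\]
valid for \emph{every} multi-index $\beta\in\Z^n_+$, and then to bound $\|z^\beta\|_{\calN(p,q,s)}$ from below by a constant multiple of $w_{\beta,p}\,B^{1/p}(2n+p|\beta|,q+ns-n)$ uniformly in $\beta$; here $w_{\beta,p}=\big(\int_\SSS|\xi^\beta|^p\,d\sigma(\xi)\big)^{1/p}$. First, since $P\{\|f_\omega\|>a\}\le1$ always and $P\{\|f_\omega\|>a\}\lesssim a^{-1-\varepsilon}$ for $a>a_0$, the layer-cake formula gives
\[
C_1=\int_0^\infty P\{\|f_\omega\|>a\}\,da\ \le\ a_0+C\int_{a_0}^\infty a^{-1-\varepsilon}\,da\ <\ \infty ,
\]
so in particular $\|f_\omega\|<\infty$ for $P$-almost every $\omega$. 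Also, each monomial $z^\beta$ is a polynomial, hence lies in $\calN(p,q,s)$ by Proposition \ref{prop02}, because the hypothesis $s>\max\{0,1-\tfrac qn\}$ forces $ns+q>n$.

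For the first estimate I would use a single-coordinate symmetrization. Fix $\beta$ and let $\omega\mapsto\omega^{(\beta)}$ be the transformation of $\Omega$ that flips the sign of $\varepsilon_\beta$ and fixes every other $\varepsilon_\alpha$. By the independence and symmetry of the Bernoulli variables this transformation preserves $P$, so $\|f_{\omega^{(\beta)}}\|$ is equidistributed with $\|f_\omega\|$ and $\int_\Omega\|f_{\omega^{(\beta)}}\|\,dP=C_1$. Since $f_\omega-f_{\omega^{(\beta)}}=2\varepsilon_\beta(\omega)\,a_\beta\,z^\beta$ and $|\varepsilon_\beta(\omega)|=1$, the triangle inequality in the Banach space $\calN(p,q,s)$ (legitimate because $p\ge1$, by Theorem \ref{functionalBS}) gives
\[
2\,|a_\beta|\,\|z^\beta\|\ =\ \|f_\omega-f_{\omega^{(\beta)}}\|\ \le\ \|f_\omega\|+\|f_{\omega^{(\beta)}}\| ,
\]
and integrating this over $\Omega$ yields $2|a_\beta|\,\|z^\beta\|\le2C_1$, i.e. $|a_\beta|\,\|z^\beta\|\le C_1$ for all $\beta$.

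Finally, I would bound $\|z^\beta\|$ below by restricting the supremum defining the norm to $a=0$. Using $|\Phi_0(z)|=|z|$ and $d\lambda(z)=dV(z)/(1-|z|^2)^{n+1}$, and then passing to polar coordinates,
\[
\|z^\beta\|^p\ \ge\ \int_\B|z^\beta|^p(1-|z|^2)^{q+ns-n-1}\,dV(z)\ =\ 2n\,w_{\beta,p}^p\int_0^1 r^{\,2n-1+p|\beta|}(1-r^2)^{q+ns-n-1}\,dr ,
\]
where the exponent $q+ns-n-1>-1$ since $ns+q>n$. As $1\le1+r\le2$ on $[0,1]$, one has $(1-r^2)^{q+ns-n-1}\simeq(1-r)^{q+ns-n-1}$ with constants depending only on $q,n,s$, whence
\[
\int_0^1 r^{\,2n-1+p|\beta|}(1-r^2)^{q+ns-n-1}\,dr\ \simeq\ \int_0^1 r^{\,2n-1+p|\beta|}(1-r)^{q+ns-n-1}\,dr\ =\ B(2n+p|\beta|,\,q+ns-n) ,
\]
uniformly in $\beta$. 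Combining the displays, $|a_\beta|\,w_{\beta,p}\,B^{1/p}(2n+p|\beta|,q+ns-n)\lesssim|a_\beta|\,\|z^\beta\|\le C_1<\infty$ for every $\beta$, which is precisely the assertion. The only point requiring care is the symmetrization step: one must verify that the single sign-flip is measure preserving and, crucially, that the resulting bound is uniform over $\beta$ — which is why we integrate over $\Omega$ rather than argue for a fixed $\omega$. The remaining steps are a routine change of variables and a comparison with the Beta integral.
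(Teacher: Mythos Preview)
Your proof is correct, and it takes a genuinely different route from the paper's argument.

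The paper first invokes the embedding $\calN(p,q,s)\subseteq A^p_{q+ns-n-1}$ (Remark \ref{Bergremark}) to transfer the tail decay to the weighted Bergman norm, then applies Fubini and Khintchine's inequality to bound
\[
E\!\left(\int_0^1 r^{2n-1}(1-r)^{q+ns-n-1}M_p^p(r,f_\omega)\,dr\right)
\ \gtrsim\ |a_\alpha|^p\,w_{\alpha,p}^p\,B(2n+p|\alpha|,q+ns-n)
\]
by dropping all but the $\alpha$-th term from the Khintchine square function. Your argument replaces the Fubini/Khintchine step by a single-coordinate symmetrization: flipping the sign of $\varepsilon_\beta$ is (distributionally) measure preserving, so the triangle inequality in the Banach space $\calN(p,q,s)$ isolates $|a_\beta|\,\|z^\beta\|$ directly, after which you bound $\|z^\beta\|$ below by evaluating the supremum at $a=0$ --- which recovers exactly the same Bergman-type integral and Beta function the paper obtains.

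Your approach is more elementary (no Khintchine) and only requires the \emph{first} moment $E\|f_\omega\|<\infty$, which follows immediately from the tail bound $a^{-1-\varepsilon}$ via layer cake. The paper's route instead needs finiteness of the \emph{$p$-th} moment of the Bergman norm; from the stated decay this actually requires $(1+\varepsilon)/p>1$, a restriction the paper does not make explicit. So your argument is both simpler and somewhat more robust. One small cosmetic point: the ``transformation $\omega\mapsto\omega^{(\beta)}$'' is only literally a map on $\Omega$ when $\Omega$ carries a product structure; the cleaner formulation is the distributional identity $(-\varepsilon_\beta,(\varepsilon_\alpha)_{\alpha\neq\beta})\overset{d}{=}(\varepsilon_\beta,(\varepsilon_\alpha)_{\alpha\neq\beta})$, which is what you actually use.
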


\begin{proof}
By Remark \ref{Bergremark}, it is known that $\calN(p, q, s) \subseteq A^p_{q+ns-n-1}$ and hence $f_w \in A^p_{q+ns-n-1}$ for almost every $\omega \in \Omega$. Thus, for any $a>a_0$, we have
$$
P\left\{w: \int_0^1 r^{2n-1} (1-r)^{q+ns-n-1} M_p^p(r, f_\omega) dr>a \right\} \lesssim a^{-1-\varepsilon},
$$
which clearly implies $E(f_\omega)<\infty$ by writing the expectation into an integration with respect to distribution function. Thus, for each $\alpha$, we have
\begin{eqnarray*}
\infty%
&>& E \left(\int_0^1 r^{2n-1} (1-r)^{q+ns-n-1} M_p^p(r, f_\omega) dr \right) \\
&=& \int_0^1 r^{2n-1}(1-r)^{q+ns-n-1} \left[ \int_\SSS \int_\Omega |f_\omega(r\xi)|^pdPd\sigma(\xi) \right] dr \\
&=& \int_0^1 r^{2n-1}(1-r)^{q+ns-n-1} \left[\int_\SSS \int_\Omega  \left| \sum_\alpha \varepsilon_\alpha(\omega) a_\alpha r^{|\alpha|} \xi^\alpha \right|^p dPd\sigma(\xi)\right]dr\\
&\gtrsim& \int_0^1 r^{2n-1}(1-r)^{q+ns-n-1}  \left[ \int_\SSS \left( \sum_\alpha |a_\alpha|^2r^{2|\alpha|} |\xi^{\alpha}|^2 \right)^{\frac{p}{2}}d\sigma(\xi) \right]dr\\
&& \quad (\textrm{by Khintchine's inequality}) \\
&\ge& \int_0^1 r^{2n-1} (1-r)^{q+ns-n-1} \left[ \int_\SSS |a_\alpha|^p r^{p|\alpha|} |\xi^\alpha|^p d\sigma(\xi) \right] dr \\
&=& |a_\alpha|^p w_{\alpha, p}^p B(2n+p|\alpha|, q+ns-n),
\end{eqnarray*}
which implies the desired result.
\end{proof}


\medskip

\section{Characterizations of $\calN(p, q, s)$-type spaces}


Recall that by Proposition \ref{Carleson01}, $f $ belongs to $\calN(p, q, s)$ is equivalent to $d\mu=d\mu_{f, p, q, s}(z)=|f(z)|^p(1-|z|^2)^{q+ns}d\lambda(z)$ is an $(ns)$-Carleson measure. In this section, we extend this result and establish several characterizations of the $\calN(p, q, s)$-norm.


\subsection{Various derivative  characterizations}


Let us recall several notations first. For $f \in H(\B), z \in \B$. Let
$$
\nabla f(z)=\left(\frac{\partial f}{\partial z_1}(z), \dots, \frac{\partial f}{\partial z_n}(z)\right)
$$
denote the complex gradient of $f$ and let $\widetilde {\nabla}f$ denote the invariant gradient of $\B$, i.e., $(\widetilde {\nabla}f)(z)=\nabla(f \circ \Phi_z)(0)$. Moreover, we write
$$
Rf(z)=\sum_{k=1}^n z_k \frac{\partial f}{\partial z_k}(z)
$$
as the radial derivative of $f$ (see, e.g., \cite{Zhu}) and for $1 \le i, j \le n$,
$$
T_{i, j}f(z)=\bar{z_j} \frac{\partial f}{\partial z_i}-\bar{z_i}\frac{\partial f}{\partial z_j}
$$
as the tangential derivative of $f$ (see, e.g., \cite{MJ}). We need the following lemmas.

\begin{lem} \label{Charac01}
Let $\xi \in \SSS$ and $0<\del<1$. Then there exists some $M>0$, which is independent of $\del$, such that
$$
\bigcup_{z \in Q_\del(\xi)} D \left(z, \frac{1}{4} \right) \subseteq Q_{M\del}(\xi).
$$
\end{lem}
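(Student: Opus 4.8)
\emph{Proof proposal.} The plan is to reduce the set inclusion to a pointwise comparison of $|1-\langle w,\xi\rangle|$ with $|1-\langle z,\xi\rangle|$ for $z\in Q_\del(\xi)$ and $w\in D(z,1/4)$, and then to carry out this comparison by transporting everything through the automorphism $\Phi_z$. Since $\Phi_z$ is an involution and $D(z,1/4)=\Phi_z(\{|u|<1/4\})$, every $w\in D(z,1/4)$ can be written as $w=\Phi_z(u)$ with $|u|<1/4$. Likewise, $\Phi_z$ extends to a homeomorphism of $\overline{\B}$ carrying $\SSS$ onto $\SSS$, so I may write $\xi=\Phi_z(\eta)$ for the unique $\eta\in\SSS$ given by $\eta=\Phi_z(\xi)$.

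Next I would invoke the standard M\"obius identity
$$1-\langle \Phi_z(x),\Phi_z(y)\rangle=\frac{(1-|z|^2)\bigl(1-\langle x,y\rangle\bigr)}{\bigl(1-\langle x,z\rangle\bigr)\bigl(1-\langle z,y\rangle\bigr)},$$
which is valid for $x,y\in\overline{\B}$ by continuity (see, e.g., \cite{Rud, Zhu}). Applying it with $x=0,\ y=\eta$ gives $1-\langle z,\xi\rangle=\dfrac{1-|z|^2}{1-\langle z,\eta\rangle}$, and applying it with $x=u,\ y=\eta$ gives $1-\langle w,\xi\rangle=\dfrac{(1-|z|^2)\bigl(1-\langle u,\eta\rangle\bigr)}{\bigl(1-\langle u,z\rangle\bigr)\bigl(1-\langle z,\eta\rangle\bigr)}$. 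Dividing, the factors $1-|z|^2$ and $1-\langle z,\eta\rangle$ cancel and one is left with the clean identity
$$\frac{1-\langle w,\xi\rangle}{1-\langle z,\xi\rangle}=\frac{1-\langle u,\eta\rangle}{1-\langle u,z\rangle}.$$
Since $|u|<1/4$, $|\eta|=1$ and $|z|<1$, the Cauchy--Schwarz inequality yields $|1-\langle u,\eta\rangle|\le 1+|u|<5/4$ and $|1-\langle u,z\rangle|\ge 1-|u|>3/4$, so $|1-\langle w,\xi\rangle|<\tfrac{5}{3}|1-\langle z,\xi\rangle|<\tfrac{5}{3}\,\del$. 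Hence $w\in Q_{M\del}(\xi)$ with $M=\tfrac{5}{3}$, which is independent of $\del$ and of $\xi$, proving the lemma.

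There is no genuinely hard estimate here; the only points requiring care are bookkeeping about the automorphisms of $\B$, namely that $\Phi_z\circ\Phi_z=\mathrm{id}$ (so that $w\in D(z,1/4)$ indeed corresponds to some $u$ with $|u|<1/4$) and that the displayed identity stays valid when one argument is a boundary point $\eta\in\SSS$ (so that $\xi=\Phi_z(\eta)$ is legitimate and $|1-\langle z,\eta\rangle|>0$); both are classical. I would also remark that the same computation with $D(z,R)$ in place of $D(z,1/4)$ gives the inclusion with $M=(1+R)/(1-R)$, so the choice of radius $1/4$ is purely for later convenience.
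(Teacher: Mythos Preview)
Your proof is correct, but it takes a different route from the paper's. The paper argues via the non-isotropic quasi-triangle inequality $|1-\langle w,\xi\rangle|^{1/2}\le |1-\langle w,z\rangle|^{1/2}+|1-\langle z,\xi\rangle|^{1/2}$, and then bounds $|1-\langle w,z\rangle|$ using the identity $|1-\langle w,z\rangle|^2=\dfrac{(1-|z|^2)(1-|w|^2)}{1-|\Phi_w(z)|^2}$ together with the standard comparability $1-|w|^2\simeq 1-|z|^2$ on $D(z,1/4)$ (from \cite[Proposition~1.21, Lemma~2.20]{Zhu}); this yields an inexplicit constant $M=(1+2M'^{1/4})^2$. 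Your argument instead pulls the whole picture back through $\Phi_z$, obtains the clean ratio identity $\dfrac{1-\langle w,\xi\rangle}{1-\langle z,\xi\rangle}=\dfrac{1-\langle u,\eta\rangle}{1-\langle u,z\rangle}$, and reads off the explicit constant $M=5/3$ (and $(1+R)/(1-R)$ in general). Your approach is slicker and yields a sharp, explicit bound with no appeal to the $1-|z|^2\simeq 1-|w|^2$ lemma; the paper's approach, on the other hand, stays closer to the metric-ball geometry used elsewhere in the section and avoids checking that the M\"obius identity extends to boundary arguments.
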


\begin{proof}
Take $w \in \bigcup\limits_{z \in Q_r(\xi)} D\left(z, \frac{1}{4}\right)$, which implies $w \in D\left(z, \frac{1}{4}\right)$ for some $z \in Q_\del(\xi)$. Since $|\Phi_w(z)|<\frac{1}{4}$, by \cite[Proposition 1.21 and Lemma 2.20]{Zhu}, there exists some $M'>0$ independent of $z$ and $w$, such that
\begin{equation} \label{Charac03}
\frac{1}{M'} \le \frac{1-|w|^2}{1-|z|^2} \le M'.
\end{equation}
Hence, we have
\begin{eqnarray*}
|1-\langle w, \xi\rangle|^{1/2}%
&\le& |1-\langle w, z \rangle|^{1/2}+|1-\langle z, \xi \rangle|^{1/2} \\
&\le& \del^{1/2}+\left( \frac{(1-|z|^2)(1-|w|^2)}{1-|\Phi_w(z)|^2}\right)^{\frac{1}{4}}\\
&\le& \del^{1/2}+2M'^{1/4}\del^{1/2}.
\end{eqnarray*}
The result follows by taking $M=(1+2M'^{1/4})^2$.
\end{proof}

\begin{lem} \label{Charac02}
For $f \in H(\B)$, there exists a constant $C>0$, such that
$$
|Rf(z)| \le \frac{C}{(1-|z|^2)^{1/2}} \int_{D\left(z, \frac{1}{4}\right)} \sum_{i<j} |T_{i, j}f(w)|d\lambda(w), \ \forall z \in \B.
$$
\end{lem}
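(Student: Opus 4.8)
\emph{Reduction to $z$ on an axis.} First I would use rotation invariance. Pick a unitary $U$ with $U^{-1}z=|z|e_1$ and put $g=f\circ U$. Since $R(f\circ U)(w)=Rf(Uw)$, since $d\lambda$ and the family of pseudohyperbolic balls are $U$‑invariant (so $D(z,1/4)=U(D(|z|e_1,1/4))$), and since the elementary identity $\sum_{i<j}|T_{i,j}h(w)|^2=|w|^2|\nabla h(w)|^2-|Rh(w)|^2$ forces $\sum_{i<j}|T_{i,j}g(w)|^2=\sum_{i<j}|T_{i,j}f(Uw)|^2$, the claim for $f$ at $z$ is equivalent to the claim for $g$ at $|z|e_1$. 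Hence I may assume $z=(r,0,\dots,0)$ with $r=|z|\in(0,1)$ (the case $z=0$ is trivial, since $Rf(0)=0$), and then $Rf(z)=r\,\partial_1f(z)$, so it suffices to bound $|\partial_1f(z)|$.

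\emph{Extracting $\partial_1f(z)$ by a polydisc average.} The heart of the argument is the identity that comes directly from the definition of $T_{1,2}$: multiplying $T_{1,2}f(w)=\bar w_2\partial_1f(w)-\bar w_1\partial_2f(w)$ by $w_2$ gives
$$|w_2|^2\,\partial_1f(w)=w_2\,T_{1,2}f(w)+\bar w_1w_2\,\partial_2f(w)\qquad(w\in\B).$$
I would integrate this over the non‑isotropic polydisc $P=\{w:\,|w_1-r|<\rho_1,\ |w_j|<\rho\ \text{for }2\le j\le n\}$ with $\rho_1=c(1-r^2)$ and $\rho=c(1-r^2)^{1/2}$, where $c$ is chosen small enough that $P\subseteq D(z,1/4)$. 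Performing the $w_2$‑integration first and using the mean value property for holomorphic functions on a disc, the term $\int_{|w_2|<\rho}w_2\,\partial_2f\,dA(w_2)$ vanishes (the integrand is holomorphic in $w_2$ with a zero at $w_2=0$), so the whole ``error'' term $\int_P\bar w_1w_2\,\partial_2f\,dV$ equals $0$; and $\int_{|w_2|<\rho}|w_2|^2\partial_1f\,dA(w_2)=\tfrac{\pi\rho^4}{2}\,\partial_1f(w_1,0,w_3,\dots,w_n)$, which, after integrating the remaining holomorphic function over the $(n-1)$‑dimensional polydisc, becomes $\tfrac{\rho^2}{2}\,\mathrm{vol}(P)\,\partial_1f(z)$. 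This gives the exact formula
$$\partial_1f(z)=\frac{2}{\rho^2\,\mathrm{vol}(P)}\int_P w_2\,T_{1,2}f(w)\,dV(w).$$

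\emph{Conclusion.} From $|w_2|<\rho$ on $P$ I obtain $|\partial_1f(z)|\le\frac{2}{\rho\,\mathrm{vol}(P)}\int_P|T_{1,2}f(w)|\,dV(w)$. On $P\subseteq D(z,1/4)$ one has $1-|w|^2\simeq1-|z|^2$, so $dV(w)\simeq(1-|z|^2)^{n+1}d\lambda(w)$; together with $\rho\simeq(1-|z|^2)^{1/2}$ and $\mathrm{vol}(P)=\pi^n\rho_1^2\rho^{2n-2}\simeq(1-|z|^2)^{n+1}$ the powers of $1-|z|^2$ cancel, leaving
$$|\partial_1f(z)|\lesssim\frac{1}{(1-|z|^2)^{1/2}}\int_P|T_{1,2}f(w)|\,d\lambda(w)\le\frac{1}{(1-|z|^2)^{1/2}}\int_{D(z,1/4)}\sum_{i<j}|T_{i,j}f(w)|\,d\lambda(w),$$
and hence $|Rf(z)|=r\,|\partial_1f(z)|$ satisfies the same bound.

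\emph{Where the difficulty lies.} Once one has the identity, the computation in the second step is essentially forced; the two points that require care are (i) the geometric fact that for $z=(r,0,\dots,0)$ the pseudohyperbolic ball $D(z,1/4)$ contains a polydisc of dimensions $\sim(1-r^2)\times(1-r^2)^{1/2}\times\cdots\times(1-r^2)^{1/2}$ with constant independent of $r$ — this non‑isotropy is precisely the source of the factor $(1-|z|^2)^{-1/2}$ — and (ii) the Fubini bookkeeping that makes the $\bar w_1w_2\partial_2f$ term disappear and isolates $\partial_1f(z)$. I expect these (both routine but delicate) to be the main obstacle; everything else is standard. Implicitly $n\ge2$ here, since for $n=1$ the right-hand side is empty and the statement fails.
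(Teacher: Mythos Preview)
Your argument is correct. The paper does not actually give a proof of this lemma --- it simply writes ``the proof of this lemma is a simple modification of \cite[Lemma~2]{ZH} and hence we omit it here'' --- so there is nothing within the paper to compare against directly.

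For the record, every step you outline checks out: the rotation reduction is legitimate because $\sum_{i<j}|T_{i,j}h|^2=|w|^2|\nabla h|^2-|Rh|^2$ is unitarily invariant and the $\ell^1$ and $\ell^2$ sums over the $\binom{n}{2}$ tangential operators are comparable with constants depending only on $n$; the non-isotropic polydisc $P$ with radii $c(1-r^2)$ in the $e_1$-direction and $c(1-r^2)^{1/2}$ in the orthogonal directions sits inside $D(re_1,1/4)$ for $c$ small (this follows from $1-|\Phi_z(w)|^2=(1-|z|^2)(1-|w|^2)/|1-\langle w,z\rangle|^2$ and the elementary estimates $|1-\langle w,z\rangle|\simeq 1-|w|^2\simeq 1-r^2$ on $P$); the mean-value vanishing of $\int_{|w_2|<\rho}w_2\partial_2 f\,dA(w_2)$ and the evaluation $\int_{|w_2|<\rho}|w_2|^2\partial_1 f\,dA(w_2)=\tfrac{\pi\rho^4}{2}\partial_1 f|_{w_2=0}$ are exactly as you say; and the final power counting is correct. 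Your closing observation that the statement is vacuous for $n=1$ is also a point the paper leaves implicit.
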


\begin{proof}
The proof of this lemma is a simple modification of \cite[Lemma 2]{ZH} and hence we omit it here.
\end{proof}

We have the following result.

\begin{thm} \label{Carlesoncharac}
Let $f \in H(\B)$ and $p \ge 1, q>0$ and $s>\max\left\{0, 1-\frac{q}{n}\right\}$. The following statements are equivalent:
\begin{enumerate}
\item $f \in \calN(p, q, s)$ or equivalently, $d\mu_1=|f(z)|^p(1-|z|^2)^{q+ns}d\lambda(z)$ is an $(ns)$-Carleson measure;
\item $d\mu_2=|\nabla f(z)|^p(1-|z|^2)^{p+q+ns}d\lambda(z)$ is an $(ns)$-Carleson measure;
\item $d\mu_3=|\widetilde{\nabla} f(z)|^p(1-|z|^2)^{q+ns}d\lambda(z)$ is an $(ns)$-Carleson measure;
\item $d\mu_4=|Rf(z)|^p(1-|z|^2)^{p+q+ns}d\lambda(z)$ is an $(ns)$-Carlson measure;
\item $d\mu_5=\left(\sum\limits_{i<j} |T_{i, j}f(z)|^p\right)(1-|z|^2)^{\frac{p}{2}+q+ns}d\lambda(z)$ is an $(ns)$-Carleson measure.
\end{enumerate}
\end{thm}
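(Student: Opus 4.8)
The starting point is Proposition~\ref{Carleson01}, which already furnishes the equivalence of $(1)$ with the statement that $d\mu_1$ is an $(ns)$-Carleson measure, together with the norm equivalence~\eqref{Carleson}; throughout I would freely interchange ``$d\nu$ is $(ns)$-Carleson'' with the uniform bound $\nu(Q_r(\xi))\lesssim r^{ns}$. The plan is to route the whole argument through the equivalence $(1)\Leftrightarrow(4)$: all of $(1)\Rightarrow(2),(3),(4),(5)$ will come from one Cauchy-type estimate plus cheap pointwise comparisons, each of $(2),(3),(5)$ will imply $(4)$ directly, and the only genuinely new work will be the reverse implication $(4)\Rightarrow(1)$.

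First I would collect the pointwise comparisons among the derivatives. For $\tfrac12\le|z|<1$ one has the Lagrange-type identity $|z|^2|\nabla f(z)|^2=|Rf(z)|^2+\sum_{i<j}|T_{i,j}f(z)|^2$ and, from the standard identity for the invariant gradient, $|\widetilde{\nabla}f(z)|^2\simeq(1-|z|^2)^2|Rf(z)|^2+(1-|z|^2)\sum_{i<j}|T_{i,j}f(z)|^2$. Combined with $\big(\sum_{i<j}|T_{i,j}f|^2\big)^{p/2}\simeq\sum_{i<j}|T_{i,j}f|^p$ and $|Rf|\le|\nabla f|$ these give, as densities against $d\lambda$,
\[
\frac{d\mu_2}{d\lambda}\lesssim\frac{d\mu_3}{d\lambda},\qquad\frac{d\mu_4}{d\lambda}\lesssim\frac{d\mu_2}{d\lambda},\qquad\frac{d\mu_5}{d\lambda}\lesssim\frac{d\mu_3}{d\lambda},
\]
whence $(3)\Rightarrow(2)$, $(2)\Rightarrow(4)$ and $(3)\Rightarrow(5)$ immediately; the range $|z|<\tfrac12$ is harmless, since there all five densities are mutually comparable and the small-radius Carleson tubes avoid it (tubes of radius bounded below being absorbed into a single constant).

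For the forward direction I would exploit subharmonicity. As $p\ge1$, $|f\circ\Phi_z|^p$ is subharmonic, so a Cauchy estimate at the origin together with the sub-mean-value property and the M\"obius invariance of $d\lambda$ give
\[
|\widetilde{\nabla}f(z)|^p=|\nabla(f\circ\Phi_z)(0)|^p\lesssim\int_{\frac14\B}|f\circ\Phi_z|^p\,d\lambda=\int_{D(z,1/4)}|f|^p\,d\lambda,
\]
that is, $\tfrac{d\mu_3}{d\lambda}(z)\lesssim\mu_1(D(z,1/4))$. Integrating over $Q_r(\xi)$, applying Fubini, using $(1-|z|^2)\simeq(1-|w|^2)$ on $D(z,1/4)$ and the tube inclusion $\bigcup_{z\in Q_r(\xi)}D(z,1/4)\subseteq Q_{Mr}(\xi)$ of Lemma~\ref{Charac01}, one gets $\mu_3(Q_r(\xi))\lesssim\mu_1(Q_{Mr}(\xi))\lesssim r^{ns}$, hence $(1)\Rightarrow(3)$ and, by the previous paragraph, $(1)\Rightarrow(2),(4),(5)$. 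The same mechanism applied to Lemma~\ref{Charac02} — raise it to the $p$-th power and use H\"older's inequality together with $\lambda(D(z,1/4))\simeq1$ — yields $\tfrac{d\mu_4}{d\lambda}(z)\lesssim\mu_5(D(z,1/4))$, and then Fubini and Lemma~\ref{Charac01} give $\mu_4(Q_r(\xi))\lesssim\mu_5(Q_{Mr}(\xi))\lesssim r^{ns}$, that is, $(5)\Rightarrow(4)$.

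It remains to prove $(4)\Rightarrow(1)$, which I expect to be the main obstacle. Assuming $d\mu_4$ is $(ns)$-Carleson, a sub-mean-value estimate for the holomorphic function $Rf$ gives $|Rf(w)|^p(1-|w|^2)^{p+q+ns}\lesssim\mu_4(D(w,1/2))\lesssim(1-|w|^2)^{ns}$, so $|Rf(w)|\lesssim(1-|w|^2)^{-1-q/p}$ and $Rf\in A^1_t$ for every $t>q/p$. One may then use the standard integral representation writing $f-f(0)$ as a fixed integral operator applied to $Rf$, with kernel comparable to $(1-|w|^2)^t|1-\langle z,w\rangle|^{-(n+t)}$ (the apparent divergence at the origin being killed by the absence of a constant term in $Rf$). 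The crux is a Carleson-embedding estimate for this Bergman-type operator: one splits the $w$-integration into the piece over $Q_{Cr}(\xi)$, where the total mass of $|Rf(w)|^p(1-|w|^2)^{p+q+ns}\,d\lambda(w)$ is $\lesssim(Cr)^{ns}$, and the piece over $\B\setminus Q_{Cr}(\xi)$, where $|1-\langle z,w\rangle|\gtrsim r$ for $z\in Q_r(\xi)$ and a dyadic decomposition into annular tubes applies, while the contribution of $f(0)$ itself produces only $|f(0)|^p r^{q+ns}\lesssim r^{ns}$ because $q>0$ and $ns+q>n$. This is the point where the analysis is genuinely delicate, essentially a Forelli--Rudin/Schur-type argument; an alternative route is a multi-scale Hardy inequality along radial segments truncated to the tube. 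Once $(4)\Rightarrow(1)$ is secured, the chain closes: $(1)$ implies $(2),(3),(4),(5)$; each of $(2),(3),(5)$ implies $(4)$; and $(4)$ implies $(1)$.
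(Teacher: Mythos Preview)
Your argument is sound and closes the chain, but it is organised quite differently from the paper's proof.  The paper outsources the equivalence $(1)\Leftrightarrow(2)$ to \cite[Theorem 3.2]{XZ} together with \cite[Theorem 45]{ZZ}, then uses the identity
\[
|z|^2|\widetilde\nabla f(z)|^2=(1-|z|^2)\Bigl((1-|z|^2)|Rf(z)|^2+\sum_{i<j}|T_{i,j}f(z)|^2\Bigr)
\]
to reduce everything to proving $(4)\Leftrightarrow(5)$.  Its $(5)\Rightarrow(4)$ is exactly your Fubini\,+\,Lemma~\ref{Charac02}\,+\,Lemma~\ref{Charac01} mechanism.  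The paper's hard direction is $(4)\Rightarrow(5)$: it writes $T_{i,j}f(z)=\int_0^1 t^{-1}(T_{i,j}Rf)(tz)\,dt$, controls the tangential derivative of $Rf$ by a kernel of size $(1-|w|^2)^\gamma|1-\langle z,w\rangle|^{-(n+\gamma+1/2)}$ via~\eqref{Charac05}, and then runs a dyadic near/far decomposition in $Q_\delta(\xi)$.

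Your route instead proves $(1)\Rightarrow(3)$ directly from the sub-mean-value estimate $|\widetilde\nabla f(z)|^p\lesssim\int_{D(z,1/4)}|f|^p\,d\lambda$ and Lemma~\ref{Charac01}, which is self-contained and avoids the external reference \cite{XZ}; the price is that your hard step becomes $(4)\Rightarrow(1)$, for which you sketch an integral-representation argument with dyadic annular shells.  That sketch is correct in spirit and is essentially the content of Lemma~\ref{decom04} proved later in the paper (take $t=-p$ and the kernel $T$ with parameter $b=n+1+\alpha$ for suitable $\alpha$), so you could either carry out the Schur/dyadic computation here or forward-reference that lemma.  Two small remarks: on $\{|z|<\tfrac12\}$ the densities are not literally ``mutually comparable'' (e.g.\ $Rf(0)=0$ while $\nabla f(0)$ need not vanish), but your parenthetical is the right fix---the Carleson condition for $r$ bounded away from $0$ is just finiteness of $\mu_k(\B)$, which holds since $f\in H(\B)$; and the Lagrange identity $|z|^2|\nabla f|^2=|Rf|^2+\sum_{i<j}|T_{i,j}f|^2$ you invoke is indeed correct but worth a one-line verification since it is not stated explicitly in the paper.
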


\begin{proof}
Note that the equivalence between (1) and (2) follows from \cite[Theorem 3.2]{XZ} and \cite[Theorem 45]{ZZ}. Moreover, since $$(1-|z|^2)|Rf(z)| \le (1-|z|^2)|\nabla f(z)| \le \widetilde{\nabla} f(z)|, z \in \B,$$
  it is clear that $(3) \Longrightarrow (2) \Longrightarrow (4)$. Furthermore, the identity
$$
|z|^2|\widetilde{\nabla}f(z)|^2=(1-|z|^2)\left(\left(1-|z|^2\right)|Rf(z)|^2+\sum_{i<j}|T_{i, j}f(z)|^2\right)
$$
implies that (3) follows from (4) and (5). Therefore, it suffices to show the equivalence between (4) and (5).

$ (5) \Longrightarrow (4).$ Suppose $d\mu_5$ is an $(ns)$-Carleson measure. First we note that for any $z \in \B$,
\begin{equation} \label{Charac04}
\int_{D\left(z, \frac{1}{4}\right)} d\lambda(w)<K,
\end{equation}
for some $K$ independent of the choice of $z$. Indeed, by \cite[2.2.7]{Rud}, we have
\begin{eqnarray*}
\int_{D\left(z, \frac{1}{4}\right)} d\lambda(w)&= &
\int_{D\left(z, \frac{1}{4}\right)} \frac{1}{(1-|w|^2)^{n+1}}dV(w)\\
&\simeq &\frac{1}{(1-|z|^2)^{n+1}} \int_{D\left(z, \frac{1}{4}\right)}dV(w) \quad (\textrm{by} \ \eqref{Charac03}) \\
&\simeq &\frac{1}{(1-|z|^2)^{n+1}} \cdot (1-|z|^2)^{n+1}=1.
\end{eqnarray*}
By Lemmas \ref{Charac01} and \ref{Charac02}, \eqref{Charac03} and \eqref{Charac04}, for any $\xi \in \SSS$ and $\del \in (0, 1)$,
\begin{eqnarray*}
&&\int_{Q_\del(\xi)} |Rf(z)|^p(1-|z|^2)^{p+q+ns}d\lambda(z) \\
&\lesssim &\int_{Q_\del(\xi)} \left( \int_{D\left(z, \frac{1}{4}\right)} \sum_{i<j} |T_{i, j}f(w)|d\lambda(w) \right)^p (1-|z|^2)^{\frac{p}{2}+q+ns}d\lambda(z)\\
&\lesssim& \int_{Q_\del(\xi)} \sum_{i<j} \left(\int_{D\left(z, \frac{1}{4}\right)} |T_{i, j}f(w)|d\lambda(w)\right)^p (1-|z|^2)^{\frac{p}{2}+q+ns}d\lambda(z)
\end{eqnarray*}
\begin{eqnarray*}
&\lesssim & \int_{Q_\del(\xi)} \left(\sum_{i<j} \int_{D\left(z, \frac{1}{4}\right)} |T_{i, j}f(w)|^pd\lambda(w)\right) (1-|z|^2)^{\frac{p}{2}+q+ns}d\lambda(z)\\
&\le & \int_\B \sum_{i<j} |T_{i, j}f(w)|^p \chi_{\bigcup_{z \in Q_\del(\xi)} D \left(z, \frac{1}{4} \right)}(w)\times \\
&& \left(\int_{Q_\del(\xi)} \chi_{ D \left(z, \frac{1}{4} \right)}(z)(1-|z|^2)^{\frac{p}{2}+q+ns}d\lambda(z)\right) d\lambda(w)\\
&\simeq& \int_\B (1-|w|^2)^{\frac{p}{2}+q+ns}\sum_{i<j} |T_{i, j}f(w)|^p \chi_{\bigcup_{z \in Q_\del(\xi)} D \left(z, \frac{1}{4} \right)}(w) \times\\
\\
&& \left(\int_{Q_\del(\xi)} \chi_{ D \left(z, \frac{1}{4} \right)}(z)d\lambda(z)\right) d\lambda(w)\\
&\lesssim & \int_\B (1-|w|^2)^{\frac{p}{2}+q+ns}\sum_{i<j} |T_{i, j}f(w)|^p \chi_{\bigcup_{z \in Q_\del(\xi)} D \left(z, \frac{1}{4} \right)}(w) d\lambda(w)\\
&\le& \int_{Q_{M\del}(\xi)} (1-|w|^2)^{\frac{p}{2}+q+ns}\sum_{i<j} |T_{i, j}f(w)|^p d\lambda(w)\\
&\lesssim& \del^{ns} \quad (\textrm{since} \ d\mu_5 \ \textrm{is an $(ns)$-Carleson measure}).
\end{eqnarray*}
Hence, we get the desired result.

$ (4) \Longrightarrow (5).$ Suppose $d\mu_4$ is an $(ns)$-Carleson measure. From
$$
f(z)-f(0)=\int_0^1 \frac{d}{dt}f(tz)dt=\int_0^1 \frac{Rf(tz)}{t}dt,
$$
we see that for $1 \le i, j \le n$,
$$
T_{i, j}f(z)=\int_0^1 \frac{T_{i, j}(Rf(tz))}{t}dt=\int_0^1 \frac{(T_{i, j}Rf)(tz)}{t}dt.
$$
Hence, it suffices to prove for each $1 \le i, j \le n$,
$$
\left| \int_0^1 \frac{(T_{i, j}Rf)(tz)}{t}dt \right|^p (1-|z|^2)^{\frac{p}{2}+q+ns}d\lambda(z)
$$
is an $(ns)$-Carleson measure.

Note that by \cite[Corollary 5.24]{Zhu} and the fact that  $\frac{p}{2}+q+ns-n-1>-1$, we have, for any $\xi \in \SSS$ and $0<\del<1$,
\begin{eqnarray*}
&&\int_{Q_r(\xi)} \left| \int_0^{1/2} \frac{(T_{i, j}Rf)(tz)}{t}dt\right|^p(1-|z|^2)^{\frac{p}{2}+q+ns}d\lambda(z)\\
&\lesssim& \int_{Q_r(\xi)} (1-|z|^2)^{\frac{p}{2}+q+ns-n-1}dV(z)  \simeq  r^{\frac{p}{2}+q+ns} \le r^{ns}
\end{eqnarray*}
Thus, we only need to show for $1 \le i, j \le n$,
$$
\left( \int_{1/2}^1 |(T_{i, j}Rf)(tz)|dt \right)^p (1-|z|^2)^{\frac{p}{2}+q+ns}d\lambda(z)
$$
is an $(ns)$-Carleson measure.

By the proof of \cite[Theorem 1]{ZH}, for any $\gamma \ge 0$, we have
\begin{equation} \label{Charac05}
 \int_{1/2}^1 |(T_{i, j}Rf)(tw)|dt \lesssim \int_{\B} \frac{(1-|z|^2)^\gamma |Rf(z)|}{|1-\langle z, w \rangle|^{n+\gamma+\frac{1}{2}}}dV(z), \ w \in \B.
\end{equation}

Now we consider two cases.\\

\textit{Case I: $p>1$.}  Let $p'$ be the conjugate of $p$. Take and fix two positive numbers $\gamma$ and $\rho$, such that
$$
\max\left\{0, 1-\frac{p'}{2}\right\}<2p'\rho<1, \gamma>\max\left\{(p+p')\rho, p+q+ns-n-1-p\rho \right\}.
$$
Then for $w \in\B$, by \cite[Theorem 1.12]{Zhu}, we have
\begin{eqnarray*}
&&\left(\int_\B \frac{(1-|z|^2)^\gamma |Rf(z)|}{|1-\langle z, w \rangle|^{n+\gamma+\frac{1}{2}}}dV(z) \right)^p\\
&= & \left(\int_\B \frac{(1-|z|^2)^{\frac{\gamma}{p}} (1-|z|^2)^{\frac{\gamma}{p'}}|Rf(z)|}{|1-\langle z, w \rangle|^{n+\gamma+\frac{1}{2}}}dV(z) \right)^p\\
&=&\left(\int_\B \frac{(1-|z|^2)^{\frac{\gamma}{p}+\rho} (1-|z|^2)^{\frac{\gamma}{p'}-\rho}|Rf(z)|}{|1-\langle z, w \rangle|^{\frac{n+\gamma}{p}-\rho} |1-\langle z, w \rangle|^{\frac{n+\gamma}{p'}+\rho+\frac{1}{2}}}dV(z) \right)^p\\
&\le &\left( \int_\B \frac{(1-|z|^2)^{\gamma+p\rho}|Rf(z)|^p}{|1-\langle z, w \rangle|^{n+\gamma-p\rho}}dV(z)\right)  \left( \int_\B \frac{(1-|z|^2)^{\gamma-p'\rho}}{|1-\langle z, w \rangle|^{n+\gamma+p'\rho+\frac{p'}{2}}}dV(z)\right)^{p/p'}\\
&\simeq &\left( \int_\B \frac{(1-|z|^2)^{\gamma+p\rho}|Rf(z)|^p}{|1-\langle z, w \rangle|^{n+\gamma-p\rho}}dV(z)\right)   \left((1-|w|^2) ^{1-2p'\rho-\frac{p'}{2}}\right)^{p-1} \\
&=&\left( \int_\B \frac{(1-|z|^2)^{\gamma+p\rho}|Rf(z)|^p}{|1-\langle z, w \rangle|^{n+\gamma-p\rho}}dV(z)\right)   (1-|w|^2) ^{\frac{p}{2}-2p\rho-1}  .
\end{eqnarray*}

Thus, for any $\xi \in \SSS$ and $rl \in (0, 1)$,
\begin{eqnarray*}
&& \int_{Q_r(\xi)} \left( \int_{1/2}^1 |(T_{i, j}Rf)(tw)|dt \right)^p (1-|w|^2)^{\frac{p}{2}+q+ns}d\lambda(w) \\
&\lesssim & \int_{Q_r(\xi)} \left( \int_{\B} \frac{(1-|z|^2)^\gamma |Rf(z)|}{|1-\langle z, w \rangle|^{n+\gamma+\frac{1}{2}}}dV(z) \right)^p (1-|w|^2)^{\frac{p}{2}+q+ns}d\lambda(w) \\
&\lesssim& \int_{Q_r(\xi)} \left( \int_\B \frac{(1-|z|^2)^{\gamma+p\rho}|Rf(z)|^p}{|1-\langle z, w \rangle|^{n+\gamma-p\rho}}dV(z)\right)  (1-|w|^2) ^{p+q+ns-2p\rho-1}d\lambda(w)\\
&= &\int_{Q_r(\xi)} \left( \int_{Q_{2r}(\xi)} \frac{(1-|z|^2)^{\gamma+p\rho}|Rf(z)|^p}{|1-\langle z, w \rangle|^{n+\gamma-p\rho}}dV(z)\right) (1-|w|^2) ^{p+q+ns-2p\rho-1}d\lambda(w)\\
&&   +\sum_{j=1}^\infty  \int_{Q_r(\xi)} \bigg\{ \left( \int_{Q_{2^{j+1}r}(\xi)\backslash Q_{2^jr}(\xi)} \frac{(1-|z|^2)^{\gamma+p\rho}|Rf(z)|^p}{|1-\langle z, w \rangle|^{n+\gamma-p\rho}}dV(z)\right) \\
&&  \quad(1-|w|^2) ^{p+q+ns-2p\rho-1}d\lambda(w)\bigg\}\\
&=& I_1+I_2.
\end{eqnarray*}

\textbf{$\bullet$  \ Estimation of $I_1$.}

 Since $2p'\rho<1$ and $s>1-\frac{q}{n}$, it follows that
$$
p+q+ns-n-2p\rho-2>p-2p\rho-2>-1.
$$
Hence, by \cite[Theorem 1.12]{Zhu} and Fubini's theorem, we have
\begin{eqnarray*}
I_1%
&=& \int_{Q_{2r}(\xi)} (1-|z|^2)^{\gamma+p\rho}|Rf(z)|^p \\
&& \left(\int_{Q_r(\xi)}\frac{(1-|w|^2)^{p+q+ns-n-2p\rho-2}}{|1-\langle z, w \rangle|^{n+t-p\rho}}dV(w) \right) dV(z)\\
&\le& \int_{Q_{2r}(\xi)} (1-|z|^2)^{\gamma+p\rho}|Rf(z)|^p \\
&& \left(\int_\B\frac{(1-|w|^2)^{p+q+ns-n-2p\rho-2}}{|1-\langle z, w \rangle|^{n+t-p\rho}}dV(w) \right) dV(z)\\
&\simeq& \int_{Q_{2r}(\xi)} |Rf(z)|^p(1-|z|^2)^{\gamma+p\rho+p+q+ns-\gamma-p\rho-n-1}dV(z) \\
&=& \int_{Q_{2r}(\xi)} |Rf(z)|^p(1-|z|^2)^{p+q+ns}d\lambda(z)\\
&\lesssim&  r^{ns}.
\end{eqnarray*}

\textbf{$\bullet$  Estimation of $I_2$.}

First we note that for $w \in Q_r(\xi)$ and $$z \in Q_{2^{j+1}r}(\xi) \backslash Q_{2^jr}(\xi), j \in \N,
$$
 we have
\begin{eqnarray*}
|1-\langle z, w \rangle|^{1/2}%
&\ge& |1-\langle z, \xi \rangle|^{1/2}-|1-\langle w, \xi\rangle|^{1/2}\\
&\ge& (2^{j/2}-1) r^{1/2},
\end{eqnarray*}
which implies for $w \in Q_r(\xi)$ and $z \in Q_{2^{j+1}r}(\xi) \backslash Q_{2^jr}(\xi), j \in \N$, we have
$|1-\langle z, w \rangle| \ge \frac{2^jr}{100}.$ Since $\gamma+p\rho>p+q+ns-n-1$, we put
$\beta=\gamma+p\rho-p-q-ns+n+1>0,$ and hence
$n+\gamma-p\rho=p+q+ns+\beta-2p\rho-1.$
Moreover, since $2p'\rho<1$, we have
$2p\rho+1-p-q<0.$

Thus, using the fact that $p+q+ns-2p\rho-n-2>-1$ and \cite[Corollary 5.24]{Zhu}, it follows that
\begin{eqnarray*}
I_2%
&=& \sum_{j=1}^\infty  \bigg\{ \int_{Q_r(\xi)}  \int_{Q_{2^{j+1}r}(\xi)\backslash Q_{2^jr}(\xi)} \frac{(1-|z|^2)^\beta}{|1-\langle z, w \rangle|^\beta} \times\\
&&  \frac{(1-|z|^2)^{p+q+ns-n-1}|Rf(z)|^p}{|1-\langle z, w \rangle|^{p+q+ns-2p\rho-1}}dV(z)   (1-|w|^2)^{p+q+ns-2p\rho-1}d\lambda(w) \bigg\}\\
&\lesssim& \sum_{j=1}^\infty  \bigg\{ \int_{Q_r(\xi)} \left( \int_{Q_{2^{j+1}r}(\xi)\backslash Q_{2^jr}(\xi)} \frac{(1-|z|^2)^{p+q+ns-n-1}|Rf(z)|^p}{|1-\langle z, w \rangle|^{p+q+ns-2p\rho-1}}dV(z)\right) \\
&& \quad \quad \quad (1-|w|^2) ^{p+q+ns-2p\rho-1}d\lambda(w) \bigg\}\\
&\lesssim& \sum_{j=1}^\infty \bigg\{(2^j r)^{2p\rho+1-p-q-ns} \int_{Q_r(\xi)} (1-|w|^2)^{p+q+ns-2p\rho-1}\\
&& \quad \quad \quad \quad  \left(\int_{Q_{2^{j+1}r}(\xi)} (1-|z|^2)^{p+q+ns}|Rf(z)|^p d\lambda(z)  \right)d\lambda(w)\bigg\}\\
&\lesssim& \sum_{j=1}^\infty \left\{ (2^j r)^{2p\rho+1-p-q-ns} (2^{j+1}r)^{ns} \int_{Q_r(\xi)} (1-|w|^2)^{p+q+ns-2p\rho-1}d\lambda(z) \right\}\\
&\simeq& \sum_{j=1}^\infty (2^j r)^{2p\rho+1-p-q-ns} (2^{j+1}r)^{ns} r^{p+q+ns-2p\rho-1}\\
&\simeq& \left(\sum_{j=1}^\infty 2^{j(2p\rho+1-p-q)}\right) r^{ns} \lesssim r^{ns}.
\end{eqnarray*}

Finally, combining the estimations of $I_1$ and $I_2$, we get the desired result.\\

\textit{Case II: $p=1$.}

Take and fix a positive number $\gamma$, such that $\gamma>q+ns-n,$ and put $\beta=\gamma-q-ns+n$, which implies that
$$n+\gamma+\frac{1}{2}=\beta+q+ns+\frac{1}{2}.$$

 For any $\xi \in \SSS$ and $r \in (0, 1)$, we have
\begin{eqnarray*}
&&\int_{Q_r(\xi)} \left(\int_{1/2}^1 |(T_{i, j}Rf)(tw)|dt\right) (1-|w|^2)^{\frac{1}{2}+q+ns}d\lambda(w)\\
&\lesssim& \int_{Q_r(\xi)} \left(\int_\B \frac{(1-|z|^2)^\gamma |Rf(z)|}{|1-\langle z,w \rangle|^{n+\gamma+\frac{1}{2}}}dV(z) \right)(1-|w|^2)^{\frac{1}{2}+q+ns}d\lambda(w)\\
&= &\int_{Q_r(\xi)} (1-|w|^2)^{\frac{1}{2}+q+ns} \left(\int_{Q_{2r}(\xi)} \frac{(1-|z|^2)^\gamma |Rf(z)|}{|1-\langle z,w \rangle|^{n+\gamma+\frac{1}{2}}}dV(z)\right) d\lambda(w) \\
&&   + \sum_{j=1}^\infty \int_{Q_r(\xi)} \bigg\{ \left( \int_{Q_{2^{j+1}r}(\xi)\backslash Q_{2^jr}(\xi)} \frac{(1-|z|^2)^\gamma |Rf(z)|}{|1-\langle z,w \rangle|^{n+\gamma+\frac{1}{2}}}dV(z)\right) \\
&&  \quad (1-|w|^2)^{\frac{1}{2}+q+ns} d\lambda(w) \bigg\}\\
&=& J_1+J_2.
\end{eqnarray*}

\textbf{$\bullet$ Estimation of $J_1$.}

By Fubini's theorem and \cite[Theorem 1.12]{Zhu}, we have
\begin{eqnarray*}
J_1%
&=& \int_{Q_r(\xi)} (1-|w|^2)^{q+ns-n-\frac{1}{2}} \left(\int_{Q_{2r}(\xi)} \frac{(1-|z|^2)^\gamma |Rf(z)|}{|1-\langle z,w \rangle|^{n+\gamma+\frac{1}{2}}}dV(z)\right) dV(w) \\
&\le& \int_{Q_{2r}(\xi)}|Rf(z)|(1-|z|^2)^\gamma  \left( \int_\B \frac{(1-|w|^2)^{q+ns-n-\frac{1}{2}}}{|1-\langle z, w\rangle|^{n+\gamma+\frac{1}{2}}}dV(w) \right) dV(z)\\
&\simeq& \int_{Q_{2r}(\xi)}|Rf(z)|(1-|z|^2)^\gamma (1-|z|^2)^{q+ns-n-\gamma}dV(z)\\
&=& \int_{Q_{2r}(\xi)} |Rf(z)|(1-|z|^2)^{1+q+ns}d\lambda(z) \lesssim r^{ns}.
\end{eqnarray*}

\textbf{$\bullet$ Estimation of $J_2$.}

By our choice of $\gamma$ and \cite[Corollary 5.24]{Zhu}, we have
\begin{eqnarray*}
J_2%
&=& \sum_{j=1}^\infty \int_{Q_r(\xi)}  \int_{Q_{2^{j+1}r}(\xi)\backslash Q_{2^jr}(\xi)}  \frac{(1-|z|^2)^\beta}{|1-\langle z, w \rangle|^\beta} \times\\
&& \frac{(1-|z|^2)^{q+ns-n} |Rf(z)|}{|1-\langle z,w \rangle|^{q+ns+\frac{1}{2}}}dV(z)  (1-|w|^2)^{\frac{1}{2}+q+ns} d\lambda(w)  \\
&\lesssim& \sum_{j=1}^\infty \int_{Q_r(\xi)}   \int_{Q_{2^{j+1}r}(\xi)\backslash Q_{2^jr}(\xi)} \frac{(1-|z|^2)^{q+ns-n} |Rf(z)|}{|1-\langle z,w \rangle|^{q+ns+\frac{1}{2}}}dV(z)  \\
&& \quad \quad \quad (1-|w|^2)^{\frac{1}{2}+q+ns} d\lambda(w)  \\
&\lesssim& \sum_{j=1}^\infty \bigg\{(2^j r)^{-q-ns-\frac{1}{2}} \int_{Q_r(\xi)} (1-|w|^2)^{\frac{1}{2}+q+ns} \\
&& \quad \quad \quad \left(\int_{Q_{2^{j+1}r}(\xi)} |Rf(z)|(1-|z|^2)^{1+q+ns}d\lambda(z)\right)d\lambda(w)\bigg\} \\
&\lesssim& \sum_{j=1}^\infty (2^jr)^{-q-ns-\frac{1}{2}} (2^{j+1}r)^{ns} \int_{Q_r(\xi)}(1-|w|^2)^{\frac{1}{2}+q+ns}d\lambda(w)\\
&\simeq& \sum_{j=1}^\infty (2^jr)^{-q-ns-\frac{1}{2}} (2^{j+1}r)^{ns}  r^{\frac{1}{2}+q+ns} \lesssim r^{ns}.
\end{eqnarray*}
The desired result follows from the estimations on $J_1$ and $J_2$.
\end{proof}

Correspondingly, we have the following result for $\calN^0(p, q, s)$ spaces.

\begin{thm} \label{vanishingeqnorm}
Let $f \in H(\B), p \ge 1, q>0$ and $s>\max\left\{0, 1-\frac{q}{n}\right\}$. The following statements are equivalent:
\begin{enumerate}
\item $f \in \calN^0(p, q, s)$ or equivalently, $d\mu_1=|f(z)|^p(1-|z|^2)^{q+ns}d\lambda(z)$ is a vanishing $(ns)$-Carleson measure;
\item $d\mu_2=|\nabla f(z)|^p(1-|z|^2)^{p+q+ns}d\lambda(z)$ is a vanishing $(ns)$-Carleson measure;
\item $d\mu_3=|\widetilde{\nabla} f(z)|^p(1-|z|^2)^{q+ns}d\lambda(z)$ is a vanishing $(ns)$-Carleson measure;
\item $d\mu_4=|Rf(z)|^p(1-|z|^2)^{p+q+ns}d\lambda(z)$ is a vanishing $(ns)$-Carlson measure;
\item $d\mu_5=\left(\sum\limits_{i<j} |T_{i, j}f(z)|^p\right)(1-|z|^2)^{\frac{p}{2}+q+ns}d\lambda(z)$ is a vanishing $(ns)$-Carleson measure.
\end{enumerate}
\end{thm}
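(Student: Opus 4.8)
The plan is to follow the proof of Theorem~\ref{Carlesoncharac} almost verbatim, upgrading every estimate of the shape ``$\lesssim r^{ns}$'' there to ``$=o(r^{ns})$ as $r\to 0$, uniformly in $\xi\in\SSS$'', which by Proposition~\ref{Carleson01}(2) is exactly the assertion that the measure in question is a \emph{vanishing} $(ns)$-Carleson measure. The easy implications transfer with no extra work. The equivalence $(1)\Longleftrightarrow(2)$ is the ``little-oh'' form of \cite[Theorem 3.2]{XZ} together with \cite[Theorem 45]{ZZ} and Proposition~\ref{Carleson01}(2). The pointwise inequalities $(1-|z|^2)|Rf(z)|\le(1-|z|^2)|\nabla f(z)|\le|\widetilde\nabla f(z)|$ give $\mu_4(Q_r(\xi))\le\mu_2(Q_r(\xi))\le\mu_3(Q_r(\xi))$ for all $\xi\in\SSS$ and $0<r<1$, hence $(3)\Longrightarrow(2)\Longrightarrow(4)$. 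Finally the identity
$$
|z|^2|\widetilde\nabla f(z)|^2=(1-|z|^2)\Big((1-|z|^2)|Rf(z)|^2+\sum_{i<j}|T_{i,j}f(z)|^2\Big),
$$
combined with the fact that $Q_r(\xi)\subseteq\{z:|z|>1/2\}$ once $r<1/2$, shows that on $Q_r(\xi)$ the integrand of $d\mu_3$ is dominated by a constant times the sum of the integrands of $d\mu_4$ and $d\mu_5$, so $(4)$ together with $(5)$ implies $(3)$. Thus everything reduces to the equivalence $(4)\Longleftrightarrow(5)$.

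For $(5)\Longrightarrow(4)$ I would just reuse the proof of the implication $(5)\Longrightarrow(4)$ of Theorem~\ref{Carlesoncharac}: the chain of estimates there, through Lemma~\ref{Charac02}, Lemma~\ref{Charac01}, \eqref{Charac03}, \eqref{Charac04} and Fubini, does not care whether $d\mu_5$ is Carleson or vanishing, and it yields a fixed constant $M>0$ with
$$
\frac{1}{\delta^{ns}}\int_{Q_\delta(\xi)}|Rf(z)|^p(1-|z|^2)^{p+q+ns}\,d\lambda(z)\;\lesssim\;\frac{1}{(M\delta)^{ns}}\int_{Q_{M\delta}(\xi)}\Big(\sum_{i<j}|T_{i,j}f(z)|^p\Big)(1-|z|^2)^{\frac p2+q+ns}\,d\lambda(z).
$$
If $d\mu_5$ is a vanishing $(ns)$-Carleson measure the right side tends to $0$ as $\delta\to0$ uniformly in $\xi$, hence so does the left side, which is $(4)$.

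The hard part will be $(4)\Longrightarrow(5)$. The genuine obstacle is that in Theorem~\ref{Carlesoncharac} the dyadic-annulus sum $\sum_j\int_{Q_{2^{j+1}r}(\xi)\setminus Q_{2^{j}r}(\xi)}(\cdots)$ was estimated using the $(ns)$-Carleson bound $\mu_4(Q_{2^{j+1}r}(\xi))\lesssim(2^{j+1}r)^{ns}$ on \emph{every} annulus, whereas the vanishing hypothesis only supplies the smallness $\mu_4(Q_t(\xi))<\varepsilon\,t^{ns}$ for $t$ below a threshold $\delta_0=\delta_0(\varepsilon)$ that is uniform in $\xi$. I would run the same two cases as there --- $p>1$, using \cite[Theorem 1.12]{Zhu}, H\"older and the exponents $\gamma,\rho$ from Case~I; $p=1$ with $\gamma$ from Case~II --- which reduces the problem to showing that each of $I_1,I_2$ (resp.\ $J_1,J_2$) and the $\int_0^{1/2}$-contribution is $o(r^{ns})$. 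The $\int_0^{1/2}$-piece is $\lesssim r^{\frac p2+q+ns}=o(r^{ns})$ for free since $\frac p2+q>0$; the term $I_1$ (resp.\ $J_1$) is bounded, exactly as in Theorem~\ref{Carlesoncharac}, by a constant times $\mu_4(Q_{2r}(\xi))$, which is $o((2r)^{ns})=o(r^{ns})$ uniformly by the vanishing hypothesis; and $I_2$ (resp.\ $J_2$) is handled by splitting the $j$-sum at $j_0\sim\log_2(\delta_0/r)$. For $j\le j_0$ one inserts $\mu_4(Q_{2^{j+1}r}(\xi))<\varepsilon(2^{j+1}r)^{ns}$, which (because the exponent $2p\rho+1-p-q$, resp.\ $-q-\tfrac12$, is negative, precisely as in Theorem~\ref{Carlesoncharac}) sums to $\lesssim\varepsilon\,r^{ns}$; for $j>j_0$ one keeps only the ordinary Carleson bound $\mu_4(Q_{2^{j+1}r}(\xi))\lesssim(2^{j+1}r)^{ns}$ (valid since $f\in\calN^0(p,q,s)\subseteq\calN(p,q,s)$, so $d\mu_4$ is an $(ns)$-Carleson measure by Theorem~\ref{Carlesoncharac}), which yields a contribution $\lesssim r^{ns}\sum_{j>j_0}2^{j(2p\rho+1-p-q)}$ --- a tail of a convergent geometric series, hence $o(r^{ns})$ because $j_0\to\infty$ as $r\to0$. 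Letting $r\to0$ and then $\varepsilon\to0$ gives $\mu_5(Q_r(\xi))=o(r^{ns})$ uniformly in $\xi$, i.e.\ $(5)$; all thresholds and constants are uniform in $\xi$ because the vanishing Carleson condition on $d\mu_4$ is, so the decay is uniform, as required.
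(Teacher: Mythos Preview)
Your approach is correct and is precisely the ``simple modification of Theorem~\ref{Carlesoncharac}'' that the paper itself invokes (and then omits); the dyadic splitting of $I_2$ (resp.\ $J_2$) at $j_0\sim\log_2(\delta_0/r)$ is exactly the kind of argument the paper uses elsewhere (compare Lemma~\ref{decom06}). One small correction: in your tail estimate you justify the ordinary Carleson bound $\mu_4(Q_{2^{j+1}r}(\xi))\lesssim(2^{j+1}r)^{ns}$ by writing ``valid since $f\in\calN^0(p,q,s)\subseteq\calN(p,q,s)$'', but at that point you are assuming only condition~(4), not condition~(1), so this is circular --- the correct (and shorter) justification is simply that a vanishing $(ns)$-Carleson measure is in particular an $(ns)$-Carleson measure, which gives the needed bound directly.
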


\begin{proof}
The proof of this theorem is a simple modification of Theorem \ref{Carlesoncharac} and  hence we omit it here.
\end{proof}

Moreover, combining Theorem \ref{Carlesoncharac}, \cite[Theorem 45]{ZZ} and \cite[Theorem 3.2]{XZ}, we have the following characterizations of $\calN(p, q, s)$-norm. In particular, we have:

\begin{cor} \label{equivalentnorm}
Let $f \in H(\B)$, $p \ge 1, q>0$ and $s>\max\left\{0, 1-\frac{q}{n}\right\}$. The following quantities are equivalent.
\begin{enumerate}
\item
$$
I_1=\sup_{a \in \B} \int_\B |f(z)|^p(1-|z|^2)^q(1-|\Phi_a(z)|^2)^{ns}d\lambda(z);
$$
\item
$$
I_2=|f(0)|^p+\sup_{a \in \B} \int_\B |\nabla f(z)|^p(1-|z|^2)^{p+q}(1-|\Phi_a(z)|^2)^{ns}d\lambda(z);
$$
\item
$$
I_3=|f(0)|^p+\sup_{a \in \B} \int_\B |R f(z)|^p(1-|z|^2)^{p+q}(1-|\Phi_a(z)|^2)^{ns}d\lambda(z);
$$
\item
$$
I_4=|f(0)|^p+\sup_{a \in \B} \int_\B |\widetilde {\nabla}f(z)|^p(1-|z|^2)^q(1-|\Phi_a(z)|^2)^{ns}d\lambda(z);
$$
\item
$$
I_5=|f(0)|^p+\sup_{a \in \B} \int_\B \left(\sum\limits_{i<j} |T_{i, j}f(z)|^p\right)(1-|z|^2)^{\frac{p}{2}+q}(1-|\Phi_a(z)|^2)^{ns} d\lambda(z).
$$
\end{enumerate}
\end{cor}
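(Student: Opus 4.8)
The plan is to reduce all five quantities to one and the same Carleson‑measure quantity and then quote Theorem~\ref{Carlesoncharac}. First I would record the elementary identity
$$
1-|\Phi_a(z)|^2=\frac{(1-|a|^2)(1-|z|^2)}{|1-\langle a,z\rangle|^2},
$$
which lets me absorb the M\"obius weight into the measure. Writing $\mu_1,\dots,\mu_5$ for the measures in Theorem~\ref{Carlesoncharac} and using this identity, the integral in $I_j$ becomes $\int_\B\big(\tfrac{1-|a|^2}{|1-\langle a,z\rangle|^2}\big)^{ns}\,d\mu_j(z)$ for every $j$ (this is exactly the computation carried out in the proof of Proposition~\ref{Carleson01}(1) for $\mu_1$). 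Hence $I_1=\sup_{a\in\B}\int_\B(\cdots)^{ns}d\mu_1$ and, for $j\in\{2,3,4,5\}$, $I_j-|f(0)|^p=\sup_{a\in\B}\int_\B(\cdots)^{ns}d\mu_j$. By \cite[Theorem~45]{ZZ} each such supremum is comparable, with constants depending only on $n$ and $s$, to the $(ns)$-Carleson "norm" $\|\mu_j\|:=\sup_{0<r<1,\,\xi\in\SSS}r^{-ns}\mu_j(Q_r(\xi))$; for $j=1$ this is just \eqref{Carleson}. Thus $I_1\simeq\|\mu_1\|$ and $I_j\simeq|f(0)|^p+\|\mu_j\|$ for $j=2,3,4,5$.

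Next I would invoke Theorem~\ref{Carlesoncharac} in quantitative form. Its equivalences are obtained through pointwise domination of one measure by another —from $(1-|z|^2)|Rf(z)|\le(1-|z|^2)|\nabla f(z)|\le|\widetilde\nabla f(z)|$ for $(3)\Rightarrow(2)\Rightarrow(4)$, from the identity $|z|^2|\widetilde\nabla f(z)|^2=(1-|z|^2)\big((1-|z|^2)|Rf(z)|^2+\sum_{i<j}|T_{i,j}f(z)|^2\big)$ for $(3)$ out of $(4)$ and $(5)$, and via the Case~I/Case~II kernel estimates and \cite[Theorem~3.2]{XZ} for the remaining directions— and none of these steps introduces a constant depending on $f$. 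Reading them off with the standard additive correction at the origin therefore gives
$$
|f(0)|^p+\|\mu_1\|\simeq|f(0)|^p+\|\mu_j\|,\qquad j=2,3,4,5,
$$
with constants depending only on $p,q,s,n$.

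Finally I would close the loop: by Proposition~\ref{boundaryineq}, $|f(0)|^p\lesssim\|f\|^p=I_1\simeq\|\mu_1\|$, so $I_1\simeq|f(0)|^p+\|\mu_1\|$; combining this with the two displays above yields $I_1\simeq|f(0)|^p+\|\mu_j\|\simeq I_j$ for every $j$, which is the claim. The one point that needs genuine care — and which I regard as the main obstacle — is that Theorem~\ref{Carlesoncharac} is stated only qualitatively, so one must revisit its proof (and \cite[Theorem~3.2]{XZ}) to confirm that every implication is a two‑sided estimate with $f$-independent constants, and to track precisely where the harmless term $|f(0)|^p$ must be reinstated: it is exactly the term that rescues the constant functions, for which $\mu_2=\cdots=\mu_5\equiv0$ while $\mu_1\not\equiv0$. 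Once that bookkeeping is done, the equivalence of $I_1,\dots,I_5$ is immediate.
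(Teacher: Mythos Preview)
Your proposal is correct and follows essentially the same route as the paper: the paper derives Corollary~\ref{equivalentnorm} simply by ``combining Theorem~\ref{Carlesoncharac}, \cite[Theorem~45]{ZZ} and \cite[Theorem~3.2]{XZ}'', and you have spelled out exactly how these three ingredients fit together, including the bookkeeping with $|f(0)|^p$ (via Proposition~\ref{boundaryineq}) that the paper leaves implicit. Your observation that Theorem~\ref{Carlesoncharac} must be read quantitatively is well taken and is indeed the only point requiring care.
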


\begin{rem} \label{NpqsFpqsID} From the above results, it is easy to see that
$$
\calN(p, q, s)=F(p, p+q-n-1, ns)
$$
and
$$
\calN^0(p, q, s)=F_0(p, p+q-n-1, ns).
$$
An alternative proof of the equivalence of $(1), (2), (3)$ and $(4)$ in Corollary \ref{equivalentnorm} can be found in \cite{ZHC}, which depends on a careful estimation of the quantity
$$
I_{w, a}=\int_\B \frac{(1-|z|^2)^\del}{|1-\langle z, w \rangle|^t |1-\langle z, a \rangle|^r}dV(z),
$$
where $w, a \in \B$, $\del>-1$, $t, r \ge 0$ and $r+t-\del>n+1$. However, their approach does not cover the last quantity $I_5$.
\end{rem}


\medskip

\subsection{Korenblum's inequality of $\calN(p, q, s)$-type spaces}


We denote $f_r(z)=f(rz)$ for $f$ being a holomorphic function and $0 \le r<1$. Recall that the Korenblum's inequality usually refers to the following inequality:
$$
\|g_r\|_{BMOA} \le \|g\|_{\calB} \sqrt{|\log(1-r^2)|},
$$
where $g \in H(\D)$, BMOA is the space of bounded mean oscillation of holomorphic functions and $\calB$ is the Bloch space. By \cite{KO}, the above estimation is sharp.

As an application of our main result in this section, we study the Korenblum's inequality for $\calN(p, q, s)$-type spaces.

The following description via higher radial derivative is straightforward from Corollary \ref{equivalentnorm}.

\begin{cor} \label{higherequivnorm}
Let $f \in H(\B)$, $p \ge 1, q>0$, $s>\max\left\{0, 1-\frac{q}{n}\right\}$ and $m \in \N$. Then $f \in \calN(p, q, s)$ if and only if
$$
\sup_{a \in \B} \int_\B |R^mf(z)|^p(1-|z|^2)^{mp+q}(1-|\Phi_a(z)|^2)^{ns}d\lambda(z)<\infty.
$$
\end{cor}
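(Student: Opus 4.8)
The plan is to induct on $m$, using Corollary \ref{equivalentnorm} as the engine of the inductive step. The case $m=0$ is the definition of $\calN(p,q,s)$, and the case $m=1$ is precisely the equivalence of $I_1$ and $I_3$ in Corollary \ref{equivalentnorm}, together with the trivial remark that $|f(0)|^p<\infty$ for every $f\in H(\B)$. So it suffices to show that for each fixed $m\ge 0$ the finiteness of
$$
\sup_{a\in\B}\int_\B |R^m f(z)|^p(1-|z|^2)^{mp+q}(1-|\Phi_a(z)|^2)^{ns}\,d\lambda(z)
$$
is equivalent to the finiteness of the analogous quantity with $R^m f$ replaced by $R^{m+1}f$ and the exponent $mp+q$ replaced by $(m+1)p+q$.

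To obtain this, I would apply Corollary \ref{equivalentnorm} to the holomorphic function $g:=R^m f$, with the weight parameter $q$ replaced by $q_m:=mp+q$. The hypotheses are satisfied: $q_m=mp+q>0$, and since $q_m\ge q$ we have $1-q_m/n\le 1-q/n$, hence $\max\{0,1-q_m/n\}\le\max\{0,1-q/n\}<s$; that is, the standing admissibility condition $ns+q>n$ only becomes weaker as the weight increases, so it is preserved under radial differentiation. The equivalence of $I_1$ and $I_3$ in Corollary \ref{equivalentnorm}, applied to $g$, then gives
$$
\sup_{a\in\B}\int_\B |R^m f|^p(1-|z|^2)^{q_m}(1-|\Phi_a(z)|^2)^{ns}\,d\lambda \simeq |R^m f(0)|^p+\sup_{a\in\B}\int_\B |R^{m+1}f|^p(1-|z|^2)^{p+q_m}(1-|\Phi_a(z)|^2)^{ns}\,d\lambda .
$$
Since $|R^m f(0)|^p$ is a finite constant and $p+q_m=(m+1)p+q$, the left-hand side is finite exactly when the $R^{m+1}$-quantity with exponent $(m+1)p+q$ is finite. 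Chaining these comparabilities from $m=0$ up to the desired value, and invoking the definition of $\calN(p,q,s)$ at the bottom of the chain, yields the stated if-and-only-if (and in fact the quantitative comparison $\|f\|^p\simeq\sup_{a\in\B}\int_\B|R^m f|^p(1-|z|^2)^{mp+q}(1-|\Phi_a(z)|^2)^{ns}\,d\lambda+\sum_{j=0}^{m-1}|R^j f(0)|^p$, with comparability constants depending only on $p,q,s,n,m$).

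I do not anticipate a genuine obstacle here: the argument is a finite iteration of a result already proved, and the only point needing a moment's care is the parameter bookkeeping, namely verifying at each stage that the pair $(q_m,s)$ still meets the admissibility hypothesis $s>\max\{0,1-q_m/n\}$ required by Corollary \ref{equivalentnorm}. As observed above this is automatic from $q_m\ge q$, so the induction runs without friction, each step contributing only a harmless additive term $|R^j f(0)|^p$ and a multiplicative constant.
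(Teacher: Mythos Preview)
Your proof is correct and is precisely the intended argument: the paper states only that the result is ``straightforward from Corollary \ref{equivalentnorm}'', and the induction on $m$ you give---applying the $I_1\simeq I_3$ equivalence of Corollary \ref{equivalentnorm} to $g=R^m f$ with weight $q_m=mp+q$, after checking that $s>\max\{0,1-q_m/n\}$ is inherited from $q_m\ge q$---is exactly how one unwinds that remark.
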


We need the following preliminaries. Let $\alpha>0$. The $\alpha$-Bloch space $\calB^\alpha$ is the space of all   $f\in H(\B)$ such that $\sup_{a \in \B} (1-|z|^2)^\alpha|Rf(z)|<\infty$ and the little $\alpha$-Bloch space $\calB^\alpha_0$ consists of those  $f\in H(\B)$ satisfying $
\lim_{|z| \to 1}(1-|z|^2)^\alpha|Rf(z)|=0.$
It is well-known that $\calB^\alpha$ becomes a Banach space with the norm $$\|f\|_{\calB^\alpha}=|f(0)|+\sup_{z \in \B} (1-|z|^2)^\alpha|Rf(z)|. $$ We denote
$$
K_1=|f(0)|+\sup_{z \in \B} (1-|z|^2)^\alpha |\nabla f(z)|
$$
and
$$
K_2=|f(0)|+\sup_{z \in \B} (1-|z|^2)^{\alpha-1} |\widetilde{\nabla}f(z)|.
$$
It is known that when $\alpha>0$, $K_1$ and $\|f\|_{\calB^\alpha}$ are equivalent and when $\alpha>\frac{1}{2}$, the same is true for both $K_1$ and $K_2$ (see, e.g., \cite[Theorem 7.1 and Theorem 7.2]{Zhu}.

The $\alpha$-Bloch space $\calB^\alpha$ has a close relationship with the Bergman-type space $A^{-p}(\B)$. It is a classical result that for $p>0$, we have (see, e.g., \cite{Zhu})
$$
A^{-p}(\B)=\calB^{p+1}.
$$
 However, we did not find a reference for the proof of the above result,  and hence we give its proof here for the readers' convenience.

\begin{lem} \label{BlochBertype}
Suppose $p>0$. Then $f$ is in $\calB^{p+1}$ if and only if $|f(z)|(1-|z|^2)^p$ is bounded in $\B$; $f$ is in $\calB^{p+1}_0$ if and only if $(1-|z|^2)^p|f(z)| \to 0$ as $|z| \to 1^{-}$.
\end{lem}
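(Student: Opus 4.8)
The plan is to prove the two equivalences together, since the ``bounded'' statement is exactly the identity $A^{-p}(\B)=\calB^{p+1}$ and the ``little--oh'' statement will come out of the very same estimates sharpened by a routine $\eps$--$\del$ argument. I will use a Cauchy--type gradient bound for one inclusion and recovery of $f$ from its radial derivative for the other, and I will pass freely between $Rf$ and $\nabla f$ using the comparison (recalled just before the lemma) of $\|f\|_{\calB^\alpha}$ with $|f(0)|+\sup_z(1-|z|^2)^\alpha|\nabla f(z)|$, valid for $\alpha>0$.

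First I would show that if $|f(z)|(1-|z|^2)^p$ is bounded by $C$ on $\B$, then $f\in\calB^{p+1}$. Fix $z\in\B$ and apply a standard Cauchy estimate on the Euclidean ball $B\!\left(z,\tfrac{1-|z|}{2}\right)$, which gives $|\nabla f(z)|\lesssim(1-|z|)^{-1}\sup\{|f(w)|:w\in B(z,(1-|z|)/2)\}$. On that ball $1-|w|\simeq1-|z|$, so the hypothesis forces $\sup|f|\lesssim C(1-|z|^2)^{-p}$ there, whence $|Rf(z)|\le|\nabla f(z)|\lesssim C(1-|z|^2)^{-(p+1)}$. For the vanishing version, given $\eps>0$ pick $\del<1$ with $(1-|w|^2)^p|f(w)|<\eps$ whenever $|w|>\del$; once $|z|$ is close enough to $1$ the ball $B(z,(1-|z|)/2)$ lies inside $\{|w|>\del\}$, and the identical computation yields $(1-|z|^2)^{p+1}|Rf(z)|\lesssim\eps$, i.e.\ $f\in\calB^{p+1}_0$.

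For the converse I would recover $f$ from $Rf$ along radii. For $t\in(0,1]$ one has $\frac{d}{dt}f(tz)=\sum_{k}z_k\partial_k f(tz)=\tfrac1t Rf(tz)$, and the middle expression extends continuously to $t=0$, so $f(z)-f(0)=\int_0^1\frac{Rf(tz)}{t}\,dt$. Splitting this at $t=\tfrac12$, the part over $[0,\tfrac12]$ is dominated by $\sup_{|w|\le1/2}|\nabla f(w)|$, a constant independent of $z$, while over $[\tfrac12,1]$ I would use $|Rf(tz)|\le\|f\|_{\calB^{p+1}}(1-t^2|z|^2)^{-(p+1)}$ together with $1-t^2|z|^2\ge1-t|z|$ and the elementary one--variable bound $\int_{1/2}^1(1-t|z|)^{-(p+1)}dt\lesssim(1-|z|^2)^{-p}$ to obtain a contribution $\lesssim\|f\|_{\calB^{p+1}}(1-|z|^2)^{-p}$; hence $|f(z)|(1-|z|^2)^p$ is bounded. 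The little--oh case is the same argument with the radial integral split at $t=\del/|z|$, where $\del$ is chosen so that $(1-|w|^2)^{p+1}|Rf(w)|<\eps$ for $|w|>\del$: the lower piece is $O(1)$ and so vanishes after multiplication by $(1-|z|^2)^p$ as $|z|\to1$, while the upper piece is $\lesssim\eps(1-|z|^2)^{-p}$, giving $(1-|z|^2)^p|f(z)|\lesssim\eps+o(1)$.

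The main point that needs care is the bookkeeping near $t=0$ in the radial--integration step, and, relatedly, making that integral produce exactly the exponent $-p$ rather than something larger; both are handled by the split of the $t$--integral and by the asymptotic $\int_{1/2}^1(1-tr)^{-(p+1)}dt\simeq(1-r)^{-p}$. Everything else is the routine Cauchy estimate plus the comparison $1-|w|\simeq1-|z|$ on $B(z,(1-|z|)/2)$, so I do not expect any serious difficulty.
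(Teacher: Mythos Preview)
Your argument is correct, and it takes a genuinely different route from the paper's proof. The paper establishes both directions via integral representations: for $A^{-p}(\B)\Rightarrow\calB^{p+1}$ it invokes the Bergman reproducing formula $f(z)=\int_\B f(w)(1-\langle z,w\rangle)^{-(n+1+p)}dV_p(w)$, differentiates under the integral, and applies the Forelli--Rudin estimate; for $\calB^{p+1}\Rightarrow A^{-p}(\B)$ it uses an integral bound of the form $|f(z)-f(0)|\lesssim\int_\B(1-|w|^2)^\alpha|Rf(w)||1-\langle z,w\rangle|^{-(n+\alpha)}dV(w)$ together with another Forelli--Rudin estimate. The little--oh statement is then dispatched in one line by appealing to the fact that both $\calB^{p+1}_0$ and $A^{-p}_0(\B)$ are the closures of the polynomials in their respective big spaces.

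Your approach is more elementary and self-contained: the Cauchy gradient estimate on a small Euclidean ball replaces the reproducing kernel argument, and direct radial integration of $Rf$ replaces the integral representation. This avoids the Forelli--Rudin machinery entirely and works in any dimension without change. The paper's approach, on the other hand, sits more naturally alongside the rest of the article, which relies on exactly these kernel estimates throughout, and its closure argument for the little--oh case is slightly cleaner than your $\eps$--$\del$ splitting (though yours has the virtue of being explicit and not requiring the density statements as input).
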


\begin{proof}
$(i)$ \textit{Necessity.} Let $f \in \calB^{p+1}$. Take and fix some $\alpha>p+1$. It is clear that $Rf \in A^1_\alpha$. Hence, by the proof of \cite[Theorem 2.16]{Zhu} and \cite[Proposition 1.4.10]{Rud}, we have
\begin{eqnarray*}
|f(z)-f(0)|%
&\lesssim& \int_\B \frac{(1-|w|^2)^\alpha |Rf(w)|dV(w)}{|1-\langle z, w\rangle|^{n+\alpha}}\\
&\le& \|f\|_{\calB^{p+1}} \int_\B \frac{(1-|w|^2)^{\alpha-p-1}}{|1-\langle z, w \rangle|^{n+1+(\alpha-p-1)+p}}dV(w)\\
&\simeq& \frac{\|f\|_{\calB^{p+1}}}{(1-|z|^2)^p},
\end{eqnarray*}
which implies the desired claim.

\textit{Sufficiency.}
If $(1-|z|^2)^p|f(z)| \le M$ for some constant $M>0$, then by \cite[Theorem 2.2]{Zhu},
$$
f(z)=\int_\B \frac{f(w)}{(1-\langle z, w \rangle)^{n+1+p}}dV_p(w).
$$
Thus, by \cite[Proposition 1.4.10]{Rud}, we get
\begin{eqnarray*}
|Rf(z)|%
&=&(n+p+1) \left|\int_\B \frac{f(w)}{(1-\langle z, w\rangle)^{n+2+p}} \left(\sum_{k=1}^n z_k \bar{w_k}\right) dV_p(w)\right|\\
&\lesssim& \int_\B \frac{|f(w)|(1-|w|^2)^p}{|1-\langle z, w \rangle|^{n+2+p}}dV(w)\\
&\le& M \int_\B \frac{1}{|1-\langle z, w \rangle|^{n+1+p+1}}dV(w)\\
&\lesssim& \frac{1}{(1-|z|^2)^{p+1}},
\end{eqnarray*}
as desired.

$(ii)$ The second assertion follows from $(i)$ and the fact that $\calB^{p+1}_0$ and $A^{-p}_0(\B)$ are the closure of the polynomials in $\calB^{p+1}$ and $A^{-p}(\B)$ respectively.
\end{proof}

\begin{thm}
Let $p \ge 1, q>0$, $s>\max\left\{0, 1-\frac{q}{n} \right\}$ and $0 \le r<1$. Then for any $f\in H(\B)$, we have
\[ \|f_r\| \le \begin{cases}
C|f|_{\frac{q}{p}} \cdot \frac{1}{(1-r^2)^{\frac{q}{p}}} & s>1\\
 C|f|_{\frac{q}{p}}\cdot  \frac{1}{(1-r^2)^{\frac{2(n+1-ns)}{p}}} & s \le 1,
\end{cases} \]
where $C$ is some constant independent of $r$.
\end{thm}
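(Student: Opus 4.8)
The plan is to estimate $\|f_r\|$ by relating $\calN(p,q,s)$ to the Bergman-type space $A^{-q/p}(\B)$ and then dilating. First I would treat the easy case $s>1$: by Proposition \ref{Npqsbigs} we have $\calN(p,q,s)=A^{-q/p}(\B)$ with equivalent norms, so $\|f_r\|\simeq |f_r|_{q/p}$. Since $|f_r|_{q/p}=\sup_{z\in\B}|f(rz)|(1-|z|^2)^{q/p}$ and $1-|z|^2=\tfrac{1}{r^2}(r^2-|rz|^2)\le \tfrac{1}{r^2}(1-|rz|^2)\cdot\frac{r^2}{1-(1-r^2)/\,\cdot}$ — more cleanly, using $1-|z|^2\le \frac{1-|rz|^2}{1-r^2}$ (valid since $(1-|z|^2)(1-r^2)\le 1-r^2|z|^2$ rearranges to $r^2|z|^2-|z|^2-r^2+1\ge 0$, i.e. $(1-r^2)(1-|z|^2)\ge 0$... let me instead use $(1-|rz|^2)\ge (1-r^2)$, no) — the correct elementary bound is $1-|z|^2 \le \frac{1}{1-r^2}(1-|rz|^2)$ when... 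I would verify the precise inequality $(1-|z|^2)\le (1-r^2)^{-1}(1-r^2|z|^2)$, which is equivalent to $r^2|z|^2-|z|^2 \le r^2|z|^2 - r^2$, i.e. $-|z|^2\le -r^2$, i.e. $|z|\ge r$; so this only holds for $|z|\ge r$, and for $|z|<r$ we use $1-|z|^2\le 1\le (1-r^2)^{-1}$. Either way $(1-|z|^2)^{q/p}\lesssim (1-r^2)^{-q/p}(1-|rz|^2)^{q/p}$ up to combining the two regimes, hence $|f_r|_{q/p}\lesssim (1-r^2)^{-q/p}|f|_{q/p}$, giving the first case.

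For the case $s\le 1$, the point is that $\calN(p,q,s)$ need no longer equal $A^{-q/p}(\B)$, but we still have the one-sided inclusion $\calN(p,q,s)\subseteq A^{-q/p}(\B)$ from Proposition \ref{boundaryineq}, and conversely, by Remark following Corollary \ref{Hadamard10} (or directly from Proposition \ref{Npqsbigs} applied with a suitable $k$), $A^{-k}(\B)\subseteq \calN(p,q,s)$ whenever $k < \frac{q+ns-n}{p}$, with norm control $\|g\|\lesssim |g|_k$. The strategy is: given $f\in H(\B)$ (we may assume $f\in A^{-q/p}(\B)$, else the right side is infinite), the dilate $f_r$ lies in $A^{-k}(\B)$ for every $k>0$ with a quantitative bound. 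Indeed $|f_r|_k = \sup_z |f(rz)|(1-|z|^2)^k$; writing $w=rz$ and using $(1-|z|^2)^k \le (1-|w|^2)^k$ on $|z|<r$ is false too, so instead write $(1-|z|^2)^k = (1-|z|^2)^{k-q/p}(1-|z|^2)^{q/p}$ and bound $(1-|z|^2)^{q/p}\le (1-r^2)^{-q/p}(1-|rz|^2)^{q/p}$ as above while $(1-|z|^2)^{k-q/p}$ is handled by whichever sign $k-q/p$ has. This yields $|f_r|_k \lesssim (1-r^2)^{-q/p}\,|f|_{q/p}$ plus possibly an extra factor; then $\|f_r\|\lesssim |f_r|_k$. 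To obtain the stated exponent $\frac{2(n+1-ns)}{p}$ one must choose $k$ optimally near the endpoint $\frac{q+ns-n}{p}$ and track the resulting power of $(1-r^2)$ carefully — I expect the exponent emerges as $\frac{q/p + (\text{gap from }k\text{ to }q/p)}{1}$-type bookkeeping, and the factor $2$ comes from the two-region split (using $1-|z|^2\le 2(1-|z|)$ type estimates near $|z|=r$).

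The main obstacle will be the bookkeeping in the second case: getting exactly $(1-r^2)^{-2(n+1-ns)/p}$ rather than a cruder exponent requires choosing the auxiliary weight $k$ correctly and combining the inclusion $A^{-k}(\B)\subseteq \calN(p,q,s)$ with a sharp dilation estimate, rather than the naive bound. A clean route is to bypass $A^{-k}$ and instead estimate $\|f_r\|^p$ directly: plug $f_r$ into the defining integral, use $|f(rz)|\le |f|_{q/p}(1-|rz|^2)^{-q/p}$, change variables, and reduce to estimating $\sup_{a}(1-|a|^2)^{ns}\int_\B \frac{(1-|z|^2)^{q+ns-n-1}}{(1-r^2|z|^2)^{q}|1-\langle a,z\rangle|^{2ns}}\,dV(z)$; the singularity of $(1-r^2|z|^2)^{-q}$ as $r\to1$ produces, via \cite[Theorem 1.12]{Zhu} after bounding $(1-r^2|z|^2)^{-1}\lesssim (1-r^2)^{-1}(1-|z|^2)^{-1}$ near $|z|=1$ — wait, that direction again needs care — via splitting $\B$ into $|z|<1/2$ and $|z|\ge 1/2$, the claimed power of $1-r^2$. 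I would organize the final write-up around this direct computation for $s\le1$, and around Proposition \ref{Npqsbigs} for $s>1$, keeping the two elementary lemmas $(1-|z|^2)\lesssim (1-r^2)^{-1}(1-|rz|^2)$ (on $|z|\ge r$) and $(1-r^2|z|^2)\gtrsim (1-r^2)(1-|z|^2)$ as the two technical inputs to be checked at the start.
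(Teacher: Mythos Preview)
Your handling of the case $s>1$ matches the paper's: both invoke Proposition~\ref{Npqsbigs} to identify $\calN(p,q,s)$ with $A^{-q/p}(\B)$ and then estimate $|f_r|_{q/p}$ by an elementary dilation bound.

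For $s\le 1$, however, your plan diverges substantially from the paper's proof and, as written, has a real gap. Your direct route correctly reduces the problem to bounding
\[
\sup_{a\in\B}(1-|a|^2)^{ns}\int_\B \frac{(1-|z|^2)^{q+ns-n-1}}{(1-r^2|z|^2)^{q}\,|1-\langle a,z\rangle|^{2ns}}\,dV(z),
\]
but the inequality you then propose, $(1-r^2|z|^2)^{-1}\lesssim (1-r^2)^{-1}(1-|z|^2)^{-1}$, is too crude: applying it with exponent $q$ leaves $(1-|z|^2)^{ns-n-1}$ in the integrand, and since $s\le 1$ forces $ns-n-1\le -1$, the integral diverges. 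The fix is to interpolate: from $1-r^2|z|^2\ge\max\{1-r^2,\,1-|z|^2\}$ one gets $(1-r^2|z|^2)^{-q}\le (1-r^2)^{-\theta q}(1-|z|^2)^{-(1-\theta)q}$ for any $\theta\in[0,1]$, and choosing $\theta q$ just above $n-ns$ keeps the integral convergent and yields $\|f_r\|\le C_\theta\,|f|_{q/p}(1-r^2)^{-\theta q/p}$. Since $\theta q$ can be taken below $2(n+1-ns)$, this would actually prove a bound stronger than the one stated --- but you did not identify this interpolation, and your speculation that ``the factor $2$ comes from the two-region split'' is off the mark.

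The paper does not touch the defining integral directly at all. It passes to the equivalent norm built from $R^m f$ (Corollary~\ref{higherequivnorm}), uses $R^m(f_r)=(R^mf)_r$, and then performs two changes of variable, $w=rz$ followed by $u=\Phi_{ra}(w)$. The key inputs are a Schwarz--Pick type inequality $|\Phi_a(\Phi_{ra}(z)/r)|\ge |z|$ (obtained via the maximum modulus principle) and a volume estimate
\[
\int_{D(ar,r)}(1-|z|^2)^{ns-n-1}\,dV(z)\lesssim \frac{r^{2n}}{(1-r^2)^{2(n+1-ns)}},
\]
from which the exponent $2(n+1-ns)/p$ emerges directly. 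None of this machinery appears in your sketch; if you want the paper's proof you must bring in the radial-derivative norm and the Bergman-ball estimate, while if you want to pursue your own direct route you must replace the failed pointwise bound by the interpolation above.
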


\begin{proof}
When $s>1$, it is clear that the $\calN(p, q, s)$-norm is equivalent to the $A^{-\frac{q}{p}}(\B)$-norm, from which, the desired result follows trivially. Now we assume $s \le 1$.

Take and fix the smallest $m \in \N$ such that $mp+q-n-1>0$. First note that an easy calculation shows that $R(f_r)(z)=Rf(rz)=(Rf)_r(z), \forall z \in \B$, which implies
\begin{equation} \label{Koren0001}
R^m(f_r)(z)=(R^mf)_r(z), \quad \forall z \in \B.
\end{equation}
Next, for any $a \in \B$, apply the maximum modulus principle in $D(ar, r)$ to the function $z \mapsto \frac{z}{\Phi_a\left(\frac{\Phi_{ra}(z)}{r}\right)}$, it follows that
\begin{equation} \label{Koren0002}
\left|\Phi_a\left(\frac{\Phi_{ra}(z)}{r}\right) \right| \ge |z|,
\end{equation}
since $\Phi_a$ maps $\SSS$ homeomorphically to itself for each $a \in \B$.

Moreover, for any fixed $a \in \B$, we claim that
\begin{equation} \label{Koren0003}
\int_{D(ar, r)} (1-|z|^2)^{ns-n-1}dV(z) \le C \cdot \frac{r^{2n}}{(1-r^2)^{2(n+1-ns)}}.
\end{equation}
where $C$ is independent of both $a$ and $r$. Indeed, for any $z \in D(ar, r)$, we have $|\Phi_{ra}(z)|<r$, which implies $1-|\Phi_{ra}(z)|^2>1-r^2$, that is
$$
\frac{(1-|ra|^2)(1-|z|^2)}{|1-\langle z, ra \rangle|^2}>1-r^2.
$$
Thus, for $z \in D(ra, r)$, we have
$$
1-|z|^2 \ge \frac{(1-r^2)|1-\langle z, ra \rangle|^2}{1-|ra|^2} \gtrsim (1-r^2)(1-|ra|^2).
$$
Thus, by \cite[Lemma 1.23]{Zhu} and the fact that $ns-n-1<0$, we have
\begin{eqnarray*}
&& \int_{D(ar, r)} (1-|z|^2)^{ns-n-1}dV(z)\\
&\lesssim& (1-r^2)^{ns-n-1}(1-|ra|^2)^{ns-n-1} V(D(ar, r))\\
& =&(1-r^2)^{ns-n-1}(1-|ra|^2)^{ns} \cdot \frac{r^{2n}}{(1-r^4|a|^2)^{n+1}}\\
&\le& r^{2n}(1-r^2)^{ns-n-1}(1-r^4|a|^2)^{ns-n-1}\\
&\lesssim & \frac{r^{2n}}{(1-r^2)^{2(n+1-ns)}}.
\end{eqnarray*}
Therefore, by \eqref{Koren0001}, \eqref{Koren0002} and \eqref{Koren0003}, we have
\begin{eqnarray*}
&&\|f_r\|^p= \sup_{a \in \B} \int_\B |R^m(f_r)(z)|^p(1-|z|^2)^{mp+q}(1-|\Phi_a(z)|^2)^{ns}d\lambda(z) \\
&= &\sup_{a \in \B} \int_\B|R^mf (rz)|^p (1-|z|^2)^{mp+q-n-1}(1-|\Phi_a(z)|^2)^{ns}dV(z)\\
&=&\sup_{a \in \B} \int_{|w|<r} |R^mf(w)|^p \left(1-\left|\frac{w}{r}\right|^2\right)^{mp+q-n-1}\left(1-\left|\Phi_a\left( \frac{w}{r} \right)\right|^2\right)^{ns} \frac{dV(w)}{r^{2n}}\\
&&   \quad  (\textrm{change variable with} \ w=rz) \\
& =&\sup_{a \in \B} \int_{D(ar, r)} |R^mf \circ \Phi_{ra}(u)|^p \left(1-\left| \frac{\Phi_{ar}(u)}{r} \right|^2 \right)^{mp+q-n-1}\\
&&   \quad \quad \quad  \left(1-\left|\Phi_a \left( \frac{\Phi_{ra}(u)}{r} \right) \right|^2 \right)^{ns} \left( \frac{1-|ra|^2}{|1-\langle u, ra \rangle|^2} \right)^{n+1} \frac{dV(u)}{r^{2n}}\\
&&  \quad  (\textrm{change variable with} \ u=\Phi_{ar}(w))
\end{eqnarray*}
\begin{eqnarray*}
&& \le \sup_{a \in \B} \int_{D(ar, r)} |R^mf \circ \Phi_{ra}(u)|^p \left(1-\left|\Phi_{ar}(u) \right|^2 \right)^{mp+q-n-1}\\
&& \quad \quad \quad \quad  (1-|u|^2)^{ns} \left( \frac{1-|ra|^2}{|1-\langle u, ra \rangle|^2} \right)^{n+1} \frac{dV(u)}{r^{2n}}\\
&& \lesssim |f|_{\frac{q}{p}}^p \sup_{a \in \B} \int_{D(ar, r)} \frac{(1-|u|^2)^{ns-n-1}}{r^{2n}}dV(u)\\
&& \quad \quad (\textrm{by \cite[Lemma 15]{ZZ} and Corollary \ref{BlochBertype}}) \\
&& \lesssim |f|_{\frac{q}{p}}^p \cdot \frac{1}{(1-r^2)^{2(n+1-ns)}},
\end{eqnarray*}
which implies the desired result.
\end{proof}


\medskip

\subsection{Derivative-free, mixture and oscillation characterizations}


As a second application of Corollary \ref{equivalentnorm}, we study some other new derivative-free, mixture and oscillation characterizations to $\calN(p,q, s)$-spaces, whose idea comes from \cite{LH}.

We need the following lemma, which was proved in \cite{PZ}.

\begin{lem} \label{mixedlem}
Suppose $\alpha>-1, p>0, 0 \le \beta<p+2$ and $f \in H(\B)$. Then $f \in A^p_\alpha$ if and only if
$$
K(f)=\int_\B |f(z)|^{p-\beta} |\widetilde{\nabla}f(z)|^\beta dV_\alpha(z)<\infty.
$$
Moreover, the quantities $\|f\|_{p, \alpha}^p$ and $|f(0)|^p+K(f)$ are comparable for $f \in H(\B)$.
\end{lem}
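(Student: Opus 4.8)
To prove Lemma~\ref{mixedlem} the plan is to establish the two one--sided estimates $K(f)\lesssim\|f\|_{p,\alpha}^p$ and $\|f\|_{p,\alpha}^p\lesssim|f(0)|^p+K(f)$. Since point evaluation at the origin is bounded on $A^p_\alpha$ we have $|f(0)|^p\lesssim\|f\|_{p,\alpha}^p$, so these two estimates give simultaneously the equivalence $f\in A^p_\alpha\iff K(f)<\infty$ and the comparability of $\|f\|_{p,\alpha}^p$ with $|f(0)|^p+K(f)$. The case $\beta=0$ is trivial, since then $K(f)=\|f\|_{p,\alpha}^p$, so one may assume $0<\beta<p+2$. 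I will freely use the pointwise identity $|\widetilde{\nabla}f(z)|^2=(1-|z|^2)\bigl(|\nabla f(z)|^2-|Rf(z)|^2\bigr)$ and the standard radial/complex--gradient characterizations of $A^p_\alpha$ from \cite{Zhu}.

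For the bound $K(f)\lesssim\|f\|_{p,\alpha}^p$ the starting point is a sub--mean value estimate: writing $\widetilde{\nabla}f(z)=\nabla(f\circ\Phi_z)(0)$, applying the Cauchy estimate to the holomorphic function $f\circ\Phi_z$ on $\{|w|<1/2\}$, and using $1-|w|^2\simeq1-|z|^2$ and $|1-\langle z,w\rangle|\simeq1-|z|^2$ on $D(z,1/2)$ together with the subharmonicity of $|f|^p$, one gets $|\widetilde{\nabla}f(z)|^p\lesssim(1-|z|^2)^{-(n+1+\alpha)}\int_{D(z,1/2)}|f|^p\,dV_\alpha$. If $0<\beta\le p$, combining this with $|f(z)|\le\sup_{D(z,1/2)}|f|$ gives the pointwise bound $|f(z)|^{p-\beta}|\widetilde{\nabla}f(z)|^\beta\lesssim(1-|z|^2)^{-(n+1+\alpha)}\int_{D(z,1/2)}|f|^p\,dV_\alpha$; integrating $dV_\alpha$ over $\B$ and using Fubini (the slice $\{z:w\in D(z,1/2)\}=D(w,1/2)$ has $V_\alpha$--mass $\simeq(1-|w|^2)^{n+1+\alpha}$) yields $K(f)\lesssim\|f\|_{p,\alpha}^p$. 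When $p<\beta<p+2$ the factor $|f|^{p-\beta}$ carries a negative power of $|f|$ and this elementary majorization breaks down; this is the crux. Here I would pass through the exponent $\beta=2$: the invariant Littlewood--Paley identity $\widetilde{\Delta}(|f|^p)=c_p\,|f|^{p-2}|\widetilde{\nabla}f|^2\ge0$ (so $|f|^p$ is $\mathcal{M}$--subharmonic) combined with the invariant Hardy--Stein formula expresses $\int_\B|f|^{p-2}|\widetilde{\nabla}f|^2\,dV_\alpha$ as $|f(0)|^p$ plus a weighted integral of $|f|^p$, settling $\beta=2$; general $\beta$ then follows by H\"older, via the identity $|f|^{p-\beta}|\widetilde{\nabla}f|^\beta=\bigl(|f|^{p-2}|\widetilde{\nabla}f|^2\bigr)^{\beta/2}\bigl(|f|^p\bigr)^{1-\beta/2}$ for $\beta\le2$ and by interpolating between the $\beta=2$ case and the invariant--gradient characterization $\int_\B|\widetilde{\nabla}f|^p\,dV_\alpha\simeq\|f\|_{p,\alpha}^p$ for $2\le\beta<p+2$, the hypothesis $\beta<p+2$ being precisely what keeps the relevant weighted integrals convergent.

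For the reverse bound $\|f\|_{p,\alpha}^p\lesssim|f(0)|^p+K(f)$ I would reconstruct $|f|^p$ from its radial derivative: from $|R(|f|^p)|\lesssim|f|^{p-1}|Rf|\lesssim|f|^{p-1}(1-|z|^2)^{-1/2}|\widetilde{\nabla}f|$ and integration along the segment from $0$ to $z$ one gets $|f(z)|^p\lesssim|f(0)|^p+\int_0^1|f(tz)|^{p-1}(1-t^2|z|^2)^{-1/2}|\widetilde{\nabla}f(tz)|\,dt$; writing $|f|^{p-1}|\widetilde{\nabla}f|=\bigl(|f|^{p-\beta}|\widetilde{\nabla}f|^\beta\bigr)^{1/\beta}\bigl(|f|^p\bigr)^{1/\beta'}$, multiplying by $(1-|z|^2)^\alpha$, integrating over $\B$, and running a Schur--type estimate on the resulting integral operator produces $\|f\|_{p,\alpha}^p\lesssim|f(0)|^p+K(f)^{1/\beta}\|f\|_{p,\alpha}^{p/\beta'}$, after which Young's inequality and absorption of the $\|f\|_{p,\alpha}^p$ term (legitimate once one first works with the dilates $f_r$ and lets $r\uparrow1$) close the argument. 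The main obstacle throughout is exactly the range $p<\beta<p+2$: there $|f|^{p-\beta}$ is unbounded near the zero set of $f$, so every estimate must be arranged to control integrability globally rather than pointwise, which is why the $\mathcal{M}$--subharmonic / invariant--Green machinery, rather than the elementary sub--mean value estimates, must be brought in.
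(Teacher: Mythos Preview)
The paper does not prove this lemma at all: it is stated with the remark ``The following lemma, which was proved in \cite{PZ}'' and no proof is given. So there is nothing in the paper to compare your argument against; the authors simply import the result from Pavlovi\'c--Zhu, \emph{New characterizations of Bergman spaces}, Ann.\ Acad.\ Sci.\ Fenn.\ Math.\ \textbf{33} (2008), 87--99.

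As for your sketch itself: the overall strategy (two one--sided estimates, passing through the special exponent $\beta=2$ via $\widetilde{\Delta}(|f|^p)=c_p|f|^{p-2}|\widetilde{\nabla}f|^2$, then H\"older interpolation) is the right shape and is indeed close to what Pavlovi\'c--Zhu do. A couple of points would need tightening before it counts as a proof. First, your reverse estimate uses the factorization $|f|^{p-1}|\widetilde{\nabla}f|=(|f|^{p-\beta}|\widetilde{\nabla}f|^\beta)^{1/\beta}(|f|^p)^{1/\beta'}$ followed by H\"older with exponents $\beta,\beta'$, but this requires $\beta>1$; for $0<\beta\le1$ you would need a separate argument. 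Second, in the forward bound for $2\le\beta<p+2$ you say ``interpolating between the $\beta=2$ case and the invariant--gradient characterization $\int_\B|\widetilde{\nabla}f|^p\,dV_\alpha\simeq\|f\|_{p,\alpha}^p$'', but the latter is the case $\beta=p$, which does not reach the full range $\beta<p+2$ when $p\le2$ (and even when $p>2$ only gets you to $\beta\le p$). The actual endpoint one needs is closer to $\beta=p+2$, and reaching it requires a more careful Hardy--Stein/Green--function argument rather than a straight H\"older interpolation. If you want to see the details filled in, consult the cited paper directly.
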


\begin{thm} \label{mix01}
Suppose $f \in H(\B), p \ge 1, q>0, s>\max\left\{0, 1-\frac{q}{n}\right\}, 0 \le \beta<p+2$ and $\alpha>q+ns-n-1$. Then $f \in \calN(p, q, s)$ if and only if
$$
M=\sup_{a \in \B} \int_\B \int_\B \frac{|f(z)-f(w)|^{p-\beta}}{|1-\langle z, w\rangle|^{2(n+1+\alpha)}} |\widetilde{\nabla}f(z)|^\beta(1-|w|^2)^q $$
$$(1-|\Phi_a(w)|^2)^{ns}dV_\alpha(w)dV_\alpha(z)<\infty.
$$
\end{thm}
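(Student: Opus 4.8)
The plan is to reduce the statement to the mixed-norm characterization of weighted Bergman spaces supplied by Lemma \ref{mixedlem}, applied fiberwise in the parameter $a$. The quantity $M$ involves a double integral over $\B\times\B$; the inner integral in $w$ should be thought of as a "smoothing" of $(1-|w|^2)^q(1-|\Phi_a(w)|^2)^{ns}$ against the Bergman kernel $|1-\langle z,w\rangle|^{-2(n+1+\alpha)}$. The first step is therefore to establish, for each fixed $a\in\B$, a two-sided estimate
$$
\int_\B \frac{(1-|w|^2)^{q+\alpha}(1-|\Phi_a(w)|^2)^{ns}}{|1-\langle z,w\rangle|^{2(n+1+\alpha)}}\,dV(w)\ \simeq\ (1-|z|^2)^{q-n-1}(1-|\Phi_a(z)|^2)^{ns},
$$
uniformly in $z$ and $a$. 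The upper bound follows from the standard integral estimate in \cite[Theorem 1.12]{Zhu} together with the decomposition $(1-|\Phi_a(w)|^2)^{ns}=(1-|a|^2)^{ns}(1-|w|^2)^{ns}|1-\langle a,w\rangle|^{-2ns}$ and the three-point/Forelli–Rudin type bookkeeping of the two singularities at $z$ and at $a$ (one needs $\alpha>q+ns-n-1$ precisely so that the relevant exponent $q+\alpha+ns-n-1>-1$ makes the integral convergent and of the claimed order). For the lower bound one restricts the $w$-integral to a Bergman ball $D(z,r)$, on which $1-|w|^2\simeq 1-|z|^2$, $1-|\Phi_a(w)|^2\simeq 1-|\Phi_a(z)|^2$ (by the standard distortion estimates \cite[Proposition 1.21, Lemma 2.20]{Zhu}) and $|1-\langle z,w\rangle|\simeq 1-|z|^2$.

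Granting this kernel estimate, I would argue as follows. Fix $a\in\B$ and set $d\nu_a(w)=(1-|w|^2)^q(1-|\Phi_a(w)|^2)^{ns}\,dV_\alpha(w)$, which up to the normalizing constant $c_\alpha$ is $(1-|w|^2)^{q+\alpha}(1-|\Phi_a(w)|^2)^{ns}\,dV(w)$. Then by Fubini,
$$
\int_\B\!\!\int_\B \frac{|f(z)-f(w)|^{p-\beta}}{|1-\langle z,w\rangle|^{2(n+1+\alpha)}}|\widetilde{\nabla}f(z)|^\beta\,d\nu_a(w)\,dV_\alpha(z)
$$
is comparable, after performing the $w$-integration and using $|f(z)-f(w)|\le |f(z)|+|f(w)|$ for the upper bound, to
$$
\int_\B |f(z)|^{p-\beta}|\widetilde{\nabla}f(z)|^\beta (1-|z|^2)^{q-n-1}(1-|\Phi_a(z)|^2)^{ns}\,dV(z)
+(\text{cross terms in }f(w)).
$$
The main term is exactly the integrand appearing in Lemma \ref{mixedlem} applied with weight $\alpha'=q+ns-n-1>-1$ against the measure $(1-|\Phi_a(z)|^2)^{ns}(1-|z|^2)^{ns}|1-\langle a,z\rangle|^{-2ns}\,dV$, i.e. after the usual change of variables $z\mapsto\Phi_a(z)$, the $A^p$-mixed-norm quantity of $f\circ\Phi_a$ with weight $q+ns-n-1$. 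By Lemma \ref{mixedlem} this is comparable (uniformly, since the comparison constants there depend only on $p,\beta,\alpha'$) to $|f(a)|^p+\int_\B|f\circ\Phi_a(z)|^p(1-|z|^2)^{q+ns-n-1}\,dV(z)$, which upon undoing the change of variables is $\simeq |f(a)|^p+\int_\B|f(z)|^p(1-|z|^2)^q(1-|\Phi_a(z)|^2)^{ns}\,d\lambda(z)$. Taking the supremum over $a$ and invoking Proposition \ref{boundaryineq} to absorb $|f(a)|^p$ into $\|f\|^p$ (via $|f(a)|^p(1-|a|^2)^q\lesssim\|f\|^p$, hence $|f(a)|^p\lesssim\|f\|^p(1-|a|^2)^{-q}$, which is bounded on bounded subsets of $\B$ and otherwise controlled exactly in the range where the integral term dominates), one gets $M\simeq\|f\|^p$, which proves both directions.

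The cross terms — those pieces of $|f(z)-f(w)|^{p-\beta}$ that genuinely involve $f(w)$ rather than $f(z)$ — are where the real work lies, and this is the step I expect to be the main obstacle. For $0\le p-\beta\le 1$ one has the pointwise inequality $|f(z)-f(w)|^{p-\beta}\le |f(z)|^{p-\beta}+|f(w)|^{p-\beta}$; for $p-\beta>1$ one instead uses $|f(z)-f(w)|^{p-\beta}\lesssim |f(z)|^{p-\beta}+|f(w)|^{p-\beta}$ with a constant, so in both cases one reduces to controlling
$$
\sup_{a\in\B}\int_\B|\widetilde{\nabla}f(z)|^\beta\Big(\int_\B\frac{|f(w)|^{p-\beta}(1-|w|^2)^{q+\alpha}(1-|\Phi_a(w)|^2)^{ns}}{|1-\langle z,w\rangle|^{2(n+1+\alpha)}}\,dV(w)\Big)dV_\alpha(z).
$$
Here I would use the pointwise bound $|f(w)|\lesssim\|f\|(1-|w|^2)^{-q/p}$ from Proposition \ref{boundaryineq} to replace $|f(w)|^{p-\beta}$ by $\|f\|^{p-\beta}(1-|w|^2)^{-q(p-\beta)/p}$; since $\alpha>q+ns-n-1\ge q+ns-n-1$ and one may, if needed, enlarge $\alpha$ or observe the exponent $q+\alpha-q(p-\beta)/p+ns-n-1$ still exceeds $-1$ for the admissible range (this is the delicate bookkeeping point, using $\beta<p+2$ and $s>1-q/n$), the inner integral is again estimated by \cite[Theorem 1.12]{Zhu} and collapses to $\lesssim\|f\|^{p-\beta}(1-|z|^2)^{q\beta/p-n-1}(1-|\Phi_a(z)|^2)^{ns}$ up to harmless factors; the outer integral then becomes $\|f\|^{p-\beta}\sup_a\int_\B|\widetilde\nabla f(z)|^\beta(1-|z|^2)^{q\beta/p+\alpha}\cdot(\dots)$, which by Corollary \ref{equivalentnorm} (the $\widetilde\nabla$-characterization, applied with the Jacobian factors) is $\lesssim\|f\|^{p-\beta}\cdot\|f\|^\beta=\|f\|^p$. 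For the reverse inequality one only needs the lower bound on the kernel integral over a single Bergman ball, on which $|f(z)-f(w)|$ can be estimated from below in terms of $|\widetilde\nabla f|$ only after first noting that replacing $|f(z)-f(w)|^{p-\beta}$ by a lower bound is not needed — instead one bounds $M$ from below by discarding all of the double integral except the diagonal contribution and invoking the easy half of Lemma \ref{mixedlem}. Assembling these estimates gives $M\simeq\|f\|^p$ and hence the equivalence.
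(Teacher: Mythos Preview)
Your proposal has a genuine gap: you are trying to apply Lemma~\ref{mixedlem} in a setting where it does not apply, and the fix you suggest does not work.

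The paper's proof rests on one observation you missed: the kernel $|1-\langle z,w\rangle|^{-2(n+1+\alpha)}\,dV_\alpha(z)$ is \emph{exactly} the Jacobian factor produced by the change of variables $z=\Phi_w(u)$ in the inner integral. Using $|\widetilde\nabla f(\Phi_w(u))|=|\widetilde\nabla(f\circ\Phi_w)(u)|$ and $f(\Phi_w(u))-f(w)=F_w(u)$ with $F_w:=f\circ\Phi_w-f(w)$, one gets the identity
\[
\int_\B \frac{|f(z)-f(w)|^{p-\beta}}{|1-\langle z,w\rangle|^{2(n+1+\alpha)}}|\widetilde\nabla f(z)|^\beta\,dV_\alpha(z)
\;\simeq\;
\int_\B |F_w(u)|^{p-\beta}|\widetilde\nabla F_w(u)|^\beta\,dV_\alpha(u),
\]
and since $F_w(0)=0$, Lemma~\ref{mixedlem} applies \emph{directly} to the right-hand side, with the genuine Bergman weight $dV_\alpha$. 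This turns the inner integral into $\int_\B|F_w|^p\,dV_\alpha$ (for the sufficiency direction, via the subharmonic lower bound $|\widetilde\nabla f(w)|^p\lesssim\int|F_w|^p\,dV_\alpha$) or into $\int_\B|\widetilde\nabla f(\Phi_w(u))|^p\,dV_\alpha(u)$ (for the necessity direction, followed by a Fubini/H\"older estimate on the resulting $w$-integral).

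Your route---integrate in $w$ first, then apply Lemma~\ref{mixedlem} in $z$---breaks because after the $w$-integration the weight on $z$ is $(1-|z|^2)^{q}(1-|\Phi_a(z)|^2)^{ns}\,d\lambda(z)$, which is \emph{not} of the form $dV_{\alpha'}$ (it carries the factor $|1-\langle a,z\rangle|^{-2ns}$). The change of variables $z\mapsto\Phi_a(z)$ you propose does not cure this: it trades $(1-|\Phi_a(z)|^2)^{ns}$ for $(1-|u|^2)^{ns}$, but simultaneously turns $(1-|z|^2)^q$ into $(1-|\Phi_a(u)|^2)^q$, so you are back to a non-standard weight and Lemma~\ref{mixedlem} still does not apply. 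The splitting $|f(z)-f(w)|^{p-\beta}\lesssim|f(z)|^{p-\beta}+|f(w)|^{p-\beta}$ also throws away the difference structure, so even if the upper bound could be patched, your ``diagonal contribution'' argument for the lower bound (sufficiency) is not a proof---there is no pointwise lower bound on $|f(z)-f(w)|$ to exploit. The correct organizing principle is to put the M\"obius change of variables on the $z$-side, centered at $w$, before doing any estimates.
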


\begin{proof} {\it Sufficiency.} Suppose $M<\infty$. Note that by $$|\widetilde{\nabla}(f \circ \Phi_w)(z)|=|\widetilde{\nabla} f(\Phi_w(z))|$$
 and $\alpha>q+ns-n-1>-1$, we have
\begin{equation} \label{Charac07}
M=\sup_{a \in \B} \int_\B (1-|w|^2)^q(1-|\Phi_a(w)|^2)^{ns} \left( \int_\B |F_w(z)|^{p-\beta} |\widetilde{\nabla}F_w(z)|^\beta dV_\alpha(z) \right) d\lambda(w),
\end{equation}
where $F_w=f \circ \Phi_w-f(w), w \in \B$. By Lemma \ref{mixedlem},
$$
M \simeq \sup_{a \in \B} \int_\B (1-|w|^2)^q(1-|\Phi_a(w)|^2)^{ns} \left( \int_\B |F_w(z)|^pdV_\alpha(z) \right) d\lambda(w).
$$
Note that by \cite[Lemma 2.4]{Zhu}, we have, for any $w \in \B$,
\begin{eqnarray*}
|\widetilde{\nabla}f (w)|^p%
&=&|\nabla (f \circ \Phi_w)(0)|^p=|\nabla (f \circ \Phi_w-f(w))(0)|^p \\
&\lesssim& \int_\B |f \circ \Phi_w(z)-f(w)|^pdV_\alpha(z)=\int_\B |F_w(z)|^pdV_\alpha(z).
\end{eqnarray*}
Hence
$$
\infty>M \gtrsim  \sup_{a \in \B} \int_\B |\widetilde{\nabla}f(w)|^p (1-|w|^2)^q(1-|\Phi_a(w)|^2)^{ns}  d\lambda(w),
$$
which, by Corollary \ref{equivalentnorm}, implies $f \in \calN(p, q, s)$.

{\it Necessity.} Suppose $f \in \calN(p, q, s)$. First by Lemma \ref{mixedlem}, we see that the quantities
$$
\int_\B |F_w(z)|^{p-\beta} |\widetilde{\nabla}F_w(z)|^\beta dV_\alpha(z) \ \textrm{and} \ \int_\B |\widetilde{\nabla}F_w(z)|^pdV_\alpha(z)
$$
are comparable. Hence, by \eqref{Charac07}, we have
\begin{eqnarray*}
M%
&=&\int_\B (1-|w|^2)^q(1-|\Phi_a(w)|^2)^{ns} \left( \int_\B |F_w(z)|^{p-\beta} |\widetilde{\nabla}F_w(z)|^\beta dV_\alpha(z) \right) d\lambda(w)\\
&\lesssim& \int_\B \left( \int_\B |\widetilde{\nabla}F_w(z)|^pdV_\alpha(z) \right) (1-|w|^2)^q(1-|\Phi_a(w)|^2)^{ns}d\lambda(w)\\
&=& \int_\B \left( \int_\B |\widetilde{\nabla}f(\Phi_w(u))|^pdV_\alpha(u) \right) (1-|w|^2)^q(1-|\Phi_a(w)|^2)^{ns}d\lambda(w)\\
&=& \int_\B   \left( \int_\B |\widetilde{\nabla} f(z)|^p(1-|\Phi_w(z)|^2)^{n+1+\alpha}d\lambda(z)\right) \\
&& \quad \quad \quad \quad \quad (1-|w|^2)^q(1-|\Phi_a(w)|^2)^{ns}d\lambda(w)  \\
&\le& \int_\B |\widetilde{\nabla}f(z)|^p(1-|z|^2)^p(1-|\Phi_a(z)|^2)^{ns}d\lambda(z) \cdot I,
\end{eqnarray*}
where
\begin{eqnarray*}
I%
&=&\sup_{a, z \in \B} \int_\B \frac{(1-|w|^2)^q(1-|\Phi_a(w)|^2)^{ns}}{(1-|z|^2)^q(1-|\Phi_a(z)|^2)^{ns}} (1-|\Phi_w(z)|^2)^{n+1+\alpha}d\lambda(w)\\
&=& \sup_{a, z \in \B} \int_\B\frac{(1-|w|^2)^q(1-|\Phi_a(w)|^2)^{ns}}{(1-|z|^2)^q(1-|\Phi_a(z)|^2)^{ns}} (1-|\Phi_z(w)|^2)^{n+1+\alpha}d\lambda(w)\\
&=& \sup_{a, z \in \B} \int_\B \frac{(1-|\Phi_z(u)|^2)^q(1-|\Phi_a(\Phi_z(u))|^2)^{ns}}{(1-|z|^2)^q(1-|\Phi_a(z)|^2)^{ns}}(1-|u|^2)^{n+1+\alpha}d\lambda(u)\\
&\simeq& \sup_{a, z \in \B} \int_\B \frac{(1-|u|^2)^q (1-|u|^2)^{ns}}{|1-\langle u, a\rangle|^{2q} |1-\langle \Phi_a(z), u \rangle|^{2ns}}dV_\alpha(u) \\
& \le& \sup_{a, z \in \B} \bigg\{ \left(\int_\B \frac{(1-|u|^2)^{q+ns}}{|1-\langle u,a \rangle|^{2(q+ns)}}dV_\alpha(u) \right)^{\frac{q}{q+ns}} \\
&& \quad \quad \quad \cdot \left(\int_\B \frac{(1-|u|^2)^{q+ns}}{|1-\langle u,\Phi_z(a) \rangle|^{2(q+ns)}}dV_\alpha(u) \right)^{\frac{ns}{q+ns}} \bigg\} \\
&<& \infty.
\end{eqnarray*}
Here, we use the fact that $q+ns-n-1-\alpha<0$ in the last inequality.

Thus, we get
$$\int_\B (1-|w|^2)^q(1-|\Phi_a(w)|^2)^{ns} \left( \int_\B |F_w(z)|^{p-\beta} |\widetilde{\nabla}F_w(z)|^\beta dV_\alpha(z) \right) d\lambda(w) \\
 \lesssim  \|f\|^p,$$
 which implies the desired result.
\end{proof}

In particular, taking $\beta=0$, we get the following result.

\begin{thm} \label{mix02}
Suppose $f \in H(\B), p \ge 1, q>0, s>\max\left\{0, 1-\frac{q}{n}\right\}$ and $\alpha>q+ns-n-1$. Then $f \in \calN(p, q, s)$ if and only if
$$
 \sup_{a \in \B} \int_\B \int_\B \frac{|f(z)-f(w)|^p}{|1-\langle z, w\rangle|^{2(n+1+\alpha)}} (1-|w|^2)^q(1-|\Phi_a(w)|^2)^{ns}dV_\alpha(w)dV_\alpha(z)<\infty.
$$
\end{thm}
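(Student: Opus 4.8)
The plan is to obtain Theorem \ref{mix02} as the immediate specialization of Theorem \ref{mix01} to the case $\beta = 0$. First I would verify that the hypotheses match: the standing assumptions $p \ge 1$, $q > 0$, $s > \max\{0, 1 - q/n\}$ and $\alpha > q + ns - n - 1$ are exactly those of Theorem \ref{mix01}, and the additional constraint $0 \le \beta < p + 2$ required there holds vacuously for $\beta = 0$ since $p \ge 1$. Thus Theorem \ref{mix01} applies with this choice of $\beta$.

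Next I would simply unwind the definition of the quantity $M$ in Theorem \ref{mix01} at $\beta = 0$. With the usual convention $t^0 \equiv 1$, the factor $|\widetilde{\nabla} f(z)|^{\beta}$ becomes $1$ and $|f(z) - f(w)|^{p - \beta}$ becomes $|f(z) - f(w)|^{p}$, so the integrand of $M$ reduces verbatim to
$$
\frac{|f(z) - f(w)|^{p}}{|1 - \langle z, w \rangle|^{2(n+1+\alpha)}}\,(1 - |w|^2)^{q}\,(1 - |\Phi_a(w)|^2)^{ns},
$$
which is precisely the integrand displayed in the statement of Theorem \ref{mix02}. Hence the finiteness condition $M < \infty$ of Theorem \ref{mix01} coincides word for word with the condition in Theorem \ref{mix02}, and the stated equivalence with $f \in \calN(p, q, s)$ is exactly what Theorem \ref{mix01} already gives. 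Consequently there is no real obstacle here: all of the substance — in particular the necessity direction, which relies on Lemma \ref{mixedlem}, Corollary \ref{equivalentnorm}, and the Forelli--Rudin--type estimate showing the auxiliary integral $I$ is finite because $q + ns - n - 1 - \alpha < 0$ — has already been carried out, and the present theorem is a one-line corollary.
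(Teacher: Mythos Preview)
Your proposal is correct and matches the paper's own approach exactly: the paper explicitly introduces Theorem \ref{mix02} with the phrase ``In particular, taking $\beta=0$, we get the following result,'' and provides no separate proof. Your verification that the hypotheses of Theorem \ref{mix01} are satisfied at $\beta=0$ and that the integrand specializes correctly is entirely appropriate.
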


We also have the following description.

\begin{thm} \label{finter}
Suppose $f \in H(\B), p \ge 1, q>0, s>\max\left\{0, 1-\frac{q}{n}\right\}$ and $0<r<1$. Then the following statements are equivalent:
\begin{enumerate}
\item $f \in \calN(p, q, s)$;
\item \[ \begin{split}
\sup_{a \in \B} \int_\B \bigg( &\frac{1}{V(D(z, r))}  \int_{D(z, r)}  |f(z)-f(w)|(1-|z|^2)^{\frac{q}{2p}}(1-|w|^2)^{\frac{q}{2p}}\\
                                              &(1-|\Phi_a(z)|^2)^{\frac{ns}{2p}} (1-|\Phi_a(w)|^2)^{\frac{ns}{2p}}dV(w)\bigg)^pd\lambda(z)<\infty;
\end{split} \]
\item \[ \begin{split}
\sup_{a \in \B} \int_\B \big( &\sup_{w \in D(z, r)}  |f(z)-f(w)|(1-|z|^2)^{\frac{q}{2p}}(1-|w|^2)^{\frac{q}{2p}}\\
                                            & (1-|\Phi_a(z)|^2)^{\frac{ns}{2p}}(1-|\Phi_a(w)|^2)^{\frac{ns}{2p}}\big)^pd\lambda(z)<\infty;
\end{split} \]
\item There exists some $c$ satisfying $1<c<\frac{1}{r}$, such that
\[ \begin{split}
\sup_{a \in \B} \int_\B &\bigg( \frac{1}{V(D(z, cr))}  \int_{D(z, cr)}  |f(z)-f(w)|^p|(1-|z|^2)^{\frac{q}{2}}(1-|w|^2)^{\frac{q}{2}}\\
                                    &(1-|\Phi_a(z)|^2)^{\frac{ns}{2}} (1-|\Phi_a(w)|^2)^{\frac{ns}{2}}dV(w)\bigg)d\lambda(z)<\infty;
\end{split} \]
\end{enumerate}
\end{thm}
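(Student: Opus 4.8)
The plan is to prove the four equivalences by first reducing the displayed quantities in $(2)$, $(3)$ and $(4)$ to a common transparent shape, then establishing the two substantive implications $(1)\Rightarrow(3)$ and $(2)\Rightarrow(1)$, the remaining links being elementary. The reduction rests on the observation that if $w\in D(z,r)$ (or $w\in D(z,cr)$, with $cr<1$), then $1-|z|^2\simeq1-|w|^2$ and $1-|\Phi_a(z)|^2\simeq1-|\Phi_a(w)|^2$, the implied constants depending only on $r$ (or on $cr$) and, crucially, not on $a$, since $\Phi_a$ carries Bergman balls to Bergman balls; this is of the type already used in Lemma \ref{Charac01} (compare also \cite{Zhu}). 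Hence the four weight factors inside each integral collapse, up to constants, to $(1-|z|^2)^{q}(1-|\Phi_a(z)|^2)^{ns}$ (before the outer $p$-th power in $(2)$ and $(3)$ one gets its $1/p$-th power). After this reduction, $(2)$ reads $\sup_{a}\int_\B(1-|z|^2)^q(1-|\Phi_a(z)|^2)^{ns}\big(\tfrac1{V(D(z,r))}\int_{D(z,r)}|f(z)-f(w)|\,dV(w)\big)^p\,d\lambda(z)$, $(3)$ is the same with the inner average replaced by $\sup_{w\in D(z,r)}$, and $(4)$ is the same with the inner quantity replaced by the $L^p$-average $\tfrac1{V(D(z,cr))}\int_{D(z,cr)}|f(z)-f(w)|^p\,dV(w)$. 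In these forms $(3)\Rightarrow(2)$ is trivial, and $(4)\Rightarrow(2)$ follows from Jensen's inequality (convexity of $t\mapsto t^{p}$, $p\ge1$) together with $D(z,r)\subseteq D(z,cr)$ and $V(D(z,r))\simeq V(D(z,cr))\simeq(1-|z|^2)^{n+1}$.

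For $(2)\Rightarrow(1)$ I would use the elementary Cauchy-type bound $|\nabla g(0)|\lesssim\int_{|w|<r}|g(w)-g(0)|\,dV(w)$ for $g\in H(\B)$, applied to $g=f\circ\Phi_z$; after the change of variables $u=\Phi_z(w)$, whose real Jacobian is $\simeq(1-|z|^2)^{-(n+1)}$ on $D(z,r)$, this yields $|\widetilde\nabla f(z)|\lesssim\tfrac1{V(D(z,r))}\int_{D(z,r)}|f(u)-f(z)|\,dV(u)$. Raising to the $p$-th power, multiplying by $(1-|z|^2)^q(1-|\Phi_a(z)|^2)^{ns}$, integrating $d\lambda(z)$ and taking the supremum over $a$ shows that the quantity $I_4$ of Corollary \ref{equivalentnorm} is bounded by $|f(0)|^p$ plus the reduced quantity in $(2)$; by that corollary, $f\in\calN(p,q,s)$.

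For $(1)\Rightarrow(3)$ — which, run with $r$ replaced by $cr$, also gives $(1)\Rightarrow(4)$ since the $L^p$-average over $D(z,cr)$ is dominated by $\sup_{w\in D(z,cr)}$ — I would combine two pointwise lemmas of the kind in \cite{LH} and \cite{Zhu}: first, for $w\in D(z,\rho)$, $|f(z)-f(w)|\lesssim\sup_{u\in D(z,\rho)}|\widetilde\nabla f(u)|$, obtained by integrating the invariant gradient along the Bergman geodesic from $z$ to $w$, which stays in $D(z,\rho)$; second, for $\rho'>\rho$, $\sup_{u\in D(z,\rho)}|\widetilde\nabla f(u)|^p\lesssim\tfrac1{V(D(z,\rho'))}\int_{D(z,\rho')}|\widetilde\nabla f(u)|^p\,dV(u)$, which one derives from the conformal invariance $|\widetilde\nabla(f\circ\Phi_z)(v)|=|\widetilde\nabla f(\Phi_z(v))|$, the subharmonicity of $|\nabla(f\circ\Phi_z)|^p$, and the equivalence $|\nabla h|\simeq|\widetilde\nabla h|$ on balls staying away from $\SSS$. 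Together these give $\big(\sup_{w\in D(z,\rho)}|f(z)-f(w)|\big)^p\lesssim(1-|z|^2)^{-(n+1)}\int_{D(z,\rho')}|\widetilde\nabla f(u)|^p\,dV(u)$. Substituting this into the reduced form of $(3)$ and interchanging the order of integration — so that $\int_\B\big(\int_{D(z,\rho')}(\cdots)\,dV(u)\big)\,d\lambda(z)$ becomes $\int_\B\big(\int_{D(u,\rho')}(\cdots)\,d\lambda(z)\big)\,dV(u)$ — and then using once more that the weights and $V(D(u,\rho'))$ are comparable to their values at the center $u$, the whole expression is bounded by a constant multiple of $\sup_{a}\int_\B|\widetilde\nabla f(u)|^p(1-|u|^2)^q(1-|\Phi_a(u)|^2)^{ns}\,d\lambda(u)=I_4-|f(0)|^p$, which is finite for $f\in\calN(p,q,s)$ by Corollary \ref{equivalentnorm}. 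This closes $(1)\Rightarrow(3)\Rightarrow(2)\Rightarrow(1)$ and, with $(1)\Rightarrow(4)$ and $(4)\Rightarrow(2)$, establishes all four equivalences.

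I expect the main obstacle to be the sub-mean-value estimate $\sup_{u\in D(z,\rho)}|\widetilde\nabla f(u)|^p\lesssim V(D(z,\rho'))^{-1}\int_{D(z,\rho')}|\widetilde\nabla f(u)|^p\,dV(u)$: since $|\widetilde\nabla f|^p$ is not subharmonic, one must route through the automorphism invariance of the invariant gradient and the subharmonicity of the ordinary gradient to obtain it with the \emph{correct} power of $1-|z|^2$ — a cruder bound through $(1-|z|^2)^{1/2}|\nabla f|$ loses half a power and lands on a weight that is not equivalent to the one in Corollary \ref{equivalentnorm}. Beyond this, the work is bookkeeping: keeping the exponents of $1-|z|^2$ straight through the Fubini interchange and checking that every comparability constant is uniform in $a$.
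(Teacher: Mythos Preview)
Your proof is correct and the overall logical scheme is sound; the reduction of the weights via $1-|\Phi_a(w)|^2\simeq 1-|\Phi_a(z)|^2$ (uniform in $a$, since $\Phi_a(D(z,r))=D(\Phi_a(z),r)$) is exactly what makes all the bookkeeping go through, and your concern about the sub-mean-value estimate for $|\widetilde\nabla f|^p$ is resolved precisely by the route you describe.

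Your argument differs from the paper's chiefly in the passage from $(1)$ to $(3)$/$(4)$. The paper runs the cycle $(3)\Rightarrow(2)\Rightarrow(1)\Rightarrow(4)\Rightarrow(3)$: for $(1)\Rightarrow(4)$ it invokes Lemma~\ref{mixedlem} (the Pavlovi\'c--Zhu mixed-norm characterization of $A^p_\alpha$) to control $\int_{D(z,cr)}|f(z)-f(w)|^p\,dV(w)$ by $\int_{D(z,cr)}(1-|w|^2)^p|\nabla f(w)|^p\,dV(w)$, and for $(4)\Rightarrow(3)$ it uses the subharmonicity of $|(f\circ\Phi_z)(v)-f(z)|^p$ directly to bound $\sup_{w\in D(z,r)}|f(z)-f(w)|^p$ by the $L^p$-average over the larger ball $D(z,cr)$. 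You instead prove $(1)\Rightarrow(3)$ directly, via the geodesic bound $|f(z)-f(w)|\lesssim\sup_{D(z,r)}|\widetilde\nabla f|$ and the sub-mean-value inequality for $|\widetilde\nabla f|^p$ obtained through the automorphism invariance, then get $(1)\Rightarrow(4)$ by the trivial domination of the $L^p$-average by the supremum and close the loop with $(4)\Rightarrow(2)$ by Jensen. Your route is somewhat more self-contained in that it avoids Lemma~\ref{mixedlem}; the paper's route, on the other hand, sidesteps the delicate sub-mean-value estimate for $|\widetilde\nabla f|^p$ by working instead with the genuinely subharmonic quantity $|(f\circ\Phi_z)(v)-f(z)|^p$, which is perhaps the cleaner trick once one sees it.
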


\begin{proof}
 (3) $\Longrightarrow$ (2).  This implication is obvious.

 (2) $\Longrightarrow$ (1).  When $z \in D(a, r), a, z \in \B$, we have (see, e.g., \cite{Zhu})
\begin{equation} \label{Charac11}
(1-|z|^2)^{n+1} \simeq (1-|a|^2)^{n+1} \simeq |1-\langle a, z \rangle|^{n+1} \simeq V(D(a, r))
\end{equation}
as well as (see, e.g., \cite[(2.20)]{Zhu})
\begin{equation} \label{Charac12}
|1-\langle z, u \rangle| \simeq |1-\langle a, u \rangle|, \ \forall u \in \B.
\end{equation}

By the inequality
$$
(1-|z|^2)|\nabla f(z)| \lesssim \frac{1}{V(D(z, r))} \int_{D(z, r)} |f(z)-f(w)|dV(w), \forall z \in \B,
$$
(see, e.g., \cite{LH}), we have
\begin{eqnarray*}
&&|\nabla f(z)|^p (1-|z|^2)^{p+q} (1-|\Phi_a(z)|^2)^{ns}\\
&\lesssim &\left( \frac{1}{V(D(z, r))} \int_{D(z, r)} |f(z)-f(w)| (1-|z|^2)^{\frac{q}{p}}(1-|\Phi_a(z)|^2)^{\frac{ns}{p}}dV(w) \right)^p \\
&\simeq& \bigg( \frac{1}{V(D(z, r))}  \int_{D(z, r)}  |f(z)-f(w)|(1-|z|^2)^{\frac{q}{2p}} (1-|w|^2)^{\frac{q}{2p}}\\
&&  \quad (1-|\Phi_a(z)|^2)^{\frac{ns}{2p}} (1-|\Phi_a(w)|^2)^{\frac{ns}{2p}}dV(w)\bigg)^p.
\end{eqnarray*}
Integrating with respect to $z$ over $\B$ on both sides and taking the supremum over $a$, we get
$$
\sup_{a \in \B} \int_\B |\nabla f(z)|^p (1-|z|^2)^{p+q} (1-|\Phi_a(z)|^2)^{ns}d\lambda(z),
$$
which implies $f \in \calN(p, q, s)$.

 (1) $\Longrightarrow$ (4).
Indeed, for this assertion, we can show that for each $1 \le c < \frac{1}{r}$, (4) is satisfied. Take and fix some $c \in \left(1, \frac{1}{r}\right)$. From Lemma \ref{mixedlem} and the fact that
$$
|\widetilde{\nabla}f(z)|^2=(1-|z|^2)(|\nabla f(z)|^2-|Rf(z)|^2),
$$
we have
$$
\int_{D(z, cr)} |f(w)|^pdV(w) \lesssim \int_{D(z, cr)} (1-|w|^2)^p |\nabla f(w)|^pdV(w)+|f(z)|^p.
$$
Hence, we have
\begin{eqnarray*}
&&\int_\B \bigg( \frac{1}{V(D(z, cr))}  \int_{D(z, cr)}  |f(z)-f(w)|^p|(1-|z|^2)^{\frac{q}{2}}(1-|w|^2)^{\frac{q}{2}}\\
&&   \quad (1-|\Phi_a(z)|^2)^{\frac{ns}{2}} (1-|\Phi_a(w)|^2)^{\frac{ns}{2}}dV(w)\bigg)d\lambda(z)\\
&\lesssim& \int_\B \bigg( \frac{1}{V(D(z, cr))} \int_{D(z, cr)} (1-|w|^2)^{p+q}|\nabla f(w)|^p (1-|\Phi_a(w)|^2)^{ns}dV(w) \bigg)d\lambda(z)\\
&\lesssim & \int_\B \int_\B \chi_{D(z, cr)}(w)|\nabla f(w)|^p(1-|w|^2)^{p+q} (1-|\Phi_a(w)|^2)^{ns}d\lambda(w)d\lambda(z)\\
& =&\int_\B \chi_{D(w, cr)}(z) \left(\int_\B |\nabla f(w)|^p (1-|w|^2)^{p+q}(1-|\Phi_a(w)|^2)^{ns}d\lambda(w)\right)d\lambda(z) \\
&\lesssim & \|f\|^p<\infty.
\end{eqnarray*}

 (4) $\Longrightarrow$ (3).
Suppose there exists some $c$ satisfying (4). For any $f \in H(\B)$, by the subharmonicity and \cite[Proposition 1.21 and Lemma 2.20]{Zhu}, for any $z \in \B$ and $w \in D(z, r)$, {\it i.e.} $|\Phi_z(w)|<r$, we have
\begin{eqnarray*}
&&|f(z)-f(w)|^p=|(f \circ\Phi_z)(\Phi_z(w))-(f\circ \Phi_z)(0)|^p\\
&\lesssim& \int_{\{u \in \B: |u-\Phi_z(w)|<(c-1)r\}} |(f \circ \Phi_z)(u)-(f \circ \Phi_z)(0)|^pdV(u)\\
&\le& \int_{|u| \le cr} |(f\circ\Phi_z)(u)-f(z)|^pdV(u)\\
&& \quad (\textrm{since} \ |\Phi_z(w)|<r)\\
&= & \int_{|\Phi_z(\zeta)|<cr} |f(\zeta)-f(z)|^p \frac{(1-|z|^2)^{n+1}}{|1-\langle z, \zeta \rangle|^{2(n+1)}}dV(\zeta)\\
&&  \quad (\textrm{change variables with} \ \zeta=\Phi_z(u)) \\
&\simeq& \frac{1}{V(D(z, cr)} \int_{D(z, cr)} |f(\zeta)-f(z)|^pdV(\zeta).
\end{eqnarray*}
Hence, by the above inequality, we have
\begin{eqnarray*}
&&  \int_\B \big( \sup_{w \in D(z, r)}  |f(z)-f(w)|(1-|z|^2)^{\frac{q}{2p}}(1-|w|^2)^{\frac{q}{2p}}\\
&&  \quad \quad \quad \quad (1-|\Phi_a(z)|^2)^{\frac{ns}{2p}}(1-|\Phi_a(w)|^2)^{\frac{ns}{2p}}\big)^pd\lambda(z) \\
&\simeq& \int_\B (1-|z|^2)^{\frac{q}{p}}(1-|\Phi_a(z)|^2)^{\frac{ns}{p}} \left(\sup_{w \in D(z, r)} |f(z)-f(w)|^p\right)d\lambda(z)
\end{eqnarray*}
\begin{eqnarray*}
&\lesssim& \int_\B \frac{1}{V(D(z, cr))} \int_{D(z, cr)} |f(\zeta)-f(z)|^p (1-|z|^2)^{\frac{q}{p}}\times\\
&& (1-|\Phi_a(z)|^2)^{\frac{ns}{p}}dV(\zeta) d\lambda(z)\\
&\simeq& \int_\B \bigg( \frac{1}{V(D(z, cr))}  \int_{D(z, cr)}  |f(z)-f(\zeta)|^p|(1-|z|^2)^{\frac{q}{2}}(1-|\zeta|^2)^{\frac{q}{2}}\\
&&   \quad \quad \quad \quad  (1-|\Phi_a(z)|^2)^{\frac{ns}{2}} (1-|\Phi_a(\zeta)|^2)^{\frac{ns}{2}}dV(\zeta)\bigg)d\lambda(z)\\
&<&\infty,
\end{eqnarray*}
which implies the desired result.
\end{proof}

From  Theorem \ref{finter}, \eqref{Charac11} and \eqref{Charac12}, we easily get the following result.

\begin{thm} \label{finter01}
Suppose $f \in H(\B), p \ge 1, q>0, s>\max\left\{0, 1-\frac{q}{n}\right\}$ and $0<r<1$. Then the following statements are equivalent:
\begin{enumerate}
\item $f \in \calN(p, q, s)$;
\item
\[ \begin{split}
\sup_{a \in \B} \int_\B \bigg( &\frac{1}{V(D(z, r))}  \int_{D(z, r)} |f(z)-f(w)|(1-|z|^2)^{\frac{q}{p}}\\
                                               &(1-|\Phi_a(z)|^2)^{\frac{ns}{p}}dV(w)\bigg)^pd\lambda(z)<\infty;
\end{split} \]
\item
\[ \begin{split}
\sup_{a \in \B} \int_\B \big( &\sup_{w \in D(z, r)}  |f(z)-f(w)|(1-|z|^2)^{\frac{q}{p}}(1-|\Phi_a(z)|^2)^{\frac{ns}{p}}\big)^pd\lambda(z)<\infty;
\end{split} \]
\item There exists some $c$ satisfying $1<c<\frac{1}{r}$, such that
\[ \begin{split}
\sup_{a \in \B} \int_\B &\bigg( \frac{1}{V(D(z, cr))}  \int_{D(z, cr)}  |f(z)-f(w)|^p|(1-|z|^2)^q\\
                                    &(1-|\Phi_a(z)|^2)^{ns}\bigg)d\lambda(z)<\infty.
\end{split} \]
\end{enumerate}
\end{thm}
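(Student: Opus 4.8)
The plan is to deduce Theorem \ref{finter01} from Theorem \ref{finter} by a direct comparison of the integrands, exploiting the geometry of the Bergman metric balls $D(z,r)$. The key observation is that all the quantities appearing in Theorem \ref{finter01} differ from the corresponding ones in Theorem \ref{finter} only by replacing, inside the integral over $w \in D(z,r)$ (or $D(z,cr)$), the ``split'' weights $(1-|z|^2)^{\frac{q}{2p}}(1-|w|^2)^{\frac{q}{2p}}$ and $(1-|\Phi_a(z)|^2)^{\frac{ns}{2p}}(1-|\Phi_a(w)|^2)^{\frac{ns}{2p}}$ by the ``concentrated'' weights $(1-|z|^2)^{\frac{q}{p}}(1-|\Phi_a(z)|^2)^{\frac{ns}{p}}$ which depend only on the center $z$. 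So the whole content is that these two families of weights are comparable with constants independent of $a$, $z$, $w$ (as long as $w \in D(z,cr)$ with $c<1/r$ fixed).

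First I would record the standard facts, already cited in the excerpt as \eqref{Charac11} and \eqref{Charac12}: for $w \in D(z,\rho)$ (any fixed $\rho \in (0,1)$) one has $1-|w|^2 \simeq 1-|z|^2 \simeq |1-\langle z,w\rangle|$, with comparison constants depending only on $\rho$. From $1-|w|^2 \simeq 1-|z|^2$ we immediately get $(1-|w|^2)^{\frac{q}{2p}} \simeq (1-|z|^2)^{\frac{q}{2p}}$, hence $(1-|z|^2)^{\frac{q}{2p}}(1-|w|^2)^{\frac{q}{2p}} \simeq (1-|z|^2)^{\frac{q}{p}}$. For the M\"obius factor I would use $1-|\Phi_a(w)|^2 = \frac{(1-|a|^2)(1-|w|^2)}{|1-\langle a,w\rangle|^2}$ together with $1-|w|^2 \simeq 1-|z|^2$ and $|1-\langle a,w\rangle| \simeq |1-\langle a,z\rangle|$ (the latter being \eqref{Charac12} with $u = a$) to conclude $1-|\Phi_a(w)|^2 \simeq 1-|\Phi_a(z)|^2$ uniformly in $a$; raising to the power $\frac{ns}{2p}$ and multiplying gives $(1-|\Phi_a(z)|^2)^{\frac{ns}{2p}}(1-|\Phi_a(w)|^2)^{\frac{ns}{2p}} \simeq (1-|\Phi_a(z)|^2)^{\frac{ns}{p}}$. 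The analogous statements with exponents $\frac{q}{2},\frac{q}{2}$ in place of $\frac{q}{2p},\frac{q}{2p}$ and $\frac{ns}{2},\frac{ns}{2}$ in place of $\frac{ns}{2p},\frac{ns}{2p}$ hold by the same argument, giving the comparison needed to pass between statement (4) of Theorem \ref{finter} and statement (4) of Theorem \ref{finter01}.

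Having these comparisons, I would substitute them into each of the three integral expressions of Theorem \ref{finter}: in (2) the inner average $\frac{1}{V(D(z,r))}\int_{D(z,r)}|f(z)-f(w)|(\cdots)\,dV(w)$ becomes, up to a constant factor independent of everything, $(1-|z|^2)^{\frac{q}{p}}(1-|\Phi_a(z)|^2)^{\frac{ns}{p}} \cdot \frac{1}{V(D(z,r))}\int_{D(z,r)}|f(z)-f(w)|\,dV(w)$, which is exactly (2) of Theorem \ref{finter01}; similarly for the sup-version (3) and for (4), where the outer weight $(1-|z|^2)^{\frac{q}{2}}(1-|\zeta|^2)^{\frac{q}{2}}(1-|\Phi_a(z)|^2)^{\frac{ns}{2}}(1-|\Phi_a(\zeta)|^2)^{\frac{ns}{2}}$ collapses to $(1-|z|^2)^{q}(1-|\Phi_a(z)|^2)^{ns}$. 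Taking the supremum over $a \in \B$ preserves all these two-sided comparisons, so each condition of Theorem \ref{finter01} is equivalent to the corresponding condition of Theorem \ref{finter}, and the latter are all equivalent to $f \in \calN(p,q,s)$ by Theorem \ref{finter}. This yields the claimed chain of equivalences.

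The main obstacle is purely bookkeeping: one must be careful that the comparison $|1-\langle a,w\rangle| \simeq |1-\langle a,z\rangle|$ is uniform in $a$ — this is precisely the content of \eqref{Charac12} / \cite[(2.20)]{Zhu}, which holds with a constant depending only on $r$ (or $cr$), so fixing $c$ first with $1<c<1/r$ causes no trouble. There is no analytic difficulty beyond this; once the weight comparison is in place the equivalence is immediate from Theorem \ref{finter}. I would therefore present the proof tersely, citing \eqref{Charac11}, \eqref{Charac12} and the identity for $1-|\Phi_a(w)|^2$, and noting that all estimates are uniform in $a$.
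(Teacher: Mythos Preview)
Your proposal is correct and takes essentially the same approach as the paper, which simply states that the result follows from Theorem \ref{finter} together with \eqref{Charac11} and \eqref{Charac12}. Your write-up makes explicit exactly the weight comparison the paper leaves implicit, namely that $1-|\Phi_a(w)|^2 \simeq 1-|\Phi_a(z)|^2$ uniformly in $a$ for $w\in D(z,\rho)$, which is precisely what \eqref{Charac11} and \eqref{Charac12} together give.
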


\begin{rem}
Our earlier estimates in Theorems \ref{mix01},  \ref{mix02},   \ref{finter} and  \ref{finter01} are pointwise estimates with respect to $a \in B$, so if we replace $\sup_{a \in \B}$ and $<\infty$ by $\lim_{|a| \to 1}$ and $=0$, respectively, we obtain the corresponding characterizations of $\calN^0(p, q, s)$. Hence, we omit the details of the proof.
\end{rem}


\medskip

\section{Atomic decomposition and Gleason's problem for $\calN(p, q, s)$-type spaces}


In this section, we will focus on the decomposition of functions in $\calN(p, q, s)$-type spaces, which is an important concept and is a useful tool in studying such kind of function spaces.


\subsection{Atomic decomposition}


First, we give some preliminaries. Recall that for $z, w \in \B$, the Bergman metric can be written as (see, e.g. \cite[Proposition 1.21]{Zhu})
$$
\beta(z, w)=\frac{1}{2} \log \frac{1+|\Phi_z(w)|}{1-|\Phi_z(w)|}.
$$
  Moreover, for $r>0$ and $z \in \B$, the set
$E(z, r)=\{w \in \B: \beta(z, w)<r\}$ is a Bergman metric ball at $z$. Note that by a simple calculation, we have
$$
|\Phi_z(w)|=\tanh \beta(z, w),  \ z, w \in \B,
$$
which implies that $E(z, r)=D(z, \tanh r)$.

\begin{lem} \cite[Theorem 2.23]{Zhu} \label{decom01}
There exists a positive integer $N$ such that for any $0<r \le 1$ we can find a sequence $\{a_k\}$ in $\B$ with the following properties:
\begin{enumerate}
\item $\B=\bigcup_k E(a_k ,r)$;
\item The sets $E(a_k, r/4)$ are mutually disjoint;
\item Each point $z \in \B$ belongs to at most $N$ of the sets $E(a_k, 4r)$.
\end{enumerate}
\end{lem}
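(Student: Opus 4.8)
Since this lemma is quoted from \cite[Theorem 2.23]{Zhu}, the natural plan is simply to reproduce the standard construction of an $r$-lattice in the Bergman metric. Two structural facts drive the whole argument: first, $\beta(z,w)=\tfrac12\log\frac{1+|\Phi_z(w)|}{1-|\Phi_z(w)|}$ is a genuine metric on $\B$, so the triangle inequality is available; second, $d\lambda$ is M\"obius invariant, so for every $\rho>0$ the quantity $\lambda(E(0,\rho))=\lambda(D(0,\tanh\rho))$ equals $\lambda(E(z,\rho))$ for all $z\in\B$ — call this common value $v(\rho)$.

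The points $\{a_k\}$ are obtained as a maximal $(r/2)$-separated set. Concretely, the family of subsets $A\subseteq\B$ with $\beta(a,a')\ge r/2$ for all distinct $a,a'\in A$ is nonempty and closed under unions of chains, so Zorn's lemma produces a maximal element $\{a_k\}$. Property (2) is then immediate from the triangle inequality: if $w$ lay in $E(a_j,r/4)\cap E(a_k,r/4)$ with $j\ne k$, then $\beta(a_j,a_k)<r/2$, contradicting separation; hence the balls $E(a_k,r/4)$ are pairwise disjoint. Property (1) follows from maximality: if some $z$ had $\beta(z,a_k)\ge r/2$ for every $k$, then $\{a_k\}\cup\{z\}$ would still be $(r/2)$-separated, a contradiction; so each $z$ lies in some $E(a_k,r/2)\subseteq E(a_k,r)$, i.e.\ $\B=\bigcup_k E(a_k,r)$.

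For property (3), fix $z$ and let $k$ range over those indices with $z\in E(a_k,4r)$. For such $k$ and any $w\in E(a_k,r/4)$ the triangle inequality gives $\beta(z,w)<4r+r/4=\tfrac{17}{4}r$, so all the disjoint balls $E(a_k,r/4)$ are contained in the single ball $E(z,\tfrac{17}{4}r)$; comparing $\lambda$-measures and using M\"obius invariance, the number of such $k$ is at most $v(\tfrac{17}{4}r)/v(r/4)$. The one point that needs a moment's thought is that this bound must be taken uniform over $0<r\le 1$, i.e.\ that $N$ does not depend on $r$. This is where the Euclidean behaviour of $v$ near $0$ enters: writing $v(\rho)=\int_{|w|<\tanh\rho}(1-|w|^2)^{-(n+1)}\,dV(w)$, the map $r\mapsto v(\tfrac{17}{4}r)/v(r/4)$ is continuous and positive on $(0,1]$, and since $v(\rho)$ tends to $0$ like $V(\{|w|<\rho\})=\rho^{2n}$ as $\rho\to 0^+$, the ratio has the finite limit $17^{2n}$ at the left endpoint; hence it is bounded on $(0,1]$ and one takes $N$ to be any integer exceeding that bound. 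Everything else being a direct consequence of the triangle inequality and the invariance of $d\lambda$, this uniformity in $r$ is the only mild obstacle.
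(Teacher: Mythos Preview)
Your argument is correct and is essentially the standard proof from \cite[Theorem~2.23]{Zhu}; the paper itself does not prove this lemma but merely cites it, so there is nothing to compare.
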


\begin{lem} \cite[Lemma 2.28]{Zhu} \label{decom02}
Take and fix a sequence $\{a_k\}$ chosen according to Lemma \ref{decom01} with $r$ the separation constant. Then for each $k \ge 1$ there exists a Borel set $E_k$ satisfying the following conditions:
\begin{enumerate}
\item $E(a_k, r/4) \subset E_k \subset E(a_k, r)$ for every $k$;
\item $E_k \cap E_j=\emptyset$ for $k \neq j$;
\item $\B=\bigcup_k E_k$.
\end{enumerate}
\end{lem}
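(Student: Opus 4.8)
The plan is to build the sets $E_k$ by disjointifying the cover from Lemma \ref{decom01}(1), while arranging that each small ball $E(a_k,r/4)$ is assigned entirely to the $k$-th piece. I would fix the ordering of the sequence $\{a_k\}_{k\ge1}$. By Lemma \ref{decom01}(2) each $z\in\B$ lies in at most one of the sets $E(a_k,r/4)$; write $\iota(z)=k$ if $z\in E(a_k,r/4)$, and leave $\iota(z)$ undefined otherwise. For each $k\ge1$ I would then set
$$
E_k=E(a_k,r/4)\ \cup\ \Big\{\,z\in E(a_k,r)\ :\ \iota(z)\ \text{is undefined and}\ k=\min\{\,j:z\in E(a_j,r)\,\}\,\Big\},
$$
where the minimum is taken over the nonempty subset $\{j:z\in E(a_j,r)\}$ of $\N$, nonempty by Lemma \ref{decom01}(1).

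Next I would verify the three conditions. Condition (1) is immediate: $E(a_k,r/4)\subseteq E_k$ by construction, and both terms in the union lie in $E(a_k,r)$, so $E_k\subseteq E(a_k,r)$. Condition (3) follows by cases: if $\iota(z)=k$ then $z\in E(a_k,r/4)\subseteq E_k$; if $\iota(z)$ is undefined then $z$ lies in $E_k$ for $k=\min\{j:z\in E(a_j,r)\}$. For condition (2), suppose $z\in E_i\cap E_j$ with $i\ne j$. If $\iota(z)=m$ is defined, then $z$ belongs to $E(a_k,r/4)$ only for $k=m$ and, since the second clause requires $\iota$ undefined, membership $z\in E_k$ forces $k=m$; hence $i=j=m$, a contradiction. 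If $\iota(z)$ is undefined, then $z\notin E(a_k,r/4)$ for every $k$, so $z\in E_i$ and $z\in E_j$ both force the index to equal $\min\{j:z\in E(a_j,r)\}$, again $i=j$. Finally, each $E(a_k,r)$ and $E(a_k,r/4)$ is an open Bergman metric ball, hence Borel, and $E_k$ is obtained from countably many such sets by countable unions, intersections and complements, so $E_k$ is Borel.

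The only delicate point — and the sole place where the separation property Lemma \ref{decom01}(2) is used — is the disjointness in condition (2). The naive disjointification $E(a_k,r)\setminus\bigcup_{j<k}E(a_j,r)$ need not contain $E(a_k,r/4)$, so the small balls must be re-inserted into their pieces; pairwise disjointness of $\{E(a_k,r/4)\}_k$ is exactly what guarantees this re-insertion cannot create overlaps between distinct $E_k$'s. Everything else is routine set-theoretic bookkeeping; this is precisely \cite[Lemma 2.28]{Zhu}.
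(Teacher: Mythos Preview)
Your proof is correct. The paper does not supply its own proof of this lemma; it simply quotes \cite[Lemma 2.28]{Zhu}. Your argument---disjointify the cover $\{E(a_k,r)\}$ by the first-index rule, then re-insert the mutually disjoint small balls $E(a_k,r/4)$ into their respective pieces---is the standard one and is essentially what appears in Zhu's book, so there is nothing further to compare.
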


\begin{lem} \label{decom03}
Suppose $p \ge 1, q>0, s>\max\left\{1, 1-\frac{q}{n}\right\}$ and $\{a_k\} \subset \B$ is a chosen sequence according to Lemma \ref{decom01} with the separation constant $r \in (0, 1]$. Then
$$
d\mu_1=\sum_k |c_k|^p (1-|a_k|^2)^{q+ns}\del_{a_k} dV(z)
$$
is an $(ns)$-Carleson measure if and only if
$$
d\mu_2=\sum_k \frac{|c_k|^p}{(1-|a_k|^2)^p} (1-|z|^2)^{p+q+ns}\chi_k(z) d\lambda(z)
$$
is an $(ns)$-Carleson measure, where $\chi_k$ is the characteristic function of $E(a_k, r)$.
\end{lem}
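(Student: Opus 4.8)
The plan is to pass between the two measures using the three properties of the covering $\{E(a_k,r)\}=\{D(a_k,\tanh r)\}$ from Lemma \ref{decom01}, together with the standard estimates $(1-|a_k|^2)\simeq(1-|z|^2)\simeq|1-\langle a_k,z\rangle|$ valid for $z\in E(a_k,r)$ (see \cite[Proposition 1.21, Lemma 2.20]{Zhu}) and the fact that $V(E(a_k,r))\simeq(1-|a_k|^2)^{n+1}$. Note first that for each $k$,
$$
\int_{E(a_k,r)}(1-|z|^2)^{p+q+ns}\,d\lambda(z)
=\int_{E(a_k,r)}(1-|z|^2)^{p+q+ns-n-1}\,dV(z)
\simeq (1-|a_k|^2)^{p+q+ns},
$$
since the integrand is comparable to $(1-|a_k|^2)^{p+q+ns-n-1}$ on $E(a_k,r)$. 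Consequently
$$
\frac{|c_k|^p}{(1-|a_k|^2)^p}\int_{E(a_k,r)}(1-|z|^2)^{p+q+ns}\chi_k(z)\,d\lambda(z)
\simeq |c_k|^p(1-|a_k|^2)^{q+ns}=\mu_1(\{a_k\}),
$$
so the ``mass'' carried near $a_k$ by $\mu_2$ is comparable to the point mass of $\mu_1$ at $a_k$. This is the algebraic heart of the equivalence.

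For the direction $\mu_2$ Carleson $\Longrightarrow \mu_1$ Carleson: fix $\xi\in\SSS$ and $0<\delta<1$. For every $a_k\in Q_\delta(\xi)$, by Lemma \ref{Charac01} (with $\tfrac14$ replaced by $\tanh r$, which changes only the constant $M$) we have $E(a_k,r)\subseteq Q_{M\delta}(\xi)$, hence
$$
\mu_1(Q_\delta(\xi))=\!\!\sum_{a_k\in Q_\delta(\xi)}\!\!|c_k|^p(1-|a_k|^2)^{q+ns}
\lesssim \!\!\sum_{a_k\in Q_\delta(\xi)}\frac{|c_k|^p}{(1-|a_k|^2)^p}\int_{E(a_k,r)}(1-|z|^2)^{p+q+ns}\,d\lambda(z).
$$
Since $E(a_k,r)\subseteq Q_{M\delta}(\xi)$ for these $k$ and since by property (3) of Lemma \ref{decom01} the sets $E(a_k,r)$ (being contained in $E(a_k,4r)$) overlap at most $N$ times, the last sum is $\le N\,\mu_2(Q_{M\delta}(\xi))\lesssim (M\delta)^{ns}\simeq\delta^{ns}$. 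Thus $\mu_1$ is an $(ns)$-Carleson measure.

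For the converse direction $\mu_1$ Carleson $\Longrightarrow \mu_2$ Carleson: fix $\xi\in\SSS$ and $\delta\in(0,1)$, and estimate $\mu_2(Q_\delta(\xi))=\sum_k\frac{|c_k|^p}{(1-|a_k|^2)^p}\int_{Q_\delta(\xi)\cap E(a_k,r)}(1-|z|^2)^{p+q+ns}\,d\lambda(z)$. Only the indices $k$ with $E(a_k,r)\cap Q_\delta(\xi)\neq\emptyset$ contribute; for such $k$, the comparability $|1-\langle a_k,z\rangle|\simeq|1-\langle\xi,z\rangle|$ (cf.\ \eqref{Charac12}) together with $|1-\langle z,\xi\rangle|<\delta$ for $z\in Q_\delta(\xi)$ forces $a_k\in Q_{M'\delta}(\xi)$ for a fixed constant $M'$ (equivalently: reverse the role of the two balls in Lemma \ref{Charac01}). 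Bounding the truncated integral by the full one, $\int_{Q_\delta(\xi)\cap E(a_k,r)}(1-|z|^2)^{p+q+ns}\,d\lambda(z)\le\int_{E(a_k,r)}(1-|z|^2)^{p+q+ns}\,d\lambda(z)\simeq(1-|a_k|^2)^{p+q+ns}$, we get
$$
\mu_2(Q_\delta(\xi))\lesssim\!\!\sum_{a_k\in Q_{M'\delta}(\xi)}\!\!|c_k|^p(1-|a_k|^2)^{q+ns}=\mu_1(Q_{M'\delta}(\xi))\lesssim(M'\delta)^{ns}\simeq\delta^{ns},
$$
so $\mu_2$ is an $(ns)$-Carleson measure, completing the proof.

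The main obstacle — really the only subtle point — is the geometric containment step: showing that if $E(a_k,r)$ meets $Q_\delta(\xi)$ then $a_k$ itself lies in a dilated tube $Q_{C\delta}(\xi)$, and conversely that $a_k\in Q_\delta(\xi)$ implies $E(a_k,r)\subseteq Q_{C\delta}(\xi)$. Both follow from the quasi-triangle inequality $|1-\langle z,w\rangle|^{1/2}\le|1-\langle z,u\rangle|^{1/2}+|1-\langle u,w\rangle|^{1/2}$ and the estimate $|1-\langle a_k,z\rangle|\simeq 1-|a_k|^2$ for $z\in E(a_k,r)$, exactly as in the proof of Lemma \ref{Charac01}; one must simply check the constants do not depend on $r\in(0,1]$ (they depend only on $\tanh 1<1$). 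Everything else is bookkeeping with the finite-overlap property (3) of Lemma \ref{decom01} and the elementary integral $\int_{E(a_k,r)}(1-|z|^2)^{p+q+ns-n-1}\,dV(z)\simeq(1-|a_k|^2)^{p+q+ns}$.
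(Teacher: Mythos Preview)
Your argument is correct, but it differs from the paper's. The paper does not work with Carleson tubes $Q_\delta(\xi)$ at all; instead it invokes the equivalent ``test-function'' characterization of $(ns)$-Carleson measures from \cite[Theorem 45]{ZZ}, namely that $\mu$ is $(ns)$-Carleson if and only if $\sup_{a\in\B}\int_\B\Big(\tfrac{1-|a|^2}{|1-\langle z,a\rangle|^2}\Big)^{ns}d\mu(z)<\infty$. Using $(1-|z|^2)\simeq(1-|a_k|^2)$ and $|1-\langle z,a\rangle|\simeq|1-\langle a_k,a\rangle|$ for $z\in E(a_k,r)$, together with $V(E(a_k,r))\simeq(1-|a_k|^2)^{n+1}$, the paper shows directly that the test integrals for $\mu_1$ and $\mu_2$ are comparable \emph{for each fixed} $a\in\B$, which immediately gives both directions at once. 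This is shorter and avoids the tube-geometry containment step (your Lemma~\ref{Charac01} type argument) entirely. Your approach, by contrast, is more self-contained: it uses only the definition of Carleson measures and elementary geometry, never appealing to the test-function characterization. One minor remark: the finite-overlap property (3) of Lemma~\ref{decom01} that you invoke is actually not needed in either direction of your argument --- in the $\mu_2\Rightarrow\mu_1$ step the sum you write is already a partial sum of $\mu_2(Q_{M\delta}(\xi))$, so the factor $N$ is superfluous (though harmless).
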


\begin{proof}
For any $a \in \B$, by \cite[Lemma 2.20, (2.20)]{Zhu} and \cite[2.2.7]{Rud}, we have
\begin{eqnarray*}
&&\int_\B \left(\frac{1-|a|^2}{|1-\langle z, a \rangle|^2}\right)^{ns}d\mu_2(z)\\
&= &\int_\B \left(\frac{1-|a|^2}{|1-\langle z, a \rangle|^2}\right)^{ns} \left(\sum_k \frac{|c_k|^p}{(1-|a_k|^2)^p} (1-|z|^2)^{p+q+ns}\chi_k(z)\right) d\lambda(z)\\
&= &\sum_k \frac{|c_k|^p}{(1-|a_k|^2)^p} \int_{D(a_k, r)} \frac{(1-|a|^2)^{ns}(1-|z|^2)^{p+q+ns-n-1}}{|1-\langle z,a \rangle|^{2ns}}dV(z)\\
&\simeq& \sum_k \frac{|c_k|^p}{(1-|a_k|^2)^{n+1-q-ns}} \cdot \frac{(1-|a|^2)^{ns}}{|1-\langle a_k, a \rangle|^{2ns}} V(D(a_k, r))
 \end{eqnarray*}
\begin{eqnarray*}
&\simeq& \sum_k |c_k|^p (1-|a_k|^2)^{q+ns} \cdot \frac{(1-|a|^2)^{ns}}{|1-\langle a_k, a \rangle|^{2ns}}\\
& =&\int_\B \left( \frac{1-|a|^2}{|1-\langle z, a\rangle|^2}\right)^{ns}d\mu_1(z).
\end{eqnarray*}
The desired result follows from \cite[Theorem 45]{ZZ}.
\end{proof}

Fix a parameter $b>n$ and let $\alpha=b-(n+1)$. We also fix a sequence $\{a_k\}$ chosen according to Lemma \ref{decom01} with separation constant $r$ and a sequence of Borel measurable sets $\{E_k\}$ with each $E_k$ satisfying the condition in Lemma \ref{decom02}. Recall that the operator $T$ associated to $\{a_k\}$ is as follows:
\begin{equation} \label{decomeq01}
T(f)(z)=\int_\B \frac{(1-|w|^2)^{b-n-1}}{|1-\langle z, w \rangle|^b}f(w)dV(w),
\end{equation}
where $f$ is some Lebesgue measurable function.

Moreover, take a finer ``lattice"  $\{a_{kj}\}$ with separation constant $\gamma$ in the Bergman metric than $\{a_k\}$ with a sequence of Borel measurable sets $\{E_{kj}\}$ chosen according to \cite[Page 64]{Zhu} and define
\begin{equation} \label{decomeq02}
S(f)(z)=\sum_{k, j} \frac{V_\alpha(E_{kj})f(a_{kj})}{(1-\langle z, a_{kj} \rangle)^b},
\end{equation}
where $f \in H(\B)$. Note that $\{a_{kj}\}$ also satisfies the conditions in Lemma \ref{decom01}. We refer the reader to the excellent book \cite{Zhu} for the detailed information about such a decomposition of $\mathbb{B}$ into Bergman metric balls.

The following result indicates a deep relationship between $T$ and $S$.

\begin{lem} \cite[Lemma 3.22]{Zhu}\label{decom10}
There exists a constant $C>0$, independent of the separation constant $r$ for $\{a_k\}$ and the separation constant $\gamma$ for $\{a_{kj}\}$, such that
$$
|f(z)-S(f)(z)| \le C\sigma T(|f|)(z)
$$
for all $f \in H(\B)$ and $z \in \B$, where $\sigma=\gamma+\frac{\tanh(\gamma)}{\tanh(r)}$.
\end{lem}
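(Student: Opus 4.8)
This is \cite[Lemma 3.22]{Zhu}; I outline the argument I would run. The plan is a cell-by-cell comparison of $S(f)$ with the weighted Bergman reproducing formula of exponent $b=n+1+\alpha$. For $f$ in a dense subclass one has $f(z)=\int_\B \frac{f(w)}{(1-\langle z,w\rangle)^b}\,dV_\alpha(w)$; since the $E_{kj}$ partition $\B$ and $V_\alpha(E_{kj})=\int_{E_{kj}}dV_\alpha$, subtracting $S(f)$ (see \eqref{decomeq02}) yields
\[
f(z)-S(f)(z)=\sum_{k,j}\int_{E_{kj}}\Big(\frac{f(w)}{(1-\langle z,w\rangle)^b}-\frac{f(a_{kj})}{(1-\langle z,a_{kj}\rangle)^b}\Big)\,dV_\alpha(w).
\]
Writing $G_z(w)=f(w)(1-\langle z,w\rangle)^{-b}$ for fixed $z$, everything reduces to estimating the oscillation of $G_z$ over each fine cell $E_{kj}\subseteq E(a_{kj},\gamma)$ and then summing; this is where both separation constants will enter.

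For the oscillation I would use the elementary bound $|G(w)-G(a)|\le \beta(w,a)\,\sup_{v}|\widetilde\nabla G(v)|$, valid for any $C^1$ function, the supremum over the Bergman geodesic from $a$ to $w$; on $E_{kj}$ this produces a factor $\simeq\tanh\gamma$ and reduces matters to $\sup_v|\widetilde\nabla G_z(v)|$ over a Bergman ball of radius $O(\gamma)$ about $a_{kj}$. By the Leibniz rule, $|\widetilde\nabla G_z(v)|\lesssim |1-\langle z,v\rangle|^{-b}|\widetilde\nabla f(v)|+|f(v)|\,\big|\widetilde\nabla_v(1-\langle z,v\rangle)^{-b}\big|$. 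For the kernel factor (whose conjugate is holomorphic in $v$), a fixed-scale submean value estimate gives $\big|\widetilde\nabla_v(1-\langle z,v\rangle)^{-b}\big|\lesssim |1-\langle z,v\rangle|^{-b}$ with an absolute constant; this produces the $\gamma$ summand of $\sigma$. For the $\widetilde\nabla f$ factor I would write $\widetilde\nabla f(v)=\nabla(f\circ\Phi_v)(0)$ and — crucially at the \emph{coarse} scale $r$, so that the regions obtained below have bounded overlap — apply a Cauchy estimate on the Euclidean ball of radius $\tanh r$, getting $|\widetilde\nabla f(v)|\lesssim (\tanh r)^{-1}\sup_{u\in E(v,r)}|f(u)|$; this is where $r$ enters and where the $\tanh\gamma/\tanh r$ summand is born.

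To reassemble I would use part~(1) of Lemma~\ref{decom02} together with the construction of the $E_{kj}$ on \cite[Page~64]{Zhu} to place each $E_{kj}$ inside a fixed dilate $\widehat E_k$ of $E(a_k,r)$, the comparabilities of $1-|w|^2$ and $|1-\langle z,w\rangle|$ across Bergman balls of radius $O(1)$ (uniform for $r,\gamma\le1$), and the submean value property of $|f|$ to turn the suprema above into a local $L^1$-average. Integrating over $E_{kj}$ picks up $V_\alpha(E_{kj})\simeq (1-|a_{kj}|^2)^{b}$, which exactly cancels the compensating negative power of $1-|v|^2$, so the $E_{kj}$-term is dominated by $C\big(\tanh\gamma+\tanh\gamma/\tanh r\big)\int_{\widehat E_k}\frac{(1-|u|^2)^{b-n-1}}{|1-\langle z,u\rangle|^b}|f(u)|\,dV(u)$. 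Summing over $j$, then over $k$, and invoking the bounded-overlap property (part~(3) of Lemma~\ref{decom01}) for the $\widehat E_k$, the double sum collapses into a bounded multiple of $T(|f|)(z)$, with $C$ depending only on $n$ and the overlap constant $N$ — hence on neither $r$ nor $\gamma$. Replacing $\tanh\gamma$ by $\gamma$ gives the stated $\sigma=\gamma+\tanh\gamma/\tanh r$. The main obstacle is precisely this bookkeeping: one must check at every comparison and submean value step that the constants are uniform in $r$ and $\gamma$, which is what forces the Cauchy estimate for $\widetilde\nabla f$ to be taken at scale $\tanh r$ rather than a fixed scale, and one must keep the powers of $1-|v|^2$ balanced so that after summation nothing survives except a clean multiple of $T(|f|)$.
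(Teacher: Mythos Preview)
The paper does not prove this lemma; it merely quotes it as \cite[Lemma 3.22]{Zhu}. Your outline is essentially the argument given there: write $f(z)-S(f)(z)$ as a sum over the fine cells $E_{kj}$ of the difference between the reproducing integral and its one-point quadrature, split each term via the kernel/function decomposition you describe, and control the two pieces by the Bergman-metric oscillation estimates that produce the $\gamma$ and $\tanh\gamma/\tanh r$ summands of $\sigma$, with the bounded-overlap property of the coarse lattice collapsing the sum into $T(|f|)(z)$. So your approach matches the cited source, and the bookkeeping you flag (uniformity of constants in $r,\gamma$, balancing the powers of $1-|v|^2$) is exactly the content of the proof in \cite{Zhu}.
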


 We have the following result considering the behaviour of $T$, which follows the methods in \cite[Theorem 5.26]{Zhu} and \cite[Lemma 1]{RO}.

\begin{lem}  \label{decom04}
Let $f$ be some Lebesgue measurable function. Suppose $p \ge 1, q>0, s>\max\left\{0, 1-\frac{q}{n} \right\}$ and $t>n-p-q-ns$. Then we have
\begin{enumerate}
\item
If $p>1, b>\frac{n+1}{p'}+\frac{q+ns+t}{p}+1$ and $|f(z)|^p(1-|z|^2)^{p+q+ns+t}d\lambda(z)$ is an $(ns)$-Carleson measure, where $p'$ is the conjugate of $p$, then
$$
|T(f)(z)|^p(1-|z|^2)^{p+q+ns+t}d\lambda(z)
$$
is also an $(ns)$-Carleson measure.

\item
If $p=1, b>1+q+ns+t$ and $|f(z)|(1-|z|^2)^{1+q+ns+t}d\lambda(z)$ is an $(ns)$-Carleson measure, then
$$
 |T(f)(z)|(1-|z|^2)^{1+q+ns+t}d\lambda(z)
$$
is also an $(ns)$-Carleson measure.
\end{enumerate}
\end{lem}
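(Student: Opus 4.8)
The plan is to reduce everything to the Carleson-tube characterization of $(ns)$-Carleson measures (Proposition \ref{Carleson01}, \cite[Theorem 45]{ZZ}): since $d\lambda(z)=(1-|z|^2)^{-n-1}dV(z)$, in case (1) it suffices to prove
\[
\sup_{\xi\in\SSS,\ 0<\del<1}\ \frac{1}{\del^{ns}}\int_{Q_\del(\xi)}|T(f)(z)|^{p}(1-|z|^2)^{p+q+ns+t-n-1}\,dV(z)<\infty,
\]
given the analogous bound with $|f|^{p}$ in place of $|T(f)|^{p}$, and similarly in case (2) with $p=1$. Replacing $f$ by $|f|$ (which only increases $|T(f)|$) we may assume $f\ge0$. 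The standing hypothesis $t>n-p-q-ns$ is precisely what guarantees $p+q+ns+t-n-1>-1$, so every $dV$-integral below converges at the boundary; it is also convenient to record the Carleson hypothesis in the form $d\mu(w):=f(w)^{p}(1-|w|^2)^{p+q+ns+t-n-1}dV(w)$ satisfies $\mu(Q_r(\eta))\lesssim r^{ns}$, so that $f(w)^{p}\,dV(w)=(1-|w|^2)^{\,n+1-p-q-ns-t}\,d\mu(w)$. Throughout, the workhorses are \cite[Theorem 1.12]{Zhu} and \cite[Corollary 5.24]{Zhu}, and the geometric organisation is the ``close tube plus dyadic annuli'' decomposition already used in the proof of Theorem \ref{Carlesoncharac} and in \cite[Theorem 5.26]{Zhu}, \cite[Lemma 1]{RO}.

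For part (2), $p=1$: by Fubini's theorem,
\[
\int_{Q_\del(\xi)}T(f)(z)(1-|z|^2)^{q+ns+t-n}\,dV(z)=\int_\B f(w)(1-|w|^2)^{b-n-1}\Big(\int_{Q_\del(\xi)}\frac{(1-|z|^2)^{q+ns+t-n}}{|1-\langle z,w\rangle|^{b}}\,dV(z)\Big)dV(w).
\]
Split the $w$-integral as $\int_{Q_{2\del}(\xi)}+\sum_{j\ge1}\int_{Q_{2^{j+1}\del}(\xi)\setminus Q_{2^{j}\del}(\xi)}$. On $Q_{2\del}(\xi)$, enlarge the inner $z$-integral to all of $\B$ and evaluate it by \cite[Theorem 1.12]{Zhu} — the condition $b>1+q+ns+t$ puts the exponent in the decaying regime — so that the contribution is $\lesssim\int_{Q_{2\del}(\xi)}f(w)(1-|w|^2)^{q+ns+t-n}dV(w)=\mu(Q_{2\del}(\xi))\lesssim\del^{ns}$. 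On the $j$-th annulus, $|1-\langle z,w\rangle|\gtrsim 2^{j}\del$ for $z\in Q_\del(\xi)$ and $w\notin Q_{2^{j}\del}(\xi)$; pulling this factor out, the remaining $z$-integral over $Q_\del(\xi)$ is estimated by \cite[Corollary 5.24]{Zhu}, and after rewriting $f\,dV$ in terms of $d\mu$ and using $(1-|w|^2)\lesssim 2^{j+1}\del$ on the annulus together with $\mu(Q_{2^{j+1}\del}(\xi))\lesssim(2^{j+1}\del)^{ns}$, one is left with $\del^{ns}\sum_{j\ge1}2^{-j(1+q+t)}\lesssim\del^{ns}$.

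For part (1), $p>1$: write the kernel of $T$ as a product $\frac{(1-|w|^2)^{b-n-1}}{|1-\langle z,w\rangle|^{b}}=K_1(z,w)\,K_2(z,w)$, where
\[
K_1(z,w)=\frac{(1-|w|^2)^{a_1}}{|1-\langle z,w\rangle|^{c_1}},\qquad K_2(z,w)=\frac{(1-|w|^2)^{b-n-1-a_1}}{|1-\langle z,w\rangle|^{b-c_1}},
\]
and apply H\"older's inequality with exponents $p,p'$ to get
\[
T(f)(z)^{p}\le\Big(\int_\B K_1(z,w)^{p}f(w)^{p}\,dV(w)\Big)\Big(\int_\B K_2(z,w)^{p'}\,dV(w)\Big)^{p/p'}.
\]
For a suitable choice of $a_1,c_1$, \cite[Theorem 1.12]{Zhu} evaluates the second factor as a fixed negative power of $1-|z|^2$; inserting this into $\int_{Q_\del(\xi)}T(f)(z)^{p}(1-|z|^2)^{p+q+ns+t-n-1}dV(z)$, applying Fubini and rewriting $f^{p}\,dV$ via $d\mu$, one is reduced to a double integral of exactly the shape treated in Case I of the proof of Theorem \ref{Carlesoncharac}. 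The same ``close tube plus dyadic annuli'' splitting, together with \cite[Theorem 1.12]{Zhu}, \cite[Corollary 5.24]{Zhu} and $\mu(Q_r(\eta))\lesssim r^{ns}$, then gives the bound $\lesssim\del^{ns}$; one notes that the power of $1-|w|^2$ multiplying $d\mu$ on the central tube collapses to $0$ and the powers of $\del$ on each annulus collapse to $\del^{ns}$, so only the convergence of the dyadic geometric series has to be checked.

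The main obstacle is purely the parameter bookkeeping in case (1): the splitting parameters $a_1,c_1$ must be chosen so that (i) $\int_\B K_2^{p'}\,dV(w)$ converges and equals the claimed negative power of $1-|z|^2$ (this needs $p'(b-n-1-a_1)>-1$ and a positivity of the resulting exponent), (ii) the outer weight exponent exceeds $-1$ so that \cite[Theorem 1.12]{Zhu} and \cite[Corollary 5.24]{Zhu} apply, and (iii) the exponent controlling the dyadic sum over annuli is strictly negative. A short computation shows that a value of $a_1$ in a certain open interval meets all of (i)--(iii), and that this interval is nonempty precisely when $b>\frac{n+1}{p'}+\frac{q+ns+t}{p}+1$; the analogous, easier bookkeeping for $p=1$ is exactly what forces $b>1+q+ns+t$, while $t>n-p-q-ns$ is the minimal condition making every weight locally integrable. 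Once the parameters are pinned down, the remaining estimates are routine and run in parallel with the proof of Theorem \ref{Carlesoncharac}.
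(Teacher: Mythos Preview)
Your argument is correct; the $p=1$ case matches the paper's approach, and for $p>1$ you take a genuinely different (though closely related) route on the ``close tube'' piece.

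The paper also splits $T(f)$ into a near part $I_1$ over $Q_{2\del}(\xi)$ and a dyadic tail $I_2$. For $I_2$ it proceeds exactly as you do: H\"older on each annulus, the distance estimate $|1-\langle z,w\rangle|\gtrsim 2^j\del$, \cite[Corollary 5.24]{Zhu}, and summation of $\sum_j 2^{-j(1+\frac{q+t}{p})}$. For $I_1$, however, the paper does \emph{not} apply H\"older pointwise to the kernel as you do; instead it packages the near part as $\|Mh\|_{L^p}^p$, where $M$ is the integral operator with kernel
\[
K(z,w)=\frac{(1-|w|^2)^{b-\frac{n+1}{p'}-\frac{q+ns+t}{p}-1}(1-|z|^2)^{1+\frac{q+ns+t-n-1}{p}}}{|1-\langle z,w\rangle|^{b}},
\]
and $h(w)=|f(w)|(1-|w|^2)^{1+\frac{q+ns+t-n-1}{p}}\chi_{Q_{2\del}(\xi)}(w)$, and then proves $M$ bounded on $L^p(\B,dV)$ by Schur's test with test function $g(z)=(1-|z|^2)^{-1/(p+p')}$. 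This gives $I_1\lesssim\del^{-ns}\|h\|_{L^p}^p$, which is exactly the Carleson quantity for $f$ over $Q_{2\del}(\xi)$.

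Your pointwise-H\"older approach (write the kernel as $K_1K_2$, estimate $\int K_2^{p'}$ by \cite[Theorem 1.12]{Zhu}, then Fubini and split) is the same mechanism the paper uses in the proof of Theorem~\ref{Carlesoncharac}, Case~I, and it works here too: with $\gamma=(n+1)+p(a_1-c_1)$ the weight on the central tube indeed collapses to $(1-|w|^2)^{p+q+ns+t-n-1}$, and on the annuli the $\del$-powers collapse to $\del^{ns}$ with summable $2^{j}$-exponent $p(a_1-c_1)-p-q-t+n+1<0$. The nonemptiness of the admissible interval for $a_1$ follows from the stated lower bound on $b$ (in fact from the slightly weaker $b>1+\frac{q+ns+t}{p}+\frac{n}{p'}$), so your claim about the bookkeeping is justified. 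The trade-off: Schur's test is a one-shot argument that avoids the two-parameter $(a_1,c_1)$ bookkeeping, while your approach keeps the whole proof parallel to Theorem~\ref{Carlesoncharac} and makes the ``little-oh'' version (Lemma~\ref{decom06}) an immediate modification.
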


\begin{proof}
The proof of (2) is a simple modification of (1), which turns out to be much more easier than (1), and hence we omit it here.

For any $\xi \in \SSS$ and $\del \in (0, 1)$, we have
\begin{eqnarray*}
&&\frac{1}{\del^{ns}} \int_{Q_\del(\xi)} |T(f)(z)|^p(1-|z|^2)^{p+q+ns+t}d\lambda(z)\\
&\le& \frac{1}{\del^{ns}} \int_{Q_\del(\xi)} \left( \int_\B \frac{(1-|w|^2)^{b-n-1}}{|1-\langle z, w \rangle|^b}|f(w)|dV(w) \right)^p (1-|z|^2)^{p+q+ns+t}d\lambda(z)\\
&=&\frac{1}{\del^{ns}} \int_{Q_\del(\xi)} \left( \left[ \int_{Q_{2\del}(\xi)}+\sum_{j=1}^\infty \int_{A_j}\right] \frac{(1-|w|^2)^{b-n-1}}{|1-\langle z, w \rangle|^b}|f(w)|dV(w) \right)^p\\
&&  (1-|z|^2)^{p+q+ns+t}d\lambda(z)\\
&\le& \frac{2^{p-1}}{\del^{ns}}\int_{Q_\del(\xi)} \left( \int_{Q_{2\del}(\xi)} \frac{(1-|w|^2)^{b-n-1}}{|1-\langle z, w \rangle|^b}|f(w)|dV(w) \right)^p \\
&& (1-|z|^2)^{p+q+ns+t}d\lambda(z)\\
&&   + \frac{2^{p-1}}{\del^{ns}}\int_{Q_\del(\xi)} \left(\sum_{j=1}^\infty \int_{A_j} \frac{(1-|w|^2)^{b-n-1}}{|1-\langle z, w \rangle|^b}|f(w)|dV(w) \right)^p \\
&& (1-|z|^2)^{p+q+ns+t}d\lambda(z)\\
&=& I_1+I_2,
\end{eqnarray*}
where $A_j=\left\{w \in \B: 2^j\del \le |1-\langle w, \xi \rangle|<2^{j+1}\del \right\}, i=1, 2, \dots$.

\textbf{$\bullet$ Estimation of $I_1$.}

Consider the integral operator $M$ induced by $K(z, w)$,
$$
Mh(z)=\int_\B K(z, w)h(w)dV(w), z \in \B,
$$
where
$$
K(z, w)=\frac{(1-|w|^2)^{b-\frac{n+1}{p'}-\frac{q+ns+t}{p}-1}(1-|z|^2)^{1+\frac{q+ns+t-n-1}{p}}}{|1-\langle z, w \rangle|^b}, \ z, w \in \B.
$$
We claim that $M$ is a bounded operator on $L^p(\B, dV)$. Indeed, consider the function $g(z)=(1-|z|^2)^{-\frac{1}{p+p'}}$. On one hand, we have
\begin{eqnarray*}
&& \int_\B K(z, w)g^{p'}(w)dV(w)\\
& = &(1-|z|^2)^{1+\frac{q+ns+t-n-1}{p}}\int_\B \frac{(1-|w|^2)^{b-\frac{n+1}{p'}-\frac{q+ns+t}{p}-1-\frac{p'}{p+p'}}}{|1-\langle z, w \rangle|^b} dV(w)\\
&= & (1-|z|^2)^{1+\frac{q+ns+t-n-1}{p}}\int_\B \frac{(1-|w|^2)^{b-\frac{n+1}{p'}-\frac{q+ns+t}{p}-1-\frac{p'}{p+p'}}}{|1-\langle z, w \rangle|^{n+1+\left(b-\frac{n+1}{p'}-\frac{q+ns+t}{p}-1-\frac{p'}{p+p'}\right)+c_1}} dV(w),
\end{eqnarray*}
where $c_1=1+\frac{q+ns+t-n-1}{p}+\frac{p'}{p+p'}$.  Note that by our assumption,
$$
b-\frac{n+1}{p'}-\frac{q+ns+t}{p}-1-\frac{p'}{p+p'}>-1
$$
and
$$
c_1=1+\frac{q+ns+t-n-1}{p}+\frac{p'}{p+p'}>0,
$$
and hence by \cite[Proposition 1.4.10]{Rud},
\begin{eqnarray*}
&&\int_\B K(z, w)g^{p'}(w)dV(w)\\
&\lesssim & (1-|z|^2)^{1+\frac{q+ns+t-n-1}{p}} \cdot (1-|z|^2)^{-1-\frac{q+ns+t-n-1}{p}-\frac{p'}{p+p'}}=g^{p'}(z).
\end{eqnarray*}

On the other hand, we have
\begin{eqnarray*}
&&\int_\B K(z, w) g^p(z)dV(z)\\
&= &(1-|w|^2)^{b-\frac{n+1}{p'}-\frac{q+ns+t}{p}-1}\int_\B \frac{(1-|z|^2)^{1+\frac{q+ns+t-n-1}{p}-\frac{p}{p+p'}}}{|1-\langle z, w \rangle|^b}dV(z)\\
&=&(1-|w|^2)^{b-\frac{n+1}{p'}-\frac{q+ns+t}{p}-1}\int_\B \frac{(1-|z|^2)^{1+\frac{q+ns+t-n-1}{p}-\frac{p}{p+p'}}}{|1-\langle z, w \rangle|^{n+1+\left(1+\frac{q+ns+t-n-1}{p}-\frac{p}{p+p'}\right)+c_2}}dV(z),
\end{eqnarray*}
where $c_2=b+\frac{p}{p+p'}-\frac{n+1}{p'}-1-\frac{q+ns+t}{p}$. By our assumption again, it follows that
$$
1+\frac{q+ns+t-n-1}{p}-\frac{p}{p+p'}>-1
$$
and
$$
c_2=b+\frac{p}{p+p'}-\frac{n+1}{p'}-1-\frac{q+ns+t}{p}>0.
$$
Thus, we have
\begin{eqnarray*}
&&\int_\B K(z, w) g^p(z)dV(z)\\
&\lesssim& (1-|w|^2)^{b-\frac{n+1}{p'}-\frac{q+ns+t}{p}-1} \cdot (1-|w|^2)^{-b-\frac{p}{p+p'}+\frac{n+1}{p'}+1+\frac{q+ns+t}{p}}=g^p (w).
\end{eqnarray*}

The boundedness of $M$ on $L^p(\B, dV)$ is  clear by Schur's test (see, e.g., \cite[Theorem 2.9]{Zhu}). Put
$$
h(w)=|f(w)| (1-|w|^2)^{1+\frac{q+ns+t-n-1}{p}}\chi_{Q_{2\del}(\xi)}(w), \ w \in \B.
$$
It is easy to see $h \in L^p(\B, dV)$ since $|f(z)|^p(1-|z|^2)^{p+q+ns+t}d\lambda(z)$ is an $(ns)$-Carleson measure. Moreover, we have
$$
\frac{1}{(2\del)^{ns}} \|h\|_{L^p}^p=\frac{1}{(2\del)^{ns}} \int_{Q_{2\del}(\xi)}|f(w)|^p(1-|w|^2)^{p+q+ns+t}d\lambda(w)<\infty.
$$
Therefore,
\begin{eqnarray*}
I_1%
&\le& \frac{2^{p-1}}{\del^{ns}} \int_\B \left( \int_\B K(z, w)h(w)dV(w) \right)^pdV(z)\\
&=& \frac{2^{p-1}}{\del^{ns}} \|Mh\|_{L^p}^p \lesssim \frac{1}{\del^{ns}} \|h\|_{L^p}^p<\infty.
\end{eqnarray*}

\textbf{$\bullet$ Estimation of $I_2$.}

Note that for $z \in Q_\del(\xi)$ and $w \in A_j$, we have
$$
|1-\langle z, w \rangle|^{\frac{1}{2}} \ge |1-\langle \xi, w\rangle|^{\frac{1}{2}}-|1-\langle \xi, z \rangle|^{\frac{1}{2}}> \frac{1}{2} (\sqrt{2}-1) 2^{\frac{j}{2}}\del^{\frac{1}{2}}.
$$
Moreover, for each $j \ge 1$, consider the term
$$
I_{3, j}=\int_{Q_{2^{j+1}\del}(\xi)} |f(w)|(1-|w|^2)^{b-n-1}dV(w).
$$
Using H\"older's inequality and \cite[Corollary 5.24]{Zhu}, we have
\begin{eqnarray*}
I_{3, j}%
&\le& \left( \int_{Q_{2^{j+1}\del}(\xi)} |f(w)|^p(1-|w|^2)^{p+q+ns+t}d\lambda(z) \right)^{\frac{1}{p}} \\
&& \times \left( \int_{Q_{2^{j+1}\del}(\xi)} (1-|w|^2)^{p'(b-n-1)-p'\left(1+\frac{q+ns+t-n-1}{p}\right)}dV(w) \right)^{\frac{1}{p'}}\\
&\lesssim& (2^{j+1}\del)^{b-1-\frac{q+ns+t}{p}} (2^{j+1}\del)^{\frac{ns}{p}}\\
&& \times \left( \frac{1}{(2^{j+1}\del)^{ns}} \int_{Q_{2^{j+1}\del}(\xi)} |f(w)|^p(1-|w|^2)^{p+q+ns+t}d\lambda(z) \right)^{\frac{1}{p}}\\
&\lesssim& (2^{j+1}\del)^{b-1-\frac{q+t}{p}}.
\end{eqnarray*}

Thus,  we have
\begin{eqnarray*}
I_2%
&\lesssim& \frac{1}{\del^{ns}} \int_{Q_\del(\xi)} \left( \sum_{j=1}^\infty \frac{1}{(2^j\del)^b} \int_{Q_{2^{j+1}\del}(\xi)} |f(w)|(1-|w|^2)^{b-n-1}dV(w) \right)^p\\
&& \quad \quad (1-|z|^2)^{p+q+ns+t-n-1}dV(z)\\
&\simeq& \del^{p+q+t} \left(\sum_{j=1}^\infty \frac{1}{(2^j\del)^b} \int_{Q_{2^{j+1}\del}(\xi)} |f(w)|(1-|w|^2)^{b-n-1}dV(w) \right)^p\\
&\lesssim& \del^{p+q+t} \left( \sum_{j=1}^\infty \frac{1}{(2^j \del)^b} (2^{j+1}\del)^{b-1-\frac{q+t}{p}} \right)^p \lesssim \left( \sum_{j=1}^\infty \frac{1}{2^{j\left(1+\frac{q+t}{p}\right)}} \right)^p<\infty.
\end{eqnarray*}

Finally, combining the estimations on $I_1$ and $I_2$, it is clear that for any $\xi \in \SSS$ and $\del \in (0, 1)$,
$$
\frac{1}{\del^{ns}} \int_{Q_\del(\xi)} |T(f)(z)|^p(1-|z|^2)^{p+q+ns+t}d\lambda(z)<\infty,
$$
which implies $|T(f)(z)|^p(1-|z|^2)^{p+q+ns+t}d\lambda(z)$ is an $(ns)$-Carleson measure.
\end{proof}

We are now ready to establish our main results in this section.

\begin{thm} \label{Atomicdecom}
Suppose $p \ge 1, q>0, s>\max\left\{0, 1-\frac{q}{n} \right\}$ and
\[ b>\begin{cases}
\frac{n+1}{p'}+\frac{q+ns}{p}, & p>1;\\
q+ns, & p=1.
\end{cases} \]

\begin{enumerate}
\item Let $\{a_k\}$ be a sequence satisfying the conditions in Lemma \ref{decom01} with the separation constant $r \in (0, 1)$. If $\{c_k\}$ is a sequence such that the measure $\sum_k |c_k|^p(1-|a_k|^2)^{q+ns}\del_{a_k}$ is an $(ns)$-Carleson measure, then the function
$$
f(z)=\sum_k c_k \left( \frac{1-|a_k|^2}{1-\langle z, a_k \rangle} \right)^b
$$
belongs to $\calN(p, q, s)$.
\item There exists a sequence $\{a_k\}$ in $\B$ such that $\calN(p, q, s)$-type spaces consists exactly of function of the form
$$
f(z)=\sum_k c_k \left( \frac{1-|a_k|^2}{1-\langle z, a_k \rangle} \right)^b,
$$
where the sequence $\{c_k\}$ has the property that $\sum_k |c_k|^p(1-|a_k|^2)^{q+ns}\del_{a_k}$ is an $(ns)$-Carleson measure.
\end{enumerate}
\end{thm}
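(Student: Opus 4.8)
The plan is to reduce both statements to the Carleson--measure description of $\calN(p,q,s)$ in Proposition \ref{Carleson01} — namely that $f\in\calN(p,q,s)$ if and only if $|f(z)|^{p}(1-|z|^{2})^{q+ns}\,d\lambda(z)$ is an $(ns)$-Carleson measure — combined with the mapping properties of the model operator $T$ of \eqref{decomeq01} from Lemma \ref{decom04} and the approximate reproducing identity of Lemma \ref{decom10}. The hypothesis on $b$ is exactly the threshold for which Lemma \ref{decom04} is applicable with the parameter choice $t=-p$: with $t=-p$ the weight $p+q+ns+t$ there collapses to $q+ns$, the condition $t>n-p-q-ns$ becomes $q+ns>n$ (true since $s>1-\frac qn$), and the bound required of $b$ becomes precisely $b>\frac{n+1}{p'}+\frac{q+ns}{p}$ for $p>1$ and $b>q+ns$ for $p=1$. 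Moreover, by H\"older's inequality and $\calN(p,q,s)\subseteq A^{p}_{q+ns-n-1}$ (Remark \ref{Bergremark}), this same threshold forces $\calN(p,q,s)\subseteq A^{1}_{b-n-1}$, so that the Bergman-type reproducing formula $f(z)=c_{b}\int_{\B}\frac{(1-|w|^{2})^{b-n-1}}{(1-\langle z,w\rangle)^{b}}f(w)\,dV(w)$ holds for every $f\in\calN(p,q,s)$; this is what makes the synthesis argument in (2) available on the whole space and not merely on a dense subclass.

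For part (1), given $\{a_{k}\}$ from Lemma \ref{decom01} with separation constant $r$ and $\{c_{k}\}$ with $\mu:=\sum_{k}|c_{k}|^{p}(1-|a_{k}|^{2})^{q+ns}\del_{a_{k}}$ an $(ns)$-Carleson measure, I would set $h(w)=\sum_{k}|c_{k}|\chi_{E(a_{k},r)}(w)$. Using $1-|w|^{2}\simeq 1-|a_{k}|^{2}$, $|1-\langle z,w\rangle|\simeq|1-\langle z,a_{k}\rangle|$ and $V(E(a_{k},r))\simeq(1-|a_{k}|^{2})^{n+1}$ for $w\in E(a_{k},r)$, one gets the pointwise bound $|f(z)|\le\sum_{k}|c_{k}|(1-|a_{k}|^{2})^{b}|1-\langle z,a_{k}\rangle|^{-b}\lesssim T(h)(z)$ on $\B$. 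A direct tube estimate then shows $|h(z)|^{p}(1-|z|^{2})^{q+ns}\,d\lambda(z)$ is $(ns)$-Carleson: by the bounded overlap property (3) of Lemma \ref{decom01}, $h^{p}\lesssim\sum_{k}|c_{k}|^{p}\chi_{E(a_{k},r)}$, and whenever $E(a_{k},r)=D(a_{k},\tanh r)$ meets $Q_{\del}(\xi)$ one has $E(a_{k},r)\subseteq Q_{M\del}(\xi)$ (as in Lemma \ref{Charac01}), so $\del^{-ns}\int_{Q_{\del}(\xi)}h^{p}(1-|z|^{2})^{q+ns}\,d\lambda\lesssim\del^{-ns}\mu(Q_{M\del}(\xi))\lesssim 1$. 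Applying Lemma \ref{decom04} with $t=-p$ gives that $|T(h)(z)|^{p}(1-|z|^{2})^{q+ns}\,d\lambda(z)$ is $(ns)$-Carleson, hence so is $|f(z)|^{p}(1-|z|^{2})^{q+ns}\,d\lambda(z)$ by the pointwise domination, and Proposition \ref{Carleson01} yields $f\in\calN(p,q,s)$. The case $p=1$ is identical, using part (2) of Lemma \ref{decom04} with $t=-1$.

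For part (2), I would run the now-standard atomic-decomposition scheme with the operators $T,S$ and the lattices $\{a_{k}\},\{a_{kj}\}$ already in place in the excerpt, taking the sequence of the statement to be the relabeled fine lattice $\{a_{kj}\}$. First, $S$ maps $\calN(p,q,s)$ boundedly to itself: the submean-value estimate $|g(a_{kj})|\lesssim V_{\alpha}(E(a_{kj},r))^{-1}\int_{E(a_{kj},r)}|g|\,dV_{\alpha}$, the comparabilities above, and bounded overlap give $|S(g)(z)|\lesssim T(|g|)(z)$, and then Lemma \ref{decom04} (with $t=-p$) preserves the $(ns)$-Carleson property of $|g|^{p}(1-|z|^{2})^{q+ns}\,d\lambda$. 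Next, Lemma \ref{decom10} together with this bound gives $\|f-S(f)\|\lesssim\sigma\|f\|$ on $\calN(p,q,s)$ with $\sigma=\gamma+\tanh(\gamma)/\tanh(r)\to0$ as $\gamma\to0$; choosing the finer lattice with $\gamma$ small enough that this constant is $<1$ makes $S$ invertible on $\calN(p,q,s)$ by a Neumann series. Given $f\in\calN(p,q,s)$, put $g=S^{-1}f\in\calN(p,q,s)$ and $c_{kj}=V_{\alpha}(E_{kj})g(a_{kj})(1-|a_{kj}|^{2})^{-b}$; then $f(z)=\sum_{k,j}c_{kj}\left(\frac{1-|a_{kj}|^{2}}{1-\langle z,a_{kj}\rangle}\right)^{b}$, and since $V_{\alpha}(E_{kj})\simeq(1-|a_{kj}|^{2})^{b}$ one has $|c_{kj}|\simeq|g(a_{kj})|$. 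Finally, the submean-value estimate $|g(a_{kj})|^{p}\lesssim V(E(a_{kj},r))^{-1}\int_{E(a_{kj},r)}|g|^{p}\,dV$ together with the tube argument of part (1), applied to $g\in\calN(p,q,s)$, shows $\sum_{k,j}|c_{kj}|^{p}(1-|a_{kj}|^{2})^{q+ns}\del_{a_{kj}}$ is an $(ns)$-Carleson measure; combined with part (1) (the converse inclusion), this identifies $\calN(p,q,s)$ exactly.

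Granting Lemmas \ref{decom01}--\ref{decom04} and \ref{decom10}, the main obstacle lies in the quantitative orchestration of part (2): one must upgrade the qualitative bound of Lemma \ref{decom04} to a genuine operator inequality on $\calN(p,q,s)$ with a constant independent of the lattice separations, so that shrinking $\gamma$ (equivalently $\sigma$) really drives $\|I-S\|_{\calN(p,q,s)\to\calN(p,q,s)}$ below $1$, and then keep all the discrete $\leftrightarrow$ continuous Carleson translations uniform through the Neumann iteration. The $p=1$ endpoint of Lemma \ref{decom04} makes that case of the theorem slightly more delicate but structurally the same.
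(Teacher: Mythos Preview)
Your proposal is correct and follows the same overall architecture as the paper (Lemma~\ref{decom04} for the boundedness of $T$, Lemma~\ref{decom10} for the Neumann-series invertibility of $S$), but the execution is genuinely different and in fact more direct. In part~(1) the paper does \emph{not} bound $|f|$ itself: it sets $u(z)=\sum_k |c_k|(1-|a_k|^2)^{-1}\chi_{E(a_k,r/4)}(z)$, invokes Lemma~\ref{decom03} to see that $|u|^p(1-|z|^2)^{p+q+ns}d\lambda$ is $(ns)$-Carleson, applies $T$ with parameter $b+1$ (so Lemma~\ref{decom04} at $t=0$), cites $|Rf|\lesssim T(u)$ from \cite[Lemma~5.28]{Zhu}, and finishes via the radial-derivative characterization Theorem~\ref{Carlesoncharac}. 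You instead bound $|f|\lesssim T(h)$ with $T$ at parameter $b$ and apply Lemma~\ref{decom04} at $t=-p$, landing directly on Proposition~\ref{Carleson01} with no derivative in sight; the threshold on $b$ comes out identical. In part~(2) the divergence is sharper: the paper introduces an auxiliary Banach space $X$ with weight $(1-|z|^2)^{p+q+ns}$, passes from $f$ to $g=R^{\alpha,1}f\in X$ via the fractional radial operator, runs the invertibility of $S$ (parameter $b+1$) on $X$, and then applies $(R^{\alpha,1})^{-1}$ termwise to descend from exponent $b+1$ to $b$. Your route inverts $S$ (parameter $b$) directly on $\calN(p,q,s)$ and never leaves the space, which is cleaner and avoids the machinery of $R^{\alpha,t}$ entirely; the paper's detour, on the other hand, plugs straight into the template of \cite[Theorem~2.30]{Zhu}, so it is closer to the standard Bergman-space argument. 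Your closing worry about ``quantitative orchestration'' is not an obstruction: $T$ is a fixed integral operator independent of both lattices, so the implicit constant in Lemma~\ref{decom04} is automatically uniform in $r,\gamma$, and only the factor $\sigma$ in Lemma~\ref{decom10} moves.
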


\begin{proof}
Without the loss of generality, we may assume $p>1$.

(1) For each $k \ge 1$, let $E_k=E\left(a_k, \frac{r}{4}\right)$. Consider the function
$$
u(z)=\sum_{k=1}^\infty \frac{|c_k|\chi_k(z)}{1-|a_k|^2},
$$
where $\chi_k$ is the characteristic function of the set $E_k$. By Lemma \ref{decom01}, the sets $E_k$ are mutually disjoint, the measure $|u(z)|^p(1-|z|^2)^{p+q+ns}d\lambda(z)$ can be written as
\begin{eqnarray*}
&&\left| \sum_{k=1}^\infty \frac{|c_k|\chi_k(z)}{1-|a_k|^2}\right|^p (1-|z|^2)^{p+q+ns}d\lambda(z)\\
&= &\sum_{k=1}^\infty \frac{|c_k|^p}{(1-|a_k|^2)^p} (1-|z|^2)^{p+q+ns}\chi_k(z)d\lambda(z),
\end{eqnarray*}
which by Lemma \ref{decom03}, is an $(ns)$-Carleson measure.

Let $T$ be the operator with the parameter $b+1$. By Lemma \ref{decom04}, since $|u(z)|^p(1-|z|^2)^{p+q+ns}d\lambda(z)$ is an $(ns)$-Carleson measure, it follows that $|T(u)(z)|^p(1-|z|^2)^{p+q+ns}d\lambda(z)$ is also an $(ns)$-Carleson measure.

From the proof of \cite[Lemma 5.28]{Zhu}, we know that $|Rf(z)| \lesssim T(u)(z)$, which implies that
$$
|Rf(z)|^p(1-|z|^2)^{p+q+ns}d\lambda(z)
$$
is also an $(ns)$-Carleson measure. The desired result follows from Theorem \ref{Carlesoncharac}.

(2) Let $X$ be the function space consist $f \in H(\B)$ satisfying
$$
 \|f\|_X^p=\sup_{0<\del<1, \xi \in \SSS} \frac{1}{\del^{ns}} \int_{Q_\del(\xi)} |f(z)|^p(1-|z|^2)^{p+q+ns}d\lambda(z)<\infty.
$$
It is easy to check that $X$ becomes a Banach space when equipped with the norm $\|\cdot\|_X$ (see, e.g., \cite[Page 185]{Zhu}). Hence, by Lemma \ref{decom04}, the operator $T$ defined in \eqref{decomeq01} with parameter $b+1$ is bounded on $X$.

Take and fix a sequence $\{b_k\}$ satisfying the condition in Lemma \ref{decom01} with separation constant $r$ and a finer ``lattice" $\{b_{kj}\}$ with separation constant $\gamma$ satisfying that
$$
C\sigma \|T\|<1,
$$
where $C$ and $\sigma$ are defined in Lemma \ref{decom10}. Let $S$ be the linear operator defined in \eqref{decomeq02} with the parameter $b+1$ associated to $\{b_{kj}\}$. Thus, by Lemma \ref{decom10} again, we can get
$$
\|f-Sf\|_X \le C\sigma \|T(|f|)\|_X \le C\sigma \|T\| \|f\|_X<\|f\|_X,
$$
which implies the operator $I-S$ is bounded on $X$ with operator norm strictly less than $1$, where $I$ is the identity operator. Hence, by \cite[Theorem 1.5.2]{AW}, $S$ is invertible on $X$.

Fix $f \in \calN(p, q, s)$ and let $g=R^{\alpha, 1}f$, where $\alpha=b-(n+1)$ and $R^{\alpha, 1}$ is a linear partial differential operator of order $1$ (see, e.g., \cite[Proposition 1.15]{Zhu}). By \cite[Proposition 1.15]{Zhu} and Theorem \ref{Carlesoncharac}, $g \in X$.

Since $S$ is invertible on $X$, there exists a function $h \in X$ such that $g=Sh$. Thus $g$ admits the representation
$$
g(z)=\sum_{k, j} \frac{V_\beta(E_{kj})h(b_{kj})}{(1-\langle z, b_{kj}\rangle)^{b+1}}.
$$
where $\beta=(b+1)-(n+1)=b-n$. Applying the inverse of $R^{\alpha, 1}$ to both sides with \cite[Proposition 1.14]{Zhu}, we obtain
$$
f(z)=\sum_{k, j}  \frac{V_\beta(E_{kj})h(b_{kj})}{(1-\langle z, b_{kj}\rangle)^b}.
$$
Let
$$
c_{kj}=\frac{V_\beta(E_{kj})h(b_{kj})}{(1-|b_{kj}|^2)^b}, \ k \ge 1, 1 \le j \le J,
$$
where $J$ is some integer depending on $\gamma$ (see, e.g., \cite[Page 64]{Zhu}) and hence we can write
$$
f(z)=\sum_{k, j}  c_{kj} \left( \frac{1-|b_{kj}|^2}{1-\langle z, b_{kj} \rangle} \right)^b.
$$
It remains for us to show that the measure
$$
\sum_{k, j} |c_{kj}|^p(1-|b_{kj}|^2)^{q+ns}\del_{b_{kj}}
$$
is an $(ns)$-Carleson measure. Since
$$
V_\beta(E_{kj}) \le V_\beta(E_k) \simeq (1-|b_k|^2)^{n+1+\beta}=(1-|b_k|^2)^{b+1} \simeq (1-|b_{kj}|^2)^{b+1},
$$
where the last estimation follows from the fact that $b_{kj} \in E(b_k, r), 1 \le j \le J$, it suffices to show that the measure
$$
d\mu=\sum_{k, j} (1-|b_{kj}|^2)^{p+q+ns}|h(b_{kj})|^p\del_{b_{kj}}
$$
is an $(ns)$-Carleson measure.

By Lemma \ref{decom02}, we know that $E(b_{kj}, \frac{\gamma}{4})$ are mutually disjoint. Using \cite[Lemma 2.20, (2.20) and Lemma 2.24]{Zhu}, we have for any $a \in \B$,
\begin{eqnarray*}
&&\int_\B \left(\frac{1-|a|^2}{|1-\langle z, a \rangle|^2} \right)^{ns}d\mu(z)\\
&= &\sum_{k, j} \left( \frac{1-|a|^2}{|1-\langle b_{kj}, a\rangle|^2} \right)^{ns} |h(b_{kj})|^p (1-|b_{kj}|^2)^{p+q+ns}\\
&\lesssim & \sum_{k, j}  \left( \frac{1-|a|^2}{|1-\langle b_{kj}, a\rangle|^2} \right)^{ns} \int_{E(b_{kj}, \frac{\gamma}{4})} |h(z)|^p(1-|z|^2)^{p+q+ns}d\lambda(z)\\
&\simeq & \sum_{k, j}  \int_{E(b_{kj}, \frac{\gamma}{4})}\left(\frac{1-|a|^2}{|1-\langle z, a \rangle|^2}\right)^{ns}|h(z)|^p(1-|z|^2)^{p+q+ns}d\lambda(z)\\
&\le& \int_\B \left(\frac{1-|a|^2}{|1-\langle z, a \rangle|^2}\right)^{ns}|h(z)|^p(1-|z|^2)^{p+q+ns}d\lambda(z)\\
&<&\infty,
\end{eqnarray*}
where the last inequality follows from the fact that $f \in X$ and \cite[Theorem 45]{ZZ}. Using \cite[Theorem 45]{ZZ} again, we conclude that the measure $\mu$ is an $(ns)$-Carleson measure. The proof is complete.
\end{proof}

Similarly, we have the following description for $\calN^0(p, q, s)$ with regarding its atomic decomposition. First we observe that we have the following ``little" version for Lemma \ref{decom04}.

\begin{lem}  \label{decom06}
Let $f$ be some Lebesgue measurable function. Suppose $p \ge 1, q>0, s>\max\left\{0, 1-\frac{q}{n} \right\}$ and $t>n-p-q-ns$. Then we have
\begin{enumerate}
\item
If $p>1, b>\frac{n+1}{p'}+\frac{q+ns+t}{p}+1$ and $|f(z)|^p(1-|z|^2)^{p+q+ns+t}d\lambda(z)$ is a vanishing $(ns)$-Carleson measure, where $p'$ is the conjugate of $p$, then
$$
|T(f)(z)|^p(1-|z|^2)^{p+q+ns+t}d\lambda(z)
$$
is also a vanishing $(ns)$-Carleson measure.

\item
If $p=1, b>1+q+ns+t$ and $|f(z)|(1-|z|^2)^{1+q+ns+t}d\lambda(z)$ is a vanishing $(ns)$-Carleson measure, then
$$
 |T(f)(z)|(1-|z|^2)^{1+q+ns+t}d\lambda(z)
$$
is also a vanishing $(ns)$-Carleson measure.
\end{enumerate}
\end{lem}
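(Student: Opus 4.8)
The plan is to follow the proof of Lemma \ref{decom04} line by line, keeping track of where a uniform boundedness constant can be replaced by a quantity that becomes small. As there, we treat only the case $p>1$, the case $p=1$ being an easier modification of the same argument. Write $d\mu(z)=|f(z)|^p(1-|z|^2)^{p+q+ns+t}d\lambda(z)$. By hypothesis $\mu$ is a vanishing $(ns)$-Carleson measure, hence in particular an ordinary $(ns)$-Carleson measure with some constant $C$, and for every $\varepsilon>0$ there is a $\del_0\in(0,1)$ such that $\mu(Q_r(\xi))\le\varepsilon\,r^{ns}$ for all $\xi\in\SSS$ and all $r<\del_0$. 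Fix such an $\varepsilon$ and $\del_0$, let $\xi\in\SSS$ and $\del\in(0,\del_0/2)$, and split
\[
\frac{1}{\del^{ns}}\int_{Q_\del(\xi)}|T(f)(z)|^p(1-|z|^2)^{p+q+ns+t}d\lambda(z)\le I_1+I_2
\]
exactly as in the proof of Lemma \ref{decom04}, where $I_1$ is the contribution of the kernel integral over $Q_{2\del}(\xi)$ and $I_2$ that of the annuli $A_j=\{w\in\B:2^j\del\le|1-\langle w,\xi\rangle|<2^{j+1}\del\}$.

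For $I_1$ the Schur-test part is unchanged: the integral operator $M$ with kernel $K(z,w)$ is bounded on $L^p(\B,dV)$. With $h(w)=|f(w)|(1-|w|^2)^{1+\frac{q+ns+t-n-1}{p}}\chi_{Q_{2\del}(\xi)}(w)$, the one difference is that $\frac{1}{(2\del)^{ns}}\|h\|_{L^p}^p=\frac{1}{(2\del)^{ns}}\mu(Q_{2\del}(\xi))\le\varepsilon$ since $2\del<\del_0$; hence $I_1\lesssim\frac{1}{\del^{ns}}\|Mh\|_{L^p}^p\lesssim\frac{1}{\del^{ns}}\|h\|_{L^p}^p\lesssim\varepsilon$, with implied constant independent of $\xi$ and $\del$.

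For $I_2$ one must be more careful, because the annuli $A_j$ reach radii $2^{j+1}\del$ that need not be small, so the vanishing condition cannot be used on all of them. Set $j_\del=\lceil\log_2(\del_0/\del)\rceil$, so that $2^{j+1}\del<\del_0$ for the indices $j<j_\del-1$ while $2^j\del\gtrsim\del_0$ for $j\ge j_\del$. Rerunning the H\"older estimate of $I_{3,j}=\int_{Q_{2^{j+1}\del}(\xi)}|f(w)|(1-|w|^2)^{b-n-1}dV(w)$ from the proof of Lemma \ref{decom04}, the resulting bound becomes $I_{3,j}\lesssim\eta_j^{1/p}(2^{j+1}\del)^{b-1-\frac{q+t}{p}}$ with $\eta_j=\varepsilon$ for $j<j_\del-1$ and $\eta_j=C$ for $j\ge j_\del$. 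Substituting into the estimate of $I_2$ and splitting the $j$-sum at $j_\del$ before raising to the $p$-th power yields
\[
I_2\lesssim\varepsilon\left(\sum_{j\ge1}2^{-j(1+\frac{q+t}{p})}\right)^{p}+\left(\sum_{j\ge j_\del}2^{-j(1+\frac{q+t}{p})}\right)^{p},
\]
where the geometric series converges exactly as in the proof of Lemma \ref{decom04}. Since $j_\del\to\infty$ as $\del\to0$, the last term tends to $0$ uniformly in $\xi$, so $\limsup_{\del\to0}\sup_{\xi\in\SSS}I_2\lesssim\varepsilon$.

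Combining the two estimates gives $\limsup_{\del\to0}\sup_{\xi\in\SSS}\frac{1}{\del^{ns}}\int_{Q_\del(\xi)}|T(f)(z)|^p(1-|z|^2)^{p+q+ns+t}d\lambda(z)\lesssim\varepsilon$; letting $\varepsilon\to0$ shows $|T(f)(z)|^p(1-|z|^2)^{p+q+ns+t}d\lambda(z)$ is a vanishing $(ns)$-Carleson measure, and the case $p=1$ is handled by the same splitting applied to the simpler $p=1$ estimate of Lemma \ref{decom04}. The main obstacle is precisely the treatment of $I_2$: the vanishing-Carleson smallness is unavailable on the far annuli, so one must pay for them with the geometric decay coming from the extra power $b$ in the kernel, which forces the $\del$-dependent splitting of the $j$-sum and a $\limsup$ argument in place of a clean uniform bound.
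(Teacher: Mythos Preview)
Your proof is correct and follows essentially the same approach as the paper: both arguments recycle the $I_1/I_2$ decomposition of Lemma~\ref{decom04}, use the vanishing hypothesis on the near part, and handle $I_2$ by splitting the $j$-sum into a near block (where the vanishing bound applies) and a far block controlled by the geometric tail $\sum 2^{-j(1+\frac{q+t}{p})}$. The only cosmetic difference is that the paper fixes the splitting index $J_0$ in terms of $\varepsilon$ and then shrinks $\del$ accordingly, whereas you let the index $j_\del$ depend on $\del$ and pass to a $\limsup$; these are equivalent organizations of the same idea.
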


\begin{proof}
We would only consider the case $p>1$ again. By our assumption, for any $\varepsilon>0$, there exists a $\del_0>0$, such that the estimate
\begin{equation} \label{decomeq111}
\frac{1}{\del^{ns}}\int_{Q_\del(\xi)} |f(z)|^p(1-|z|^2)^{p+q+ns+t}d\lambda(z)<\varepsilon
\end{equation}
holds uniformly for $\xi \in \SSS$ when $\del<\del_0$. From the proof of Lemma \ref{decom04}, we have
$$
I_1 \lesssim \frac{1}{\del^{ns}} \int_{Q_{2\del}(\xi)}|f(z)|^p(1-|z|^2)^{p+q+ns+t}d\lambda(z),
$$
which, combining with \eqref{decomeq111}, implies when $\del<\frac{\del_0}{2}$, we have
$$
I_1 \lesssim 2^{ns}\varepsilon.
$$
Now we estimate $I_2$. For the chosen $\varepsilon$, there exists a $J_0 \in \N$, such that
$$
\sum_{j=J_0+1}^\infty \frac{1}{2^{j \left(1+\frac{q+1}{p} \right)}}<\varepsilon^{1/p}.
$$
From the proof of Lemma \ref{decom04}, we have
\begin{eqnarray*}
I_2%
&\lesssim& \frac{1}{\del^{ns}} \int_{Q_\del(\xi)} \left( \sum_{j=1}^\infty \frac{1}{(2^j\del)^b} \int_{Q_{2^{j+1}\del}(\xi)}|f(w)|(1-|w|^2)^{b-n-1}dV(w) \right)^p\\
&& \quad \quad (1-|z|^2)^{p+q+ns+t-n-1}dV(z)\\
&\lesssim& \frac{1}{\del^{ns}} \int_{Q_\del(\xi)} \left( \sum_{j=1}^{J_0} \frac{1}{(2^j\del)^b} \int_{Q_{2^{j+1}\del}(\xi)}|f(w)|(1-|w|^2)^{b-n-1}dV(w) \right)^p\\
&& \quad \quad (1-|z|^2)^{p+q+ns+t-n-1}dV(z)\\
&& + \frac{1}{\del^{ns}} \int_{Q_\del(\xi)} \left( \sum_{j=J_0+1}^\infty \frac{1}{(2^j\del)^b} \int_{Q_{2^{j+1}\del}(\xi)}|f(w)|(1-|w|^2)^{b-n-1}dV(w) \right)^p\\
&& \quad \quad (1-|z|^2)^{p+q+ns+t-n-1}dV(z)\\
&=&J_1+J_2.
\end{eqnarray*}

\textbf{$\bullet$ Estimation of $J_1$.}

 Take $\del<\frac{\del_0}{2^{J_0+1}}$. Then by \eqref{decomeq111} and the estimation of $I_{3, j}, j \ge 1$, we have for $j=1, 2, \dots, J_0$,
\begin{eqnarray*}
I_{3, j}%
&\lesssim& (2^{j+1}\del)^{b-1-\frac{q+t}{p}}  \left( \frac{1}{(2^{j+1}\del)^{ns}} \int_{Q_{2^{j+1}\del}(\xi)} |f(w)|^p(1-|w|^2)^{p+q+ns+t}d\lambda(z) \right)^{\frac{1}{p}}\\
&\le&  (2^{j+1}\del)^{b-1-\frac{q+t}{p}} \varepsilon^{\frac{1}{p}}.
\end{eqnarray*}
Thus, we see that
\begin{eqnarray*}
J_1%
&\lesssim& \frac{\varepsilon}{\del^{ns}} \int_{Q_\del(\xi)} \left(\sum_{j=1}^{J_0} \frac{1}{(2^j\del)^{1+\frac{q+t}{p}}}\right)^p (1-|z|^2)^{p+q+ns+t-n-1}dV(z)\\
&\lesssim& \varepsilon \cdot \left(\sum_{j=1}^{J_0} \frac{1}{2^{j\left(1+\frac{q+t}{p}\right)}}\right)^p \lesssim \varepsilon.
\end{eqnarray*}

\textbf{$\bullet$ Estimation of $J_2$.}

By the proof in Lemma \ref{decom04}, it is easy to see that
$$
J_2 \lesssim \left( \sum_{j=J_0+1}^\infty \frac{1}{2^{j\left(1+\frac{q+t}{p}\right)}} \right)^p<\varepsilon.
$$	

Hence, it follows that $I_2 \lesssim \varepsilon$, which implies the desired result.
\end{proof}

By using the last lemma, we get the following theorem considering the atomic decomposition on $\calN^0(p, q, s)$-type space, whose proof is straightforward from Theorem \ref{Atomicdecom}, and hence we omit the detail here.

\begin{thm} \label{Atomicdecom01}
Suppose $p \ge 1, q>0, s>\max\left\{0, 1-\frac{q}{n} \right\}$ and
\[ b>\begin{cases}
\frac{n+1}{p'}+\frac{q+ns}{p}, & p>1;\\
q+ns, & p=1.
\end{cases} \]

\begin{enumerate}
\item Let $\{a_k\}$ be a sequence satisfying the conditions in Lemma \ref{decom01} with the separation constant $r \in (0, 1)$. If $\{c_k\}$ is a sequence such that the measure $\sum_k |c_k|^p(1-|a_k|^2)^{q+ns}\del_{a_k}$ is a vanishing $(ns)$-Carleson measure, then the function
$$
f(z)=\sum_k c_k \left( \frac{1-|a_k|^2}{1-\langle z, a_k \rangle} \right)^b
$$
belongs to $\calN^0(p, q, s)$.
\item There exists a sequence $\{a_k\}$ in $\B$ such that $\calN^0(p, q, s)$-type spaces consists exactly of function of the form
$$
f(z)=\sum_k c_k \left( \frac{1-|a_k|^2}{1-\langle z, a_k \rangle} \right)^b,
$$
where the sequence $\{c_k\}$ has the property that $\sum_k |c_k|^p(1-|a_k|^2)^{q+ns}\del_{a_k}$ is a vanishing $(ns)$-Carleson measure.
\end{enumerate}
\end{thm}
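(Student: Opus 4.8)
The plan is to follow the proof of Theorem \ref{Atomicdecom} essentially line by line, replacing each ``$(ns)$-Carleson measure'' input by its vanishing counterpart and each ``$\sup$'' characterization by the corresponding ``little-oh'' characterization. All three structural ingredients of the earlier argument have the required little versions at hand: Lemma \ref{decom06} plays the role of Lemma \ref{decom04}, Theorem \ref{vanishingeqnorm} plays the role of Theorem \ref{Carlesoncharac}, and the ``little-oh version'' of \cite[Theorem 45]{ZZ} (already used in Proposition \ref{Carleson01}(2)) plays the role of \cite[Theorem 45]{ZZ}. In addition one needs the obvious vanishing analogue of Lemma \ref{decom03}: running the same chain of estimates there and letting $\del\to 0$ shows that $d\mu_1=\sum_k|c_k|^p(1-|a_k|^2)^{q+ns}\del_{a_k}\,dV(z)$ is a vanishing $(ns)$-Carleson measure if and only if $d\mu_2=\sum_k\frac{|c_k|^p}{(1-|a_k|^2)^p}(1-|z|^2)^{p+q+ns}\chi_k(z)\,d\lambda(z)$ is.

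For part (1): given $\{c_k\}$ with $\sum_k|c_k|^p(1-|a_k|^2)^{q+ns}\del_{a_k}$ a vanishing $(ns)$-Carleson measure, set $E_k=E(a_k,r/4)$ and $u(z)=\sum_k|c_k|\chi_k(z)/(1-|a_k|^2)$. By the vanishing analogue of Lemma \ref{decom03}, $|u(z)|^p(1-|z|^2)^{p+q+ns}d\lambda(z)$ is a vanishing $(ns)$-Carleson measure; applying the operator $T$ with parameter $b+1$ and Lemma \ref{decom06}, so is $|T(u)(z)|^p(1-|z|^2)^{p+q+ns}d\lambda(z)$; since $|Rf(z)|\lesssim T(u)(z)$ as in the proof of \cite[Lemma 5.28]{Zhu}, the measure $|Rf(z)|^p(1-|z|^2)^{p+q+ns}d\lambda(z)$ is vanishing $(ns)$-Carleson, and Theorem \ref{vanishingeqnorm} yields $f\in\calN^0(p,q,s)$.

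For part (2): let $X$ be the Banach space introduced in the proof of Theorem \ref{Atomicdecom} and let $X_0\subset X$ be the closed subspace of those $f$ for which $|f(z)|^p(1-|z|^2)^{p+q+ns}d\lambda(z)$ is a vanishing $(ns)$-Carleson measure. By Lemma \ref{decom06} the operator $T$ with parameter $b+1$ maps $X_0$ into itself, and with the same choice of fine lattice $\{b_{kj}\}$ as before Lemma \ref{decom10} gives $\|I-S\|_{X\to X}<1$; since every power $(I-S)^m$ preserves the closed subspace $X_0$, so does the Neumann series $S^{-1}=\sum_{m\ge 0}(I-S)^m$, hence $S$ restricts to an invertible operator on $X_0$. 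Now take $f\in\calN^0(p,q,s)$ and put $g=R^{\al,1}f$ with $\al=b-(n+1)$; by \cite[Proposition 1.15]{Zhu} together with Theorem \ref{vanishingeqnorm}, $g\in X_0$, so $g=Sh$ for some $h\in X_0$, and applying the inverse of $R^{\al,1}$ via \cite[Proposition 1.14]{Zhu} exactly as in the proof of Theorem \ref{Atomicdecom} gives
$$
f(z)=\sum_{k,j}c_{kj}\left(\frac{1-|b_{kj}|^2}{1-\langle z,b_{kj}\rangle}\right)^b,\qquad c_{kj}=\frac{V_\be(E_{kj})h(b_{kj})}{(1-|b_{kj}|^2)^b}.
$$
That these coefficients satisfy the vanishing Carleson condition follows by repeating verbatim the domination estimate at the end of the proof of Theorem \ref{Atomicdecom}, but with the ``little-oh version'' of \cite[Theorem 45]{ZZ} applied to $\sum_{k,j}(1-|b_{kj}|^2)^{p+q+ns}|h(b_{kj})|^p\del_{b_{kj}}$ and using $h\in X_0$.

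The main obstacle --- really the only point where more than bookkeeping is needed --- is checking that $S$ (equivalently $S^{-1}$) preserves the closed subspace $X_0$ and that $g=R^{\al,1}f$ lands in $X_0$ rather than merely in $X$. The first is handled by the Neumann-series remark above (closedness of $X_0$ plus $T$-invariance from Lemma \ref{decom06}), and the second by combining the mapping properties of the fractional radial derivative in \cite[Proposition 1.14 and 1.15]{Zhu} with the across-derivatives equivalence of vanishing $(ns)$-Carleson conditions in Theorem \ref{vanishingeqnorm}. Everything else is a direct transcription of the argument for Theorem \ref{Atomicdecom}, which is why the statement can legitimately be called straightforward.
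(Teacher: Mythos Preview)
Your proposal is correct and follows exactly the approach the paper intends: the paper's own proof simply states that the result is a straightforward modification of Theorem \ref{Atomicdecom} and omits all details. You have in fact supplied considerably more than the paper does, explicitly identifying the three little-oh substitutions (Lemma \ref{decom06} for Lemma \ref{decom04}, Theorem \ref{vanishingeqnorm} for Theorem \ref{Carlesoncharac}, and the vanishing version of \cite[Theorem 45]{ZZ}) and carefully addressing the one genuinely nontrivial point --- that $S$ restricts to an invertible operator on the closed subspace $X_0$ --- via the Neumann series argument.
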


We already see that the atomic decomposition on $\calN(p, q, s)$-type space is closely related to the choice of $\{a_k\}$. We say that a sequence $\{a_k\}$ of distinct points with in $\B$ is an \emph{$r$-lattice} in the Bergman metric if it satisfies Lemma \ref{decom01} with separation constant $r$. We need the following lemma.

\begin{lem} \label{atomicsample}
Let $\{z_n\}$ be a sequence of points on $\B$, $\alpha>-1$ and $f \in A^1_\alpha(\B) $. Let $\{a_n\}$ be an $r$-lattice in the Bergman metric. Then there exists a constant $C_1>0$ depending only on $r$ and $\alpha$ so that
\begin{equation} \label{sample001}
\|f\|_{1, \alpha} \ge C_1 \sum_n (1-|a_n|^2)^{n+1+\alpha}|f(a_n)|.
\end{equation}
Furthermore, there exists some $r_0>0$ and a constant $C_2>0$ depending only on $r$ and $\alpha$ so that
\begin{equation} \label{sample002}
\|f\|_{1, \alpha} \le C_2 \sum_n (1-|a_n|^2)^{n+1+\alpha}|f(a_n)|,
\end{equation}
if $0<r<r_0$.
\end{lem}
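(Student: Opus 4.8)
The two inequalities are proved by quite different mechanisms: \eqref{sample001} is an elementary sub--mean value estimate that works for every $r\in(0,1]$, while \eqref{sample002} is the ``frame'' direction and genuinely needs $r$ small, via the discretization argument behind \cite[Theorem 2.30]{Zhu}. First I would prove \eqref{sample001}. Fix $n$. Since $f\in H(\B)$, the function $|f|$ obeys a sub--mean value inequality over Bergman metric balls (see \cite[Lemma 2.24]{Zhu}); together with the standard comparisons $1-|w|^2\simeq 1-|a_n|^2$ and $V\bigl(E(a_n,r/4)\bigr)\simeq(1-|a_n|^2)^{n+1}$ valid for $w\in E(a_n,r/4)$ (\cite[Lemma 2.20]{Zhu}), this gives
\[
(1-|a_n|^2)^{n+1+\alpha}|f(a_n)|\lesssim \frac{(1-|a_n|^2)^{n+1+\alpha}}{V(E(a_n,r/4))}\int_{E(a_n,r/4)}|f(w)|\,dV(w)\simeq\int_{E(a_n,r/4)}|f(w)|(1-|w|^2)^\alpha\,dV(w),
\]
with implied constants depending only on $r$ and $\alpha$. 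Since the balls $E(a_k,r/4)$ are pairwise disjoint by Lemma \ref{decom01}(2), summing over $n$ yields
\[
\sum_n(1-|a_n|^2)^{n+1+\alpha}|f(a_n)|\lesssim\sum_n\int_{E(a_n,r/4)}|f(w)|(1-|w|^2)^\alpha\,dV(w)\le\int_\B|f(w)|(1-|w|^2)^\alpha\,dV(w)\simeq\|f\|_{1,\alpha},
\]
which is \eqref{sample001}.

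For \eqref{sample002} I would argue as follows. We may assume $\sum_n(1-|a_n|^2)^{n+1+\alpha}|f(a_n)|<\infty$, otherwise there is nothing to prove. Fix $\gamma>\alpha$ and put $b=n+1+\gamma$; since $f\in A^1_\alpha\subseteq A^1_\gamma$, the reproducing formula $f(z)=\int_\B f(w)(1-\langle z,w\rangle)^{-b}\,dV_\gamma(w)$ holds. Let $\{E_n\}$ be Borel sets associated with the lattice $\{a_n\}$ as in Lemma \ref{decom02} and let $Sf(z)=\sum_n V_\gamma(E_n)f(a_n)(1-\langle z,a_n\rangle)^{-b}$ be the operator of \eqref{decomeq02}. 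The operator $T$ of \eqref{decomeq01} with this $b$ is bounded on $A^1_\alpha$ by the Forelli--Rudin estimates \cite[Theorem 1.12]{Zhu} (the exponent conditions are met because $b>n+1+\alpha$ and $b-n-1=\gamma>-1$), with norm depending only on $\alpha,\gamma$. Lemma \ref{decom10} — applied with $\{a_n\}$ playing the role of the fine lattice against an auxiliary coarse lattice with a fixed separation constant, exactly as in \cite[proof of Theorem 2.30]{Zhu} — gives $|f-Sf|\le C\sigma\,T(|f|)$ with $\sigma=\sigma(r)\to0$ as $r\to0$. Hence there is $r_0=r_0(\alpha)>0$ such that for $0<r<r_0$,
\[
\|f-Sf\|_{1,\alpha}\le C\sigma\,\|T\|_{A^1_\alpha\to A^1_\alpha}\,\|f\|_{1,\alpha}\le\tfrac12\|f\|_{1,\alpha},
\]
and therefore $\|f\|_{1,\alpha}\le 2\|Sf\|_{1,\alpha}$.

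It then remains to estimate $\|Sf\|_{1,\alpha}$. By \cite[Lemma 2.20]{Zhu} one has $V_\gamma(E_n)\simeq(1-|a_n|^2)^{n+1+\gamma}$, and by \cite[Theorem 1.12]{Zhu} (using $b>n+1+\alpha$),
\[
\bigl\|(1-\langle\cdot,a_n\rangle)^{-b}\bigr\|_{1,\alpha}=\int_\B\frac{(1-|z|^2)^\alpha}{|1-\langle z,a_n\rangle|^{b}}\,dV(z)\simeq(1-|a_n|^2)^{\,n+1+\alpha-b}=(1-|a_n|^2)^{\alpha-\gamma}.
\]
Hence, by the triangle inequality in $A^1_\alpha$,
\[
\|Sf\|_{1,\alpha}\le\sum_n V_\gamma(E_n)|f(a_n)|\bigl\|(1-\langle\cdot,a_n\rangle)^{-b}\bigr\|_{1,\alpha}\lesssim\sum_n(1-|a_n|^2)^{n+1+\gamma}(1-|a_n|^2)^{\alpha-\gamma}|f(a_n)|=\sum_n(1-|a_n|^2)^{n+1+\alpha}|f(a_n)|,
\]
and combining this with $\|f\|_{1,\alpha}\le 2\|Sf\|_{1,\alpha}$ gives \eqref{sample002} with a constant $C_2$ depending only on $r$ and $\alpha$.

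\textbf{Main obstacle.} Everything except the error estimate $|f-Sf|\le C\sigma\,T(|f|)$ with $\sigma\to0$ is bookkeeping plus the Forelli--Rudin integral estimates of \cite[Theorem 1.12]{Zhu} and the lattice geometry of Lemmas \ref{decom01}--\ref{decom02}. The delicate point is that single smallness input: one must either invoke Lemma \ref{decom10} with a correctly chosen auxiliary coarse lattice that $\{a_n\}$ refines (so that its $\sigma=\gamma+\tanh\gamma/\tanh r$ becomes small as the separation constant of $\{a_n\}$ tends to $0$), or carry out directly the mean--value estimate on the oscillation of $w\mapsto f(w)(1-\langle z,w\rangle)^{-b}$ over the Bergman balls $E(a_n,r)$, which is what produces the gain $\tanh r\to0$ and makes the error absorbable; a naive covering estimate of $\|f\|_{1,\alpha}$ by $\sum_n\int_{E(a_n,r)}|f|\,dV_\alpha$ does not suffice, because the finite-overlap constant of the cover does not shrink with $r$.
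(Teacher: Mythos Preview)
Your argument is correct, and in fact you have supplied considerably more than the paper does: the paper's own ``proof'' of this lemma consists solely of the two citations --- the first inequality is attributed to \cite[Lemma~1.5]{JMT} and the second to \cite[Theorem~2]{luk} --- with no details given. Your proof of \eqref{sample001} via the sub--mean value inequality on the disjoint balls $E(a_n,r/4)$ is exactly the standard argument behind the cited result, and your proof of \eqref{sample002} via the discretization operator $S$ and absorption of the error $\|f-Sf\|_{1,\alpha}\le\tfrac12\|f\|_{1,\alpha}$ is precisely Luecking's method in \cite{luk}.

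The one technical wrinkle you yourself flag is real but minor: Lemma~\ref{decom10} as stated in the paper is formulated for a two--level lattice $\{a_k\},\{a_{kj}\}$ where the fine lattice is constructed by refining each cell $E_k$ of the coarse one (as in \cite[p.~64]{Zhu}), and an arbitrary $r$-lattice $\{a_n\}$ need not arise in this way from any prescribed coarse lattice. So literally invoking Lemma~\ref{decom10} is a slight overreach. The clean fix is the alternative you already name: carry out the oscillation estimate directly on the single lattice, i.e.\ write $\|f\|_{1,\alpha}=\sum_n\int_{E_n}|f|\,dV_\alpha$, split $|f(w)|\le|f(a_n)|+|f(w)-f(a_n)|$, and bound the second piece by $C\tanh(r)\,T(|f|)$ via the pointwise Lipschitz estimate for holomorphic functions in the Bergman metric (this is what underlies \cite[Lemma~3.22]{Zhu} and is exactly Luecking's computation). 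That gives the same absorption inequality without any auxiliary coarse lattice. With that adjustment your proof is complete.
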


\begin{proof}
The first part of the above lemma is a particular case of \cite[Lemma 1.5]{JMT} and the second part is proved in \cite[Theorem 2]{luk}.
\end{proof}

Using the above lemma, we have the following result.

\begin{thm}
Let $p \ge 1$, $q>0$, $s>\max\left\{0, 1-\frac{q}{n}\right\}$, $\alpha>-1$ and $\{a_n\}$ be an $r$-lattice in the Bergman metric in $\B$. Then for any $\{c_n\} \in \ell^\infty$,
\begin{equation} \label{sample003}
f(z)=\sum_n c_n \cdot \left( \frac{1-|a_n|^2}{1-\langle z, a_n\rangle}\right)^{n+1+\alpha}
\end{equation}
belongs to $\calN(p, q, s)$. Moreover, there is an $r_0>0$ such that every $f \in \calN(p, q, s)$ has the form \eqref{sample003} for some $\{c_n\} \in \ell^\infty$ if $r<r_0$.
\end{thm}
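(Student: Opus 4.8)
The plan is to prove the two halves separately: that every series \eqref{sample003} with $\{c_n\}\in\ell^\infty$ represents an element of $\calN(p,q,s)$, and that, for an $r$-lattice with $r$ small, the synthesis map $\{c_n\}\mapsto f$ sends $\ell^\infty$ onto $\calN(p,q,s)$.

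For the first half only a pointwise estimate is needed. Starting from $|f(z)|\le\|\{c_n\}\|_{\ell^\infty}\sum_n\big((1-|a_n|^2)/|1-\langle z,a_n\rangle|\big)^{n+1+\alpha}$ and using that $\{a_n\}$ is an $r$-lattice, the summand $w\mapsto(1-|w|^2)^{\alpha}|1-\langle z,w\rangle|^{-(n+1+\alpha)}$ is comparable throughout each $E(a_n,r/4)$ with constants depending only on $r$ and $\alpha$ (by \cite[Lemma 2.20, (2.20)]{Zhu}), while $V(E(a_n,r/4))\simeq(1-|a_n|^2)^{n+1}$; since the $E(a_n,r/4)$ are pairwise disjoint and $\bigcup_n E(a_n,r)=\B$, the sum is comparable to $\int_\B(1-|w|^2)^{\alpha}|1-\langle z,w\rangle|^{-(n+1+\alpha)}\,dV(w)$, which by \cite[Theorem 1.12]{Zhu} is $\simeq\log\frac{1}{1-|z|^2}$. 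Thus the series converges absolutely and locally uniformly, $f\in H(\B)$, and $d\mu_{f, p, q, s}(z)\lesssim\big(\log\frac{1}{1-|z|^2}\big)^{p}(1-|z|^2)^{q+ns-n-1}\,dV(z)$. Fixing $\varepsilon\in\big(0,\min\{q,\,q+ns-n\}\big)$, which is a nonempty interval because $q>0$ and $s>1-q/n$, and bounding $\big(\log\frac{1}{1-|z|^2}\big)^{p}\le C_\varepsilon(1-|z|^2)^{-\varepsilon}$, the standard estimate for $\int_{Q_\del(\xi)}(1-|z|^2)^{c}\,dV$ gives $\mu_{f, p, q, s}(Q_\del(\xi))\lesssim\del^{q+ns-\varepsilon}\le\del^{ns}$. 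Hence $\mu_{f, p, q, s}$ is an $(ns)$-Carleson measure and $f\in\calN(p,q,s)$ by Proposition \ref{Carleson01}.

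For the second half I would run the scheme from the proof of Theorem \ref{Atomicdecom}(2), now targeting an $\ell^\infty$ bound on the coefficients. Let $X$ be the Banach space of $h\in H(\B)$ with $\|h\|_X^{p}=\sup_{\xi\in\SSS,\,0<\del<1}\del^{-ns}\int_{Q_\del(\xi)}|h(z)|^{p}(1-|z|^2)^{p+q+ns}\,d\lambda(z)<\infty$; by Theorem \ref{Carlesoncharac}, $f\in\calN(p,q,s)$ if and only if $R^{\alpha,1}f\in X$, where $R^{\alpha,1}$ is the order-one radial operator of \cite[Proposition 1.15]{Zhu}, which sends $(1-\langle z,w\rangle)^{-(n+1+\alpha)}$ to a multiple of $(1-\langle z,w\rangle)^{-(n+2+\alpha)}$ and is inverted by \cite[Proposition 1.14]{Zhu}. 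Given $f\in\calN(p,q,s)$, put $g=R^{\alpha,1}f\in X$, fix the $r$-lattice $\{a_n\}$ with $r<r_0$, and pass to a finer lattice $\{a_{kj}\}$ with separation $\gamma$; keeping $r$ fixed and letting $\gamma\to0$ makes $\gamma+\tanh(\gamma)/\tanh(r)$ as small as desired, so by Lemma \ref{decom10}, together with the boundedness of $T$ (hence of $S$) on $X$ — which one verifies directly in the manner of Lemma \ref{decom04} — the operator $I-S$ has norm $<1$ on $X$ and $S$ is invertible on $X$. Writing $g=Sh$ with $h\in X$, expanding $S$ with parameter $n+2+\alpha$ and applying the inverse of $R^{\alpha,1}$ termwise produces $f(z)=\sum_{k,j}c_{kj}\big((1-|a_{kj}|^2)/(1-\langle z,a_{kj}\rangle)\big)^{n+1+\alpha}$ with $c_{kj}$ a fixed multiple of $(1-|a_{kj}|^2)^{-(n+1+\alpha)}V_{\alpha+1}(E_{kj})h(a_{kj})\simeq(1-|a_{kj}|^2)\,h(a_{kj})$. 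After relabelling $\{a_{kj}\}$ as $\{a_n\}$ this is the representation \eqref{sample003}, the representing series converging pointwise in $\B$ (and weak-$*$ in $\calN(p,q,s)$), which is the appropriate notion here.

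The step I expect to be the real obstacle is the $\ell^\infty$ bound on $\{c_n\}$, that is, $\{(1-|a_n|^2)h(a_n)\}\in\ell^\infty$. The crude subharmonicity bound available for $h\in X$ (in the spirit of the proof of Proposition \ref{boundaryineq}) only yields $|h(z)|\lesssim\|h\|_X(1-|z|^2)^{-1-q/p}$, which is not by itself sufficient; one must instead realize the representation in a function space for which the atoms in \eqref{sample003} are exactly normalized — most cleanly through a duality identification of $\calN(p,q,s)$ with the dual of a weighted $A^1$-space, under which the surjectivity of the synthesis map becomes precisely the ``bounded below'' half \eqref{sample002} of the sampling Lemma \ref{atomicsample} (with \eqref{sample001} giving the reverse inequality). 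Locating the right pairing and weight, and checking that the operators $T$ and $S$ stay bounded on the relevant space even though $b=n+1+\alpha$ need not be large, are the two points requiring genuine care; everything else is bookkeeping with the lattice estimates already developed in this section.
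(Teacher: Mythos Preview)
Your first half is correct, and more elementary than the paper's route: the paper instead realizes the synthesis map as the adjoint $T^*:\ell^\infty\to (A^1_\alpha)^*$ of the sampling operator $Tf=\{(1-|a_n|^2)^{n+1+\alpha}f(a_n)\}$, bounded by \eqref{sample001}, and then uses the identification $(A^1_\alpha)^*=A^{-k_0}(\B)$ (via \cite[Theorem~7.6]{Zhu}) together with $A^{-k_0}(\B)\subseteq\calN(p,q,s)$ from Proposition~\ref{Npqsbigs} for $k_0\in(0,q/p]$ small enough that $s>1-(q-k_0p)/n$. Your logarithmic pointwise bound amounts to showing $f$ lies in every $A^{-k}(\B)$, so the two arguments meet at the same inclusion.

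For the second half, the $X$-space inversion is a genuine dead end here, and you diagnose the reason correctly: it produces $c_{kj}\simeq(1-|a_{kj}|^2)\,h(a_{kj})$ with $h\in X$, and membership in $X$ only forces $|h(z)|\lesssim(1-|z|^2)^{-1-q/p}$; moreover the boundedness of $T$ on $X$ in Lemma~\ref{decom04} needs $b$ large relative to $p,q,s$, which fails for arbitrary $\alpha>-1$. You are right that duality with $A^1$ is the fix, but the key is \emph{not} to identify $\calN(p,q,s)$ itself with a dual---in general it is not one. The paper simply uses the inclusion $\calN(p,q,s)\subseteq A^{-q/p}(\B)$ from Proposition~\ref{boundaryineq}: under the pairing of \cite[Theorem~7.6]{Zhu} one has $(A^1_\alpha)^*=A^{-q/p}(\B)$, and the synthesis map is exactly $T^*:\ell^\infty\to A^{-q/p}(\B)$. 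Now $T^*$ is onto if and only if $T:A^1_\alpha\to\ell^1$ is bounded below, which is precisely \eqref{sample002} for $r<r_0$. Hence every $f\in A^{-q/p}(\B)$---in particular every $f\in\calN(p,q,s)$---has the form \eqref{sample003} with $\{c_n\}\in\ell^\infty$. No constraint on $\alpha$ beyond $\alpha>-1$ is needed, and the $X$-machinery is never invoked.
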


\begin{proof}
Let $\{a_n\}$ be an $r$-lattice in the Bergman metric in $\B$. Moreover, since $s>1-\frac{q}{n}$, there exists some $k_0 \in \left(0, \frac{q}{p}\right]$, such that
$$
s>1-\frac{q-k_0p}{n},
$$
which, by Proposition \ref{Npqsbigs}, implies $A^{-k_0}(\B) \subseteq \calN(p, q, s)$.

Then $T$, defined as follows, is a bounded linear operator from $A^1_\alpha(\B)$ to $\ell^1$,
$$
T f=\{(T f)_n\}=\{(1-|a_n|^2)^{n+1+\alpha} f(a_n)\}, \ f \in A^1_\alpha(\B),
$$
where the boundedness of $T$ is due to \eqref{sample001} under $\{z_n\}$ being an $r$-lattice. Thus $T^*$, by \cite[Theorem 7.6]{Zhu}, the adjoint operator of $T$ is a bounded linear operator from $\ell^\infty (=(\ell^1)^*)$ to $A^{-k_0} (=\calB^{k_0+1}=(A^1_\alpha)^*)$, where $\calB^{k_0+1}$ is the $(k_0+1)$-Bloch space. Moreover, $T^*$ can be written as follows.
$$
\langle Tf, y \rangle=\langle f, T^*y \rangle, \quad f \in A^1_\alpha(\B),  \ y \in \ell^\infty,
$$
where the $\langle \cdot, \cdot \rangle$ is just the usual inner product between $\ell^1$ and $\ell^\infty$.

To compute $T^*$, we take
$$
y=e_n,  \quad (e_n)_m=
\begin{cases}
1, &m=n \\
0, &m \neq n.
\end{cases}
$$
So, for $f \in A^1_\alpha(\B)$,
\[ \begin{split}
\langle T f, e_n\rangle=(T f)_n&=(1-|a_n|^2)^{n+1+\alpha} f(a_n)\\
                                                 &=(1-|a_n|^2)^{n+1+\alpha} \langle f, K_{a_n}\rangle_{\alpha+k_0},
\end{split} \]
where $K_{a_n}(z)=\frac{1}{(1-\langle z, a_n \rangle)^{n+1+\alpha}}$ is the reproducing kernel for $A^1_\alpha(\B)$ and $\langle \cdot, \cdot  \rangle_{\alpha+k_0}$ is the integral pair defined in \cite[Theorem 7.6]{Zhu}. Hence
$$
T^*e_n=(1-|z_n|^2)^{n+1+\alpha} K_{a_n}(z)
$$
and
$$
T^*y=\sum_{n} c_n \cdot \left( \frac{1-|a_n|^2}{1-\langle z, a_n \rangle} \right)^{n+1+\alpha}, \quad \textrm{for} \ y=\{c_n\} \in \ell^\infty,
$$
i.e. the function in the form \eqref{sample003} is in $A^{-k_0}$. Since $A^{-k_0} \subseteq \calN(p, q, s)$, which implies the desired result.

Now we turn to show the second part. Noting that by \cite[Theorem 7.6]{Zhu} again, $(A^1_\alpha)^*=\calB^{\frac{q}{p}+1}=A^{-\frac{q}{p}}(\B)$, we also can regard $T^*$ as a bounded linear operator from $\ell^\infty$ to $A^{-\frac{q}{p}}(\B)$, where in the sequel, we denote $T^*$ as $S^*$.

In fact, it is only necessary to claim $S^*$ to be surjective. However, $S^*$ is onto if and only if $S: A^1_\alpha(\B) \mapsto \ell^1$ is bounded below (see, e.g., \cite[Theorem A, Page 194]{DS}). By Lemma \ref{atomicsample}, there exists an $r_0>0$ such that $S$ is bounded below if $\{a_n\}$ is an $r$-lattice with $0<r<r_0$, that is to say, there is an $r_0>0$, such that every $f \in A^{-\frac{q}{p}}(\B)$ has the form \ref{sample003} for some $\{c_n\} \in \ell^\infty$. Finally, by Proposition \ref{boundaryineq}, we note that $\calN(p, q, s) \subseteq A^{-\frac{q}{p}}(\B)$. The proof is complete.
\end{proof}

By modifying the proof of the above theorem a bit, we easily get the following corollary.

\begin{cor}
Let $p>0$, $\alpha>-1$ and $\{a_n\}$ be an $r$-lattice in the Bergman metric in $\B$. Then for any $\{c_n\} \in \ell^\infty$,
\begin{equation} \label{sample004}
f(z)=\sum_n c_n \cdot \left( \frac{1-|a_n|^2}{1-\langle z, a_n\rangle}\right)^{n+1+\alpha}
\end{equation}
belongs to $A^{-p}(\B)$. Moreover, there is an $r_0>0$ such that every $f \in A^{-p}(\B)$ has the form \eqref{sample004} for some $\{c_n\} \in \ell^\infty$ if $r<r_0$.
\end{cor}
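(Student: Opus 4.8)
The plan is to run, almost verbatim, the argument used for the preceding theorem, simply replacing $\calN(p,q,s)$ by $A^{-p}(\B)$: the only structural facts invoked there are the two-sided sampling estimates of Lemma \ref{atomicsample} for $A^1_\alpha(\B)$, the identification $A^{-p}(\B)=\calB^{p+1}$ from Lemma \ref{BlochBertype}, and the realization of a Bloch-type space as the dual of a Bergman space. Accordingly I would split the proof into the ``inclusion'' part (any $\ell^\infty$-sequence yields a function in $A^{-p}(\B)$) and the ``surjectivity'' part (for $r$ small every $f\in A^{-p}(\B)$ is of the form \eqref{sample004}).

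For the inclusion, I would observe that when $\{c_n\}\in\ell^\infty$ the series in \eqref{sample004} converges locally uniformly, and since $\{a_n\}$ is an $r$-lattice (so the balls $E(a_n,r/4)$ are pairwise disjoint, the $E(a_n,4r)$ have bounded overlap, $(1-|a_n|^2)^{n+1+\alpha}\simeq V_\alpha(E(a_n,r))$, and $|1-\langle z,a_n\rangle|\simeq|1-\langle z,w\rangle|$ for $w\in E(a_n,r)$) a standard Riemann-sum comparison gives
\[
|f(z)|\le\|\{c_n\}\|_\infty\sum_n\frac{(1-|a_n|^2)^{n+1+\alpha}}{|1-\langle z,a_n\rangle|^{n+1+\alpha}}\lesssim\|\{c_n\}\|_\infty\int_\B\frac{(1-|w|^2)^{\alpha}}{|1-\langle z,w\rangle|^{n+1+\alpha}}\,dV(w).
\]
By \cite[Theorem 1.12]{Zhu} the last integral is comparable to $\log\frac{1}{1-|z|^2}$, hence $(1-|z|^2)^{p}|f(z)|\to 0$ as $|z|\to 1$, so $f\in A^{-p}_0(\B)\subseteq A^{-p}(\B)$; equivalently, these functions lie in $A^{-k}(\B)$ for every $k>0$, which is all that is needed here.

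For the surjectivity part I would follow the duality scheme of the preceding proof. Define $T\colon A^1_\alpha(\B)\to\ell^1$ by $Tf=\{(1-|a_n|^2)^{n+1+\alpha}f(a_n)\}$; by inequality \eqref{sample001} of Lemma \ref{atomicsample} the operator $T$ is bounded, and by \eqref{sample002} there is an $r_0>0$ so that $T$ is bounded below whenever $r<r_0$. A bounded operator between Banach spaces is bounded below precisely when its adjoint is onto \cite[Theorem A, Page 194]{DS}, so $T^*\colon\ell^\infty=(\ell^1)^*\to(A^1_\alpha)^*$ is surjective for $r<r_0$. Identifying $(A^1_\alpha)^*$, via \cite[Theorem 7.6]{Zhu} and Lemma \ref{BlochBertype}, with $\calB^{p+1}=A^{-p}(\B)$ through the appropriate Bergman-type integral pairing, and testing $T^*$ against the standard unit vectors $e_n\in\ell^1$ together with the reproducing formula $f(a_n)=\langle f,K_{a_n}\rangle$ of \cite[Theorem 2.2]{Zhu}, one computes $T^*e_n$ to be a fixed scalar multiple of the atom $\big(\tfrac{1-|a_n|^2}{1-\langle z,a_n\rangle}\big)^{n+1+\alpha}$, so that $T^*\{c_n\}$ is exactly the series \eqref{sample004}; surjectivity of $T^*$ then yields the claim, and the corollary for $A^{-p}(\B)$ follows by the same modification of the argument. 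The step I expect to be the main obstacle is precisely this last bookkeeping: one must choose the integral pairing realizing $(A^1_\alpha)^*\cong A^{-p}(\B)$ so that the reproducing kernels it uses carry exactly the exponent $n+1+\alpha$ occurring in \eqref{sample004}; reconciling the Bloch scale $\calB^{1}$ (the ``natural'' dual under the $dV_\alpha$-pairing) with $\calB^{p+1}$ may force one to precompose with a fractional radial-derivative operator $R^{\gamma,t}$ and then invert it, exactly as in the proof of Theorem \ref{Atomicdecom} (see \cite[Proposition 1.15]{Zhu}); once the exponents are aligned, computing $T^*$ and applying \cite[Theorem A, Page 194]{DS} are routine.
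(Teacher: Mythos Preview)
Your proposal is correct and, for the surjectivity half, follows exactly the duality scheme the paper has in mind (the paper does not write out a separate proof but says to modify the preceding theorem ``a bit''; that modification is simply to take both the parameter $k_0$ of the first part and the parameter $q/p$ of the second part equal to $p$, so that the sandwich $A^{-k_0}\subseteq\calN(p,q,s)\subseteq A^{-q/p}$ collapses to $A^{-p}$).

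Your treatment of the inclusion differs from the paper's implicit argument: rather than invoking boundedness of $T^*\colon\ell^\infty\to(A^1_\alpha)^*\cong A^{-p}(\B)$, you give a direct pointwise bound via a Riemann-sum comparison and \cite[Theorem~1.12]{Zhu}, obtaining the stronger conclusion $f\in A^{-p}_0(\B)$ for every $p>0$. This is more elementary and avoids the duality bookkeeping entirely for that half. The concern you flag about aligning the exponent $n+1+\alpha$ with the pairing realizing $(A^1_\alpha)^*\cong\calB^{p+1}$ is not a genuine obstacle: the preceding theorem already handles this by choosing the pairing $\langle\cdot,\cdot\rangle_{\alpha+k_0}$ (here $\langle\cdot,\cdot\rangle_{\alpha+p}$) from \cite[Theorem~7.6]{Zhu}, under which the reproducing identity $f(a_n)=\langle f,K_{a_n}\rangle$ holds with $K_{a_n}(z)=(1-\langle z,a_n\rangle)^{-(n+1+\alpha)}$; no fractional derivative correction is needed.
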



\medskip

\subsection{The solvability of Gleason's problem}


In the second half of this section, we study the Gleason's problem on $\calN(p, q, s)$-type spaces, which, finally turns out to be an application of Lemma \ref{decom04}.

Let $X$ be a space of holomorphic functions on a domain $\Omega$ in $\C^n$. The Gleason's problem for $X$ is the following statement: Given $a \in \Omega, f \in X$, and $f(a)=0$, do there exist functions $f_1, \dots, f_n$ in $X$ such that
$$
f(z)=\sum_{k=1}^n (z_k-a_k)f_k(z)
$$
for all $z$ in $\Omega$?

The case when $X$ is the Bergman space or the Bloch space was considered in \cite{Zhu3}. In the present survey, we focus on the case $X=\calN(p, q, s)$ and $\Omega=\B$. We have the following results.

\begin{prop} \label{Gleprop01}
Suppose $p \ge 1, q>0$ and $s>\max\left\{0, 1-\frac{q}{n} \right\}$. Then there exist bounded linear operators $A_1, \dots, A_n$ on $\calN(p, q, s)$ such that
$$
f(z)-f(0)=\sum_{k=1}^n z_kA_kf(z), \quad \forall f \in \calN(p, q, s), z \in \B.
$$
Here $$
A_k  f = \int_0^1 \frac{\partial f}{\partial z_k}(tz)dt.
$$
\end{prop}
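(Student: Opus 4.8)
The plan is to verify directly that the operators $A_k f(z) = \int_0^1 \frac{\partial f}{\partial z_k}(tz)\,dt$ map $\calN(p,q,s)$ boundedly into itself; the identity $f(z)-f(0)=\sum_{k=1}^n z_k A_k f(z)$ is then just the fundamental theorem of calculus applied along the segment from $0$ to $z$, since $\frac{d}{dt}f(tz)=\sum_{k=1}^n z_k \frac{\partial f}{\partial z_k}(tz)$. So the entire content is the boundedness of each $A_k$.

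To prove boundedness I would pass through the equivalent characterizations established earlier. By Corollary \ref{equivalentnorm} (or Theorem \ref{Carlesoncharac}), $g\in\calN(p,q,s)$ if and only if $|\nabla g(z)|^p(1-|z|^2)^{p+q+ns}d\lambda(z)$ is an $(ns)$-Carleson measure, so it suffices to control $\nabla(A_k f)$ in terms of $\nabla f$. Writing $A_k f(z)=\int_0^1 \partial_k f(tz)\,dt$ and differentiating under the integral sign, $\frac{\partial}{\partial z_j}(A_k f)(z) = \int_0^1 t\,(\partial_j\partial_k f)(tz)\,dt$, so I need to bound the second-order derivatives of $f$ by the first-order ones. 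The standard device is an integral representation: using the reproducing-type formula (as in \cite[Theorem 2.2]{Zhu} and \cite[Proposition 1.4.10]{Rud}) one expresses $\partial_j\partial_k f(w)$ — or more cleanly $(A_k f)$ itself — as an integral against a kernel $\dfrac{(1-|u|^2)^{b-n-1}}{(1-\langle w,u\rangle)^{b+1}}$ applied to $\nabla f(u)$, for a suitable large parameter $b$. Concretely, I expect to show a pointwise estimate of the form
$$
(1-|z|^2)\,|\nabla(A_k f)(z)| \;\lesssim\; \int_\B \frac{(1-|u|^2)^{b-n-1}}{|1-\langle z,u\rangle|^{b}}\,(1-|u|^2)\,|\nabla f(u)|\,dV(u) \;=\; T\big(u\mapsto (1-|u|^2)|\nabla f(u)|\big)(z),
$$
where $T$ is exactly the operator in \eqref{decomeq01}.

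Once this pointwise domination is in place, the result follows immediately from Lemma \ref{decom04}: if $f\in\calN(p,q,s)$, then $|\nabla f(z)|^p(1-|z|^2)^{p+q+ns}d\lambda(z)$ is an $(ns)$-Carleson measure, i.e. $\big((1-|z|^2)|\nabla f(z)|\big)^p(1-|z|^2)^{q+ns}d\lambda(z)$ is one (taking the parameter $t=0$ in Lemma \ref{decom04}, which is legitimate since $s>\max\{0,1-\frac qn\}$ gives $0>n-p-q-ns$); choosing $b$ large enough that the hypothesis $b>\frac{n+1}{p'}+\frac{q+ns}{p}+1$ (resp. $b>1+q+ns$ when $p=1$) holds, Lemma \ref{decom04} shows $|T(\cdot)(z)|^p(1-|z|^2)^{p+q+ns}d\lambda(z)$ is again an $(ns)$-Carleson measure, hence $A_k f\in\calN(p,q,s)$ with $\|A_k f\|\lesssim\|f\|$. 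Linearity of $A_k$ is obvious, and boundedness then gives continuity, so each $A_k$ is a bounded linear operator; the closed graph theorem is not even needed.

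The main obstacle is establishing the pointwise estimate bounding $(1-|z|^2)|\nabla(A_k f)(z)|$ by $T\big((1-|u|^2)|\nabla f(u)|\big)(z)$ with the correct kernel exponent. The delicate point is that differentiating $A_k f$ produces a second derivative of $f$, so one cannot simply reproduce $\nabla f$ against a bounded kernel; instead one must first write $f$ (or $\nabla f$) via an atomic/integral formula with enough derivatives to absorb, integrate by parts or use the identity $\partial_j\partial_k f = \partial_j(\partial_k f)$ together with the representation of a holomorphic function's derivative as an integral of the function against $(1-\langle z,u\rangle)^{-(n+2+\cdots)}$, and then track exponents carefully so that the resulting kernel matches the form $|1-\langle z,u\rangle|^{-b}$ with $(1-|u|^2)^{b-n-1}$ and with the extra factor $(1-|u|^2)$ landing precisely on $|\nabla f(u)|$. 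This is essentially the computation underlying \cite[Lemma 5.28]{Zhu} adapted to first derivatives, and it is where all the bookkeeping lives; everything after it is a direct citation of Lemma \ref{decom04} and Theorem \ref{Carlesoncharac}.
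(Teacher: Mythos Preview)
Your strategy is sound and would work, but the paper takes a more direct route that sidesteps exactly the ``main obstacle'' you flag. Instead of passing to $\nabla(A_kf)$ and controlling it by $\nabla f$ (which forces you through second derivatives of $f$), the paper estimates $A_kf$ itself by $T(|f|)$. The trick is that the $t$-integration in $A_kf(z)=\int_0^1\partial_kf(tz)\,dt$ exactly undoes one power of the kernel: writing $f$ via the reproducing formula $f(z)=\int_\B f(w)(1-\langle z,w\rangle)^{-(n+1+\alpha)}\,dV_\alpha(w)$ for suitably large $\alpha$, one gets $\partial_kf(z)=C(\alpha)\int_\B \bar w_k f(w)(1-\langle z,w\rangle)^{-(n+2+\alpha)}\,dV_\alpha(w)$, and then $\int_0^1(1-t\langle z,w\rangle)^{-(n+2+\alpha)}\,dt$ produces $(1-\langle z,w\rangle)^{-(n+1+\alpha)}$ times a bounded factor. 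Hence
\[
|A_kf(z)|\lesssim\int_\B\frac{(1-|w|^2)^\alpha|f(w)|}{|1-\langle z,w\rangle|^{n+1+\alpha}}\,dV(w),
\]
and Lemma \ref{decom04} applied with $t=-p$ (rather than your $t=0$) shows $|A_kf(z)|^p(1-|z|^2)^{q+ns}d\lambda(z)$ is an $(ns)$-Carleson measure, i.e.\ $A_kf\in\calN(p,q,s)$ via the defining characterization (Proposition \ref{Carleson01}) rather than the gradient one. Your detour through $\nabla(A_kf)$ is not wrong, but it trades this one-line cancellation for the second-derivative bookkeeping you anticipate; the paper's path eliminates that work entirely.
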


\begin{proof}
Note that for any $f$ holomorphic in $\B$, we have
$$
f(z)-f(0)=\sum_{k=1}^n z_k \int_0^1 \frac{\partial f}{\partial z_k}(tz)dt.
$$
Thus, it suffices to show that the operator $A_k$ is bounded on $\calN(p, q, s)$ for each $k \in \{1, \dots, n\}$. Without the loss of generality, we may assume $p>1$ since the proof for the case $p=1$ follows exactly the same line as the case $p>1$.

Take any $f \in \calN(p, q, s)$, and hence $f \in A^{-\frac{q}{p}}(\B)$. Moreover, take and fix $\alpha>\max\left\{\frac{q}{p}-1, \frac{q+ns-n-1}{p}\right\}$. It is easy to see that $f \in A_\alpha^1$, and hence by \cite[Theorem 2.2]{Zhu}
$$
f(z)=\int_\B \frac{f(w)dV_\alpha(w)}{(1-\langle z, w \rangle)^{n+1+\alpha}}, \quad \forall z \in \B,
$$
which implies that
$$
\frac{\partial f}{\partial z_k}(z)=C(\alpha) \int_\B \frac{\bar{w_k}(1-|w|^2)^\alpha f(w)dV(w)}{(1-\langle z, w\rangle)^{n+2+\alpha}},
$$
where $C(\alpha)$ is some constant which only depends on $\alpha$. Then we have
\begin{eqnarray*}
|A_kf(z)|%
 &=& \left| \int_0^1 \frac{\partial f}{\partial z_k}(tz)dt \right|\\
&\simeq& \left| \int_0^1 \left( \int_\B \frac{\bar{w_k}(1-|w|^2)^\alpha f(w)dV(w)}{(1-t\langle z, w\rangle)^{n+2+\alpha}}\right) dt \right| \\
&=& \left| \int_\B \bar{w_k} (1-|w|^2)^\alpha f(w) \left( \int_0^1 \frac{1}{(1-t\langle z, w \rangle)^{n+2+\alpha}}dt \right)dV(w) \right|\\
&\simeq& \left| \int_\B \frac{\bar{w_k}(1-|w|^2)^\alpha f(w)}{(1-\langle z, w \rangle)^{n+1+\alpha}} \cdot \frac{ 1-(1-\langle z, w \rangle)^{n+1+\alpha}}{\langle z, w \rangle} dV(w) \right|.
\end{eqnarray*}
Note that
$$
\frac{ 1-(1-\langle z, w \rangle)^{n+1+\alpha}}{\langle z, w \rangle}
$$
is a polynomial in $z$ and $\bar{w}$. Thus, we have
$$
|A_kf(z)| \lesssim \int_\B \frac{(1-|w|^2)^\alpha |f(w)|}{|1-\langle z, w \rangle|^{n+1+\alpha}}dV(w).
$$
Now in Lemma \ref{decom04}, put $t=-p$. Then by the choice of $\alpha$, we have
$$
|A_kf(z)|^p(1-|z|^2)^{q+ns}d\lambda(z)
$$
is an $(ns)$-Carleson measure, which, by Proposition \ref{Carleson01}, implies the desired result.
\end{proof}

\begin{cor} \label{Glecor1}
Suppose $p \ge 1, q>0$ and $s>\max\left\{0, 1-\frac{q}{n} \right\}$. Then there exist bounded linear operators $A_1, \dots, A_n$ on $\calN(p, q, s)$ such that
$$
f(z)=\sum_{k=1}^n (z_k-b_k)A_kf(z), \quad z \in \B
$$
for all $f \in \calN(p, q, s)$ with $f(b)=0, b=(b_1, \dots, b_n) \in \B$.
\end{cor}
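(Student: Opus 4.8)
The plan is to reduce Gleason's problem at the point $b$ to the one at the origin, which is already solved in Proposition \ref{Gleprop01}, by conjugating with the involutive automorphism $\Phi_b$. The only genuinely new ingredient required is the boundedness of the composition operator $C_{\Phi_b}:f\mapsto f\circ\Phi_b$ on $\calN(p,q,s)$; everything else is bookkeeping together with the trivial observation that multiplication by any $\psi\in H^\infty(\B)$ is bounded on $\calN(p,q,s)$ with $\|M_\psi\|\le\|\psi\|_\infty$ (immediate from the definition of the norm, pulling $\psi$ out of the integral).

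First I would prove $C_{\Phi_b}$ is bounded. Starting from
\[
\|f\circ\Phi_b\|^p=\sup_{a\in\B}\int_\B |f(\Phi_b(z))|^p(1-|z|^2)^q(1-|\Phi_a(z)|^2)^{ns}\,d\lambda(z),
\]
I change variables $w=\Phi_b(z)$, using that $\Phi_b$ is an involution and that $d\lambda$ is M\"obius invariant, to obtain
\[
\|f\circ\Phi_b\|^p=\sup_{a\in\B}\int_\B |f(w)|^p(1-|\Phi_b(w)|^2)^q\bigl(1-|(\Phi_a\circ\Phi_b)(w)|^2\bigr)^{ns}\,d\lambda(w).
\]
Since $1-|\Phi_b(w)|^2=\frac{(1-|b|^2)(1-|w|^2)}{|1-\langle w,b\rangle|^2}$ and $b$ is fixed, the factor $(1-|\Phi_b(w)|^2)^q$ is comparable to $(1-|w|^2)^q$, with constants depending only on $b$. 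Moreover $\Phi_a\circ\Phi_b$ is an automorphism of $\B$ carrying $\Phi_b(a)$ to $0$, hence equals $V\circ\Phi_{\Phi_b(a)}$ for some unitary $V$, so $|(\Phi_a\circ\Phi_b)(w)|=|\Phi_{\Phi_b(a)}(w)|$; as $a$ runs over $\B$ so does $\Phi_b(a)$, and the supremum is therefore unchanged. Thus $\|f\circ\Phi_b\|\simeq\|f\|$, and $C_{\Phi_b}$ maps $\calN(p,q,s)$ boundedly (and invertibly) to itself.

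Next, given $f\in\calN(p,q,s)$ with $f(b)=0$, put $g=f\circ\Phi_b\in\calN(p,q,s)$; then $g(0)=f(\Phi_b(0))=f(b)=0$, so Proposition \ref{Gleprop01} gives bounded operators $A_1,\dots,A_n$ on $\calN(p,q,s)$ with $g(w)=\sum_{k=1}^n w_k\,A_kg(w)$ for all $w\in\B$. Writing $\Phi_b(z)=\frac{b-(P_b+s_bQ_b)z}{1-\langle z,b\rangle}$, the numerator is affine in $z$ and vanishes at $z=b$, hence equals $-(P_b+s_bQ_b)(z-b)$; therefore each component has the form $(\Phi_b(z))_k=\frac{1}{1-\langle z,b\rangle}\sum_{j=1}^n d_{kj}(z_j-b_j)$ for constants $d_{kj}$ depending on $b$. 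Substituting $w=\Phi_b(z)$ into the identity for $g$ and using $g(\Phi_b(z))=f(z)$ yields
\[
f(z)=\sum_{k=1}^n(\Phi_b(z))_k\,(A_kg)(\Phi_b(z))=\sum_{j=1}^n(z_j-b_j)\Bigl[\tfrac{1}{1-\langle z,b\rangle}\sum_{k=1}^n d_{kj}\,\bigl(A_k(f\circ\Phi_b)\bigr)(\Phi_b(z))\Bigr].
\]
Setting $\widetilde A_j f(z):=\tfrac{1}{1-\langle z,b\rangle}\sum_{k=1}^n d_{kj}\,\bigl(A_k(f\circ\Phi_b)\bigr)(\Phi_b(z))$, we have $\widetilde A_j=M_{\psi_b}\circ C_{\Phi_b}\circ\bigl(\sum_k d_{kj}A_k\bigr)\circ C_{\Phi_b}$ with $\psi_b(z)=(1-\langle z,b\rangle)^{-1}\in H^\infty(\B)$; this is a composition of bounded operators on $\calN(p,q,s)$, hence bounded, and it furnishes the required decomposition (so the $\widetilde A_j$ play the role of the $A_j$ in the statement).

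The hard part will be the boundedness of $C_{\Phi_b}$, and within it the automorphism composition identity $\Phi_a\circ\Phi_b=V\circ\Phi_{\Phi_b(a)}$ used to absorb the supremum over $a$; this is standard (see, e.g., \cite{Rud,Zhu}). As an alternative one could instead invoke the Carleson-measure characterization of Proposition \ref{Carleson01} and check that $\Phi_b$ distorts Carleson tubes only by a $b$-dependent constant, but the direct change-of-variables computation above is cleaner and keeps the constants explicit.
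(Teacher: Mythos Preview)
Your proof is correct but takes a genuinely different route from the paper's. The paper argues directly along the line segment from $b$ to $z$: it sets
\[
A_kf(z)=\int_0^1\frac{\partial f}{\partial z_k}\bigl(b+t(z-b)\bigr)\,dt,
\]
and then essentially repeats the integral-operator estimate of Proposition~\ref{Gleprop01}, observing that the only change in the kernel is the appearance of the bounded factor $\dfrac{(1-\langle b,w\rangle)^{n+1+\alpha}-(1-\langle z,w\rangle)^{n+1+\alpha}}{\langle b,w\rangle-\langle z,w\rangle}$, after which Lemma~\ref{decom04} applies exactly as before. Your approach instead reduces to the origin case by conjugating with the automorphism $\Phi_b$, relying on the M\"obius quasi-invariance of the $\calN(p,q,s)$-norm (your computation of $\|C_{\Phi_b}\|$) together with the bounded multiplier $M_{(1-\langle\cdot,b\rangle)^{-1}}$.

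Both arguments yield operators whose norms depend on $b$, and yours is conceptually cleaner in that it avoids redoing the kernel estimate. The paper's direct approach, however, pays an immediate dividend in the next corollary: the explicit segment-integral formula for $A_kf$ is exactly what feeds the induction for the higher-order Gleason problem, producing the representation $A_\alpha f(z)=C_\alpha\int_0^1(1-t)^m(D^\alpha f)(b+t(z-b))\,dt$. Your $\widetilde A_j$, being a composition $M_{\psi_b}\circ C_{\Phi_b}\circ(\cdots)\circ C_{\Phi_b}$, do not have this form, so extending your method to the iterated statement would require additional bookkeeping.
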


\begin{proof}
Note that
\begin{eqnarray*}
f(z)%
&=&f(z)-f(b)=\int_0^1 \left( \frac{d}{dt} f(b+t(z-b)\right)dt\\
&=& \sum_{k=1}^n (z_k-b_k) \int_0^1 \frac{\partial f}{\partial z_k}(b+t(z-b))dt=\sum_{k=1}^n(z_k-b_k)A_kf(z).
\end{eqnarray*}
A similar argument as in Proposition \ref{Gleprop01} with choosing the same $\alpha$ shows that
\begin{eqnarray*}
&&|A_kf(z)| \\
&\le& \int_\B \frac{(1-|w|^2)^\alpha |f(w)|}{|1-\langle z, w\rangle|^{n+1+\alpha}} \cdot \left|\frac{(1-\langle b, w \rangle)^{n+1+\alpha}-(1-\langle z, w \rangle)^{n+1+\alpha}}{\langle b, w \rangle- \langle z, w \rangle}\right| dV(z)\\
&\lesssim& \int_\B \frac{(1-|w|^2)^\alpha |f(w)|}{|1-\langle z, w \rangle|^{n+1+\alpha}}dV(w),
\end{eqnarray*}
since the quantity
$$
\frac{(1-x)^{n+1+\alpha}-(1-y)^{n+1+\alpha}}{x-y}
$$
is clearly bounded when $|x|, |y| \le 2$. The rest of the proof follows exactly the same as Proposition \ref{Gleprop01}.
\end{proof}

\begin{cor}
Suppose $p \ge 1, q>0$, $s>\max\left\{0, 1-\frac{q}{n} \right\}$ and $m \in \N$. Then there exist bounded linear operators $A_\alpha (|\alpha|=m)$ on $\calN(p, q, s)$ such that
$$
f(z)=\sum_{|\alpha|=m} (z-b)^\alpha A_\alpha f(z), \quad z \in \B
$$
for all $f \in \calN(p, q, s)$ with $(D^\beta f)(b)=0, b=(b_1, \dots, b_n) \in \B$, where $\beta$ is the multi-index satisfying $|\beta|<m$. Moreover, we have
\begin{equation} \label{Gleeq001}
A_\alpha f(z)=C_\alpha \int_0^1 (1-t)^m (D^\alpha f)(b+t(z-b))dt, \quad \forall z \in \B.
\end{equation}
where $C_\alpha$ is some constant only depending on $\alpha$.
\end{cor}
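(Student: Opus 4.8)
The plan is to read off the operators $A_\alpha$ and the representation formula from the multivariate Taylor expansion with integral remainder, and then to prove the boundedness of each $A_\alpha$ on $\calN(p,q,s)$ along the same lines as the first--order Gleason problem in Proposition \ref{Gleprop01} and Corollary \ref{Glecor1}, the only new ingredient being one extra one--dimensional kernel estimate.

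\emph{Step 1 (the representation).} Fix $b\in\B$ and $f\in\calN(p,q,s)$ with $(D^\beta f)(b)=0$ for every multi-index $\beta$ with $|\beta|<m$. For fixed $z\in\B$ set $g(t)=f(b+t(z-b))$; this is holomorphic, hence $C^\infty$, on a neighbourhood of $[0,1]\subset\R$ because the segment $\{b+t(z-b):0\le t\le1\}$ lies in the convex set $\B$. Since $g^{(k)}(0)=\sum_{|\beta|=k}\tfrac{k!}{\beta!}(z-b)^\beta(D^\beta f)(b)=0$ for $0\le k\le m-1$, the one--variable Taylor formula with integral remainder gives
\[
f(z)=g(1)=\frac{1}{(m-1)!}\int_0^1(1-t)^{m-1}g^{(m)}(t)\,dt,
\]
and inserting $g^{(m)}(t)=\sum_{|\alpha|=m}\tfrac{m!}{\alpha!}(z-b)^\alpha(D^\alpha f)(b+t(z-b))$ yields $f(z)=\sum_{|\alpha|=m}(z-b)^\alpha A_\alpha f(z)$ with $A_\alpha f(z)=\tfrac{m}{\alpha!}\int_0^1(1-t)^{m-1}(D^\alpha f)(b+t(z-b))\,dt$, which is formula \eqref{Gleeq001} with $C_\alpha=m/\alpha!$. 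Each $A_\alpha$ is linear, and the defining integral converges because $D^\alpha f$ is bounded on the compact segment above; so $A_\alpha f\in H(\B)$, and it only remains to show that $A_\alpha$ maps $\calN(p,q,s)$ boundedly into itself.

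\emph{Step 2 (reduction to a kernel estimate).} As in Proposition \ref{Gleprop01}, since $f\in\calN(p,q,s)\subseteq A^{-q/p}(\B)$ by Proposition \ref{boundaryineq}, fix $\tau>\max\{\tfrac{q}{p}-1,\ \tfrac{q+ns-n-1}{p}\}$; then $f\in A^1_\tau$ and, by \cite[Theorem 2.2]{Zhu}, $f(z)=\int_\B f(w)(1-\langle z,w\rangle)^{-(n+1+\tau)}\,dV_\tau(w)$. Differentiating $m$ times under the integral sign gives $(D^\alpha f)(\zeta)=c_\alpha\int_\B \overline{w}^\alpha(1-|w|^2)^\tau f(w)(1-\langle \zeta,w\rangle)^{-(n+1+\tau+m)}\,dV(w)$ for a constant $c_\alpha$. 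Putting $\zeta=b+t(z-b)$, using $1-\langle b+t(z-b),w\rangle=(1-t)(1-\langle b,w\rangle)+t(1-\langle z,w\rangle)$, and interchanging the order of integration (legitimate since $f\in A^{-q/p}(\B)$ and $1-|b+t(z-b)|^2$ is bounded below for $t\in[0,1]$), we obtain
\[
|A_\alpha f(z)|\lesssim\int_\B(1-|w|^2)^\tau|f(w)|\Bigl(\int_0^1\frac{(1-t)^{m-1}\,dt}{\bigl|(1-t)(1-\langle b,w\rangle)+t(1-\langle z,w\rangle)\bigr|^{\,n+1+\tau+m}}\Bigr)dV(w).
\]

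\emph{Step 3 (the one--dimensional estimate and conclusion).} The heart of the argument, and the main obstacle, is the bound
\[
\int_0^1\frac{(1-t)^{m-1}\,dt}{\bigl|(1-t)u+tv\bigr|^{\,n+1+\tau+m}}\ \lesssim\ \frac{1}{|v|^{\,n+1+\tau}}\qquad\bigl(u=1-\langle b,w\rangle,\ v=1-\langle z,w\rangle\bigr),
\]
with constant depending on $b,m,n,\tau$. This is the higher--order analogue of the elementary fact, used in Corollary \ref{Glecor1}, that $(x^N-y^N)/(x-y)$ stays bounded for $|x|,|y|\le2$: the extra $m$ powers in the kernel must be absorbed by the weight $(1-t)^{m-1}$. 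One proves it by the substitution $s=1-t$ and splitting $[0,1]$ into $\{0\le s\le|v|/8\}$ and $\{|v|/8\le s\le1\}$: on the first piece $|(1-s)v+su|\ge\tfrac12|v|$ (since $|u-v|\le4$) and $\int_0^{|v|/8}s^{m-1}\,ds\simeq|v|^m$; on the second piece $|(1-s)v+su|\ge\re\bigl((1-s)v+su\bigr)\ge s(1-|b|)$, so the integral is at most $(1-|b|)^{-(n+1+\tau+m)}\int_{|v|/8}^{\infty}s^{-1-(n+1+\tau)}\,ds\simeq|v|^{-(n+1+\tau)}$, the constant being finite because $1-|b|>0$ with $b$ fixed. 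Granting this, $|A_\alpha f(z)|\lesssim\int_\B(1-|w|^2)^\tau|f(w)|\,|1-\langle z,w\rangle|^{-(n+1+\tau)}\,dV(w)=T(|f|)(z)$, where $T$ is the operator \eqref{decomeq01} with parameter $n+1+\tau$. Now apply Lemma \ref{decom04} with its parameter set to $-p$: the hypothesis that this parameter exceed $n-p-q-ns$ is exactly $q+ns>n$, i.e. $s>1-q/n$; the hypothesis on the kernel exponent becomes $\tau>\tfrac{q+ns-n-1}{p}$ (for $p=1$ one uses part (2), requiring $n+1+\tau>q+ns$); and $|f(z)|^p(1-|z|^2)^{q+ns}d\lambda(z)$ is an $(ns)$--Carleson measure precisely because $f\in\calN(p,q,s)$ (Proposition \ref{Carleson01}). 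Lemma \ref{decom04} then shows that $|T(|f|)(z)|^p(1-|z|^2)^{q+ns}d\lambda(z)$, hence $|A_\alpha f(z)|^p(1-|z|^2)^{q+ns}d\lambda(z)$, is an $(ns)$--Carleson measure, so $A_\alpha f\in\calN(p,q,s)$ with $\|A_\alpha f\|\lesssim\|f\|$ by Proposition \ref{Carleson01}. This completes the proof.
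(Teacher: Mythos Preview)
Your proof is correct and takes a genuinely different route from the paper's.

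The paper proceeds by induction on $m$: the base case is Corollary \ref{Glecor1}, and for the step one writes $f=\sum_{|\alpha|=m}(z-b)^\alpha A_\alpha f$, observes that $A_\alpha f(b)=0$ under the hypothesis, applies the first-order operators $A_k$ from Corollary \ref{Glecor1} to each $A_\alpha f$, and obtains $A_\gamma$ for $|\gamma|=m+1$ as a finite sum of compositions $A_k\circ A_\alpha$; boundedness is then immediate from composing bounded operators, and the explicit formula is verified by a short Fubini computation on $A_k\circ A_\alpha$. You instead read off the formula directly from the multivariate Taylor remainder, and prove boundedness of each $A_\alpha$ in one shot via the one-dimensional kernel bound in your Step~3 followed by Lemma \ref{decom04}. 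Your approach is more self-contained and makes the formula transparent, at the price of the extra estimate in Step~3; the paper's inductive argument avoids that estimate entirely by reducing everything to the $m=1$ case already handled in Corollary \ref{Glecor1}.

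One remark: your derivation gives the weight $(1-t)^{m-1}$, not $(1-t)^m$ as written in \eqref{Gleeq001}. Your exponent is the correct one --- at $m=1$ the base-case operator $A_kf(z)=\int_0^1(D^{e_k}f)(b+t(z-b))\,dt$ carries the weight $(1-t)^0$, and the paper's own inductive calculation raises the exponent by one at each step --- so the statement contains a typo.
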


\begin{proof}
We prove the result by induction. The base case is exactly Corollary \ref{Glecor1}. Suppose the statement is valid when $|\alpha|=m$. Take any $f \in \calN(p, q, s)$ with $(D^\beta f)(b)=0, |\beta|<m+1$, then by induction assumption and the fact that $A_\alpha f(b)=0, \forall |\alpha|=m$ (this follows from \eqref{Gleeq001}), we have
\begin{eqnarray*}
f(z)%
&=&\sum_{|\alpha|=m} (z-b)^\alpha A_\alpha f(z)\\
&=& \sum_{|\alpha|=m} (z-b)^\alpha \left( A_\alpha f(b)+ \sum_{k=1}^n(z_k-b_k) A_k (A_\alpha f)(z) \right)\\
&=& \sum_{|\alpha|=m} \sum_{k=1}^n (z-b)^\alpha (z_k-b_k) A_k (A_\alpha f)(z),
\end{eqnarray*}
which clearly can be written into the form of
$$
\sum\limits_{|\gamma|=m+1} (z-b)^\gamma A_\gamma f(z).
$$
The boundedness of $A_\gamma$ follows from the facts that $A_\gamma$ can be written as a finite sum
$$
\sum_{|k|=1, |\alpha|=m, \alpha+e_k=\gamma} A_k \circ A_\alpha
$$
and both $A_k$ and $A_\alpha$ are bounded on $\calN(p, q, s)$.

To prove \eqref{Gleeq001}, it suffices to show that for each $k$ and $\alpha$, the operator $A_k \circ A_\alpha$ is also of this form. Indeed, we have
\begin{eqnarray*}
A_k(A_\alpha f)(z)&=& \int_0^1 \frac{\partial (A_\alpha f)}{\partial z_k} (b+v(z-b))dv\\
& =&C_\alpha \int_0^1 \int_0^1 t (1-t)^m (D^{\alpha+e_k} f)(b+tv(z-b))dtdv\\
& = &C_\alpha \int_0^1 \int_0^t (1-t)^m (D^{\alpha+e_k} f)(b+u(z-b))dudt\\
&&  (\textrm{change variables with} \ t=t, u=tv)\\
&= & C_\alpha \int_0^1 (D^{\alpha+e_k} f)(b+u(z-b)) \left( \int_u^1 (1-t)^m dt \right) du\\
&&  (\textrm{by Fubini's theorem})\\
& =&C_\alpha' \int_0^1 (1-u)^{m+1} (D^\gamma f)(b+u(z-b))du,
\end{eqnarray*}
which implies the desired result.
\end{proof}


\medskip

\section{Distance between $A^{-\frac{q}{p}}(\B)$ spaces and $\calN(p, q, s)$-type spaces}


Recall that from Theorem \ref{boundaryineq}, we have, for $p \ge 1$ and $q, s>0$,
$$
\calN(p, q, s) \subseteq A^{-\frac{q}{p}}(\B).
$$
A natural question can be asked is: for any $f \in A^{-\frac{q}{p}}(\B)$, what can we say about the distance between $f$ and $\calN(p, q, s)$ with regarding $\calN(p, q, s)$ as a subspace of $A^{-\frac{q}{p}}(\B)$? In this section, we will focus on this question.

We denote the distance in $A^{-\frac{q}{p}}(\B)$ of $f$ to $\calN(p, q, s)$ by $d(f, \calN(p, q, s) )$, that is
$$
d(f, \calN(p, q, s) )=\inf_{g \in \calN(p, q ,s)} |f-g|_{\frac{q}{p}},
$$
where $|\cdot|_{\frac{q}{p}}$ is the norm defined on $A^{-\frac{q}{p}}(\B)$. Moreover, for $f \in H(\B)$ and $\varepsilon>0$, let
$$
\Omega_{\varepsilon}(f)=\{z \in \B: |f(z)|(1-|z|^2)^{\frac{q}{p}} \ge \varepsilon\}.
$$

We have the following result.

\begin{thm} \label{distance01}
Suppose $p \ge 1, q>0$, $s>\max\left\{0, 1-\frac{q}{n}\right\}$  and $f \in A^{-\frac{q}{p}}(\B)$. Then the following quantities are equivalent:
\begin{enumerate}
\item $d_1=d(f, \calN(p, q, s) )$;
\item $d_2=\inf\{\varepsilon: \chi_{\Omega_\varepsilon(f)}(z) (1-|z|^2)^{ns}d\lambda(z) \ \textrm{is an $(ns)$-Carleson measure}\}$;
\item
$$
d_3=\inf\left\{\varepsilon: \sup_{a \in \B} \int_{\Omega_\varepsilon(f)} |f(z)|^p(1-|z|^2)^q(1-|\Phi_a(z)|^2)^{ns}d\lambda(z)<\infty\right\}.
$$
\end{enumerate}
\end{thm}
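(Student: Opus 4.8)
The plan is to establish the cycle $d_2\simeq d_3$, then $d_2\lesssim d_1$, and finally $d_1\lesssim d_2$, the last being the substantive part.

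\smallskip

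\textbf{Step 1: $d_2=d_3$.} This is essentially the Carleson measure bookkeeping already used in Proposition \ref{Carleson01}. Fix $\varepsilon>0$. On $\Omega_\varepsilon(f)$ one has $\varepsilon^p\le |f(z)|^p(1-|z|^2)^q\le |f|_{q/p}^p$, the right-hand inequality because $f\in A^{-q/p}(\B)$. Multiplying by $(1-|z|^2)^{ns}\,d\lambda(z)$ shows that the measures $\chi_{\Omega_\varepsilon(f)}(z)(1-|z|^2)^{ns}\,d\lambda(z)$ and $|f(z)|^p(1-|z|^2)^{q+ns}\chi_{\Omega_\varepsilon(f)}(z)\,d\lambda(z)$ dominate each other up to constants, hence one is an $(ns)$-Carleson measure iff the other is. Writing $(1-|\Phi_a(z)|^2)^{ns}d\lambda(z)=\bigl(\tfrac{1-|a|^2}{|1-\langle z,a\rangle|^2}\bigr)^{ns}(1-|z|^2)^{ns}\,d\lambda(z)$, the quantity inside $d_3$ equals $\int_\B\bigl(\tfrac{1-|a|^2}{|1-\langle z,a\rangle|^2}\bigr)^{ns}d\mu(z)$ with $d\mu=|f|^p(1-|z|^2)^{q+ns}\chi_{\Omega_\varepsilon(f)}\,d\lambda$, which by \cite[Theorem 45]{ZZ} is bounded in $a$ iff $\mu$ is an $(ns)$-Carleson measure. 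Thus, for each $\varepsilon$, the defining conditions of $d_2$ and $d_3$ are equivalent, so $d_2=d_3$.

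\smallskip

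\textbf{Step 2: $d_2\lesssim d_1$.} Let $g\in\calN(p,q,s)$ and $\varepsilon>|f-g|_{q/p}$. For $z\in\Omega_{2\varepsilon}(f)$ the triangle inequality gives $|g(z)|(1-|z|^2)^{q/p}\ge 2\varepsilon-\varepsilon=\varepsilon$, i.e. $\Omega_{2\varepsilon}(f)\subseteq\Omega_\varepsilon(g)$, and on $\Omega_\varepsilon(g)$ we have $\chi_{\Omega_{2\varepsilon}(f)}(z)(1-|z|^2)^{ns}\le \varepsilon^{-p}|g(z)|^p(1-|z|^2)^{q+ns}$; the right-hand side is an $(ns)$-Carleson measure by Proposition \ref{Carleson01} since $g\in\calN(p,q,s)$. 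Hence $\chi_{\Omega_{2\varepsilon}(f)}(1-|z|^2)^{ns}\,d\lambda$ is an $(ns)$-Carleson measure, so $d_2\le 2\varepsilon$; letting $\varepsilon\downarrow|f-g|_{q/p}$ and then taking the infimum over $g$ gives $d_2\le 2\,d_1$.

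\smallskip

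\textbf{Step 3: $d_1\lesssim d_2$ (the main step).} Fix $\varepsilon>d_2$, so $\chi_{\Omega_\varepsilon(f)}(1-|z|^2)^{ns}\,d\lambda$ is an $(ns)$-Carleson measure. Choose $t$ large (constraints below) and use the reproducing formula $f(z)=c_t\int_\B\frac{(1-|w|^2)^t f(w)}{(1-\langle z,w\rangle)^{n+1+t}}\,dV(w)$ from \cite[Theorem 2.2]{Zhu}, valid since $f\in A^1_t$ once $t>q/p-1$. Split $f=g+h$, with $g(z)=c_t\int_{\Omega_\varepsilon(f)}(\cdots)$ and $h(z)=c_t\int_{\B\setminus\Omega_\varepsilon(f)}(\cdots)$; both are holomorphic. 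On $\B\setminus\Omega_\varepsilon(f)$ we have $|f(w)|<\varepsilon(1-|w|^2)^{-q/p}$, so by \cite[Theorem 1.12]{Zhu} (using $t-q/p>-1$ and $q/p>0$) $|h(z)|\lesssim\varepsilon\int_\B\frac{(1-|w|^2)^{t-q/p}}{|1-\langle z,w\rangle|^{n+1+t}}\,dV(w)\simeq\varepsilon(1-|z|^2)^{-q/p}$, i.e. $|f-g|_{q/p}=|h|_{q/p}\lesssim\varepsilon$. For $g$: with $\varphi:=\chi_{\Omega_\varepsilon(f)}|f|$ and $T$ the operator of \eqref{decomeq01} with parameter $b=n+1+t$ we have $|g(z)|\lesssim T(\varphi)(z)$. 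Since $ns+q>n$, the parameter $\tilde t:=-p$ satisfies $\tilde t>n-p-q-ns$, and choosing $t$ large enough that $b>\frac{n+1}{p'}+\frac{q+ns-p}{p}+1$ when $p>1$ (resp. $b>q+ns$ when $p=1$) we may apply Lemma \ref{decom04} to $\varphi$: its hypothesis is that $|\varphi(z)|^p(1-|z|^2)^{q+ns}\,d\lambda(z)$ is an $(ns)$-Carleson measure, which holds because on $\Omega_\varepsilon(f)$ one has $|f(z)|^p(1-|z|^2)^{q+ns}\le|f|_{q/p}^p(1-|z|^2)^{ns}$ and $\chi_{\Omega_\varepsilon(f)}(1-|z|^2)^{ns}\,d\lambda$ is $(ns)$-Carleson. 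Therefore $|T(\varphi)(z)|^p(1-|z|^2)^{q+ns}\,d\lambda(z)$, and hence $|g(z)|^p(1-|z|^2)^{q+ns}\,d\lambda(z)$, is an $(ns)$-Carleson measure, so $g\in\calN(p,q,s)$ by Proposition \ref{Carleson01}. Consequently $d_1\le|f-g|_{q/p}\lesssim\varepsilon$ for every $\varepsilon>d_2$, giving $d_1\lesssim d_2$. Combining the three steps yields $d_1\simeq d_2\simeq d_3$.

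\smallskip

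The hard part will be Step 3, specifically showing that the "good" piece $g$ lies in $\calN(p,q,s)$: this forces one to recognize $g$ as (dominated by) the integral operator $T$ from the atomic-decomposition section and to check that a single choice of $t$ can meet both the requirement $t>q/p-1$ (needed for the smallness of $h$ via \cite[Theorem 1.12]{Zhu}) and the lower bound on $b=n+1+t$ demanded by Lemma \ref{decom04}; since both are lower bounds, any sufficiently large $t$ works, but one must also verify the admissibility of the auxiliary exponent $\tilde t=-p$, which is exactly the point where the standing hypothesis $s>\max\{0,1-q/n\}$ (equivalently $ns+q>n$) is used. The cases $p>1$ and $p=1$ are dispatched by the two parts of Lemma \ref{decom04}, the latter being easier. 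Finally, replacing $\sup_{a\in\B}$ and ``$<\infty$'' by $\lim_{|a|\to1}$ and ``$=0$'' throughout would give the analogous statement with vanishing Carleson measures and $\calN^0(p,q,s)$, using Lemma \ref{decom06} in place of Lemma \ref{decom04}.
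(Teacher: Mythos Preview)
Your proof is correct and follows essentially the same route as the paper: the substantive step $d_1\lesssim d_2$ is handled identically via the reproducing-kernel splitting $f=g+h$ and Lemma \ref{decom04} with parameter $t=-p$, and the remaining inequalities are the same Carleson-measure bookkeeping. The only cosmetic difference is that you merge the paper's separate steps $d_2\le d_3$ and $d_3\le d_1$ into a direct $d_2=d_3$ (observing the defining conditions coincide for each fixed $\varepsilon$) plus $d_2\le 2d_1$; in Step 3 you feed $\varphi=\chi_{\Omega_\varepsilon(f)}|f|$ into $T$ whereas the paper uses $\chi_{\Omega_\varepsilon(f)}(1-|w|^2)^{-q/p}$, but since $|f(w)|\le|f|_{q/p}(1-|w|^2)^{-q/p}$ these are equivalent.
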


\begin{proof}  (1) $d_1 \lesssim d_2$.

Without the loss of generality, we may assume that $p>1$. Let $\varepsilon$ be a positive number such that $\chi_{\Omega_\varepsilon(f)}(z)(1-|z|^2)^{ns}d\lambda(z)$ is an $(ns)$-Carleson measure. Since $f \in A^{-\frac{q}{p}}(\B)$, we have
$$
\sup_{z \in \B} |f(z)|(1-|z|^2)^{\frac{q}{p}}<\infty.
$$
Take and fix some $\alpha>\max\left\{\frac{q}{p}-1, \frac{q+ns-n-1}{p}\right\}$. It is easy to see that $f \in A^1_\alpha$, and hence by \cite[Theorem 2.2]{Zhu}, we have
$$
f(z)=\int_\B \frac{f(w)}{(1-\langle z, w\rangle)^{n+1+\alpha}}dV_\alpha(w).
$$
Let
$$
f_1(z)=\int_{\Omega_\varepsilon(f)}\frac{f(w)}{(1-\langle z, w\rangle)^{n+1+\alpha}}dV_\alpha(w)
$$
and
$$
f_2(z)=\int_{\B \backslash \Omega_\varepsilon(f)}\frac{f(w)}{(1-\langle z, w\rangle)^{n+1+\alpha}}dV_\alpha(w).
$$
It is clear that both $f_1$ and $f_2$ are holomorphic functions in $\B$ and $f(z)=f_1(z)+f_2(z)$. We have the following claims.

\textbf{$\bullet \  |f_2|_{\frac{q}{p}} \le C\varepsilon$ for some constant $C>0$.}

For any $z \in \B$, by \cite[Proposition 1.4.10]{Rud}, we have
\begin{eqnarray*}
|f_2(z)|%
&\le& \int_{\B \backslash \Omega_\varepsilon(f)}\frac{|f(w)|}{|1-\langle z, w\rangle|^{n+1+\alpha}}dV_\alpha(w)\\
&\lesssim& \varepsilon \int_\B \frac{(1-|w|^2)^{\alpha-\frac{q}{p}}}{|1-\langle z, w\rangle|^{n+1+\alpha}}dV_\alpha(w)\\
&\lesssim& \varepsilon (1-|z|^2)^{-\frac{q}{p}},
\end{eqnarray*}
which implies the desired claim.

\textbf{$\bullet \  f_1 \in \calN(p, q, s)$.}

By Theorem \ref{Carlesoncharac}, it suffices to show that $|f_1(z)|^p(1-|z|^2)^{q+ns}d\lambda(z)$ is a $(ns)$-Carleson measure.
Note that for $z \in \B$, we have
\begin{eqnarray*}
|f_1(z)|%
&\lesssim& \int_{\Omega_\varepsilon(f)} \frac{|f(w)|(1-|w|^2)^{\frac{q}{p}}(1-|w|^2)^{\alpha-\frac{q}{p}}dV(w)}{|1-\langle z, w \rangle|^{n+1+\alpha}}\\
&\lesssim& \int_{\Omega_\varepsilon(f)} \frac{(1-|w|^2)^{\alpha-\frac{q}{p}}dV(w)}{|1-\langle z, w \rangle|^{n+1+\alpha}}\\
&=& \int_\B \frac{(1-|w|^2)^{\alpha}}{|1-\langle z,w \rangle|^{n+1+\alpha}} \cdot \frac{\chi_{\Omega_\varepsilon(f)}(w)}{(1-|w|^2)^{\frac{q}{p}}}dV(w).
\end{eqnarray*}
Let
$$
g(w)= \frac{\chi_{\Omega_\varepsilon(f)}(w)}{(1-|w|^2)^{\frac{q}{p}}}
$$
and hence we have
$$
|g(w)|^p(1-|w|^2)^{q+ns}d\lambda(w)=\chi_{\Omega_\varepsilon(f)}(w) (1-|w|^2)^{ns}d\lambda(w),
$$
which is an $(ns)$-Carleson measure by our assumption. Now in Theorem \ref{decom04}, let $t=-p$ and $b=n+1+\alpha$, it is easy to check that
$$
t=-p>n-p-q-ns
$$
and
$$
 b=n+1+\alpha>\frac{n+1}{p'}+\frac{q+ns}{p}.
$$
Hence, the operator $T$ with parameter $n+1+\alpha$ sends the $(ns)$-Carleson measure
$$
|g(z)|^p(1-|z|^2)^{q+ns}d\lambda(z)
$$
to
\begin{eqnarray*}
&&|Tg(z)|^p(1-|z|^2)^{q+ns}d\lambda(z)\\
&=&\left|\int_\B \frac{(1-|w|^2)^{\alpha}}{|1-\langle z,w \rangle|^{n+1+\alpha}} \cdot \frac{\chi_{\Omega_\varepsilon(f)}(w)}{(1-|w|^2)^{\frac{q}{p}}}dV(w)\right|^p(1-|z|^2)^{q+ns}d\lambda(z),
\end{eqnarray*}
which, by Theorem \ref{decom04}, is also an $(ns)$-Carleson measure. Finally, since
$$
|f_1(z)|\lesssim  \int_\B \frac{(1-|w|^2)^{\alpha}}{|1-\langle z,w \rangle|^{n+1+\alpha}} \cdot \frac{\chi_{\Omega_\varepsilon(f)}(w)}{(1-|w|^2)^{\frac{q}{p}}}dV(w),
$$
it follows that $|f_1(z)|^p(1-|z|^2)^{q+ns}d\lambda(z)$ is an $(ns)$-Carleson measure. Hence, the claim is proved.

Note that $f_1 \in A^{-\frac{q}{p}}(\B)$ since $\calN(p, q, s) \subseteq A^{-\frac{q}{p}}(\B)$. Thus, we have
$$
d_1=d(f, \calN(p, q, s) ) \le |f-f_1|_{\frac{q}{p}}=|f_2|_{\frac{q}{p}}\lesssim \varepsilon.
$$
Finally, by letting $\varepsilon$ tends to $d_2$, we get the desired result.

  (2)  $d_2 \le d_3$.

Take and fix an $\varepsilon>0$ such that
$$
\sup_{a \in \B} \int_{\Omega_\varepsilon(f)} |f(z)|^p(1-|z|^2)^q(1-|\Phi_a(z)|^2)^{ns}d\lambda(z)<\infty.
$$
Since $|f(z)|^p(1-|z|^2)^q \ge \varepsilon^p$ for $z \in \Omega_\varepsilon(f)$, it follows that
\begin{eqnarray*}
&&\sup_{a \in \B} \int_{\Omega_\varepsilon(f)} (1-|\Phi_a(z)|^2)^{ns}d\lambda(z)\\
&=&\sup_{a \in \B} \int_\B \left( \frac{1-|a|^2}{|1-\langle z, a \rangle|^2} \right)^{ns} \chi_{\Omega_\varepsilon(f)}(z)(1-|z|^2)^{ns}d\lambda(z)<\infty,
\end{eqnarray*}
which, by \cite[Theorem 45]{Zhu}, implies  $\chi_{\Omega_\varepsilon(f)}(z)(1-|z|^2)^{ns}d\lambda(z)$ is an $(ns)$-Carleson measure. Hence,
\[ \begin{split}
\bigg\{\varepsilon &: \sup_{a \in \B} \int_{\Omega_\varepsilon(f)} |f(z)|^p(1-|z|^2)^q(1-|\Phi_a(z)|^2)^{ns}d\lambda(z)<\infty \bigg\} \\
&\subseteq \left\{\varepsilon: \chi_{\Omega_\varepsilon(f)}(z) (1-|z|^2)^{ns}d\lambda(z) \ \textrm{is an $(ns)$-Carleson measure}\right\},
\end{split} \]
which implies $d_2 \le d_3$.

 (3)   $d_3 \le d_1$.

It suffices to show that
$$
(d_1, \infty) \subseteq \bigg\{\varepsilon : \sup_{a \in \B} \int_{\Omega_\varepsilon(f)} |f(z)|^p(1-|z|^2)^q(1-|\Phi_a(z)|^2)^{ns}d\lambda(z)<\infty \bigg\}.
$$
Take and fix any $\varepsilon>d_1$, then there exists a $f_1 \in \calN(p, q, s)$, such that
$$
|f-f_1|_\frac{q}{p}<\frac{d_1+\varepsilon}{2}.
$$
By triangle inequality, we have for $z \in \Omega_\varepsilon(f)$,
\begin{eqnarray*}
|f_1(z)|(1-|z|^2)^{\frac{q}{p}}%
&\ge& |f(z)|(1-|z|^2)^{\frac{q}{p}}-|f(z)-f_1(z)|(1-|z|^2)^{\frac{q}{p}}\\
&\ge& \varepsilon-\frac{d_1+\varepsilon}{2}=\frac{\varepsilon-d_1}{2}.
\end{eqnarray*}
Thus, we have
\begin{eqnarray*}
&&\sup_{a \in \B} \int_{\Omega_\varepsilon(f)} |f(z)|^p(1-|z|^2)^q(1-|\Phi_a(z)|^2)^{ns}d\lambda(z)\\
&\lesssim& \sup_{a \in \B} \int_{\Omega_\varepsilon(f)} (1-|\Phi_a(z)|^2)^{ns}d\lambda(z)\\
&\le& \frac{2^p}{(\varepsilon-d_1)^p} \sup_{a \in \B} \int_\B |f_1(z)|^p(1-|z|^2)^q(1-|\Phi_a(z)|^2)^{ns}d\lambda(z)\\
&<&\infty,
\end{eqnarray*}
which implies the desired inclusion. The proof is complete.
\end{proof}

Noting that by Corollary \ref{equivalentnorm}, we can express the $\calN(p, q, s)$-norm by using complex gradient and radial derivative respectively. By using these equivalent norms, we can express $d(f, \calN(p, q, s) )$ via different forms.

More precisely, from the view of Lemma \ref{BlochBertype}, it is clear that we can express $d(f, \calN(p, q, s) )$ as
$$
\inf_{g \in \calN(p, q, s)} \|f-g\|_{\calB^{\frac{q}{p}+1}}.
$$
Now for $f \in H(\B)$ and $\varepsilon>0$, we denote
$$
\widetilde{\Omega}_\varepsilon(f)=\{z \in \B: |Rf(z)|(1-|z|^2)^{\frac{q}{p}+1} \ge \varepsilon\}.
$$
By using the above expression, we have the following result.

\begin{thm} \label{distance02}
Suppose $p \ge 1, q>0$, $s>\max\left\{0, 1-\frac{q}{n}\right\}$  and $f \in A^{-\frac{q}{p}}(\B)=\calB^{\frac{q}{p}+1}$. Then the following quantities are equivalent:
\begin{enumerate}
\item $d_1=d(f, \calN(p, q, s) )$;
\item $d_4=\inf\{\varepsilon: \chi_{\widetilde{\Omega}_\varepsilon(f)}(z) (1-|z|^2)^{ns}d\lambda(z) \ \textrm{is an $(ns)$-Carleson measure}\}$;
\item
$$
d_5=\inf\left\{\varepsilon: \sup_{a \in \B} \int_{\widetilde{\Omega}_\varepsilon(f)} |Rf(z)|^p(1-|z|^2)^{p+q}(1-|\Phi_a(z)|^2)^{ns}d\lambda(z)<\infty\right\};
$$
\item
$$
d_6=\inf\left\{\varepsilon: \sup_{a \in \B} \int_{\widetilde{\Omega}_\varepsilon(f)} |\nabla f(z)|^p(1-|z|^2)^{p+q}(1-|\Phi_a(z)|^2)^{ns}d\lambda(z)<\infty\right\};
$$
\item
$$
d_7=\inf\left\{\varepsilon: \sup_{a \in \B} \int_{\widetilde{\Omega}_\varepsilon(f)} |\widetilde{\nabla} f(z)|^p(1-|z|^2)^q(1-|\Phi_a(z)|^2)^{ns}d\lambda(z)<\infty\right\}.
$$
\end{enumerate}
\end{thm}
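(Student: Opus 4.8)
I would prove the cycle $d_1\lesssim d_4$, $d_4=d_5\le d_6\le d_7$, and $d_7\lesssim d_1$, so that all five quantities are comparable. Two standing reductions: first, by Lemma \ref{BlochBertype} we identify $A^{-q/p}(\B)=\calB^{q/p+1}$ with comparable norms, and since $q/p+1>\tfrac12$ the $\calB^{q/p+1}$-norm of $h$ is also comparable to $|h(0)|+\sup_z(1-|z|^2)^{q/p+1}|Rh(z)|$ and to $|h(0)|+\sup_z(1-|z|^2)^{q/p}|\widetilde{\nabla}h(z)|$ (by \cite[Theorems 7.1 and 7.2]{Zhu}). Second, since $ns+q>n$ the constants lie in $\calN(p,q,s)$, so $d(f,\calN(p,q,s))=d(f-f(0),\calN(p,q,s))$, and $\widetilde\Omega_\varepsilon(f)$ depends only on $Rf$; hence I may assume $f(0)=0$. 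Everywhere below I convert ``$(ns)$-Carleson'' back and forth with the condition $\sup_a\int_\B\big(\tfrac{1-|a|^2}{|1-\langle z,a\rangle|^2}\big)^{ns}d\mu(z)<\infty$ via \cite[Theorem 45]{ZZ}, exactly as in the proof of Proposition \ref{Carleson01} and Theorem \ref{distance01}.

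\textbf{The easy chain.} On $\widetilde\Omega_\varepsilon(f)$ one has $\varepsilon^p\le |Rf(z)|^p(1-|z|^2)^{p+q}\le\|f\|_{\calB^{q/p+1}}^p$, the lower bound by definition and the upper bound because $f\in\calB^{q/p+1}$. This two-sided comparison, together with \cite[Theorem 45]{ZZ}, shows that $\sup_a\int_{\widetilde\Omega_\varepsilon(f)}|Rf(z)|^p(1-|z|^2)^{p+q}(1-|\Phi_a(z)|^2)^{ns}d\lambda(z)<\infty$ if and only if $\chi_{\widetilde\Omega_\varepsilon(f)}(z)(1-|z|^2)^{ns}d\lambda(z)$ is an $(ns)$-Carleson measure; hence $d_4=d_5$. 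The inequalities $d_5\le d_6\le d_7$ are immediate from the pointwise bounds $(1-|z|^2)|Rf(z)|\le(1-|z|^2)|\nabla f(z)|\le|\widetilde{\nabla}f(z)|$ used as in the proof of Theorem \ref{Carlesoncharac}: a larger integrand being integrable over $\widetilde\Omega_\varepsilon(f)$ forces the smaller one to be.

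\textbf{The bound $d_7\lesssim d_1$.} Since $\inf_{g\in\calN(p,q,s)}\|f-g\|_{\calB^{q/p+1}}\simeq d_1$, for $\varepsilon$ with $d_1$ small enough relative to $\varepsilon$ I can pick $g\in\calN(p,q,s)$ with $\|f-g\|_{\calB^{q/p+1}}$ so small that $(1-|z|^2)^{q/p+1}|R(f-g)(z)|<\varepsilon/2$ on $\B$; then on $\widetilde\Omega_\varepsilon(f)$ we get $(1-|z|^2)^{q/p+1}|Rg(z)|\ge\varepsilon/2$, so $\chi_{\widetilde\Omega_\varepsilon(f)}(z)(1-|z|^2)^{ns}d\lambda(z)\lesssim\varepsilon^{-p}|Rg(z)|^p(1-|z|^2)^{p+q+ns}d\lambda(z)$, which is an $(ns)$-Carleson measure because $g\in\calN(p,q,s)$ (Theorem \ref{Carlesoncharac}); consequently $\sup_a\int_{\widetilde\Omega_\varepsilon(f)}(1-|\Phi_a(z)|^2)^{ns}d\lambda(z)<\infty$. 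Writing $\widetilde{\nabla}f=\widetilde{\nabla}g+\widetilde{\nabla}(f-g)$, using $(1-|z|^2)^{q/p}|\widetilde{\nabla}(f-g)(z)|\lesssim\|f-g\|_{\calB^{q/p+1}}\lesssim\varepsilon$ and $\sup_a\int_\B|\widetilde{\nabla}g(z)|^p(1-|z|^2)^q(1-|\Phi_a(z)|^2)^{ns}d\lambda(z)\lesssim\|g\|^p<\infty$ from Corollary \ref{equivalentnorm}, the $d_7$-defining integral is finite uniformly in $a$; letting $\varepsilon$ decrease gives $d_7\lesssim d_1$.

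\textbf{The bound $d_1\lesssim d_4$ — the main obstacle.} This mirrors $d_1\lesssim d_2$ in Theorem \ref{distance01}, but the exceptional set is now cut out by $Rf$, so I decompose at the level of $g:=Rf$. Fix $\alpha$ large (so that $\alpha>q/p$, which makes $Rf\in A^1_\alpha$, and $b:=n+1+\alpha$ lies above the threshold of Lemma \ref{decom04} for the relevant exponent); by \cite[Theorem 2.2]{Zhu}, $Rf(z)=c_\alpha\int_\B\frac{Rf(w)}{(1-\langle z,w\rangle)^{n+1+\alpha}}dV_\alpha(w)$. Splitting the integral over $\widetilde\Omega_\varepsilon(f)$ and its complement gives $Rf=g_1+g_2$; correcting by constants I set $\tilde g_1:=g_1-g_1(0)$, $\tilde g_2:=g_2+g_1(0)$, both vanishing at $0$ with $\tilde g_1+\tilde g_2=Rf$, and define $f_1(z):=\int_0^1\tilde g_1(tz)\,t^{-1}dt$ and $f_2:=f-f_1$, so that $Rf_1=\tilde g_1$, $Rf_2=\tilde g_2$, and $f_1(0)=f_2(0)=0$. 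On $\B\setminus\widetilde\Omega_\varepsilon(f)$ we have $|Rf(w)|<\varepsilon(1-|w|^2)^{-q/p-1}$, so \cite[Proposition 1.4.10]{Rud} yields $|g_2(z)|\lesssim\varepsilon(1-|z|^2)^{-q/p-1}$ and $|g_1(0)|=|g_2(0)|\lesssim\varepsilon$, whence $\|f_2\|_{\calB^{q/p+1}}\simeq\sup_z(1-|z|^2)^{q/p+1}|\tilde g_2(z)|\lesssim\varepsilon$, i.e.\ $|f-f_1|_{q/p}\lesssim\varepsilon$. For $f_1$: pulling out the bounded factor $(1-|w|^2)^{q/p+1}|Rf(w)|\le\|f\|_{\calB^{q/p+1}}$ gives $|g_1(z)|\lesssim T(h)(z)$, where $h(w)=\chi_{\widetilde\Omega_\varepsilon(f)}(w)(1-|w|^2)^{-q/p-1}$ and $T$ is the operator \eqref{decomeq01} with parameter $b$; since $|h(w)|^p(1-|w|^2)^{p+q+ns}d\lambda(w)=\chi_{\widetilde\Omega_\varepsilon(f)}(w)(1-|w|^2)^{ns}d\lambda(w)$ is an $(ns)$-Carleson measure by the hypothesis defining $d_4$, Lemma \ref{decom04} (with $t=0$) shows $|Rf_1(z)|^p(1-|z|^2)^{p+q+ns}d\lambda(z)$ is an $(ns)$-Carleson measure — the subtracted constant $g_1(0)$ only adding a trivially $(ns)$-Carleson piece, using $p+q+ns>n$ — so $f_1\in\calN(p,q,s)$ by Theorem \ref{Carlesoncharac}. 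Thus $d_1\le|f_2|_{q/p}\lesssim\varepsilon$, and letting $\varepsilon\downarrow d_4$ completes the cycle. The points that will require the most care are the bookkeeping of the constant terms $g_1(0),g_2(0)$ (needed simultaneously to make $\tilde g_1$ admissible as a radial derivative and to keep $\|f_2\|_{\calB^{q/p+1}}$ of order $\varepsilon$) and checking that a single choice of $\alpha$ meets both the integrability condition $\alpha>q/p$ and the parameter threshold of Lemma \ref{decom04}; the analogous statement for $\calN^0(p,q,s)$ with $\lim_{|a|\to1}$ and $=0$ then follows by replacing $(ns)$-Carleson with vanishing $(ns)$-Carleson throughout and invoking Lemma \ref{decom06} in place of Lemma \ref{decom04}.
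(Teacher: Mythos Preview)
Your proposal is correct and follows essentially the same route as the paper. For $d_1\lesssim d_4$ the paper encodes the ``subtract the value at $0$ and integrate back'' step into a single kernel $L(z,w)=\int_0^1\big((1-t\langle z,w\rangle)^{-(n+1+\alpha)}-1\big)\,dt/t$, writing $f(z)-f(0)=\int_\B Rf(w)L(z,w)\,dV_\alpha(w)$ and then splitting over $\widetilde\Omega_\varepsilon(f)$; since $RL(z,w)=(1-\langle z,w\rangle)^{-(n+1+\alpha)}-1$ and $L(0,w)=0$, this produces exactly your $Rf_1=g_1-g_1(0)$ and $Rf_2=g_2-g_2(0)$, so (after your reduction $f(0)=0$) your $f_1,f_2$ coincide with the paper's. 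The remaining steps---$d_4\le d_5\le d_6\le d_7$ via the pointwise chain $(1-|z|^2)|Rf|\le(1-|z|^2)|\nabla f|\le|\widetilde\nabla f|$, and $d_7\lesssim d_1$ by choosing $g\in\calN(p,q,s)$ close to $f$ in $\calB^{q/p+1}$---match the paper's argument, with only cosmetic differences (you note $d_4=d_5$ outright, while the paper closes this via the full cycle).
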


\begin{proof}  (1)  $d_1 \lesssim d_4$.

The proof for this part is similar to the proof of $d_1 \lesssim d_2$ in Theorem \ref{distance01}. Again, we may assume that $p>1$. Let $\varepsilon$ be a positive number such that $\chi_{\widetilde{\Omega}_\varepsilon(f)}(z) (1-|z|^2)^{ns}d\lambda(z)$ is an $(ns)$-Carleson measure. Since $f \in \calB^{\frac{q}{p}+1}$, we have
$$
\sup_{z \in \B} |Rf(z)|(1-|z|^2)^{\frac{q}{p}+1}<\infty.
$$
Take and fix some $\alpha>\max\left\{\frac{q}{p}, \frac{q+ns-n-1}{p}+1 \right\}$. It is easy to see that $Rf(z) \in A^1_\alpha$, and hence by \cite[Theorem 2.2]{Zhu}, we have
$$
Rf(z)=\int_\B \frac{Rf(w)dV_\alpha(w)}{(1-\langle z, w \rangle)^{n+1+\alpha}}, \  z \in \B.
$$
Since $Rf(0)=0$, we have
$$
Rf(z)=\int_\B Rf(w) \left(\frac{1}{(1-\langle z, w \rangle)^{n+1+\alpha}}-1 \right)dV_\alpha(w), \  z \in \B.
$$
It follows that
$$
f(z)-f(0)=\int_0^1 \frac{Rf(tz)}{t}dt=\int_\B Rf(w)L(z, w)dV_\alpha(w),
$$
where the kernel
$$
L(z, w)=\int_0^1 \left(\frac{1}{(1-t\langle z, w \rangle)^{n+1+\alpha}}-1\right)\frac{dt}{t}.
$$
Let $f(z)=f_1(z)+f_2(z)$, where
$$
f_1(z)=f(0)+\int_{\widetilde{\Omega}_\varepsilon(f)} Rf(w)L(z, w)dV_\alpha(w)
$$
and
$$
f_2(z)=\int_{\B \backslash \widetilde{\Omega}_\varepsilon(f)} Rf(w)L(z, w)dV_\alpha(w)
$$
We have the following claims.

\textbf{$\bullet \ \|f_2\|_{\calB^{\frac{q}{p}+1}} \le C\varepsilon$ for some constant $C>0$.}

Since
$$
RL(z, w)=\int_0^1 \frac{(n+1+\alpha)\langle z , w\rangle dt}{(1-t\langle z, w\rangle)^{n+\alpha+2}}=\frac{1}{(1-\langle z, w\rangle)^{n+1+\alpha}}-1,
$$
we have
\begin{eqnarray*}
|Rf_2(z)|%
&=& \left| \int_{\B \backslash \widetilde{\Omega}_\varepsilon(f)} Rf(w)RL(z, w)dV_\alpha(w) \right|\\
&\lesssim& \varepsilon \int_{\B \backslash \widetilde{\Omega}_\varepsilon(f)}  (1-|w|^2)^{\alpha-\frac{q}{p}-1} \cdot \left(\frac{1}{(1-\langle z, w\rangle)^{n+1+\alpha}}-1\right)dV(w)\\
&\le& \varepsilon \cdot \left(\int_\B \frac{(1-|w|^2)^{\alpha-\frac{q}{p}-1}}{|1-\langle z, w \rangle|^{n+1+\alpha-\frac{q}{p}-1+\left(\frac{q}{p}+1\right)}}dV(w)+1\right)\\
&\lesssim&\frac{\varepsilon}{(1-|z|^2)^{\frac{q}{p}+1}},
\end{eqnarray*}
which implies the desired claim.

\textbf{$\bullet \ f_1 \in \calN(p, q, s)$.}

By Theorem \ref{Carlesoncharac}, it suffices to show that $|Rf_1(z)|^p(1-|z|^2)^{p+q+ns}d\lambda(z)$ is a $(ns)$-Carleson measure. Note that for $z \in \B$, we have
\begin{eqnarray*}
|Rf_1(z)|%
&=& \left| \int_{\widetilde{\Omega}_\varepsilon(f)} Rf(w)RL(z, w)dV_\alpha(w)\right|\\
&\le& \int_{\widetilde{\Omega}_\varepsilon(f)} |Rf(w)| \left(\frac{1}{|1-\langle z, w \rangle|^{n+1+\alpha}}+1\right)dV_\alpha(w)\\
&=& I_1+I_2,
\end{eqnarray*}
where
$$
I_1=\int_{\widetilde{\Omega}_\varepsilon(f)}  \frac{|Rf(w)|}{|1-\langle z, w \rangle|^{n+1+\alpha}}dV_\alpha(w)
$$
and
$$
I_2=\int_{\widetilde{\Omega}_\varepsilon(f)} |Rf(w)| dV_\alpha(w).
$$
First, we note that by our choice of $\alpha$, it follows that
\begin{eqnarray*}
I_2%
&\lesssim& \int_\B |Rf(w)|(1-|w|^2)^{\frac{q}{p}+1} (1-|w|^2)^{\alpha-\frac{q}{p}-1}dV(w)\\
&\lesssim& \int_\B (1-|w|^2)^{\alpha-\frac{q}{p}-1}dV(w) \lesssim 1.
\end{eqnarray*}
Next, we estimate $I_1$. Note that
\begin{eqnarray*}
I_1%
&\simeq& \int_{\widetilde{\Omega}_\varepsilon(f)}  \frac{|Rf(w)|(1-|w|^2)^{\frac{q}{p}+1}(1-|w|^2)^{\alpha-\frac{q}{p}-1}}{|1-\langle z, w \rangle|^{n+1+\alpha}}dV(w)\\
&\lesssim& \int_\B \frac{(1-|w|^2)^{\alpha}}{|1-\langle z, w \rangle|^{n+1+\alpha}} \cdot \frac{\chi_{\widetilde{\Omega}_\varepsilon(f)}(w)}{(1-|w|^2)^{\frac{q}{p}+1}}dV(w)
\end{eqnarray*}
Let
$$
g(w)=\frac{\chi_{\widetilde{\Omega}_\varepsilon(f)}(w)}{(1-|w|^2)^{\frac{q}{p}+1}}
$$
and hence we have
$$
|g(w)|^p(1-|w|^2)^{p+q+ns}d\lambda(w)=\chi_{\widetilde{\Omega}_\varepsilon(f)}(w)(1-|w|^2)^{ns}d\lambda(w),
$$
which is an $(ns)$-Carleson measure by our assumption. Now in Theorem \ref{decom04}, let $t=0$ and $b=n+1+\alpha$, it is easy to check that
$$
t=0>n-p-q-ns
$$
and
$$
b=n+1+\alpha>\frac{n+1}{p'}+\frac{q+ns}{p}+1.
$$
Hence, the operator $T$ with parameter $n+1+\alpha$ sends the $(ns)$-Carleson measure
$$
|g(z)|^p(1-|z|^2)^{p+q+ns}d\lambda(z)
$$
to
\begin{eqnarray*}
&&|Tg(z)|^p(1-|z|^2)^{p+q+ns}d\lambda(z)\\
&& \left| \int_\B \frac{(1-|w|^2)^{\alpha}}{|1-\langle z, w \rangle|^{n+1+\alpha}} \cdot \frac{\chi_{\widetilde{\Omega}_\varepsilon(f)}(w)}{(1-|w|^2)^{\frac{q}{p}+1}}dV(w)\right|^p(1-|z|^2)^{p+q+ns}d\lambda(z),
\end{eqnarray*}
which, by Theorem \ref{decom04}, is also an $(ns)$-Carleson measure. Finally, we have
$$
|Rf_1(z)| \le I_1+I_2 \lesssim \left| \int_\B \frac{(1-|w|^2)^{\alpha}}{|1-\langle z, w \rangle|^{n+1+\alpha}} \cdot \frac{\chi_{\widetilde{\Omega}_\varepsilon(f)}(w)}{(1-|w|^2)^{\frac{q}{p}+1}}dV(w)\right|+1
$$
and it follows that $|Rf_1(z)|^p(1-|z|^2)^{p+q+ns}d\lambda(z)$ is an $(ns)$-Carleson measure. Hence, the claim is proved.

Note that $f_1 \in A^{-\frac{q}{p}}(\B)=\calB^{\frac{q}{p}+1}$ since $\calN(p, q, s) \subseteq A^{-\frac{q}{p}}(\B)=\calB^{\frac{q}{p}+1}$. Thus, we have
$$
d_1=d(f, \calN(p, q, s) ) \lesssim \|f-f_1\|_{\calB^{\frac{q}{p}+1}}=\|f_2\|_{\calB^{\frac{q}{p}+1}}\lesssim \varepsilon.
$$
Finally, by letting $\varepsilon$ tends to $d_2$, we get the desired result.

 (2)  $d_4 \le d_5$.

The proof for this part is almost the same as the proof for $d_2 \le d_3$ in Theorem \ref{distance01} and hence we omit it here.

 (3)  $d_5 \le d_6 \le d_7$.

This assertion is follows by the   inequality (see, e.g., \cite[Lemma 2.14]{Zhu})
$$
|Rf(z)|(1-|z|^2) \le |\nabla f(z)|(1-|z|^2) \le |\widetilde{\nabla} f(z)|.
$$

 (4)  $d_7 \le d_1$.

It suffices to show that
$$
(d_1, \infty) \subset \left\{\varepsilon: \sup_{a \in \B} \int_{\widetilde{\Omega}_\varepsilon(f)} |\widetilde{\nabla} f(z)|^p(1-|z|^2)^q (1-|\Phi_a(z)|^2)^{ns}d\lambda(z)<\infty\right\}.
$$
Take and fix any $\varepsilon>d_1$, there exists a function $f_1 \in \calN(p, q, s)$ such that
$$
\|f-f_1\|_{\calB^{\frac{q}{p}+1}}<\frac{d_1+\varepsilon}{2}.
$$
Then, by triangle inequality, we have for $z \in \widetilde{\Omega}_\varepsilon(f)$,
\begin{eqnarray*}
|\widetilde{\nabla} f_1(z)|(1-|z|^2)^{\frac{q}{p}}%
&\ge& |Rf_1(z)|(1-|z|^2)^{\frac{q}{p}+1}\\
&\ge&|Rf(z)|(1-|z|^2)^{\frac{q}{p}+1}-|R(f-f_1)(z)|(1-|z|^2)^{\frac{q}{p}+1}\\
&\ge& \varepsilon-\frac{d_1+\varepsilon}{2}=\frac{\varepsilon-d_1}{2}.
\end{eqnarray*}
Thus, it follows that
\begin{eqnarray*}
&&\sup_{a \in \B} \int_{\widetilde{\Omega}_\varepsilon(f)} |\widetilde{\nabla} f(z)|^p(1-|z|^2)^q(1-|\Phi_a(z)|^2)^{ns}d\lambda(z)\\
&\lesssim& \sup_{a \in \B} \int_{\widetilde{\Omega}_\varepsilon(f)} (1-|\Phi_a(z)|^2)^{ns}d\lambda(z)\\
&\le&\left(\frac{2}{\varepsilon-d_1}\right)^p \sup_{a  \in \B} \int_\B |\nabla f_1(z)|^p(1-|z|^2)^{p+q}(1-|\Phi_a(z)|^2)^{ns}d\lambda(z)\\
&<&\infty,
\end{eqnarray*}
where in the last inequality, we use Corollary \ref{equivalentnorm} and in the first inequality, we use the fact that $1+\frac{q}{p}>\frac{1}{2}$ and hence by our previous remark,
$$
|f(0)|+\sup_{z \in \B} (1-|z|^2)^{\frac{q}{p}}|\widetilde{\nabla} f(z)|
$$
becomes an equivalent norm of $\calB^{\frac{q}{p}+1}$. Therefore we get the desired result.
\end{proof}

\begin{cor}
Suppose $p \ge 1, q>0$, $s>\max\left\{0, 1-\frac{q}{n}\right\}$  and $f \in A^{-\frac{q}{p}}(\B)$. Then the following conditions are equivalent:
\begin{enumerate}
\item $f$ is in the closure of $\calN(p, q, s)$ in $A^{-\frac{q}{p}}(\B)$.
\item $\chi_{\Omega_\varepsilon(f)}(z)(1-|z|^2)^{ns}d\lambda(z)$ is an $(ns)$-Carleson measure for every $\varepsilon>0$;
\item $\chi_{\widetilde{\Omega}_\varepsilon(f)}(z)(1-|z|^2)^{ns}d\lambda(z)$ is an $(ns)$-Carleson measure for every $\varepsilon>0$;
\item
$$
\sup_{a \in \B} \int_{\Omega_\varepsilon(f)} |f(z)|^p(1-|z|^2)^q(1-|\Phi_a(z)|^2)^{ns}d\lambda<\infty
$$
for every $\varepsilon>0$;
\item
$$
\sup_{a \in \B} \int_{\widetilde{\Omega}_\varepsilon(f)} |Rf(z)|^p(1-|z|^2)^{p+q}(1-|\Phi_a(z)|^2)^{ns}d\lambda(z)<\infty.
$$
for every $\varepsilon>0$;
\item
$$
\sup_{a \in \B} \int_{\widetilde{\Omega}_\varepsilon(f)} |\nabla f(z)|^p(1-|z|^2)^{p+q}(1-|\Phi_a(z)|^2)^{ns}d\lambda(z)<\infty
$$
for every $\varepsilon>0$;
\item
$$
\sup_{a \in \B} \int_{\widetilde{\Omega}_\varepsilon(f)} |\widetilde{\nabla} f(z)|^p(1-|z|^2)^q(1-|\Phi_a(z)|^2)^{ns}d\lambda(z)<\infty
$$
for every $\varepsilon>0$.
\end{enumerate}
\end{cor}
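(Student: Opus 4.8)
The plan is to deduce this corollary directly from Theorems \ref{distance01} and \ref{distance02}, together with one elementary monotonicity observation; no new analytic work is needed. First I would observe that, by the definition of the distance in $A^{-q/p}(\B)$, the function $f$ lies in the closure of $\calN(p, q, s)$ inside $A^{-q/p}(\B)$ if and only if $d(f, \calN(p, q, s)) = 0$, that is, $d_1 = 0$. Since Theorems \ref{distance01} and \ref{distance02} assert that $d_1 \simeq d_2 \simeq d_3$ and $d_1 \simeq d_4 \simeq d_5 \simeq d_6 \simeq d_7$, the vanishing of $d_1$ is equivalent to the simultaneous vanishing of each of $d_2, \dots, d_7$. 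So the whole task reduces to translating, for each index, the statement ``$d_i = 0$'' into the corresponding ``for every $\varepsilon > 0$'' condition appearing in the list.

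For that translation I would use the key monotonicity: for $f \in H(\B)$ the sublevel-type sets shrink as $\varepsilon$ grows, i.e. $\Omega_{\varepsilon_2}(f) \subseteq \Omega_{\varepsilon_1}(f)$ and $\widetilde{\Omega}_{\varepsilon_2}(f) \subseteq \widetilde{\Omega}_{\varepsilon_1}(f)$ whenever $\varepsilon_1 \le \varepsilon_2$. Consequently, if $\chi_{\Omega_{\varepsilon_1}(f)}(z)(1-|z|^2)^{ns} d\lambda(z)$ is an $(ns)$-Carleson measure, then so is $\chi_{\Omega_{\varepsilon_2}(f)}(z)(1-|z|^2)^{ns}d\lambda(z)$, since restricting a positive measure to a smaller set can only decrease its Carleson constant; the analogous remark applies to each of the integral conditions defining $d_3, d_5, d_6, d_7$, all of which integrate a nonnegative quantity over $\Omega_\varepsilon(f)$ or $\widetilde{\Omega}_\varepsilon(f)$. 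Hence each of the index sets occurring in the definitions of $d_2, \dots, d_7$ is an up-set in $(0, \infty)$ whose infimum is precisely the corresponding $d_i$. In particular $d_i = 0$ if and only if that index set equals all of $(0, \infty)$, i.e. the associated condition holds for every $\varepsilon > 0$.

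Chaining the equivalences then finishes the proof: (1) $\Leftrightarrow d_1 = 0$; by Theorem \ref{distance01}, $d_1 = 0 \Leftrightarrow d_2 = 0 \Leftrightarrow$ (2) and $d_1 = 0 \Leftrightarrow d_3 = 0 \Leftrightarrow$ (4); by Theorem \ref{distance02}, $d_1 = 0 \Leftrightarrow d_4 = d_5 = d_6 = d_7 = 0$, which unwinds to (3), (5), (6), and (7) respectively. I do not anticipate any genuine obstacle here, as all the substantive estimates (the splitting $f = f_1 + f_2$, the boundedness of the operator $T$ on the relevant Carleson-type spaces via Lemma \ref{decom04}, and the gradient comparisons) are already established inside the proofs of Theorems \ref{distance01} and \ref{distance02}; the only point requiring a line of care is the harmless monotonicity of the index sets, which is exactly what upgrades ``the infimum is zero'' to ``the condition holds for all positive $\varepsilon$.''
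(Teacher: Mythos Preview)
Your proposal is correct and matches the paper's approach: the corollary is stated immediately after Theorems~\ref{distance01} and~\ref{distance02} without a separate proof, so it is intended to follow directly from those theorems by setting the distance equal to zero. Your explicit monotonicity argument showing that each index set is an up-set (so that $d_i=0$ is equivalent to the corresponding condition holding for every $\varepsilon>0$) is exactly the routine detail the paper leaves implicit.
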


For the ``little-oh" version, we denote the distance in $A^{-\frac{q}{p}}(\B)$ of $f$ to $\calN^0(p, q, s)$ by $d(f, \calN^0(p, q, s))$, that
is
$$
d(f, \calN^0(p, q, s))=\inf_{g \in \calN^0(p, q, s)}|f-g|_{\frac{q}{p}}.
$$

We have the following result.

\begin{thm}
Suppose $p \ge 1, q>0$, $s>\max\left\{0, 1-\frac{q}{n}\right\}$  and $f \in A^{-\frac{q}{p}}(\B)$. Then the following conditions are equivalent:
\begin{enumerate}
\item $e_1=d(f, A^{-\frac{q}{p}}_0(\B))$;
\item $e_2=d(f, \calN^0(p, q, s))$;
\item
$$
e_3=\inf\{\varepsilon:\chi_{\Omega_\varepsilon(f)}(z)(1-|z|^2)^{ns}d\lambda(z) \ \textrm{is a vanishing $(ns)$-Carleson measure} \};
$$
\item
$$
e_4=\inf\{\varepsilon:\chi_{\widetilde{\Omega}_\varepsilon(f)}(z)(1-|z|^2)^{ns}d\lambda(z) \ \textrm{is a vanishing $(ns)$-Carleson measure} \};
$$
\item
$$
e_5=\inf\left\{\varepsilon: \lim_{|a| \to 1} \int_{\Omega_\varepsilon(f)} |f(z)|^p(1-|z|^2)^q(1-|\Phi_a(z)|^2)^{ns}d\lambda(z)=0 \right\};
$$
\item
$$
e_6=\inf\left\{\varepsilon: \lim_{|a| \to 1}  \int_{\widetilde{\Omega}_\varepsilon(f)} |Rf(z)|^p(1-|z|^2)^{p+q}(1-|\Phi_a(z)|^2)^{ns}d\lambda(z)=0 \right\};
$$
\item
$$
e_7=\inf\left\{\varepsilon: \lim_{|a| \to 1}  \int_{\widetilde{\Omega}_\varepsilon(f)} |\nabla f(z)|^p(1-|z|^2)^{p+q}(1-|\Phi_a(z)|^2)^{ns}d\lambda(z)=0\right\};
$$
\item
$$
e_8=\inf\left\{\varepsilon: \lim_{|a| \to 1}  \int_{\widetilde{\Omega}_\varepsilon(f)} |\widetilde{\nabla} f(z)|^p(1-|z|^2)^q(1-|\Phi_a(z)|^2)^{ns}d\lambda(z)=0\right\}.
$$
\end{enumerate}
\end{thm}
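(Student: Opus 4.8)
The plan is to run the arguments of Theorems \ref{distance01} and \ref{distance02} in the ``little-oh'' setting: everywhere one replaces ``$(ns)$-Carleson measure'' by ``vanishing $(ns)$-Carleson measure'', Lemma \ref{decom04} by Lemma \ref{decom06}, Theorem \ref{Carlesoncharac} by Theorem \ref{vanishingeqnorm}, and \cite[Theorem 45]{ZZ} by its vanishing version. Writing $l=\frac{q}{p}$ (so that $A^{-l}(\B)=A^{-\frac{q}{p}}(\B)$) and noting that $s>\max\left\{0,1-\frac{q}{n}\right\}$ forces $ns+q>n$, the only genuinely new point is the identity $e_1=e_2$; after that, the remaining estimates close up into the two cycles $e_1=e_2\lesssim e_3\le e_5\le e_1$ and $e_1=e_2\lesssim e_4\le e_6\le e_7\le e_8\le e_1$, which forces all eight quantities to be comparable.

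First I would establish $e_1=e_2$. Since $ns+q>n$, every polynomial lies in $\calN(p,q,s)$ by Proposition \ref{prop02}, and in fact in $\calN^0(p,q,s)$: this is implicit in the proof of Theorem \ref{thm01}, where $\int_\B(1-|z|^2)^q(1-|\Phi_a(z)|^2)^{ns}d\lambda(z)\to0$ as $|a|\to1$ is shown when $ns+q>n$. Conversely $\calN^0(p,q,s)\subseteq A^{-l}_0(\B)$: the pointwise estimate from the proof of Proposition \ref{boundaryineq} applied over $D(a_0,1/2)$ gives $|f(a_0)|^p(1-|a_0|^2)^q\lesssim\int_\B|f(z)|^p(1-|z|^2)^q(1-|\Phi_{a_0}(z)|^2)^{ns}d\lambda(z)$, whose right side tends to $0$ as $|a_0|\to1$ for $f\in\calN^0(p,q,s)$. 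As $A^{-l}_0(\B)$ is the closure of the polynomials in $A^{-l}(\B)$, the closure of $\calN^0(p,q,s)$ in $A^{-l}(\B)$ is exactly $A^{-l}_0(\B)$, and since the distance to a set equals the distance to its closure, $e_1=e_2$.

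Next I would carry out the two decompositions $e_2\lesssim e_3$ and $e_2\lesssim e_4$, which are the vanishing analogues of $d_1\lesssim d_2$ and $d_1\lesssim d_4$. Fix $\varepsilon$ for which $\chi_{\Omega_\varepsilon(f)}(z)(1-|z|^2)^{ns}d\lambda(z)$ (resp. $\chi_{\widetilde{\Omega}_\varepsilon(f)}(z)(1-|z|^2)^{ns}d\lambda(z)$) is a vanishing $(ns)$-Carleson measure, and split $f=f_1+f_2$ via the reproducing formula as in the proof of Theorem \ref{distance01} (resp. Theorem \ref{distance02}), integrating over $\Omega_\varepsilon(f)$ (resp. $\widetilde{\Omega}_\varepsilon(f)$) and its complement, with the same choice of $\alpha$. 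The bound $|f_2|_l\lesssim\varepsilon$ is unchanged, and $|f_1(z)|\lesssim T(g)(z)$ (resp. $|Rf_1(z)|\lesssim 1+T(g)(z)$) with $g(w)=\chi_{\Omega_\varepsilon(f)}(w)(1-|w|^2)^{-l}$ (resp. $g(w)=\chi_{\widetilde{\Omega}_\varepsilon(f)}(w)(1-|w|^2)^{-l-1}$), where $T$ is the operator \eqref{decomeq01} with parameter $b=n+1+\alpha$. Since the weighted measure built from $g$ is vanishing $(ns)$-Carleson by the choice of $\varepsilon$, Lemma \ref{decom06} (applied with $t=-p$, resp. $t=0$) makes $|T(g)(z)|^p(1-|z|^2)^{q+ns}d\lambda(z)$ (resp. $|T(g)(z)|^p(1-|z|^2)^{p+q+ns}d\lambda(z)$) vanishing $(ns)$-Carleson as well, whence $f_1\in\calN^0(p,q,s)$ by Theorem \ref{vanishingeqnorm}. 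Thus $e_2\le|f-f_1|_l=|f_2|_l\lesssim\varepsilon$, and letting $\varepsilon$ decrease to $e_3$ (resp. $e_4$) finishes this step.

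The remaining inequalities are soft. If $\lim_{|a|\to1}\int_{\Omega_\varepsilon(f)}|f(z)|^p(1-|z|^2)^q(1-|\Phi_a(z)|^2)^{ns}d\lambda(z)=0$, then $|f(z)|^p(1-|z|^2)^q\ge\varepsilon^p$ on $\Omega_\varepsilon(f)$ and the vanishing form of \cite[Theorem 45]{ZZ} shows $\chi_{\Omega_\varepsilon(f)}(z)(1-|z|^2)^{ns}d\lambda(z)$ is vanishing $(ns)$-Carleson, so $e_3\le e_5$; the same over $\widetilde{\Omega}_\varepsilon(f)$ gives $e_4\le e_6$, while $|Rf(z)|(1-|z|^2)\le|\nabla f(z)|(1-|z|^2)\le|\widetilde{\nabla}f(z)|$ gives $e_6\le e_7\le e_8$. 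For $e_5\le e_1$ and $e_8\le e_1$: given $\varepsilon>e_1=e_2$, pick $g\in\calN^0(p,q,s)$ with $|f-g|_l<\tfrac{e_1+\varepsilon}{2}$ (resp. $\|f-g\|_{\calB^{l+1}}<\tfrac{e_1+\varepsilon}{2}$, using Lemma \ref{BlochBertype}); the triangle inequality forces $|g(z)|(1-|z|^2)^l\ge\tfrac{\varepsilon-e_1}{2}$ on $\Omega_\varepsilon(f)$ (resp. $|Rg(z)|(1-|z|^2)^{l+1}\ge\tfrac{\varepsilon-e_1}{2}$ on $\widetilde{\Omega}_\varepsilon(f)$), so using $|f|_l<\infty$ one bounds $\int_{\Omega_\varepsilon(f)}|f(z)|^p(1-|z|^2)^q(1-|\Phi_a(z)|^2)^{ns}d\lambda(z)$ by a constant multiple of $(\varepsilon-e_1)^{-p}\int_\B|g(z)|^p(1-|z|^2)^q(1-|\Phi_a(z)|^2)^{ns}d\lambda(z)$, which tends to $0$ as $|a|\to1$ since $g\in\calN^0(p,q,s)$ (and analogously with $\widetilde{\nabla}g$ and Corollary \ref{equivalentnorm}). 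Chasing the two cycles above then gives the theorem. The main obstacle will be Step 2: one must check that the exponents fed to Lemma \ref{decom06} really satisfy its hypotheses — the condition $t>n-p-q-ns$ becomes $ns+q>n$ when $t=-p$ and is automatic when $t=0$, and the lower bound on $b$ is exactly the one in the statement — and, crucially, that the averaging operator $T$ sends the \emph{vanishing} Carleson measure attached to $g$ to another \emph{vanishing} one, which is precisely where the little-oh refinement over Theorems \ref{distance01}--\ref{distance02} is needed; everything else is bookkeeping parallel to those two theorems.
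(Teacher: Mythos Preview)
Your proposal is correct and follows essentially the same approach as the paper: the paper's proof is a two-sentence sketch that says $e_1=e_2$ because both $A_0^{-q/p}(\B)$ and $\calN^0(p,q,s)$ are closures of the polynomials, and that $e_2,\dots,e_8$ are handled by rerunning Theorems~\ref{distance01} and~\ref{distance02} with $\sup_{a\in\B}$ replaced by $\lim_{|a|\to1}$ and Lemma~\ref{decom04} replaced by Lemma~\ref{decom06}. You have simply filled in the details the paper omits (in particular the inclusion $\calN^0(p,q,s)\subseteq A_0^{-q/p}(\B)$ via the pointwise estimate of Proposition~\ref{boundaryineq}, and the verification of the exponent conditions in Lemma~\ref{decom06}), so your write-up is a faithful expansion of the paper's own argument.
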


\begin{proof}
Since both $A^{-\frac{q}{p}}_0(\B)$ and $\calN^0(p, q, s)$ are the closure of all the polynomials in $A^{-\frac{q}{p}}(\B)$ and $\calN(p, q, s)$ respectively, the equivalence between $e_1$ and $e_2$ is obvious.

Moreover, in the proof of Theorems \ref{distance01} and \ref{distance02}, by interchanging the role of $d(f, \calN(p, q, s))$ to $d(f, \calN^0(p, q, s))$, $\sup_{a \in \B}$ to $\lim_{|a| \to 1}$ and applying Lemma \ref{decom06} instead of Lemma \ref{decom04}, we can get the equivalence of $e_2, e_3, \dots, e_8$.
\end{proof}


\medskip

\section{Riemann-Stieltjes operators and multipliers}


In this section, we will study the behavior of Riemann-Stieltjes operators on $\calN(p, q, s)$-type spaces, which can be interpreted as ``half" of the multiplication operator.

Precisely, let $g$ be a holomorphic function on $\B$. Denote the \emph{Riemann-Stieltjes operators $T_g$ and $L_g$ with symbol $g$} as
\[ \begin{split}
&T_g f(z)=\int_0^1 f(tz)Rg(tz)\frac{dt}{t}, \quad f \in H(\B),  z \in \B; \\
& L_g f(z)=\int_0^1 g(tz)Rf(tz)\frac{dt}{t}, \quad f \in H(\B), z \in \B.
\end{split}
\]
Clearly, the Riemann-Stieltjes operator $T_g$ can be viewed as a generalization of the well-known Ces\`aro operator. It is also easy to see that the multiplication operator $M_g$ are determined by
$$
M_gf(z)=g(z)f(z)=g(0)f(0)+T_gf(z)+L_gf(z), \quad  f \in H(\B), z \in \B.
$$
In general, these operators are usually referred as the integral operators, which have been studied under various settings (see, e.g., \cite{CLS, ZH2, SL, JX3}).

For the purpose to study the boundedness and compactness of the Riemann-Stieltjes operators on $\calN(p, q, s)$-spaces, we introduce the following \emph{non-isotropic tent type space $T_{m, l}^\infty$} of all $\mu$-measure functions $f$ on $\B$ satisfying
$$
\|f\|_{T_{m, l}^\infty(\mu)}=\sup_{\xi \in \SSS, \del>0} \left(\frac{1}{\del^{nl}} \int_{Q_\del(\xi)} |f|^md\mu\right)^{\frac{1}{m}}<\infty,
$$
where $Q_\del(\xi)=\{z \in \B: |1-\langle z, \xi \rangle|<\del\}$ for $\xi \in \SSS$ and $\del>0$. The tent-type space is a very powerful tool in studying some deep properties of function spaces, for example, in Section 6, we take the advantage of this tent-type space to study the atomic decomposition of $\calN(p, q, s)$-spaces with $d\mu=(1-|z|^2)^{p+q+ns}d\lambda(z)$. Finally, based on the setting at the beginning of Section 4, we denote
$$
\|\mu\|_{\calC \calM_p}=\sup_{\xi \in \SSS, \del>0} \frac{\mu(Q_\del(\xi))}{\del^{p}}.
$$


\medskip

\subsection{Embedding theorem of $\calN(p, q, s)$-spaces into the tent space}


We need the following well-known lemma (see, e.g., \cite[Theorem 45]{ZZ}).

\begin{lem} \label{lemSec802}
Suppose $n+1+\alpha>0$ and $\mu$ is a positive Borel measure on $\B$. Then the following conditions are equivalent.
\begin{enumerate}
\item[(a)] There exists a constant $C>0$ such that
$$
\mu(Q_r(\xi)) \le Cr^{n+1+\alpha}
$$
for all $\xi \in \SSS$ and all $r>0$.
\item[(b)] For each $s>0$ there exists a constant $C>0$ such that
\begin{equation} \label{CarlesonSec801}
\int_\B \frac{(1-|z|^2)^sd\mu(w)}{|1-\langle z, w \rangle|^{n+1+\alpha+s}} \le C
\end{equation}
for all $z \in \B$.
\item[(c)] For some $s>0$ there exists a constant $C>0$ such that   the inequality in \eqref{Carleson01} holds for all $z \in \B$.
\end{enumerate}
\end{lem}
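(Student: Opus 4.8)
The plan is to establish the cycle $(a)\Longrightarrow(b)\Longrightarrow(c)\Longrightarrow(a)$; the implication $(b)\Longrightarrow(c)$ requires no argument, since a bound valid for every $s>0$ is in particular valid for some $s>0$. A preliminary remark used throughout: $\mu$ is necessarily a finite measure. Under $(a)$ this follows by taking $r=2$, because $\B=Q_2(\xi)$ for any $\xi\in\SSS$ (as $|1-\langle w,\xi\rangle|\le 1+|w|<2$); under $(c)$ it follows by taking $z=0$, where $(1-|z|^2)^s=1$ and $|1-\langle z,w\rangle|=1$. Finiteness lets me dispose of the ``large scale'' in both nontrivial implications, so I may assume $0<r<1$ in $(a)$ and $|z|\ge \tfrac12$ in $(b)$ and $(c)$.

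For $(a)\Longrightarrow(b)$, I would fix $s>0$ and $z\in\B$ with $|z|\ge\tfrac12$, write $\xi_z=z/|z|$ and $\del_0=1-|z|^2$, and use the elementary bound $|1-\langle z,w\rangle|\ge 1-|z|\ge\tfrac12\del_0$ to split $\B$ into the annuli $A_k=\{w\in\B:\ 2^{k-1}\del_0\le|1-\langle z,w\rangle|<2^k\del_0\}$, $k\ge 0$, the innermost one also absorbing $\{w:|1-\langle z,w\rangle|<\del_0\}$. The identity $1-\langle w,\xi_z\rangle=(|z|-\langle w,z\rangle)/|z|$ gives $|1-\langle w,\xi_z\rangle|\lesssim |1-\langle w,z\rangle|+\del_0$, hence $A_k\subseteq Q_{M2^k\del_0}(\xi_z)$ for an absolute constant $M$. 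Applying $(a)$ on each tube and summing the resulting geometric series $\sum_{k\ge0}\del_0^s\,(2^k\del_0)^{-(n+1+\alpha+s)}(2^k\del_0)^{n+1+\alpha}=\sum_{k\ge0}2^{-ks}$ yields a bound independent of $z$, which together with the trivial estimate $\mu(\B)<\infty$ on $\{|z|<\tfrac12\}$ gives $(b)$.

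For $(c)\Longrightarrow(a)$, I would assume the integral bound with constant $C$ for some fixed $s>0$, and for $\xi\in\SSS$, $0<r<1$ test with $z=(1-r)\xi$: then $1-|z|^2=2r-r^2\simeq r$, and for $w\in Q_r(\xi)$, writing $\langle w,\xi\rangle=1-\ze$ with $|\ze|<r$, one computes $1-\langle z,w\rangle=r+(1-r)\bar\ze$, so $|1-\langle z,w\rangle|<2r$. Restricting the integral to $Q_r(\xi)$ then gives $C\gtrsim r^{-(n+1+\alpha)}\mu(Q_r(\xi))$, i.e.\ $(a)$ for $0<r<1$; the range $r\ge1$ is covered by $\mu(Q_r(\xi))\le\mu(\B)<\infty$ after enlarging the constant. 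The only step requiring care is the geometric comparison in $(a)\Longrightarrow(b)$: one must verify that the super-level sets of $w\mapsto|1-\langle z,w\rangle|$ lie inside Carleson tubes at $z/|z|$ of comparable radius, and must not overlook the lower bound $|1-\langle z,w\rangle|\gtrsim 1-|z|^2$ that controls the core annulus; everything else reduces to a geometric series and the finiteness of $\mu$. We note that this is a classical fact, also recorded in \cite[Theorem 45]{ZZ}, so the argument above may equally be quoted rather than reproduced in full.
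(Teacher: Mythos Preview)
Your proof is correct, and in fact goes well beyond what the paper does: the paper simply cites this lemma as a known result from \cite[Theorem 45]{ZZ} and provides no proof at all. Your dyadic annuli decomposition for $(a)\Rightarrow(b)$ and the testing argument with $z=(1-r)\xi$ for $(c)\Rightarrow(a)$ constitute the standard direct proof of this classical Carleson measure characterization, and all the estimates you sketch (the inclusion $A_k\subseteq Q_{M2^k\del_0}(\xi_z)$, the geometric series in $2^{-ks}$, and the two-sided bound $|1-\langle z,w\rangle|\simeq r$ on $Q_r(\xi)$) check out. You even anticipated the paper's approach in your final remark: quoting \cite[Theorem 45]{ZZ} is exactly what the authors do.
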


The following lemma plays an important role in the sequel.

\begin{lem} \label{lemSec803}
Let $p \ge 1, q>0, s>\max\left\{0, 1-\frac{q}{n} \right\}$ and for a fixed $w \in \B$, put
$$
K_w(z)=\frac{1-|w|^2}{(1-\langle z, w \rangle)^{1+\frac{q}{p}}}, \quad z \in \B.
$$
Then $\sup_{w \in \B} \|K_w\| \lesssim 1$.
\end{lem}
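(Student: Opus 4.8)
The plan is to bypass the two‑variable integral estimate entirely and instead read off the uniform bound from the Bergman‑space embedding already at our disposal. By Proposition~\ref{Bergmanembedding} (see also Remark~\ref{Bergremark}), with the admissible choice $\rho=q+ns-n$ one has the continuous inclusion $A^{k}_{\alpha}\subseteq\calN(p,q,s)$, that is $\|f\|\lesssim\|f\|_{k,\alpha}$, where
\[
k:=\frac{p(q+ns)}{q},\qquad \alpha:=q+ns-n-1 .
\]
Hence it suffices to show that $\sup_{w\in\B}\|K_w\|_{k,\alpha}\lesssim 1$, and the whole argument reduces to one elementary weighted integral.

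First I would check the bookkeeping: $\alpha>-1$ is equivalent to $q+ns>n$, which holds because $s>\max\{0,1-\tfrac qn\}$, so $dV_\alpha$ is defined; and $k=p\bigl(1+\tfrac{ns}{q}\bigr)>p\ge 1$, so $\rho=q+ns-n$ lies in the admissible ranges of Proposition~\ref{Bergmanembedding} in both the cases $0<s<1$ (where one needs $\max\{0,q-n\}<\rho<\frac{q+ns-n}{1-s}$, i.e.\ the immediate inequalities $q+ns-n>0$ and $q+ns-n<\frac{q+ns-n}{1-s}$) and $s\ge 1$ (where one only needs $\rho>\max\{0,q-n\}$). Then, writing out the norm,
\[
\|K_w\|_{k,\alpha}^{k}=c_\alpha\,(1-|w|^2)^{k}\int_\B\frac{(1-|z|^2)^{\alpha}}{|1-\langle z,w\rangle|^{\gamma}}\,dV(z),
\qquad \gamma:=k\Bigl(1+\frac qp\Bigr)=\frac{(q+ns)(p+q)}{q}.
\]
Since $\frac{p+q}{q}>1$ we have $\gamma>q+ns=n+1+\alpha$, so \cite[Theorem~1.12]{Zhu} gives $\int_\B\frac{(1-|z|^2)^{\alpha}}{|1-\langle z,w\rangle|^{\gamma}}\,dV(z)\simeq(1-|w|^2)^{n+1+\alpha-\gamma}$; and a direct computation shows $n+1+\alpha-\gamma=q+ns-\gamma=-k$. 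Therefore $\|K_w\|_{k,\alpha}^{k}\simeq(1-|w|^2)^{k}(1-|w|^2)^{-k}=1$ uniformly in $w\in\B$, and combining with $\|K_w\|\lesssim\|K_w\|_{k,\alpha}$ finishes the proof.

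I do not expect a genuine obstacle here: the only analytic input is the standard estimate \cite[Theorem~1.12]{Zhu}, and the single delicate point is the verification that the chosen exponent $\rho=q+ns-n$ is legal in Proposition~\ref{Bergmanembedding}, which I addressed above. For completeness one could also argue directly from the Carleson‑measure description in Proposition~\ref{Carleson01}, reducing to the uniform boundedness in $a,w$ of $(1-|w|^2)^{p}(1-|a|^2)^{ns}\,I_{w,a}$ for the integral $I_{w,a}$ recalled in Remark~\ref{NpqsFpqsID}, and splitting into cases according to whether $2ns$ and $p+q$ lie below or above $q+ns$, each case closing via the elementary bounds $\frac{1-|a|^2}{|1-\langle a,w\rangle|}\le 2$ and $\frac{1-|w|^2}{|1-\langle a,w\rangle|}\le 2$ together with $1-|\Phi_a(w)|^2\le 1$; but this route forces one to treat the logarithmic borderline subcases, so I would present the Bergman‑embedding proof as the main one.
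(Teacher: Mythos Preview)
Your proof is correct and takes a genuinely different route from the paper's argument. The paper estimates the $\calN(p,q,s)$-integral of $K_w$ directly, splitting into the cases $s>1$ and $s\le 1$: in the first case one dominates by the crude bound $|1-\langle z,w\rangle|\ge 1-|z|$ and reduces to a single Forelli--Rudin integral in the variable $a$; in the second case one applies H\"older's inequality with an exponent $L$ chosen in the window $\max\{1,n/q\}<L<\tfrac{1}{1-s}$ to separate the $w$- and $a$-kernels, and then invokes \cite[Proposition~1.4.10]{Rud} twice. Your argument instead leverages the embedding $A^{p(q+ns)/q}_{q+ns-n-1}\subseteq\calN(p,q,s)$ from Proposition~\ref{Bergmanembedding} (with the specific choice $\rho=q+ns-n$, whose admissibility you verified correctly in both regimes of $s$), so that the problem collapses to a single weighted-Bergman norm computation handled in one stroke by \cite[Theorem~1.12]{Zhu}. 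The advantage of your approach is that it avoids all case analysis and the delicate choice of the H\"older exponent; the paper's approach, on the other hand, is self-contained at this point in the exposition and does not call back to Section~4. Your closing remark about the alternative Carleson/$I_{w,a}$ route is accurate, including the observation that it would force one to handle the logarithmic borderline subcases, which is precisely why the paper's H\"older trick (or your embedding shortcut) is preferable.
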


\begin{proof}
For any fixed $w, a \in \B$, we consider two different cases.

\textit{Case I: $s>1$.}

By \cite[Proposition 1.4.10]{Rud} and the inequality $$|1-\langle z, w \rangle|> \max \{1-|z|, 1-|w| \}, \forall z, w \in \B,$$
 we have
\begin{eqnarray*}
&&\int_\B |K_w(z)|^p(1-|z|^2)^q(1-|\Phi_a(z)|^2)^{ns}d\lambda(z)\\
&= &\int_\B \frac{(1-|w|^2)^p(1-|z|^2)^{q-n-1}}{|1-\langle z, w \rangle|^{p+q}} \cdot \frac{(1-|a|^2)^{ns}(1-|z|^2)^{ns}}{|1-\langle z, a \rangle|^{2ns}}dV(z)\\
&\le& (1-|a|^2)^{ns}  \int_\B \frac{(1-|z|^2)^{ns-n-1}}{|1-\langle z, a \rangle|^{n+1+(ns-n-1)+ns}} dV(z)<\infty.
\end{eqnarray*}

\textit{Case II: $s \le 1$.} We may assume that $s<1$ since the proof for the case $s=1$ is similar to the case $s<1$. Now we take and fix an $L$ satisfying
$$
\max\left\{1, \frac{n}{q} \right\}<L<\frac{1}{1-s}
$$
and $\gamma=q-\frac{n+1}{L}$. Again, using \cite[Proposition 1.4.10]{Rud} and the fact that $n+1+(q+ns-n-1-\gamma)L'=nsL'$, where $L'$ is the conjugate of $L$, we have
\begin{eqnarray*}
&&\int_\B |K_w(z)|^p(1-|z|^2)^q(1-|\Phi_a(z)|^2)^{ns}d\lambda(z)\\
&=&\int_\B \frac{(1-|w|^2)^p(1-|z|^2)^\gamma}{|1-\langle z, w \rangle|^{p+q}} \cdot \frac{(1-|a|^2)^{ns}(1-|z|^2)^{q+ns-n-1-\gamma}}{|1-\langle z, a \rangle|^{2ns}}dV(z)\\
&\le & \left( \int_\B \frac{(1-|w|^2)^{pL} (1-|z|^2)^{\gamma L}}{|1-\langle z, w\rangle|^{(p+q)L}}dV(z)\right)^{\frac{1}{L}} \\
&&  \quad \cdot \left( \int_\B \frac{(1-|a|^2)^{nsL'}(1-|z|^2)^{(q+ns-n-1-\gamma)L'}}{|1-\langle z, a \rangle|^{2nsL'}}dV(z) \right)^{\frac{1}{L'}}\\
&<&\infty,
\end{eqnarray*}
where in the last inequality, we use the facts that $\gamma L>-1$ and $(q+ns-n-1-\gamma)L'>-1$.
\end{proof}

Note that for $p \ge 1$ and $\alpha>-1$, by \cite[Lemma 2.24]{Zhu} and \eqref{Charac11}, we have
\begin{eqnarray} \label{equation001}
|f(z)|^p%
&\lesssim& \frac{1}{(1-|z|^2)^{n+1+\alpha}} \int_{D(z, 1/2)} |f(w)|^pdV_\alpha(w)\\  \nonumber
&\simeq& \int_{D(z, 1/2)} \frac{|f(w)|^pdV_\alpha(w)}{|1-\langle z, w \rangle|^{n+1+\alpha}}\\  \nonumber
&\le& \int_\B \frac{|f(w)|^pdV_\alpha(w)}{|1-\langle z, w \rangle|^{n+1+\alpha}}. \nonumber
\end{eqnarray}

\begin{lem} \label{lemSec804}
Let $p \ge 1, q>0, s>\max\left\{0, 1-\frac{q}{n} \right\}$, $t \ge s+\frac{q}{n}$ and $\mu$ be an $(nt)$-Carleson measure. Then for any fixed $\xi \in \SSS$ and $0<\del \le 2$, we have
$$
\int_{Q_\del(\xi)} \int_\B \frac{|f(w)|^p(1-|w|^2)^\alpha}{|1-\langle z, w \rangle|^{n+1+\alpha}}dV(w)d\mu(z) \lesssim \del^{nt-q} \|\mu\|_{\calC \calM_{nt}} \|f\|^p
$$
for some $\alpha>nt-n-1$.
\end{lem}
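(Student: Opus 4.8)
The plan is to estimate the double integral by the standard dyadic‑annulus decomposition of $\B$ adapted to the Carleson tube $Q_\del(\xi)$, exactly as in the proofs of Lemmas~\ref{decom04} and the embedding results in Section~4. First I would fix $\xi\in\SSS$, $0<\del\le 2$, and split the inner variable $w$ over the tubes: write $\B = Q_{2\del}(\xi)\cup\bigcup_{j\ge 1}A_j$, where $A_j=\{w\in\B:\,2^j\del\le|1-\langle w,\xi\rangle|<2^{j+1}\del\}$. Correspondingly the left‑hand side is $\Sigma_0+\sum_{j\ge1}\Sigma_j$, where $\Sigma_j$ is the contribution of $w$ in the $j$‑th piece. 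For the "local" term $\Sigma_0$ I would apply Fubini, bounding $\int_{Q_\del(\xi)} |1-\langle z,w\rangle|^{-(n+1+\alpha)}\,d\mu(z)$ for $w\in Q_{2\del}(\xi)$: since $\mu$ is an $(nt)$‑Carleson measure, the standard estimate (a dyadic summation against the Carleson condition, cf.\ \cite[Lemma~2.24, Theorem~45]{Zhu,ZZ}) gives a bound of the form $\|\mu\|_{\calC\calM_{nt}}\,(1-|w|^2)^{nt-n-1-\alpha}$, which requires $\alpha>nt-n-1$. Then the $w$‑integral becomes $\int_\B |f(w)|^p (1-|w|^2)^{nt-n-1}\,dV(w)$, and using \eqref{equation001} together with $\calN(p,q,s)\subseteq A^p_{q+ns-n-1}$ (Remark~\ref{Bergremark}) and $nt-n-1\ge q+ns-n-1$ (which is exactly $nt\ge q+ns$, i.e.\ $t\ge s+q/n$), this is dominated by $\|f\|^p$ up to the extra factor $\del^{nt-q}$ picked up from restricting $z$ to $Q_\del(\xi)$; I would track the powers of $\del$ carefully so that the local term is $\lesssim \del^{nt-q}\|\mu\|_{\calC\calM_{nt}}\|f\|^p$.

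For the "tail" terms $\Sigma_j$, $j\ge1$, the key geometric fact is that for $z\in Q_\del(\xi)$ and $w\in A_j$ one has $|1-\langle z,w\rangle|\gtrsim 2^j\del$ (triangle inequality in the quasi‑metric $|1-\langle\cdot,\cdot\rangle|^{1/2}$, as used repeatedly in the proof of Theorem~\ref{Carlesoncharac}). Hence $|1-\langle z,w\rangle|^{-(n+1+\alpha)}\lesssim (2^j\del)^{-(n+1+\alpha)}$, and $\Sigma_j\lesssim (2^j\del)^{-(n+1+\alpha)}\,\mu(Q_\del(\xi))\int_{Q_{2^{j+1}\del}(\xi)}|f(w)|^p(1-|w|^2)^\alpha\,dV(w)$. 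Using $\mu(Q_\del(\xi))\le\|\mu\|_{\calC\calM_{nt}}\del^{nt}$ and estimating the $f$‑integral over $Q_{2^{j+1}\del}(\xi)$ by H\"older together with the Bergman‑space membership of $f$ (again \eqref{equation001} plus $\calN(p,q,s)\subset A^p_{q+ns-n-1}$, so $\int_{Q_R(\xi)}|f|^p(1-|w|^2)^\alpha dV\lesssim R^{\,n+1+\alpha-(q+ns)}\|f\|^p$ for $\alpha>q+ns-n-1$), I get $\Sigma_j\lesssim \|\mu\|_{\calC\calM_{nt}}\|f\|^p\,\del^{nt-q}\,2^{-j\varepsilon}$ for some $\varepsilon>0$, provided $\alpha$ is chosen in the overlap $\max\{nt-n-1,\,q+ns-n-1\}<\alpha$, which is nonempty (and equals $nt-n-1$ since $nt\ge q+ns$). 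Summing the geometric series in $j$ absorbs the tail into the same bound.

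The main obstacle — really the only delicate point — is the bookkeeping of exponents: one must choose a single $\alpha$ that simultaneously makes the Forelli–Rudin integral $\int_\B (1-|w|^2)^{\text{(stuff)}}|1-\langle z,w\rangle|^{-(n+1+\alpha+\cdots)}\,d\mu(z)$ converge with the right power of $(1-|w|^2)$ coming out (forcing $\alpha>nt-n-1$), and also makes the tube estimate $\int_{Q_R(\xi)}|f|^p(1-|w|^2)^\alpha dV\lesssim R^{\,n+1+\alpha-(q+ns)}\|f\|^p$ valid (forcing $\alpha>q+ns-n-1$) while still producing exactly the exponent $nt-q$ of $\del$ and a summable geometric factor in $j$. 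The hypothesis $t\ge s+q/n$ is precisely what reconciles these two lower bounds on $\alpha$ and guarantees $nt-q\ge ns>0$ so that the $\del$‑power is harmless; after that, everything is a routine application of \cite[Theorem~1.12, Corollary~5.24, Lemma~2.24]{Zhu} and \cite[Theorem~45]{ZZ}. I would present the two estimates $\Sigma_0$ and $\sum_j\Sigma_j$ as two displayed computations and conclude by adding them.
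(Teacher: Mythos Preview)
Your decomposition and overall plan are exactly the paper's: split the $w$–integral into a near piece over $Q_{2\del}(\xi)$ and dyadic annuli $A_j$, bound $|1-\langle z,w\rangle|\gtrsim 2^j\del$ on the tails, and sum a geometric series. The gap is in how you control the $f$–integrals over tubes. You invoke only the Bergman inclusion $\calN(p,q,s)\subset A^p_{q+ns-n-1}$, which yields
\[
\int_{Q_R(\xi)}|f(w)|^p(1-|w|^2)^\alpha\,dV(w)\;\lesssim\;R^{\,n+1+\alpha-(q+ns)}\,\|f\|^p,
\]
and with this your tail term becomes $\Sigma_j\lesssim \|\mu\|_{\calC\calM_{nt}}\|f\|^p\,\del^{\,nt-q-ns}\,2^{-j(q+ns)}$, not $\del^{\,nt-q}$ as you assert. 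The missing factor $\del^{ns}$ is fatal: the lemma is used in Theorem~\ref{thmSec801} after dividing by $\del^{nt-q}$, so a bound of order $\del^{nt-q-ns}$ blows up as $\del\to 0$. The same shortfall hits the local term; the factor $\del^{nt-q}$ does \emph{not} come from ``restricting $z$ to $Q_\del(\xi)$'' as you write.

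The fix is to use the Carleson-measure characterization of $\calN(p,q,s)$ itself (Proposition~\ref{Carleson01}, equation~\eqref{Carleson}), not merely the Bergman embedding. That gives the sharper tube estimate
\[
\int_{Q_R(\xi)}|f(w)|^p(1-|w|^2)^{q+ns}\,d\lambda(w)\;\lesssim\;R^{ns}\,\|f\|^p,
\]
so that $\int_{Q_R(\xi)}|f|^p(1-|w|^2)^\alpha\,dV\lesssim R^{\,n+1+\alpha-q}\|f\|^p$ (write $(1-|w|^2)^\alpha=(1-|w|^2)^{\alpha-(q+ns-n-1)}(1-|w|^2)^{q+ns-n-1}$ and use $1-|w|^2\lesssim R$ on $Q_R$). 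With this exponent the tail gives $\Sigma_j\lesssim\|\mu\|_{\calC\calM_{nt}}\|f\|^p\,\del^{\,nt-q}\,2^{-jq}$, and for the local piece one combines $(1-|w|^2)^{nt-q-ns}\lesssim\del^{\,nt-q-ns}$ (here is where $t\ge s+q/n$ enters) with the same Carleson bound over $Q_{2\del}(\xi)$ to obtain $\del^{\,nt-q-ns}\cdot\del^{ns}=\del^{\,nt-q}$. This is precisely what the paper does; once you swap the Bergman inclusion for Proposition~\ref{Carleson01} your outline goes through verbatim.
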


\begin{proof}
Denote
\begin{eqnarray*}
I%
&=& \int_{Q_\del(\xi)} \int_{Q_{4\del}(\xi)} \frac{|f(w)|^p(1-|w|^2)^\alpha}{|1-\langle z, w \rangle|^{n+1+\alpha}}dV(w)d\mu(z)\\
&&+ \sum_{j=2}^\infty \int_{Q_\del(\xi)} \int_{A_j}\frac{|f(w)|^p(1-|w|^2)^\alpha}{|1-\langle z, w \rangle|^{n+1+\alpha}}dV(w)d\mu(z)\\
&=& I_1+I_2,
\end{eqnarray*}
where  $A_1=Q_{4\del}(\xi)$ and $A_j=Q_{4^j\del} \backslash Q_{4^{j-1}\del}(\xi), j \ge 2$.

$\bullet$ \textbf{Estimation of $I_1$.}

Note that for $w \in Q_{4\del}(\xi)$, we have $1-|w| \le |1-\langle w, \xi \rangle|<4\del,$
which implies $(1-|w|^2)^{nt-q-ns} \lesssim \del^{nt-q-ns}$ since $t \ge s+\frac{q}{n}$.  Thus, by Proposition \ref{Carleson01}, Lemma \ref{lemSec802} and Fubini's theorem, we have
\begin{eqnarray*}
&&I_1= \int_{Q_{4\del}(\xi)} \left[\int_{Q_\del(\xi)} \frac{(1-|w|^2)^\alpha}{|1-\langle z, w \rangle|^{n+1+\alpha}}d\mu(z)\right]|f(w)|^pdV(w) \\
&=& \int_{Q_{4\del}(\xi)} \left[\int_{Q_\del(\xi)} \frac{(1-|w|^2)^{\alpha+n+1-q-ns}}{|1-\langle z, w \rangle|^{n+1+\alpha}}d\mu(z)\right]|f(w)|^p(1-|w|^2)^{q+ns}d\lambda(w)\\
&=&  \int_{Q_{4\del}(\xi)} \left[\int_{Q_\del(\xi)} \frac{(1-|w|^2)^{\alpha+n+1-nt+(nt-q-ns)}}{|1-\langle z, w \rangle|^{n+1+\alpha-nt+nt}}d\mu(z)\right]\\
&& |f(w)|^p(1-|w|^2)^{q+ns}d\lambda(w)\end{eqnarray*}
\begin{eqnarray*}
&\lesssim& \del^{nt-q-ns} \int_{Q_{4\del}(\xi)} \left[\int_{Q_\del(\xi)} \frac{(1-|w|^2)^{\alpha+n+1-nt}}{|1-\langle z, w \rangle|^{(\alpha+n+1-nt)+nt}}d\mu(z)\right]\\
&& |f(w)|^p(1-|w|^2)^{q+ns}d\lambda(w) \\
&\lesssim & \del^{nt-q-ns} \|\mu\|_{\calC \calM_{nt}} \int_{Q_{4\del}(\xi)} |f(w)|^p(1-|w|^2)^{q+ns}d\lambda(w)\\
&\lesssim& \del^{nt-q} \|\mu\|_{\calC \calM_{nt}} \|f\|^p.
\end{eqnarray*}

$\bullet$ \textbf{Estimation of $I_2$.}

Note that by \cite[Proposition 5.1.2]{Rud}, for $j \ge 2, z \in Q_\del(\xi)$ and $w \in A_j$, we have
$$
|1-\langle w, z \rangle|^{\frac{1}{2}} \ge |1-\langle w, \xi \rangle|^{\frac{1}{2}}-|1-\langle z, \xi \rangle|^{\frac{1}{2}} \ge (4^{j-1}\del)^{\frac{1}{2}}-\del^{\frac{1}{2}} \ge 2^{j-2} \del^{\frac{1}{2}}
$$
and for $w \in Q_{4^j\del}(\xi)$,
\begin{equation} \label{ineqSec801}
1-|w|^2 \simeq 1-|w| \le |1-\langle w, \xi \rangle|<4^j\del.
\end{equation}

Moreover, for each $j \ge 2$, consider the term
$$
I_{2, j}= \int_{Q_\del(\xi)} \int_{A_j}\frac{|f(w)|^p(1-|w|^2)^\alpha}{|1-\langle z, w \rangle|^{n+1+\alpha}}dV(w)d\mu(z).
$$
Using Lemma \ref{Carleson01} and \eqref{ineqSec801}, we have
\begin{eqnarray*}
I_{2, j}%
&\lesssim& \frac{1}{(4^j\del)^{n+1+\alpha}} \int_{Q_\del(\xi)} \int_{A_j} |f(w)|^p(1-|w|^2)^\alpha dV(w)d\mu(z) \\
&\lesssim& \frac{(4^j \del)^{\alpha+n+1-q-ns}}{(4^j\del)^{n+1+\alpha}} \int_{Q_\del(\xi)} \int_{Q_{4^j\del}(\xi)} |f(w)|^p(1-|w|^2)^{q+ns} d\lambda(w)d\mu(z) \\
&=& \frac{\mu(Q_\del(\xi))}{(4^j\del)^q} \cdot \frac{1}{(4^j\del)^{ns}} \int_{Q_{4^j \del}(\xi)}  |f(w)|^p(1-|w|^2)^{q+ns} d\lambda(w)\\
&\le& \frac{\del^{nt-q}}{4^{jq}}\|\mu\|_{\calC \calM_{nt}} \|f\|^p.
\end{eqnarray*}
Thus,
$$
I_2=\sum_{j=2}^\infty I_{2, j} \le \sum_{j=2}^\infty  \frac{\del^{nt-q}}{4^{jq}}\|\mu\|_{\calC \calM_{nt}} \|f\|^p \lesssim \del^{nt-q} \|\mu\|_{\calC \calM_{nt}} \|f\|^p.
$$
Combining both estimation of $I_1$ and $I_2$, the proof is complete.
\end{proof}

We are now ready to establish the main result in this section.

\begin{thm} \label{thmSec801}
Let $p \ge 1, q>0, s>\max\left\{0, 1-\frac{q}{n}\right\}, t \ge s+\frac{q}{n}$ and $\mu$ be a positive Borel measure on $\B$. Then the identity operator $$I: \calN(p, q, s) \mapsto T_{p, \left(t-\frac{q}{n} \right)}^\infty(\mu)$$
 is bounded if and only if $\mu$ is an $(nt)$-Carleson measure.
\end{thm}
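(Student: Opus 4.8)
The plan is to deduce both implications from the pointwise ``reproducing'' estimate \eqref{equation001} together with the two technical lemmas already established, Lemma \ref{lemSec803} and Lemma \ref{lemSec804}. Before starting I would record two elementary facts used throughout: $nt-q>0$ (since $t\ge s+\frac qn>\frac qn$ because $s>0$), and $Q_\del(\xi)=\B$ for every $\del\ge 2$ (as $|1-\langle z,\xi\rangle|<2$ on $\B$), so that only the range $0<\del\le 2$ needs real work.

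For the sufficiency, suppose $\mu$ is an $(nt)$-Carleson measure and fix $f\in\calN(p,q,s)$. I would choose $\alpha>nt-n-1$ (so in particular $\alpha>-1$), apply \eqref{equation001} to $f$, integrate the resulting inequality over a Carleson tube $Q_\del(\xi)$ with $0<\del\le 2$ against $d\mu$, and feed the outcome into Lemma \ref{lemSec804}, whose hypotheses $t\ge s+\frac qn$ and $\alpha>nt-n-1$ are exactly what we have arranged. This produces $\int_{Q_\del(\xi)}|f|^p\,d\mu\lesssim \del^{\,nt-q}\,\|\mu\|_{\calC\calM_{nt}}\,\|f\|^p$; dividing by $\del^{\,nt-q}$ and taking the supremum over $\xi\in\SSS$ and $0<\del\le 2$ gives $\|f\|_{T^\infty_{p,\,(t-q/n)}(\mu)}^p\lesssim\|\mu\|_{\calC\calM_{nt}}\|f\|^p$, while for $\del>2$ the quotient $\del^{-(nt-q)}\int_{Q_\del(\xi)}|f|^p\,d\mu=\del^{-(nt-q)}\int_\B|f|^p\,d\mu$ only decreases because $nt-q>0$. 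Hence $I$ is bounded.

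For the necessity, suppose $I$ is bounded, i.e. $\|f\|_{T^\infty_{p,\,(t-q/n)}(\mu)}\le\|I\|\,\|f\|$ for all $f\in\calN(p,q,s)$. Since $s>1-\frac qn$ is equivalent to $ns+q>n$, Proposition \ref{prop02} guarantees that the constant function $1$ belongs to $\calN(p,q,s)$; testing the embedding on it with $\del=2$ shows $\mu(\B)<\infty$, which already yields $\mu(Q_\del(\xi))\lesssim\del^{\,nt}$ for all $\del\ge 1$. For $0<\del<1$ and $\xi\in\SSS$ I would use the extremal functions supplied by Lemma \ref{lemSec803}: set $w=(1-\del)\xi$, so that $1-|w|^2=\del(2-\del)\simeq\del$, and take $K_w(z)=\frac{1-|w|^2}{(1-\langle z,w\rangle)^{1+q/p}}$. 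For $z\in Q_\del(\xi)$ one has $|1-\langle z,w\rangle|=|(1-\langle z,\xi\rangle)+\del\langle z,\xi\rangle|<2\del$, whence $|K_w(z)|\gtrsim\del^{-q/p}$ and $|K_w(z)|^p\gtrsim\del^{-q}$. Consequently
$$
\frac{\mu(Q_\del(\xi))}{\del^{\,nt}}\lesssim\frac{1}{\del^{\,nt-q}}\int_{Q_\del(\xi)}|K_w(z)|^p\,d\mu(z)\le\|K_w\|_{T^\infty_{p,\,(t-q/n)}(\mu)}^p\le\|I\|^p\,\|K_w\|^p\lesssim\|I\|^p,
$$
where the last step uses $\sup_{w\in\B}\|K_w\|\lesssim 1$ from Lemma \ref{lemSec803}. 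This is precisely the $(nt)$-Carleson estimate, so $\mu$ is an $(nt)$-Carleson measure.

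The hard part here is not conceptual — the genuine analysis is already absorbed into Lemma \ref{lemSec804} — but rather the bookkeeping: verifying the parameter constraints $\alpha>nt-n-1$ and $nt-q>0$, treating the two ranges $\del\le 2$ and $\del>2$ uniformly, and, on the necessity side, checking that the test functions $K_{(1-\del)\xi}$ deliver the sharp lower bound $|K_w|^p\gtrsim\del^{-q}$ on $Q_\del(\xi)$ with constants independent of $\xi$ and $\del$. I expect the only mildly delicate point to be the interplay between the exponent $nt$ governing the Carleson condition on $\mu$ and the exponent $nt-q$ appearing in the tent-space norm.
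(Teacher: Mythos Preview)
Your proposal is correct and follows essentially the same approach as the paper: for sufficiency you use the subharmonic estimate \eqref{equation001} combined with Lemma \ref{lemSec804}, and for necessity you test first on the constant function (to control large $\del$) and then on $K_{(1-\del)\xi}$ via Lemma \ref{lemSec803}, exactly as the paper does. Your extra remarks about the range $\del>2$ and the positivity $nt-q>0$ are tidier than the paper's presentation but not a different idea.
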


\begin{proof} {\it Sufficiency.} Suppose $\mu$ is an $(nt)$-Carleson measure. Take some $\alpha>nt-n-1 \ge n\left(s+\frac{q}{n}\right)-n-1=q+ns-n-1>-1$. By \eqref{equation001}, for $f \in \calN(p, q, s)$, we have
$$
|f(z)|^p \lesssim \int_\B \frac{|f(w)|^p(1-|w|^2)^\alpha dV(w)}{|1-\langle z, w \rangle|^{n+1+\alpha}}.
$$
Thus, for any $\xi \in \SSS$ and $\del>0$, by Lemma \ref{lemSec804}, we have
\begin{eqnarray*}
&&\frac{1}{\del^{nt-q}} \int_{Q_\del(\xi)} |f(z)|^pd\mu(z)\\
&\lesssim& \frac{1}{\del^{nt-q}} \int_{Q_\del(\xi)}  \int_\B \frac{|f(w)|^p(1-|w|^2)^\alpha dV(w)}{|1-\langle z, w \rangle|^{n+1+\alpha}}d\mu(z)\\
&\lesssim& \|\mu\|_{\calC\calM_{nt}}\|f\|^p,
\end{eqnarray*}
which implies the desired result.

{\it Necessity.} Suppose the identity operator $I: \calN(p, q, s) \mapsto T^\infty_{p, \left(t-\frac{q}{n}\right)}(\mu)$ is bounded. First, note that $1 \in \calN(p, q, s)$ and hence for any $\del>0$, by the boundedness of $I$, we have
$$
\frac{1}{\del^{n\left(t-\frac{q}{n}\right)}} \int_{Q_\del(\xi)}d\mu \lesssim 1,
$$
which, in particular, implies that $\mu$ is a finite measure on $\B$.

For any $\xi \in \SSS$ and $0<\del<1$, we consider the function
$$
K_{(1-\del)\xi}(z)=\frac{1-|1-\del|^2}{(1-\langle z, (1-\del)\xi \rangle)^{1+\frac{q}{p}}}, \quad z \in \B.
$$
Note that for $z \in Q_\del(\xi)$, we have
\begin{equation} \label{eq1001}
|1- \langle z, (1-\del)\xi \rangle| \le |1-\langle z, \xi \rangle|+|\langle z, \del\xi \rangle| \le 2\del
\end{equation}
and
\begin{equation} \label{eq1002}
|1-\langle z, (1-\del)\xi \rangle| \ge 1-(1-\del) |\langle z, \xi \rangle| \ge 1-(1-\del)|z||\xi| \ge \del,
\end{equation}
which implies that
\begin{equation} \label{equation999}
|K_{(1-\del)\xi}(z)| \simeq \frac{2-\del}{\del^{\frac{q}{p}}}, \quad \xi \in \SSS, \ 0<\del<1.
\end{equation}
Thus, by the boundedness of $I$ and Lemma \ref{lemSec803}, we have
$$
\frac{1}{\del^{n\left(t-\frac{q}{n}\right)}} \int_{Q_\del(\xi)} |K_{(1-\del)\xi}(z)|^pd\mu \lesssim \|K_{(1-\del)\xi)}\|^p \lesssim 1,
$$
which implies
$$
\sup_{\xi \in \SSS, \del>0} \frac{\mu(Q_\del(\xi))}{\del^{nt}} \lesssim 1,
$$
and hence $\mu$ is an $(nt)$-Carleson measure.\end{proof}


\subsection{Behavior of the Riemann-Stieltjes operators on $\calN(p, q, s)$-type spaces}

In this subsection, by using the embedding theorem in the previous subsection, we study the boundedness and compactness of the Riemann-Stieltjes operators on $\calN(p, q, s)$-type spaces.

\begin{thm} \label{Sec8bddness}
Let $p \ge 1, q>0, s_2 \ge s_1>\max\left\{0, 1-\frac{q}{n} \right\}$, $g\in H(\B)$ and $d\mu_{g, p, q, s_2}=|Rg(z)|^p(1-|z|^2)^{p+q+ns_2}d\lambda(z)$. Then
\begin{enumerate}
\item[(1)] $T_g: \calN(p, q, s_1) \mapsto \calN(p, q, s_2)$ is bounded if and only if $\mu_{g, p, q, s_2}$ is an $\left(ns_2+q \right)$-Carleson measure;
\item[(2)] $L_g: \calN(p, q, s_1) \mapsto \calN(p, q, s_2)$ is bounded if and only if $\|g\|_\infty<\infty$;
\item[(3)] $M_g: \calN(p, q, s_1) \mapsto \calN(p, q, s_2)$ is bounded if and only if $\mu_{g, p, q, s_2}$ is an $\left(ns_2+q \right)$-Carleson measure and $\|g\|_\infty<\infty$.
\end{enumerate}
\end{thm}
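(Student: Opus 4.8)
The plan is to reduce everything to the Carleson-measure machinery built in Sections 4 and 8. The key tool is the classical pointwise identities
$$
R(T_gf)(z)=f(z)Rg(z), \qquad R(L_gf)(z)=g(z)Rf(z), \qquad z\in\B,
$$
together with Corollary \ref{equivalentnorm} (equivalently Theorem \ref{Carlesoncharac}), which says that $h\in\calN(p,q,s_2)$ if and only if $|Rh(z)|^p(1-|z|^2)^{p+q+ns_2}d\lambda(z)$ is an $(ns_2)$-Carleson measure, with comparable norms (modulo the constant term $|h(0)|^p$, which is harmless here since $T_gf(0)=L_gf(0)=0$). I would treat the three parts in the order (1), (2), (3), since (3) follows from (1) and (2) via the decomposition $M_gf=g(0)f(0)+T_gf+L_gf$.

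\emph{Part (1).} For the boundedness of $T_g$, observe that
$$
\|T_gf\|_{\calN(p,q,s_2)}^p\simeq \sup_{0<\del<1,\ \xi\in\SSS}\frac{1}{\del^{ns_2}}\int_{Q_\del(\xi)}|f(z)|^p|Rg(z)|^p(1-|z|^2)^{p+q+ns_2}d\lambda(z)=\sup_{0<\del<1,\ \xi\in\SSS}\frac{1}{\del^{ns_2}}\int_{Q_\del(\xi)}|f(z)|^p\,d\mu_{g,p,q,s_2}(z).
$$
Writing $nt:=ns_2+q$, so that $nt-q=ns_2$ and $t-\frac{q}{n}=s_2$, this is exactly $\|f\|_{T^\infty_{p,(t-q/n)}(\mu_{g,p,q,s_2})}^p$. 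Thus $T_g\colon\calN(p,q,s_1)\to\calN(p,q,s_2)$ is bounded precisely when the identity $I\colon\calN(p,q,s_1)\to T^\infty_{p,(t-q/n)}(\mu_{g,p,q,s_2})$ is bounded, and since $s_1\le s_2$ we have $\calN(p,q,s_1)\subseteq\calN(p,q,s_2)$ with norm control, so it suffices to run Theorem \ref{thmSec801} with $s=s_1$; the hypothesis $t\ge s_1+\frac{q}{n}$ is guaranteed by $s_2\ge s_1$. Theorem \ref{thmSec801} then gives boundedness iff $\mu_{g,p,q,s_2}$ is an $(nt)=(ns_2+q)$-Carleson measure. (For the forward direction one can alternatively test on the kernels $K_w$ of Lemma \ref{lemSec803} as in the necessity part of Theorem \ref{thmSec801}.) One should double-check that when $s_1<s_2$ the sufficiency argument still only needs $f\in\calN(p,q,s_1)$ and not the smaller space; this is immediate from the embedding \eqref{embeddingrelation}-type inclusion extended past $s\le1$ via Proposition \ref{Npqsbigs}.

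\emph{Part (2).} For $L_g$, if $\|g\|_\infty<\infty$ then
$$
|R(L_gf)(z)|^p(1-|z|^2)^{p+q+ns_2}d\lambda(z)=|g(z)|^p|Rf(z)|^p(1-|z|^2)^{p+q+ns_2}d\lambda(z)\le\|g\|_\infty^p\,|Rf(z)|^p(1-|z|^2)^{p+q+ns_2}d\lambda(z),
$$
and the right-hand side is an $(ns_2)$-Carleson measure because $f\in\calN(p,q,s_1)\subseteq\calN(p,q,s_2)$ and Corollary \ref{equivalentnorm}; hence $L_gf\in\calN(p,q,s_2)$ with $\|L_gf\|\lesssim\|g\|_\infty\|f\|$. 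Conversely, one applies $L_g$ to the test functions $K_w$ of Lemma \ref{lemSec803}: since $\sup_{w}\|K_w\|\lesssim1$, boundedness of $L_g$ forces $\|L_gK_w\|\lesssim1$ uniformly, and evaluating the radial derivative identity near the point $w$ (using the standard pointwise estimate $|Rh(z)|(1-|z|^2)\lesssim$ local average of $|h|$, i.e. the sub-mean-value estimate behind \eqref{equation001}) one extracts $|g(w)|\lesssim1$ for all $w\in\B$, i.e. $g\in H^\infty$. The cleanest way is to note $RK_w$ is comparable to $(1-|w|^2)^{-q/p}$ on $D(w,\tfrac12)$ and that $K_w(w)\simeq(1-|w|^2)^{-q/p}$, so testing the $(ns_2)$-Carleson condition for $d\mu_{L_gK_w}$ on a tube $Q_\del(\xi)$ with $(1-\del)\xi$ near $w$ isolates $|g(w)|^p$.

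\emph{Part (3).} Combine: $M_gf=g(0)f(0)+T_gf+L_gf$. If both Carleson conditions in (1) and $\|g\|_\infty<\infty$ hold, then $T_g$ and $L_g$ are bounded $\calN(p,q,s_1)\to\calN(p,q,s_2)$, and the constant term $g(0)f(0)$ is a bounded rank-one map (constants lie in $\calN(p,q,s_2)$ since $ns_2+q>n$), so $M_g$ is bounded. Conversely, if $M_g$ is bounded, testing on $f\equiv1$ gives $g\in\calN(p,q,s_2)$; more importantly, applying $M_g$ to the kernels $K_w$ and using $RK_w$, $K_w$ estimates as in (2), one separately recovers $\|g\|_\infty<\infty$ (from the $L_g=M_g\cdot-g(0)\cdot-T_g$ piece, or directly since pointwise $|g(w)|(1-|w|^2)^{q/p}=|M_g K_w'(w)|$ up to constants for a suitably normalized kernel) and then, subtracting off the now-bounded $L_g$ and the rank-one term, the operator $T_g=M_g-L_g-g(0)f(0)$ is bounded, whence by (1) $\mu_{g,p,q,s_2}$ is an $(ns_2+q)$-Carleson measure.

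\emph{Main obstacle.} The routine direction is sufficiency everywhere; the delicate point is the \emph{necessity} in (2) and the corresponding extraction of $\|g\|_\infty$ in (3). Establishing boundedness of $T_g$ (part (1)) is essentially a restatement of Theorem \ref{thmSec801}, but disentangling $T_g$ and $L_g$ inside $M_g$ requires that we first pin down $g\in H^\infty$ purely from the $M_g$ hypothesis without already knowing $T_g$ is bounded. The clean route is: from $M_g$ bounded, deduce $g\in\calN(p,q,s_2)\subseteq A^{-q/p}(\B)$ (Proposition \ref{boundaryineq}); then use the normalized test functions and the sub-mean-value inequality to upgrade this to $g\in H^\infty$; only then peel off $L_g$ and invoke part (1) for $T_g$. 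Care is also needed with the case $s_1<s_2$: one must verify that the test functions $K_w$ used for necessity lie in the \emph{domain} space $\calN(p,q,s_1)$ with uniformly bounded norm, which is exactly the content of Lemma \ref{lemSec803} (stated for the single parameter $s$, applied with $s=s_1$), so no new estimate is required.
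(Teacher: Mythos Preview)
Your proposal is correct and follows essentially the same approach as the paper: part (1) is reduced verbatim to Theorem \ref{thmSec801} via $R(T_gf)=f\,Rg$; part (2) sufficiency is the trivial $|g|\le\|g\|_\infty$ bound, and necessity proceeds by testing $L_g$ on the kernels $K_w$ of Lemma \ref{lemSec803}, using the sub-mean-value estimate for $|g(w)|^p$ over $D(w,\tfrac12)$ and the comparability of $|RK_w|$ on that ball (your exponent should read $(1-|w|^2)^{-1-q/p}$, not $(1-|w|^2)^{-q/p}$, but this is cosmetic); part (3) is assembled from (1), (2) and the decomposition $M_gf=g(0)f(0)+T_gf+L_gf$.

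One small simplification worth noting in part (3): the paper extracts $g\in H^\infty$ from the boundedness of $M_g$ \emph{without} any sub-mean-value or derivative argument. Since $M_gK_w=gK_w\in\calN(p,q,s_2)$, Proposition \ref{boundaryineq} gives directly
\[
|g(z)K_w(z)|\lesssim \frac{\|M_gK_w\|_{\calN(p,q,s_2)}}{(1-|z|^2)^{q/p}}\lesssim \frac{\|M_g\|}{(1-|z|^2)^{q/p}},
\]
and evaluating at $z=w$ (where $K_w(w)=(1-|w|^2)^{-q/p}$) yields $|g(w)|\lesssim\|M_g\|$ immediately. This is cleaner than the route you outline in your ``Main obstacle'' paragraph, which passes through $g\in A^{-q/p}$ first and then invokes sub-mean-value machinery; that detour is unnecessary here.
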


\begin{proof} (1) Note that $R(T_gf)(z)=f(z)Rg(z)$ and hence for any $\xi \in \SSS$ and $\del>0$, we have
\begin{eqnarray*}
&&\frac{1}{\del^{ns_2}} \int_{Q_\del(\xi)}  |R(T_gf)(z)|^p(1-|z|^2)^{p+q+ns_2}d\lambda(z)\\
& =&\frac{1}{\del^{ns_2}} \int_{Q_\del(\xi)}|f(z)|^p|Rg(z)|^p(1-|z|^2)^{p+q+ns_2}d\lambda(z)\\
& =&\frac{1}{\del^{n\left(s_2+\frac{q}{n}-\frac{q}{n}\right)}} \int_{Q_\del(\xi)}|f(z)|^p|Rg(z)|^p(1-|z|^2)^{p+q+ns_2}d\lambda(z).
\end{eqnarray*}
Thus, the boundedness of $T_g$ is equivalent to the boundedness of $I: \calN(p, q, s_1) \mapsto T_{p, \left(s_2+\frac{q}{n}\right)}^\infty(\mu_{g, p, q, s_2})$, which, by Theorem \ref{thmSec801}, is equivalent to $\|\mu_{g, p, q,  s_2}\|_{\calC\calM_{ns_2+q}}<\infty$.

(2) {\it Sufficiency.} Suppose $\|g\|_\infty<\infty$, then
\begin{eqnarray*}
&&\frac{1}{\del^{ns_2}} \int_{Q_\del(\xi)} |R(L_gf)(z)|^p(1-|z|^2)^{p+q+ns_2}d\lambda(z)\\
&=&\frac{1}{\del^{ns_2}} \int_{Q_\del(\xi)} |g(z)|^p|Rf(z)|^p(1-|z|^2)^{p+q+ns_2}d\lambda(z)\\
&\lesssim& \|g\|_\infty^p \|f\|^p_{\calN(p, q, s_2)}\le\|g\|_\infty^p \|f\|^p_{\calN(p, q, s_1)}.
\end{eqnarray*}
The boundedness of $L_g$ follows by taking the supremum on both sides of the above inequality over all $\xi \in \SSS$ and $\del>0$.

{\it Necessity.} Suppose $L_g: \calN(p, q, s_1) \mapsto \calN(p, q, s_2)$ is bounded. Take and fix $w \in \B$ with $|w|>\frac{2}{3}$, and further, we take $\xi=w/|w|$. Using \cite[Proposition 5.1.2]{Rud}, it is easy to see that there exists a $\del \in (0,1)$, such that
\begin{equation} \label{eq10000}
D\Big(w, \frac{1}{2}\Big) \subseteq Q_\del(\xi) \quad \textrm{and} \quad 1-|w|^2 \simeq \del.
\end{equation}
Recall that $K_w(z)=\frac{1-|w|^2}{(1-\langle z, w \rangle)^{1+\frac{q}{p}}}$ and an easy calculation shows that
\begin{equation} \label{eq100}
R(K_w)(z)=\left(1+\frac{q}{p}\right)\frac{(1-|w|^2)\langle z, w\rangle}{(1-\langle z, w \rangle)^{2+\frac{q}{p}}}.
\end{equation}
Moreover, when $z \in D\left(w, \frac{1}{2}\right)$, we have
\begin{equation} \label{eq101}
V\left( D\Big(w, \frac{1}{2} \Big) \right) \simeq (1-|w|^2)^{n+1},
\end{equation}
and (see, e.g., \cite{LH, Zhu})
\begin{equation} \label{eq102}
1-|w|^2 \simeq 1-|z|^2 \simeq |1-\langle z, w \rangle|, \quad z \in  D\Big(w, \frac{1}{2} \Big).
\end{equation}
 Also note that for $z \in D\left(w, \frac{1}{2} \right)$, we have
$$
1-|\Phi_w(z)|^2=\frac{(1-|w|^2)(1-|z|^2)}{|1-\langle z, w\rangle|^2}>\frac{3}{4},
$$
and hence
\begin{eqnarray*}
1-|\langle z, w \rangle|%
&\le& |1-\langle z, w\rangle|<\frac{2}{\sqrt{3}}(1-|w|^2)^{\frac{1}{2}}(1-|z|^2)^{\frac{1}{2}}\\
&\le&\frac{2}{\sqrt{3}}(1-|w|^2)^{\frac{1}{2}}<\frac{2}{\sqrt{3}}\cdot \frac{\sqrt{5}}{3}=\frac{2\sqrt{15}}{9},
\end{eqnarray*}
which implies $|\langle z, w \rangle|>1-\frac{2\sqrt{15}}{9}$, i.e. $|\langle z, w\rangle| \simeq 1$. Thus, by \eqref{eq100}, \eqref{eq101}, \eqref{eq102}, \cite[Lemma 2.24]{Zhu} and the boundedness of $L_g$, we have
\begin{eqnarray*}
|g(w)|^p%
&\lesssim& \frac{1}{V\left( D\left(w, \frac{1}{2} \right) \right)} \int_{D\left(w, \frac{1}{2} \right)} |g(z)|^pV(z)\\
&\simeq& \frac{1}{(1-|w|^2)^{n+1}}\int_{D\left(w, \frac{1}{2} \right)} |g(z)|^pV(z)\\
&\simeq& \frac{1}{\del^{ns_2}} \int_{D\left(w, \frac{1}{2} \right)}  \frac{|g(z)|^p |\langle z, w \rangle|^p (1-|w|^2)^p}{|1-\langle z, w \rangle|^{2p+q}} (1-|z|^2)^{p+q+ns_2}d\lambda(z)  \\
&\lesssim& \frac{1}{\del^{ns_2}} \int_{Q_\del(\xi)} \frac{|g(z)|^p |\langle z, w \rangle|^p (1-|w|^2)^p}{|1-\langle z, w \rangle|^{2p+q}} (1-|z|^2)^{p+q+ns_2}d\lambda(z)\\
&\simeq& \frac{1}{\del^{ns_2}} \int_{Q_\del(\xi)} |g(z)|^p|R(K_w)(z)|^p(1-|z|^2)^{p+q+ns_2}d\lambda(z)\\
&\lesssim& \|L_g(K_w)\|_{\calN(p, q, s_2)}^p \lesssim \|L_g\|^p \|K_w\|_{\calN(p, q, s_1)}^p \lesssim 1,
\end{eqnarray*}
which implies $|g(w)| \lesssim 1$ for $|w|>\frac{2}{3}$. The desired result then follows from the maximum modulus principle.

\medskip

(3) {\it Sufficiency.} Recall that for  $f \in H(\B)$, we have
$$
M_gf(z)=g(z)f(z)=g(0)f(0)+T_gf(z)+L_gf(z).
$$
Thus, the sufficient part is obvious from (1) and (2), as well as the inequality
\begin{equation} \label{ineqSec805}
|f(z)| \lesssim \frac{\|f\|_{\calN(p, q, s_1)}}{(1-|z|^2)^{\frac{q}{p}}}, \quad z \in \B,
\end{equation}
which follows from \ref{boundaryineq}.

{\it Necessity.} Suppose $M_g: \calN(p, q, s_1) \mapsto \calN(p, q, s_2)$ is bounded. Again, for any fixed $w \in \B$, it is clear that $K_w \in \calN(p, q, s_1)$ and hence $gK_w \in \calN(p, q, s_2)$.  Using \eqref{ineqSec805} and Lemma \ref{lemSec803}, we have
$$
|g(z)K_w(z)| \lesssim \frac{\|M_gK_w\|_{\calN(p, q, s_2)}}{(1-|z|^2)^{\frac{q}{p}}} \lesssim \frac{\|M_g\|}{(1-|z|^2)^{\frac{q}{p}}}, \quad  z \in \B.
$$
Put $z=w$ and we get $|g(w)| \le \|M_g\|$ for all $w \in \B$. Thus, $\|g\|_\infty<\infty$, which, by $(2)$, implies $L_g: \calN(p, q, s_1) \mapsto \calN(p, q, s_2)$ is bounded. Consequently, $T_gf=M_gf-L_gf-f(0)g(0)$ gives the boundedness of $T_g: \calN(p, q, s_1) \mapsto \calN(p, q, s_2)$, which implies $\mu_{g, p, q, s_2}$ is an $\left(ns_2+q \right)$-Carleson measure.
\end{proof}

Note that when $s_1=s_2=s$, clearly if $g \in H^\infty(\B)$, $M_g$ is bounded on $\calN(p, q, s)$. Keeping this trivial but crucial fact in mind, we can refine our result above as follows.

\begin{thm} \label{MultNpqs}
Let $p \ge 1, q>0, s_2 \ge s_1> \max\left\{0, 1-\frac{q}{n} \right\}$ and $g\in H(\B)$ with $\mu_{g, p, q, s_2}$ defined as above. Consider the following statements:
\begin{enumerate}
\item [(1)]  $\|g\|_\infty<\infty$;
\item [(2)] $L_g: \calN(p, q, s_1) \mapsto \calN(p, q, s_2)$ is bounded;
\item [(3)] $M_g: \calN(p, q, s_1) \mapsto \calN(p, q, s_2)$ is bounded;
\item [(4)] $T_g: \calN(p, q, s_1) \mapsto \calN(p, q, s_2)$ is bounded;
\item [(5)] $\mu_{g, p, q, s_2}$ is an $(ns_2+q)$-Carleson measure.
\end{enumerate}
We have $(1) \Longleftrightarrow (2) \Longleftrightarrow (3) \Longrightarrow (4) \Longleftrightarrow (5)$.
\end{thm}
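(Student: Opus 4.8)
The plan is to obtain Theorem \ref{MultNpqs} as a consequence of Theorem \ref{Sec8bddness}, supplemented by the elementary fact that bounded holomorphic functions lie in the Bloch space. First I would record the two equivalences that are already packaged in Theorem \ref{Sec8bddness}: the equivalence $(1)\Longleftrightarrow(2)$ is precisely part (2) of that theorem, and $(4)\Longleftrightarrow(5)$ is precisely part (1). This reduces the problem to establishing $(1)\Longleftrightarrow(3)$ and the single implication $(3)\Longrightarrow(4)$.

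For $(3)\Longrightarrow(1)$ I would simply quote the necessity direction of Theorem \ref{Sec8bddness}(3): boundedness of $M_g\colon\calN(p,q,s_1)\to\calN(p,q,s_2)$ forces $\|g\|_\infty<\infty$. The one genuinely new step is $(1)\Longrightarrow(3)$. Here I would show that $\|g\|_\infty<\infty$ already forces $\mu_{g,p,q,s_2}$ to be an $(ns_2+q)$-Carleson measure. Indeed, since $H^\infty(\B)\subseteq\calB$, one has $(1-|z|^2)|Rg(z)|\lesssim\|g\|_\infty$ for all $z\in\B$, so that for every $\xi\in\SSS$ and $0<\del<1$,
\begin{align*}
\mu_{g,p,q,s_2}(Q_\del(\xi))
&=\int_{Q_\del(\xi)}|Rg(z)|^p(1-|z|^2)^{p+q+ns_2-n-1}\,dV(z)\\
&\lesssim\|g\|_\infty^p\int_{Q_\del(\xi)}(1-|z|^2)^{q+ns_2-n-1}\,dV(z)
\lesssim\|g\|_\infty^p\,\del^{\,q+ns_2},
\end{align*}
where the last estimate uses $q+ns_2-n-1>-1$ (guaranteed by $s_2>1-\frac{q}{n}$) together with the standard volume estimate for Carleson tubes. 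Thus both hypotheses of the sufficiency direction of Theorem \ref{Sec8bddness}(3) hold, and $M_g$ is bounded, i.e. $(3)$. Combined with $(1)\Longleftrightarrow(2)$ this gives $(1)\Longleftrightarrow(2)\Longleftrightarrow(3)$.

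Finally, for $(3)\Longrightarrow(4)$ I would chain the facts already in hand: by Theorem \ref{Sec8bddness}(3), $(3)$ forces $\mu_{g,p,q,s_2}$ to be an $(ns_2+q)$-Carleson measure, which is exactly $(5)$, and by Theorem \ref{Sec8bddness}(1) this is equivalent to boundedness of $T_g$, which is $(4)$. (Alternatively, from the identity $T_gf=M_gf-L_gf-g(0)f(0)$ one can argue directly: the constant $z\mapsto g(0)f(0)$ lies in $\calN(p,q,s_2)$ because $ns_2+q>n$, and $f\mapsto f(0)$ is bounded by Proposition \ref{boundaryineq}, so boundedness of $M_g$ and $L_g$ forces that of $T_g$.) I do not anticipate any serious obstacle: essentially all of the work is already done in Theorem \ref{Sec8bddness}, and the only extra ingredient—the Bloch-space estimate used in $(1)\Longrightarrow(3)$—is routine once one recalls $H^\infty(\B)\subseteq\calB$.
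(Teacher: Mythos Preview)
Your proposal is correct and follows the same overall architecture as the paper's proof: the equivalences $(1)\Longleftrightarrow(2)$ and $(4)\Longleftrightarrow(5)$, as well as $(3)\Longrightarrow(1)$ and $(3)\Longrightarrow(4)$, are all read off from Theorem~\ref{Sec8bddness}, and only $(1)\Longrightarrow(3)$ requires an additional argument.

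The one place where you diverge from the paper is in that extra step $(1)\Longrightarrow(3)$. The paper argues as follows: since $g\in H^\infty$, the multiplication operator $M_g$ is trivially bounded on $\calN(p,q,s_2)$ (i.e.\ with $s_1=s_2$); then the \emph{necessity} direction of Theorem~\ref{Sec8bddness}(3) applied in this special case yields that $\mu_{g,p,q,s_2}$ is an $(ns_2+q)$-Carleson measure, after which the sufficiency direction (now for general $s_1\le s_2$) gives $(3)$. You instead verify the Carleson condition on $\mu_{g,p,q,s_2}$ directly, via the Bloch-space estimate $(1-|z|^2)|Rg(z)|\lesssim\|g\|_\infty$ (i.e.\ $H^\infty\subseteq\calB$) together with the standard tube volume estimate $\int_{Q_\del(\xi)}(1-|z|^2)^{q+ns_2-n-1}\,dV(z)\simeq\del^{\,q+ns_2}$. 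Your route is slightly more self-contained---it avoids the mildly self-referential device of feeding Theorem~\ref{Sec8bddness}(3) back into itself---while the paper's route has the virtue of using only statements already proved in the text. Either way the argument is short and routine.
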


\begin{proof}
Clearly, $(1) \Longleftrightarrow (2)$, $(4) \Longleftrightarrow (5)$, $(3) \Longrightarrow (1)$ and $(3) \Longrightarrow (4)$ are showed in Theorem \ref{Sec8bddness}. It suffices to show $(1) \Longrightarrow (3)$. Indeed, since $\|g\|_\infty<\infty$, we have $M_g$ is bounded on $\calN(p, q, s_2)$, which, by Theorem \ref{Sec8bddness} with $s_1=s_2$, implies that $\mu_{g, p,q, s_2}$ is an $(ns_2+q)$-Carleson measure. Thus, both $L_g$ and $T_g$ are bounded, and hence condition $(3)$ is established.
\end{proof}

To study the compactness, we need the following lemma, whose proof is standard by using Montel's theorem and Fatou's lemma, and hence we omit it here.

\begin{lem} \label{lemSec805}
Let $p \ge 1, q>0$ and $s_2 \ge s_1>\max\left\{0, 1-\frac{q}{n} \right\}$ and $g\in H(\B)$.  Then the following statements are equivalent:
\begin{enumerate}
\item[(i)] $T_g$ (respectively $L_g$) is a compact operator from $\calN(p, q, s_1)$ to $\calN(p, q, s_2)$;
\item[(ii)] For every bounded sequence $\{f_j\}$ in $\calN(p, q, s_1)$ such that $f_j \to 0$ uniformly on compact sets of $\B$, then the sequence $\{T_g(f_j)\}$ (respectively $L_g(f_j)$) converges to zero in the norm of $\calN(p, q, s_2)$.
\end{enumerate}
\end{lem}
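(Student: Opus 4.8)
The plan is to establish the two implications by the standard normal-families argument, the two ingredients being the continuity of point evaluations on $\calN(p,q,s_2)$ (Proposition \ref{boundaryineq}) and Montel's theorem applied through the continuous embedding $\calN(p,q,s_1)\subseteq A^{-\frac{q}{p}}(\B)$.

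For the direction (i) $\Longrightarrow$ (ii), I would take a bounded sequence $\{f_j\}$ in $\calN(p,q,s_1)$ with $f_j\to 0$ uniformly on compact subsets of $\B$. First I would check that $T_g f_j\to 0$ (respectively $L_g f_j\to 0$) pointwise on $\B$: for fixed $z$ the integrand $f_j(tz)Rg(tz)/t$ on $[0,1]$ has a $j$-independent majorant coming from $\sup_j\|f_j\|<\infty$ together with the pointwise estimate $|f_j(w)|\lesssim \|f_j\|(1-|w|^2)^{-\frac{q}{p}}$ of Proposition \ref{boundaryineq}, so dominated convergence applies; for $L_g$ one additionally uses the Cauchy estimates to get $Rf_j\to 0$ locally uniformly. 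Then, supposing $\|T_g f_j\|_{\calN(p,q,s_2)}\not\to 0$, a subsequence stays bounded below in norm, and by compactness a further subsequence converges in $\calN(p,q,s_2)$ to some $h$; but norm convergence forces pointwise convergence (Proposition \ref{boundaryineq} again), so $h\equiv 0$, a contradiction. The same argument works verbatim for $L_g$.

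For (ii) $\Longrightarrow$ (i), I would start from an arbitrary bounded sequence $\{f_j\}$ in $\calN(p,q,s_1)$; by Proposition \ref{boundaryineq} it is uniformly bounded on compacta, so Montel's theorem yields a subsequence $f_{j_k}$ converging locally uniformly to some $f\in H(\B)$, and Fatou's lemma applied to the supremum of integrals defining $\|\cdot\|$ shows $f\in\calN(p,q,s_1)$ with $\|f\|\le\liminf_k\|f_{j_k}\|$. Then $\{f_{j_k}-f\}$ is bounded in $\calN(p,q,s_1)$ and tends to $0$ locally uniformly, so by (ii) $T_g(f_{j_k}-f)\to 0$ in the $\calN(p,q,s_2)$-norm, that is $T_g f_{j_k}\to T_g f$; since every bounded sequence thus has an image subsequence converging in $\calN(p,q,s_2)$, the operator $T_g$ is compact, and likewise $L_g$.

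The computations here are all routine; the only points requiring a moment's care are the construction of the $j$-independent majorant on $[0,1]$ that legitimizes passing the limit inside the defining integrals of $T_g$ and $L_g$, and the verification via Fatou's lemma that the locally uniform limit of a norm-bounded sequence again lies in $\calN(p,q,s_1)$ (lower semicontinuity of the norm). Neither is a genuine obstacle, so I do not expect any serious difficulty — the lemma is essentially a soft consequence of the functional-Banach-space structure and the growth estimate already recorded in Proposition \ref{boundaryineq}.
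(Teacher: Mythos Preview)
Your proposal is correct and follows exactly the approach the paper indicates: the authors omit the proof, noting only that it ``is standard by using Montel's theorem and Fatou's lemma,'' which is precisely the normal-families argument you carry out. The two ingredients you single out---the point-evaluation bound of Proposition \ref{boundaryineq} and lower semicontinuity of the norm via Fatou---are the ones the paper has in mind, so there is nothing to add.
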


For $\xi \in \SSS$  and $\del>0$, set
$$
D_\del'(\xi)=\{ \gamma \in \SSS: |1-\langle \gamma, \xi \rangle|<\del \},
$$
which is known as the nonisotropic metric ball on $\SSS$ with radius $\del^{1/2}$. Note that $D_\del'=\SSS$ when $\del>2$. (see, e.g., \cite[Page 65]{Rud}). Using this conception, we can define the following tent
$$
\widehat{Q}_\del(\xi)=\left\{ z \in \B: \frac{z}{|z|} \in D_\del'(\xi), 1-\del<|z|<1\right\}.
$$
It is known that $Q_\del(\xi) \subset \widehat{Q}_{4\del}(\xi) \subset Q_{16\del}(\xi)$ (see, e.g., \cite[Theorem 4.1.6]{LO}). Hence in the definition of (vanishing) Carleson measure, we can replace $Q_\del(\xi)$ by $\widehat{Q}_\del(\xi)$.

We have the following important covering lemma of $D_\del'(\xi)$ (see, e.g. \cite[Lemma 3.3]{RO2}).

\begin{lem} \label{lemSec806}
Given any natural number $m$, there exists a natural number $N$ such that every non-isotropic ball of radius $\del \le 2$, can be covered by $N$ non-isotropic balls of radius $\del/m$. Moreover, $N$ can be taken as
$$
\frac{\Gamma(n+1)}{4\Gamma^2\left(\frac{n}{2}+1\right)} \cdot \left(2m+\frac{1}{2}\right)^n.
$$
\end{lem}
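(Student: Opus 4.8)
The plan is to treat $\SSS$, equipped with the nonisotropic metric $\rho(\zeta,\eta)=|1-\langle\zeta,\eta\rangle|^{1/2}$ and the normalized surface measure $\sigma$, as a space of homogeneous type, and then to run the classical greedy covering argument, keeping explicit track of the constants. Recall (as already used in the proof of Lemma \ref{Charac01}, and from \cite{Rud}) that $\rho$ satisfies the triangle inequality, so the nonisotropic ball $D'_\del(\xi)$ ``of radius $\del$'' is exactly the $\rho$-ball $\{\gamma\in\SSS:\rho(\gamma,\xi)<\del^{1/2}\}$; in particular $D'_\del(\xi)=\SSS$ once $\del>2$. From the explicit formula for the surface measure of a nonisotropic ball in \cite{Rud} one extracts two-sided bounds
\[
c_n\,\del^{n}\ \le\ \sigma\bigl(D'_\del(\xi)\bigr)\ \le\ C_n\,\del^{n},\qquad \xi\in\SSS,\ 0<\del\le 2,
\]
uniformly in $\xi$, with $c_n,C_n$ expressible through $\Gamma$-values; this is the only place where the precise shape $\frac{\Gamma(n+1)}{4\Gamma^{2}(n/2+1)}$ of the final constant enters.

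Next, fix $\xi$ and $0<\del\le 2$, and choose a maximal set $\{\eta_1,\dots,\eta_N\}\subseteq D'_\del(\xi)$ with $|1-\langle\eta_i,\eta_j\rangle|\ge \del/m$ for $i\neq j$, i.e. $\rho(\eta_i,\eta_j)\ge(\del/m)^{1/2}$. Such a family is finite: the balls $D'_{\del/(4m)}(\eta_j)$ are pairwise disjoint by the separation, they all lie inside $D'_{\del'}(\xi)$ with $\del'=\del\bigl(1+\tfrac{1}{2\sqrt m}\bigr)^{2}$ by the triangle inequality (since each $\eta_j\in D'_\del(\xi)$), and each has positive $\sigma$-measure. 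By maximality, every $\zeta\in D'_\del(\xi)$ lies within $\rho$-distance $(\del/m)^{1/2}$ of some $\eta_j$ (otherwise $\zeta$ could be adjoined), so $D'_\del(\xi)\subseteq\bigcup_{j=1}^{N}D'_{\del/m}(\eta_j)$, which is the desired covering by $N$ nonisotropic balls of radius $\del/m$.

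For the count, combine disjointness, the inclusion in $D'_{\del'}(\xi)$, and the measure estimates above:
\[
N\cdot c_n\Bigl(\tfrac{\del}{4m}\Bigr)^{n}\ \le\ \sum_{j=1}^{N}\sigma\bigl(D'_{\del/(4m)}(\eta_j)\bigr)\ \le\ \sigma\bigl(D'_{\del'}(\xi)\bigr)\ \le\ C_n\,(\del')^{n},
\]
so that $N\le (C_n/c_n)\,(4m)^{n}\bigl(1+\tfrac{1}{2\sqrt m}\bigr)^{2n}$. Substituting the explicit values of $c_n$ and $C_n$ coming from the surface-measure formula, and making an economical choice of the separation radius and of the dilation factor $\del'$, collapses this to the stated admissible value $N=\frac{\Gamma(n+1)}{4\Gamma^{2}(n/2+1)}\bigl(2m+\tfrac12\bigr)^{n}$ (as in \cite[Lemma 3.3]{RO2}).

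The qualitative statement — existence of $N=N(m,n)$ — is essentially immediate from the doubling property of $(\SSS,\rho,\sigma)$ and needs nothing beyond the two displays above with crude constants. The main obstacle is entirely in the constants: one must invoke the exact (not merely comparable) surface-measure formula for $D'_\del$, and be frugal about the separation and expansion radii in the packing step, so that the product of $C_n/c_n$ with the scaling factor simplifies to precisely $\frac{\Gamma(n+1)}{4\Gamma^{2}(n/2+1)}(2m+\tfrac12)^{n}$ rather than to a merely comparable quantity; this bookkeeping, which depends on the fine geometry of nonisotropic balls, is the reason one usually quotes the statement from \cite{RO2} rather than reproving it.
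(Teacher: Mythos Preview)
The paper does not supply a proof of this lemma at all; it simply quotes it from \cite[Lemma~3.3]{RO2}. Your Vitali-type packing argument on $(\SSS,\rho,\sigma)$ is the standard route and is correct for the qualitative statement (existence of $N=N(m,n)$), and you are right that only the exact surface-measure formula and careful bookkeeping pin down the specific constant $\frac{\Gamma(n+1)}{4\Gamma^{2}(n/2+1)}(2m+\tfrac12)^{n}$---which is precisely why the paper, like you in your final sentence, defers to \cite{RO2} for that computation rather than reproducing it.
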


\begin{lem} \label{lemSec807}
Let $f \in H^\infty$. Then for any $z_1, z_2 \in \B$,
$$
|f(z_1)-f(z_2)| \le 2\|f\|_\infty |\Phi_{z_1}(z_2)|.
$$
\end{lem}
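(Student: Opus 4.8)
The plan is to reduce the statement to the classical one-variable Schwarz lemma via the involutive automorphism $\Phi_{z_1}$. First I would set $g = f \circ \Phi_{z_1}$; since $\Phi_{z_1}$ maps $\B$ onto $\B$, we have $g \in H^\infty(\B)$ with $\|g\|_\infty = \|f\|_\infty$, and using $\Phi_{z_1}(0) = z_1$ together with the involution identity $\Phi_{z_1} \circ \Phi_{z_1} = \mathrm{id}$ (see, e.g., \cite{Rud}) one obtains $g(0) = f(z_1)$ and $g(w) = f(z_2)$ where $w := \Phi_{z_1}(z_2) \in \B$. Thus the lemma is equivalent to the centered inequality $|g(w) - g(0)| \le 2\|g\|_\infty |w|$ for an arbitrary $g \in H^\infty(\B)$ and $w \in \B$.

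To prove the centered inequality I would pass to a complex slice. Assuming $w \ne 0$ and $M := \|g\|_\infty > 0$ (the cases $w = 0$ or $M = 0$ being trivial), put $h(\zeta) = g\big(\zeta\, w/|w|\big)$ for $\zeta \in \D$, so that $h \in H^\infty(\D)$ with $\|h\|_\infty \le M$, $h(0) = g(0)$ and $h(|w|) = g(w)$. Then $\psi := (h - h(0))/(2M)$ is holomorphic on $\D$ with $\psi(0) = 0$ and $|\psi(\zeta)| \le (|h(\zeta)| + |h(0)|)/(2M) \le 1$, so Schwarz's lemma gives $|\psi(\zeta)| \le |\zeta|$ on $\D$, hence $|h(\zeta) - h(0)| \le 2M|\zeta|$; evaluating at $\zeta = |w|$ and undoing the reduction yields $|f(z_1) - f(z_2)| = |g(0) - g(w)| \le 2\|f\|_\infty |\Phi_{z_1}(z_2)|$, as claimed.

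I do not expect any genuine obstacle here: the only points requiring a little care are the norm-invariance $\|f \circ \Phi_{z_1}\|_\infty = \|f\|_\infty$ and the involution property of $\Phi_{z_1}$, both standard facts about ball automorphisms, together with the bookkeeping for the degenerate cases $w = 0$ and $\|f\|_\infty = 0$. The factor $2$ (rather than a sharper Schwarz--Pick constant) is exactly what keeps the slice estimate elementary, and it is all that is needed for the compactness arguments that follow.
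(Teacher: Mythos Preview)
Your argument is correct; the reduction via $g = f\circ\Phi_{z_1}$ to a centered slice and the Schwarz lemma is exactly the standard proof of this inequality. The paper itself does not give a proof but simply cites \cite[Lemma 3.4]{RO2}, so there is nothing further to compare.
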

(see, e.g. \cite[Lemma 3.4]{RO2}).

\begin{thm} \label{Sec8compactness}
Let $p \ge 1, q>0, s_2 \ge s_1>\max\left\{0, 1-\frac{q}{n} \right\}$, $g\in H(\B)$ and $d\mu_{g, p, q, s_2}=|Rg(z)|^p(1-|z|^2)^{p+q+ns_2}d\lambda(z)$. Then
\begin{enumerate}
\item[(1)] $T_g: \calN(p, q, s_1) \mapsto \calN(p, q, s_2)$ is compact if and only if $\mu_{g, p, q, s_2}$ is a vanishing $\left(ns_2+q \right)$-Carleson measure;
\item[(2)] $L_g: \calN(p, q, s_1) \mapsto \calN(p, q, s_2)$ is compact if and only if $g=0$;
\item[(3)] $M_g: \calN(p, q, s_1) \mapsto \calN(p, q, s_2)$ is compact if and only if $g=0$.
\end{enumerate}
\end{thm}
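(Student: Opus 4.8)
The plan is to derive all three equivalences from the sequential compactness criterion of Lemma~\ref{lemSec805}, fed by the embedding theorem (Theorem~\ref{thmSec801}) for part (1) and by the reproducing-type kernels $K_w$ of Lemma~\ref{lemSec803} for parts (2)--(3). For part (1), recall from the proof of Theorem~\ref{Sec8bddness}(1) that $R(T_gf)(z)=f(z)Rg(z)$ and $T_gf(0)=0$, so Corollary~\ref{equivalentnorm} gives
\[\|T_gf\|_{\calN(p,q,s_2)}^p\simeq \sup_{a\in\B}\int_\B|f(z)|^p\Big(\frac{1-|a|^2}{|1-\langle z,a\rangle|^2}\Big)^{ns_2}\,d\mu_{g,p,q,s_2}(z)\qquad(f\in\calN(p,q,s_1)).\]
Hence $T_g$ acts boundedly, resp.\ compactly, $\calN(p,q,s_1)\to\calN(p,q,s_2)$ exactly when the inclusion $\calN(p,q,s_1)\hookrightarrow T^\infty_{p,\,s_2+\frac{q}{n}}(\mu_{g,p,q,s_2})$ does, and by Lemma~\ref{lemSec805} it suffices to prove the ``little-oh'' companion of Theorem~\ref{thmSec801}: if $\mu:=\mu_{g,p,q,s_2}$ is a vanishing $(ns_2+q)$-Carleson measure, then $\|f_j\|_{T^\infty_{p,\,s_2+\frac{q}{n}}(\mu)}\to0$ whenever $\{f_j\}$ is bounded in $\calN(p,q,s_1)$ and $f_j\to0$ uniformly on compact subsets of $\B$.

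To prove this I would fix $\eps>0$ and split $\mu=\mu\chi_{\{|z|\le r\}}+\mu\chi_{\{r<|z|<1\}}$. Since $\mu$ is finite and vanishing Carleson, for small $\delta$ the tube $Q_\delta(\xi)$ lies in $\{|z|>r\}$ so $\mu(Q_\delta(\xi))=o(\delta^{ns_2+q})$ uniformly in $\xi$, while for $\delta$ bounded below $\mu\big(Q_\delta(\xi)\cap\{|z|>r\}\big)\le\mu(\{|z|>r\})\to0$; thus $r$ can be taken so close to $1$ that $\mu\chi_{\{r<|z|<1\}}$ is an $(ns_2+q)$-Carleson measure with Carleson constant $<\eps$, and then Theorem~\ref{thmSec801} gives $\|f_j\|^p_{T^\infty_{p,\,s_2+\frac{q}{n}}(\mu\chi_{\{r<|z|<1\}})}\lesssim\eps$ uniformly in $j$. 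For the complementary piece, the trivial bound $\delta^{-ns_2}\int_{Q_\delta(\xi)}|f_j|^p\,d\mu\chi_{\{|z|\le r\}}\le\big(\sup_{|z|\le r}|f_j|\big)^p\,\|\mu\|_{\calC\calM_{ns_2}}$ tends to $0$ as $j\to\infty$ by uniform convergence on $\{|z|\le r\}$. Combining and letting $\eps\to0$ proves the claim, hence the sufficiency in (1). For the necessity, assume $T_g$ compact but $\mu$ not vanishing Carleson; pick $\eps_0>0$, $\delta_j\downarrow0$, $\xi_j\in\SSS$ with $\mu(Q_{\delta_j}(\xi_j))\ge\eps_0\delta_j^{ns_2+q}$, and put $f_j:=K_{(1-\delta_j)\xi_j}$. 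By Lemma~\ref{lemSec803} these are bounded in $\calN(p,q,s_1)$, and since $1-|1-\delta_j|^2\to0$ while the denominators stay bounded below on compacta, $f_j\to0$ uniformly on compacta, so Lemma~\ref{lemSec805} forces $\|T_gf_j\|_{\calN(p,q,s_2)}\to0$. But taking $a=(1-\delta_j)\xi_j$ in the displayed formula and using \eqref{eq1001}, \eqref{eq1002} and \eqref{equation999} yields, exactly as in the necessity part of Theorem~\ref{thmSec801}, $\|T_gf_j\|^p_{\calN(p,q,s_2)}\gtrsim\delta_j^{-(ns_2+q)}\mu(Q_{\delta_j}(\xi_j))\ge\eps_0$, a contradiction.

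For (2) and (3) the sufficiency is trivial, so suppose $L_g$, resp.\ $M_g$, compact; in particular it is bounded, so Theorem~\ref{Sec8bddness} already gives $\|g\|_\infty<\infty$. Applying the sequential criterion — which holds verbatim for $M_g$ by the same Montel--Fatou argument behind Lemma~\ref{lemSec805} — to the kernels $K_w$ with $|w|\to1^-$ (bounded in $\calN(p,q,s_1)$ by Lemma~\ref{lemSec803}, and $\to0$ uniformly on compacta since $1-|w|^2\to0$), we get $\|L_gK_w\|_{\calN(p,q,s_2)}\to0$, resp.\ $\|M_gK_w\|_{\calN(p,q,s_2)}\to0$, as $|w|\to1^-$. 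The pointwise estimate from the necessity half of Theorem~\ref{Sec8bddness}(2) gives $|g(w)|^p\lesssim\|L_gK_w\|^p_{\calN(p,q,s_2)}$; for $M_g$ one instead evaluates the growth estimate of Proposition~\ref{boundaryineq} at $z=w$ and uses $K_w(w)=(1-|w|^2)^{-q/p}$ to get $|g(w)|\lesssim\|M_gK_w\|_{\calN(p,q,s_2)}$. In either case $|g(w)|\to0$ as $|w|\to1^-$, whence $\sup_{|z|\le r}|g(z)|=\sup_{|z|=r}|g(z)|\le\sup_{|z|\ge r}|g(z)|\to0$ as $r\to1$, and the maximum modulus principle forces $g\equiv0$.

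The hard part will be the sufficiency half of (1): converting the scale-by-scale smallness encoded in ``vanishing $(ns_2+q)$-Carleson'' into an operator-norm bound. The delicate point in the splitting $\mu=\mu\chi_{\{|z|\le r\}}+\mu\chi_{\{r<|z|<1\}}$ is that the boundary piece must have small Carleson constant \emph{simultaneously at all scales $\delta$}: the small-$\delta$ range is controlled by the uniform-in-$\xi$ decay, while the large-$\delta$ range is absorbed using only finiteness of $\mu$ together with $\mu(\{|z|>r\})\to0$. Everything else is routine, reducing to testing on $K_w$ (or $K_{(1-\delta)\xi}$) near $\partial\B$, the sequential compactness criterion, and the growth and reproducing estimates of Proposition~\ref{boundaryineq} and Lemma~\ref{lemSec803} already established.
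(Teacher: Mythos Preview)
Your proof is correct and follows the same overall strategy as the paper, but with two genuine simplifications worth noting.

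For the sufficiency in (1), both you and the paper split $\mu$ into an interior piece and a boundary piece $\mu_r=\chi_{\{|z|>r\}}\mu$, and both must show $\|\mu_r\|_{\calC\calM_{ns_2+q}}\to0$ as $r\to1^-$. The paper handles the large-$\delta$ range by invoking a nonisotropic covering lemma (Lemma~\ref{lemSec806}) to decompose $\widehat{Q}_\delta(\xi)$ into $\simeq(\delta/\delta_0)^n$ small tents, each of $\mu$-measure $\lesssim\eps\delta_0^{ns_2+q}$, and then sums. Your argument is more direct: since $\mu$ is a finite measure (a consequence of the Carleson condition, as $\mu(\B)=\mu(Q_2(\xi))\lesssim 2^{ns_2+q}$), one has $\mu(\{|z|>r\})\to0$, and for $\delta\ge\delta_0$ the crude bound $\mu_r(Q_\delta(\xi))\le\mu(\{|z|>r\})<\eps\delta_0^{ns_2+q}\le\eps\delta^{ns_2+q}$ suffices. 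This bypasses the covering lemma entirely. (Your aside that ``$Q_\delta(\xi)$ lies in $\{|z|>r\}$ for small $\delta$'' is true but not actually needed; the inequality $\mu_r\le\mu$ is enough in the small-$\delta$ range.)

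For (2), the paper argues by contradiction: assuming $|g(w_0)|=\eps_0>0$, it uses the Lipschitz estimate of Lemma~\ref{lemSec807} to propagate the lower bound $|g|\ge\eps_0/2$ to a Bergman ball $D(w_j,r)$ around each point $w_j$ of a sequence approaching $\partial\B$ with $|g(w_j)|\ge\eps_0$, and then estimates $\|L_gK_{w_j}\|_{\calN(p,q,s_2)}$ from below by a direct integral computation over $D(w_j,r)$. You instead observe that the chain of inequalities already worked out in the necessity half of Theorem~\ref{Sec8bddness}(2) gives the uniform pointwise bound $|g(w)|^p\lesssim\|L_gK_w\|^p_{\calN(p,q,s_2)}$ for $|w|>2/3$, so compactness plus Lemma~\ref{lemSec805} immediately yields $|g(w)|\to0$ as $|w|\to1^-$, and the maximum principle finishes. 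This is cleaner and avoids Lemma~\ref{lemSec807} altogether.

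For the necessity in (1) and all of (3), your argument coincides with the paper's.
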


\begin{proof}
(1) {\it Sufficiency.} Suppose $\mu_{g, p, q, s_2}$ is a vanishing $\left(ns_2+q \right)$-Carleson measure. Let $\{f_j\}$ be any bounded sequence in $\calN(p, q, s_1)$ and $f_j \to 0$ uniformly on compact sets of $\B$. By Lemma \ref{lemSec805}, it suffices to prove $\lim_{j \to \infty} \|T_gf_j\|_{\calN(p, q, s_2)}=0$.

Let $\chi_E$ denotes the characteristic function of a set $E$ of $\B$. For $r \in (0, 1)$, define the cut-off measure $d\mu_r=\chi_{\{z \in \B: |z|>r\}}d\mu_{g, p, q, s_2}$ and for fixed $\xi \in \SSS$ and $0<\del<1$, we have
\begin{eqnarray*}
&&\frac{1}{\del^{ns_2}} \int_{Q_\del(\xi)} |R(T_gf_j)(z)|^p(1-|z|^2)^{p+q+ns}d\lambda(z)\\
&=& \frac{1}{\del^{ns_2}} \int_{Q_\del(\xi)} |f_j(z)|^p|Rg(z)|^p(1-|z|^2)^{p+q+ns}d\lambda(z)\\
&=& \frac{1}{\del^{ns_2}} \int_{Q_\del(\xi)} |f_j(z)|^pd\mu_{g, p, q, s_2}(z)\\
&=& \frac{1}{\del^{ns_2}} \int_{Q_\del(\xi)} |f_j(z)|^p\chi_{\{z \in \B: |z| \le r\}}d\mu_{g, p, q, s_2}(z)+\frac{1}{\del^{ns_2}} \int_{Q_\del(\xi)} |f_j(z)|^pd\mu_r(z)\\
&=& J_{1. r}+J_{2, r}.
\end{eqnarray*}

By the proof of Theorem \ref{thmSec801}, we have for any $j \in \N$,
\begin{eqnarray*}
J_{2, r}%
&=&\frac{1}{\del^{n\left(s_2+\frac{q}{n}-\frac{q}{n}\right)}} \int_{Q_\del(\xi)} |f_j(z)|^pd\mu_r(z)\\
&\lesssim& \|\mu_r\|_{\calC\calM_{ns_2+q}} \|f_j\|^p_{\calN(p, q, s_1)} \lesssim \|\mu_r\|_{\calC\calM_{ns_2+q}}.
\end{eqnarray*}
\textbf{Claim:}
$$
\|\mu_r\|_{\calC\calM_{ns_2+q}} \to 0 \quad \textrm{as} \quad r \to 1^{-}.
$$
Indeed, for any $\varepsilon>0$, by our assumption, there exists a $\del_0>0$, such that
$$
\mu_{g, p, q, s_2}\left(\widehat{Q}_{\del_1}(\xi)\right)<\varepsilon {\del_1}^{ns_2+q},
$$
for all $\del_1 \le \del_0$ and $\xi \in \SSS$ uniformly. If $\del \le \del_0$, it is clear that
\begin{equation} \label{ineq200}
\mu_r\left(\widehat{Q}_{\del}(\xi)\right) \le \mu_{g, p, q, s_2}\left(\widehat{Q}_{\del}(\xi)\right)<\varepsilon {\del}^{ns_2+q}.
\end{equation}
If $\del>\del_0$, take and fix $m=\left\lfloor \frac{\del}{\del_0} \right\rfloor+1 \le \frac{2\del}{\del_0}$, where $\lfloor\cdot\rfloor$ is the floor function. Note that $\frac{\del}{m}<\del_0$. Then by Lemma \ref{lemSec806}, we have $Q_\del'$ can be covered by $N$ balls $Q'_{\del/m}$ on $\SSS$ with $N \simeq m^n$. Thus, by the definition of $\widehat{Q}_\del(\xi)$, it follows that
$$
\widehat{Q}_\del \cap \left\{z \in \B: |z|>1-\frac{\del_0}{m} \right\} \subset \bigcup_N \widehat{Q}_{\del/m}.
$$
Putting $r_0=1-\frac{\del_0}{m}$, and for $r_0<r<1$,  we have
\begin{eqnarray*}
\mu_r(\widehat{Q}_\del(\xi))%
&\le& \mu_r \left( \bigcup_N \widehat{Q}_{\del/m} \right) \le \mu_r \left(\bigcup_N \widehat{Q}_{\del_0} \right) \\
&\le& \sum_N  \mu_r (\widehat{Q}_{\del_0}) \le \sum_N \mu_{g, p, q, s_2} (\widehat{Q}_{\del_0}) \\
&\le& N \varepsilon \del_0^{ns_2+q} \lesssim \varepsilon \del_0^{ns_2+q} \cdot \frac{\del^n}{\del_0^n} \le \varepsilon \del^{ns_2+q},
\end{eqnarray*}
where in the last inequality, we use the fact that $s_2>1-\frac{q}{n}$. Combining this estimation with \eqref{ineq200}, we prove the claim.

Now for any $\varepsilon>0$, by the above claim and estimation on $J_{2, r}$, there exists a $r_1 \in (0, 1)$, such that when $r_1 \le r<1$ and $j \in \N$, we have
$$
J_{2, r}=\frac{1}{\del^{ns_2}} \int_{Q_\del(\xi)} |f_j(z)|^pd\mu_r(z)<\varepsilon.
$$
Fix $r_1$. Noting that for $J_{1, r_1}$, we have
\begin{eqnarray*}
J_{1, r_1}%
&=& \frac{1}{\del^{ns_2}} \int_{Q_\del(\xi)} |f_j(z)|^p|Rg(z)|^p\chi_{\{z \in \B: |z| \le r_1\}}(1-|z|^2)^{p+q+ns}d\lambda(z) \\
&=& \sup_{|z| \le r_1} |f_j(z)Rg(z)|^p \cdot \frac{1}{\del^{ns_2}} \int_{Q_\del(\xi)} (1-|z|^2)^{q+ns}d\lambda(z)\\
&\lesssim& \|1\|_{\calN(p, q, s_2)}^p \cdot \sup_{|z| \le r_1} |f_j(z)Rg(z)|^p\\
&\lesssim& \sup_{|z| \le r_1} |f_j(z)Rg(z)|^p,
\end{eqnarray*}
where in the last inequality, we use \cite[Propostion 2.8]{HL} and $1$ refers to the constant function $f(z)=1, \forall z \in \B$. Since $f_j \to 0$ as $j \to \infty$ uniformly on the compact subset of $\B$, there exists some $j_0 \in \N$, such that when $j>j_0$, we have $J_{1, r_1} \lesssim \varepsilon$.
Combining the above estimation on $J_{1, r_1}$ and $J_{2, r_2}$, we get the desired result.

{\it Necessity.} Suppose $T_g: \calN(p, q, s_1) \mapsto \calN(p, q, s_2)$ is compact. For any $\xi \in \SSS, \del_j \to 0$ as $j \to \infty$, we consider the functions
$$
K_{(1-\del_j)\xi}(z)=\frac{1-|1-\del_j|^2}{(1-\langle z, (1-\del_j)\xi)^{1+\frac{q}{p}}}, \quad z \in \B.
$$
By Lemma \ref{lemSec803}, it is clear that $\sup_{j \in \N} \|K_{(1-\del_j)\xi}\|_{\calN(p, q, s_1)} \lesssim 1$. Moreover, it is easy to see that $K_{(1-\del_j)\xi} \to 0$ uniformly on compact subsets of $\B$ as $j \to \infty$.  Thus, using \eqref{equation999}, we have
\begin{eqnarray*}
&&\frac{\mu_{g, p, q, s_2}(Q_{\del_j}(\xi))}{\del_j^{ns_2+q}}\\
&=& \frac{1}{\del_j^{ns_2+q}} \int_{Q_{\del_j}(\xi)} d\mu_{g, p, q, s_2}(z)\\
&\simeq& \frac{1}{\del_j^{ns_2}} \int_{Q_{\del_j}(\xi)} |K_{(1-\del_j)\xi}(z)|^p d\mu_{g, p, q, s_2}(z)\\
&=&\frac{1}{\del_j^{ns_2}} \int_{Q_{\del_j}(\xi)} |K_{(1-\del_j)\xi}(z)|^p |Rg(z)|^p(1-|z|^2)^{p+q+ns}d\lambda(z)\\
&=&\frac{1}{\del_j^{ns_2}} \int_{Q_{\del_j}(\xi)} |R(T_g K_{(1-\del_j)\xi})(z)|^p(1-|z|^2)^{p+q+ns}d\lambda(z)\\
&\lesssim& \|T_g(K_{(1-\del_j)\xi})\|_{\calN(p, q, s_2)},
\end{eqnarray*}
which, by Lemma \ref{lemSec805}, converges to $0$ as $j \to \infty$. Thus, $\mu_{g, p, q, s_2}$ is a vanishing $\left(ns_2+q \right)$-Carleson measure.

\medskip

(2) The sufficiency is obvious and we only verify the necessity. By Theorem \ref{Sec8bddness}, (ii), the compactness of $L_g$ implies that $g \in H^\infty$.

We prove the statement by contradiction. Assume that $g$ is not identically equal to $0$, i.e., there exists some $w_0 \in \B$ such that $|g(w_0)|=\varepsilon_0>0$. By Lemma \ref{lemSec807}, we have
$$
|g(z_1)-g(z_2)| \le 2 \|g\|_\infty |\Phi_{z_1}(z_2)|, \quad z_1, z_2 \in \B.
$$
This inequality implies that there is a sufficient small $r>0$ such that for any $a \in \B$, $|g(z)| \ge \frac{\varepsilon_0}{2}$ for all  $z$ satisfying $|\Phi_a(z)|<r$. Fix the $r$ chosen above and note that the choice of $r$ only depends on $g$.

By the maximum modulus principle, we can take a sequence $\{w_m\}_{m \ge 1}$ of points in $\B$ with $|w_m| \to 1$ as $m \to \infty$, such that
$$
\max\left\{|w_0|, r^{1/2} \right\}<|w_1|<\dots<|w_m|<\dots<1,
$$
and
$$
 |g(w_m)| \ge \varepsilon, \ \forall m \ge 1.
$$
 Putting $\xi_j=\frac{w_j}{|w_j|}, j \in \N$ and applying the same argument in \eqref{eq10000}, we can find a sequence $\{\del_j\}_{j \in \N}$ such that
$$
D(w_j, r) \subseteq Q_{\del_j}(\xi_j) \quad \textrm{and} \quad 1-|w_j|^2 \simeq \del_j, \ j \in \N.
$$
Thus, the above argument implies for each $j \in \N$, $|g(z)|\ge \frac{\varepsilon_0}{2}$ for those $z$ satisfying $|\Phi_{w_j}(z)|<r$.

Again, we consider the test functions
$$
K_j(z)=\frac{1-|w_j|^2}{(1-\langle z, w_j\rangle)^{1+\frac{q}{p}}}, \quad z \in \B, j \in \N.
$$
It is clear that $\sup_{j \in \N} \|K_j\|_{\calN(p, q, s_1)} \lesssim 1$ and $K_j$ converges to $0$ as $j \to \infty$ uniformly on compact subset of $\B$. Thus, by Lemma \ref{lemSec805}, we have $\|L_g(K_j)\|_{\calN(p, q, s_2)} \to 0$ as $j \to \infty$.

Note that $|1-\langle z, w_j \rangle| \simeq \del_j$ for $z \in Q_{\del_j}(\xi_j)$ and $|\langle z, w_j \rangle| \simeq 1$ for $z \in D(w_j, r)$. Indeed, the first assertion follows from \eqref{eq1001} and \eqref{eq1002} and for the second one, we have for $z \in D(w_j, r)$,
$$
1-|\Phi_{w_j}(z)|^2=\frac{(1-|w_j|^2)(1-|z|^2)}{|1-\langle z, w_j \rangle|^2}>1-r^2,
$$
and hence
\begin{eqnarray*}
1-|\langle z, w_j \rangle|%
&\le& |1-\langle z, w_j \rangle|<\frac{1}{\sqrt{1-r^2}}(1-|w_j|^2)^{\frac{1}{2}}(1-|z|^2)^{\frac{1}{2}}\\
&\le& \frac{1}{\sqrt{1-r^2}}(1-|w_j|^2)^{\frac{1}{2}} \le \sqrt{\frac{1-r}{1-r^2}}=\sqrt{\frac{1}{1+r}},
\end{eqnarray*}
which implies the desired assertion.

Thus, for each $j \in \N$, by \eqref{eq100}, we have
\begin{eqnarray*}
&&\|L_g(K_j)\|_{\calN(p, q, s_2)}^p\\
&\gtrsim &  \frac{1}{\del_j^{ns_2}} \int_{Q_{\del_j}(\xi_j)} |R(K_j)(z)|^p |g(z)|^p(1-|z|^2)^{p+q+ns}d\lambda(z)\\
&\simeq& \frac{1}{\del_j^{ns_2}} \int_{Q_{\del_j}(\xi_j)} \frac{(1-|w_j|^2)^p |\langle z, w_j\rangle|^p}{|1-\langle z, w_j\rangle|^{2p+q}} |g(z)|^p(1-|z|^2)^{p+q+ns_2}d\lambda(z)\\
&\ge& \frac{1}{\del_j^{ns_2}} \int_{|\Phi_{w_j}(z)|<r} \frac{(1-|w_j|^2)^p |\langle z, w_j\rangle|^p}{|1-\langle z, w_j\rangle|^{2p+q}} |g(z)|^p(1-|z|^2)^{p+q+ns_2}d\lambda(z)\\
&\gtrsim & \left(\frac{\varepsilon_0}{2} \right)^p \cdot \frac{(1-|w_j|^2)^{ns_2-n-1}}{\del_j^{ns_2}} \cdot V(D(w_j, r))  \gtrsim \left(\frac{\varepsilon_0}{2} \right)^p,
\end{eqnarray*}
which contradicts to Lemma \ref{lemSec805}.

\medskip

(3) Again the sufficiency is obvious and for the necessity, we suppose that $M_g: \calN(p, q, s_1) \mapsto \calN(p, q, s_2)$ is compact. Then by Theorem \ref{Sec8bddness}, we have $g \in H^\infty$. Let $\{w_j\}_{j \ge 1}$ be a sequence in $\B$ such that $|w_j| \to 1$, and
$$
K_j(z)=\frac{1-|w_j|^2}{(1-\langle z, w_j\rangle)^{1+\frac{q}{p}}}, \quad z \in \B, j \in \N
$$
as above. Then  $\sup_{j \in \N} \|K_j\|_{\calN(p, q, s_1)} \lesssim 1$ and $K_j$ converges to $0$ as $j \to \infty$ uniformly on compact subset of $\B$. Thus, $\|M_g(K_j)\|_{\calN(p, q, s_2)} \to 0$ as $j \to \infty$ since $M_g$ is compact. Since
$$
|g(z)K_j(z)|=|M_g(K_j)(z)| \lesssim \frac{\|M_g(K_j)\|_{\calN(p, q, s_2)}}{(1-|z|^2)^{\frac{q}{p}}}, \quad \forall z \in \B,
$$
by letting $z=w_j$, we get
$$
|g(w_j)| \lesssim \|M_g(K_j)\|_{\calN(p, q, s_2)},
$$
hence $g(w_j) \to 0$ as $j \to \infty$. Since $g$ is bounded holomorphic function on $\B$, it follows that $g=0$.
\end{proof}


\subsection{Multipliers of $\calN(p, q, s)$-type spaces}


In this subsection, we discuss the pointwise multipliers of the $\calN(p, q, s)$-type spaces. Let $X, Y$ be two spaces of holomorphic functions in $\B$. We call $\varphi \in H(\B)$ a pointwise multiplier from $X$ to $Y$ if
$$
\varphi f \in Y
$$
for all $f \in X$. The collection of all such functions $\varphi$ is denoted by $M(X, Y)$. When $X=Y$, the set $M(X, Y)$ is denoted simply by $M(X)$. It is clear that from Theorem \ref{MultNpqs}, we have
\begin{equation} \label{Sec803300}
M(\calN(p, q, s))=H^\infty.
\end{equation}
We are interested in the following question: what can we say about $M(X, \calN(p, q, s))$ if $X$ is replaced by another function space?

Recall that in Corollary \ref{embeddingBercor}, we have shown that if $q>n$, then $A_{q-n-1}^p \subseteq \calN(p, q, s)$. Thus, it turns out that it is a natural choice if we consider the case when $X$ is the Bergman space $A^p_\alpha$, in particular for the case $\alpha=p-n-1$. However, before we go further, we would like to remove the restriction $\alpha>-1$ in the definition of the Bergman space first  with considering a general type of Bergman space.

For any positive $p$ and real $\alpha$, we let $N$ be the smallest nonnegative integer such that $pN+\alpha>-1$ and recall that the \emph{general Bergman space} (still denote it as $A_\alpha^p$) is defined as the collection of all $f \in H(\B)$ such that
$$
\|f\|_{p, \alpha}:=|f(0)|+ \left( \int_\B (1-|z|^2)^{pN}|R^Nf(z)|^pdV_\alpha(z) \right)^{1/p}<\infty.
$$
Once again, the $A_\alpha^p$ becomes a Banach space when $p \ge 1$ and a complete metric space when $0<p<1$. Moreover, for $\beta$ real, let $k$ be the smallest nonnegative integer greater than $\beta$ and define the \emph{holomorphic Lipschitz spaces} by
$$
\Lambda_\beta:=\left\{ f \in H(\B):\|f\|_{\Lambda_\beta}=|f(0)|+\sup_{z \in \B} (1-|z|^2)^{k-\beta} |R^kf(z)|<\infty \right\}.
$$

We have the following observation, which generalizes our previous Corollary \ref{embeddingBercor}.

\begin{lem} \label{motiSec803}
Let $p \ge 1, q>0$, $s>\max\left\{0, 1-\frac{q}{n} \right\}$, $m$ the smallest nonnegative integer such that $mp+q-n>0$ and $0<\varepsilon<mp+q-n$. Then we have
$$
\Lambda_{\frac{n+\varepsilon-q}{p}} \subseteq A^p_{q-n-1} \subseteq \bigcap_{s>\max\left\{0, 1-\frac{q}{n} \right\}} \calN^0(p, q, s).
$$
\end{lem}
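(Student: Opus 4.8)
\emph{Proof proposal.} The plan is to prove the two inclusions separately, each by reducing to material already established above. Throughout, write $\beta=\frac{n+\varepsilon-q}{p}$, and note that $m$ is exactly the integer $N$ occurring in the definition of the generalized Bergman space $A^p_{q-n-1}$ (the smallest nonnegative integer with $pN+(q-n-1)>-1$, equivalently $pN+q-n>0$). From $0<\varepsilon<mp+q-n$ we get $p\beta=n+\varepsilon-q<mp$, hence $m>\beta$, and also $p\beta+(q-n-1)=\varepsilon-1>-1$; these two elementary inequalities are what make everything fit together.

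For the first inclusion $\Lambda_{\beta}\subseteq A^p_{q-n-1}$, I would take $f\in\Lambda_\beta$ and use the standard characterization of the holomorphic Lipschitz spaces by higher-order radial derivatives (see \cite{Zhu}): since $m>\beta$, one has $|R^mf(z)|\lesssim\|f\|_{\Lambda_\beta}(1-|z|^2)^{\beta-m}$ for all $z\in\B$. Consequently
$$
\int_\B (1-|z|^2)^{mp}|R^mf(z)|^p\,dV_{q-n-1}(z)\lesssim\|f\|_{\Lambda_\beta}^p\int_\B (1-|z|^2)^{p\beta+q-n-1}\,dV(z)<\infty,
$$
because $p\beta+q-n-1=\varepsilon-1>-1$; together with $|f(0)|\le\|f\|_{\Lambda_\beta}$ this gives $f\in A^p_{q-n-1}$ with $\|f\|_{p,q-n-1}\lesssim\|f\|_{\Lambda_\beta}$.

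For the second inclusion I would first establish the bounded embedding $A^p_{q-n-1}\subseteq\calN(p,q,s)$, with $\|f\|\lesssim\|f\|_{p,q-n-1}$, for every admissible $s$. Since $mp+q>n$, Corollary \ref{embeddingBercor} applied with $mp+q$ in place of $q$ (the hypothesis on $s$ then reads $s>\max\{0,1-\tfrac{mp+q}{n}\}=0$, which holds) yields, for $g\in H(\B)$,
$$
\sup_{a\in\B}\int_\B |g(z)|^p(1-|z|^2)^{mp+q}(1-|\Phi_a(z)|^2)^{ns}\,d\lambda(z)\lesssim\int_\B |g(z)|^p(1-|z|^2)^{mp+q-n-1}\,dV(z).
$$
Taking $g=R^mf$ and using $R^mf(0)=0$ when $m\ge1$ (when $m=0$ one is simply quoting Corollary \ref{embeddingBercor} verbatim), the right-hand side equals, up to a constant, $\int_\B(1-|z|^2)^{mp}|R^mf(z)|^p\,dV_{q-n-1}(z)$, which is finite for $f\in A^p_{q-n-1}$; by the quantitative form of Corollary \ref{higherequivnorm} (obtained by iterating Corollary \ref{equivalentnorm}, the boundary terms $R^jf(0)$ vanishing for $1\le j\le m$) the finiteness of the left-hand side is equivalent to $f\in\calN(p,q,s)$, and chasing constants gives the claimed norm estimate. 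Then I would invoke density of polynomials in $A^p_{q-n-1}$ (the dilation argument: $R^m(f_r)=(R^mf)_r\to R^mf$ in $L^p\big((1-|z|^2)^{mp+q-n-1}dV\big)$ as $r\to1$, as in \cite[Proposition 2.6]{Zhu}, and each $f_r$ is a uniform limit of its Taylor polynomials on $\overline{\B}$). Since $s>\max\{0,1-\tfrac{q}{n}\}$ forces $ns+q>n$, Corollary \ref{cor01} gives that every polynomial lies in $\calN^0(p,q,s)$, and $\calN^0(p,q,s)$ is closed in $\calN(p,q,s)$ by Proposition \ref{littleNpqs}. Hence, for $f\in A^p_{q-n-1}$, picking polynomials $P_k\to f$ in $\|\cdot\|_{p,q-n-1}$ we get $\|f-P_k\|\lesssim\|f-P_k\|_{p,q-n-1}\to0$, so $f$ is an $\calN(p,q,s)$-limit of functions in $\calN^0(p,q,s)$ and therefore $f\in\calN^0(p,q,s)$; as $s$ was arbitrary in the admissible range, $f\in\bigcap_s\calN^0(p,q,s)$.

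The only genuinely delicate point is the bookkeeping of the derivative orders: one must check that $m$ really is the parameter $N$ in the generalized Bergman norm, that $m>\beta$ (so the Lipschitz-derivative estimate applies) and that Corollary \ref{embeddingBercor} is legitimately invoked with the shifted weight $mp+q$, and one must use Corollary \ref{higherequivnorm} in its quantitative rather than merely qualitative form. Each of these is routine, and no analytic input beyond the cited corollaries, Proposition \ref{littleNpqs}, and the (standard) density of polynomials in the generalized Bergman space is needed; so the main obstacle is organizational rather than substantive.
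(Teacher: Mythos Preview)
Your argument for the first inclusion $\Lambda_\beta\subseteq A^p_{q-n-1}$ is essentially the paper's: both use that $m$ is the smallest integer exceeding $\beta$, bound $|R^mf(z)|^p(1-|z|^2)^{(m-\beta)p}$ by $\|f\|_{\Lambda_\beta}^p$, and integrate the remaining factor $(1-|z|^2)^{\varepsilon-1}$.

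For the second inclusion the two proofs diverge. The paper appeals to (the iterated form of) Theorem~\ref{vanishingeqnorm} and shows directly that $|R^mf(z)|^p(1-|z|^2)^{mp+q+ns}d\lambda(z)$ is a \emph{vanishing} $(ns)$-Carleson measure: on the tent $\widehat Q_{4\del}(\xi)$ one has $(1-|z|^2)^{ns}\lesssim\del^{ns}$, so the normalized tent integral is dominated by $\int_{\widehat Q_{4\del}(\xi)}|R^mf(z)|^p(1-|z|^2)^{mp+q-n-1}dV(z)$, which tends to $0$ uniformly in $\xi$ by absolute continuity of the finite measure $|R^mf|^p(1-|z|^2)^{mp+q-n-1}dV$. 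Your route is more structural: you first obtain the \emph{bounded} embedding $A^p_{q-n-1}\hookrightarrow\calN(p,q,s)$ by applying Corollary~\ref{embeddingBercor} to $g=R^mf$ with the shifted exponent $mp+q$ and invoking the quantitative form of Corollary~\ref{higherequivnorm}, then upgrade to $\calN^0(p,q,s)$ via density of polynomials in $A^p_{q-n-1}$ together with Corollary~\ref{cor01} and Proposition~\ref{littleNpqs}. Both arguments are correct. The paper's computation is shorter and self-contained once the Carleson characterization is available; your approach recycles more of the earlier machinery and makes the continuity of the embedding explicit, at the cost of needing the (standard but not previously stated) density of polynomials in the generalized Bergman space $A^p_{q-n-1}$.
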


\begin{proof}
For the first inclusion, first we note that $m$ is the smallest nonnegative integer such that $m-\frac{n+\varepsilon-q}{p}>0$. Then we have
\begin{eqnarray*}
\|f\|_{p, q-n-1}^p%
&\simeq& |f(0)|^p+\int_\B |R^mf(z)|^p(1-|z|^2)^{mp+q-n-1}dV(z)\\
&=& |f(0)|^p+\int_\B |R^mf(z)|^p(1-|z|^2)^{mp+q-n-\varepsilon} (1-|z|^2)^{\varepsilon-1}dV(z)\\
&\lesssim& \|f\|_{\Lambda_{\frac{n+\varepsilon-q}{p}}},
\end{eqnarray*}
which implies the desired result.

Let $f \in A^p_{q-n-1}$. We have to show $f \in \bigcap\limits_{s>\max\left\{0, 1-\frac{q}{n} \right\}} \calN^0(p, q, s)$. Clearly, by Theorem \ref{vanishingeqnorm}, it suffices to show for any $s>\max\left\{0, 1-\frac{q}{n} \right\}$,
$$
|R^mf(z)|^p (1-|z|^2)^{mp+q+ns}d\lambda(z)
$$
is a vanishing $(ns)$-Carleson measure. Thus, for any $\xi \in \SSS$ and $\del \in \left(0, \frac{1}{100} \right)$, we have
\begin{eqnarray*}
&&\frac{1}{\del^{ns}} \int_{Q_\del(\xi)} |R^m f(z)|^p(1-|z|^2)^{mp+q+ns}d\lambda(z)\\
&\le& \frac{1}{\del^{ns}} \int_{\widehat{Q}_{4\del}(\xi)} |R^m f(z)|^p(1-|z|^2)^{mp+q+ns}d\lambda(z) \\
&\lesssim& \int_{1-4\del}^1 r^{2n-1} (1-r^2)^{mp+q-n-1} \left(\int_{D'_{4\del}(\xi)} |R^m f(r\xi)|^pd\sigma(\xi)\right) dr\\
&=&  \int_{\widehat{Q}_{4\del}(\xi)} |R^m f(z)|^p(1-|z|^2)^{mp+q-n-1}dV(z),
\end{eqnarray*}
which clearly converges to zero uniformly with respect to all $\xi \in \SSS$ as $\del \to 0$. The proof is complete.
\end{proof}

\begin{rem}
From the above lemma, it is clear that by \eqref{Sec803300}
\begin{equation} \label{Sec803400}
H^\infty \subseteq M(A_{q-n-1}^p, \calN(p, q, s)) \subseteq M(\Lambda_{\frac{n+\varepsilon-q}{p}}, \calN(p, q, s) ).
\end{equation}
\end{rem}


\subsubsection{\textbf{The space $M(A^p_\alpha, \calN(p, q, s))$}}


\begin{lem} \label{atomicBergman}
Let $p \ge 1$ and $\alpha, b \in \R$ satisfying that $b$ is neither $0$ nor a negative integer, and
$$
b>n+\frac{\alpha+1}{p}.
$$
Let further, for each $w \in \B$,
$$
J_w(z)=\frac{(1-|w|^2)^{b-\frac{n+1+\alpha}{p}}}{(1-\langle z, w \rangle)^b}.
$$
Then $\sup_{w \in \B} \|J_w\|_{p, \alpha} \le 1$.
\end{lem}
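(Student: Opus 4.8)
The plan is to reduce the statement of Lemma \ref{atomicBergman} to the standard integral estimate \cite[Theorem 1.12]{Zhu}, once a clean uniform pointwise bound for the radial derivatives of the kernel $(1-\langle z,w\rangle)^{-b}$ is in hand. First I would note that $\re(1-\langle z,w\rangle)=1-\re\langle z,w\rangle>1-|z|\,|w|>0$ for $z,w\in\B$, so $J_w$ is holomorphic on $\B$ (the principal branch of $\zeta\mapsto\zeta^{-b}$ on the right half-plane is used, which is where the hypothesis that $b$ is neither $0$ nor a negative integer is relevant). Let $N$ be the smallest nonnegative integer with $pN+\alpha>-1$, so that by definition $\|f\|_{p,\alpha}=|f(0)|+\big(\int_\B(1-|z|^2)^{pN}|R^Nf(z)|^p\,dV_\alpha(z)\big)^{1/p}$, and note $(1-|z|^2)^{pN}\,dV_\alpha(z)$ is a constant multiple of $(1-|z|^2)^{pN+\alpha}\,dV(z)$.

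The key elementary step is the bound
\[
|R_z^N(1-\langle z,w\rangle)^{-b}|\le C_{N,b}\,|1-\langle z,w\rangle|^{-(b+N)},\qquad z,w\in\B .
\]
Since $R_z(1-\langle z,w\rangle)^{-b}=b\langle z,w\rangle(1-\langle z,w\rangle)^{-b-1}$ and $\langle z,w\rangle=1-(1-\langle z,w\rangle)$, an induction on $N$ shows that $R_z^N(1-\langle z,w\rangle)^{-b}=\sum_{j=0}^N c_j\,(1-\langle z,w\rangle)^{-(b+j)}$ for some constants $c_j=c_j(b,N)$. Because $|1-\langle z,w\rangle|<2$ and $N-j\ge 0$, each term obeys $|c_j|\,|1-\langle z,w\rangle|^{-(b+j)}=|c_j|\,|1-\langle z,w\rangle|^{N-j}\,|1-\langle z,w\rangle|^{-(b+N)}\le |c_j|\,2^{N-j}\,|1-\langle z,w\rangle|^{-(b+N)}$, which gives the claim (this trick sidesteps any need to know the sign of $b+N$). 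Consequently
\[
|R^N J_w(z)|\lesssim (1-|w|^2)^{\,b-\frac{n+1+\alpha}{p}}\,|1-\langle z,w\rangle|^{-(b+N)}.
\]

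Next I would insert this into the norm. Since $p\ge1$ one has $b>n+\frac{\alpha+1}{p}\ge\frac{n+1+\alpha}{p}$, so the exponent $b-\frac{n+1+\alpha}{p}$ is strictly positive; hence $|J_w(0)|=(1-|w|^2)^{\,b-\frac{n+1+\alpha}{p}}\le1$, and raising the previous display to the $p$-th power,
\[
\int_\B(1-|z|^2)^{pN}|R^N J_w(z)|^p\,dV_\alpha(z)\lesssim (1-|w|^2)^{\,pb-(n+1+\alpha)}\int_\B\frac{(1-|z|^2)^{pN+\alpha}}{|1-\langle z,w\rangle|^{p(b+N)}}\,dV(z).
\]
Here $pN+\alpha>-1$ by the choice of $N$, and $p(b+N)-(pN+\alpha)=pb-\alpha>n+1$ precisely because $b>\frac{n+1+\alpha}{p}$; thus \cite[Theorem 1.12]{Zhu} applies and the inner integral is $\simeq(1-|w|^2)^{\,n+1+(pN+\alpha)-p(b+N)}=(1-|w|^2)^{\,n+1+\alpha-pb}$. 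Multiplying the two factors of $(1-|w|^2)$ leaves an exponent $0$, so the integral is bounded independently of $w$; combined with $|J_w(0)|\le1$ this yields $\sup_{w\in\B}\|J_w\|_{p,\alpha}\lesssim1$, which is the assertion.

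The computation is routine, and there is no genuine obstacle; the only two places demanding care are (i) deriving the pointwise derivative bound in a form that is uniform over all of $\B$ (handled by the expansion above together with $|1-\langle z,w\rangle|<2$), and (ii) checking that the two index conditions required by \cite[Theorem 1.12]{Zhu}, namely $pN+\alpha>-1$ and $pb-\alpha>n+1$, are exactly what the definition of $N$ and the hypothesis $b>n+\frac{\alpha+1}{p}$ guarantee.
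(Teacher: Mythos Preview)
Your proof is correct. The computation is clean: the inductive expansion of $R^N(1-\langle z,w\rangle)^{-b}$ as a finite linear combination of powers $(1-\langle z,w\rangle)^{-(b+j)}$, followed by the dominance $|1-\langle z,w\rangle|^{N-j}\le 2^{N-j}$, gives exactly the pointwise control needed to reduce to \cite[Theorem 1.12]{Zhu}, and you have verified the two index conditions required there. One small remark: the hypothesis that $b$ is neither $0$ nor a negative integer is not actually used in your argument at all (your bound holds for any real $b>n+\frac{\alpha+1}{p}$); that hypothesis is inherited from the atomic decomposition theorem and is immaterial here.

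The paper takes a genuinely different route: it simply invokes the atomic decomposition of $A^p_\alpha$ \cite[Theorem 32]{ZZ}, which says that for $b$ in the stated range the map $\{c_k\}\mapsto \sum_k c_k\,(1-|a_k|^2)^{b-\frac{n+1+\alpha}{p}}(1-\langle z,a_k\rangle)^{-b}$ is bounded from $\ell^p$ to $A^p_\alpha$; specializing to a single atom gives $\|J_w\|_{p,\alpha}\lesssim 1$ immediately. Your approach is more elementary and self-contained, relying only on the basic integral estimate rather than the full atomic decomposition machinery, and as a bonus it makes transparent exactly which inequalities on $b$, $p$, $\alpha$, $N$ are doing the work. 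The paper's approach is shorter to state but imports a substantially deeper result.
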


\begin{proof}
The above lemma is an easy consequence of the atomic decomposition for $A^p_\alpha$ (see, e.g., \cite[Theorem 32]{ZZ}) and hence the proof is omitted here.
\end{proof}

The following result gives a general description of the space of multipliers $M(A_\alpha^p, \calN(p, q, s))$.

\begin{thm} \label{4casesMult}
Let $q>0$ and $s>\max\left\{0, 1-\frac{q}{n} \right\}$. Then we have the following assertions:
\begin{enumerate}
\item [(1)] If $p \ge 1$ and $n+1+\alpha<0$, then
$$
M(A^p_\alpha, \calN(p, q, s))=\calN(p, q, s).
$$

\item [(2)] If $p=1$ and $n+1+\alpha=0$, then
$$
M(A^p_\alpha, \calN(p, q, s))=\calN(p, q, s).
$$

\item [(3)] If $p>1$ and $n+1+\alpha=0$, then
$$
M(A_\alpha^p, \calN(p, q, s)) \subseteq \calN(p, q, s).
$$
Conversely, if $\varphi \in H(\B)$ satisfies
\begin{equation} \label{Sec803100}
\sup_{a \in \B} \int_\B |\varphi(z)|^p\left(\log \frac{2}{1-|z|^2} \right)^{p-1} (1-|z|^2)^q(1-|\Phi_a(z)|^2)^{ns}d\lambda(z)<\infty,
\end{equation}
then we have $\varphi \in M(A^p_\alpha, \calN(p, q, s))$.

\item [(4)] If $p \ge 1$ and $n+1+\alpha>0$, then
\begin{enumerate}
\item[(a)] If $n+1+\alpha>q$, then $M(A^p_\alpha, \calN(p, q, s))$ only contains the constant function $\varphi(z) \equiv 0$;
\item[(b)] If $n+1+\alpha=q$, then $M(A^p_\alpha, \calN(p, q, s))=H^\infty$;
\item[(c)] If $n+1+\alpha<q$, then
$$
\calN(p, q-n-1-\alpha, s) \subseteq M(A^p_\alpha, \calN(p, q, s)) \subseteq \calN(p, q, s) \cap A^{-\frac{q-(n+1+\alpha)}{p}}(\B).
$$
\end{enumerate}
\end{enumerate}
\end{thm}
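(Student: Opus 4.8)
The plan is to dispose of the four regimes of $b_0:=n+1+\alpha$ one at a time, using three ingredients throughout. First, the identity $M(\calN(p,q,s))=H^\infty$ from \eqref{Sec803300}, which lets any $H^\infty$ factor pass through the multiplication. Second, the uniform pointwise size estimate for the \emph{general} Bergman space $A^p_\alpha$ (see \cite{Zhu}): for $f\in A^p_\alpha$ one has $|f(z)|\lesssim\|f\|_{p,\alpha}(1-|z|^2)^{-b_0/p}$ when $b_0>0$; $|f(z)|\lesssim\|f\|_{p,\alpha}\bigl(\log\tfrac{2}{1-|z|^2}\bigr)^{1-1/p}$ when $b_0=0$ and $p>1$; and $|f(z)|\lesssim\|f\|_{p,\alpha}$, i.e. $A^p_\alpha\subseteq H^\infty$, when either $b_0<0$, or $b_0=0$ and $p=1$. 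Third, for the upper bounds in case (4), the normalized kernels $J_w$ of Lemma \ref{atomicBergman} with $\sup_{w\in\B}\|J_w\|_{p,\alpha}\lesssim1$ and $J_w(w)=(1-|w|^2)^{-b_0/p}$. One inclusion is always free: the constant $1$ lies in $A^p_\alpha$ (since $R^N1=0$), so every multiplier $\varphi$ satisfies $\varphi=\varphi\cdot1\in\calN(p,q,s)$, giving $M(A^p_\alpha,\calN(p,q,s))\subseteq\calN(p,q,s)$ in all cases.

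In cases (1) and (2) we have $A^p_\alpha\subseteq H^\infty$ by the size estimate, so for $\varphi\in\calN(p,q,s)$ and $f\in A^p_\alpha\subseteq H^\infty=M(\calN(p,q,s))$ we get $\varphi f\in\calN(p,q,s)$; combined with the free inclusion this gives equality. In case (3) the free inclusion handles $M\subseteq\calN(p,q,s)$; for the sufficiency of \eqref{Sec803100}, take $\varphi$ satisfying it and $f\in A^p_\alpha$. The logarithmic size estimate gives $|\varphi(z)f(z)|^p(1-|z|^2)^q\lesssim\|f\|_{p,\alpha}^p\,|\varphi(z)|^p\bigl(\log\tfrac{2}{1-|z|^2}\bigr)^{p-1}(1-|z|^2)^q$; integrating against $(1-|\Phi_a(z)|^2)^{ns}d\lambda(z)$ and taking the supremum over $a\in\B$ bounds $\|\varphi f\|^p$ by $\|f\|_{p,\alpha}^p$ times the finite quantity in \eqref{Sec803100}, so $\varphi f\in\calN(p,q,s)$.

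For case (4), write $q'=q-b_0$. The necessity direction is uniform: given a multiplier $\varphi$, applying it to $J_w$ and using $\calN(p,q,s)\subseteq A^{-q/p}(\B)$ (Proposition \ref{boundaryineq}) together with $J_w(w)=(1-|w|^2)^{-b_0/p}$ yields, by evaluating at $z=w$, the bound $|\varphi(w)|(1-|w|^2)^{q'/p}\lesssim\|\varphi\|_{M}$ for all $w\in\B$. In (4c) ($q'>0$) this is precisely $\varphi\in A^{-q'/p}(\B)$, which with the free inclusion gives $M\subseteq\calN(p,q,s)\cap A^{-q'/p}(\B)$; the reverse containment $\calN(p,q',s)\subseteq M$ follows because, for $\varphi\in\calN(p,q',s)$ and $f\in A^p_\alpha$, the size estimate gives $|\varphi(z)f(z)|^p(1-|z|^2)^q\lesssim\|f\|_{p,\alpha}^p|\varphi(z)|^p(1-|z|^2)^{q'}$, whence $\|\varphi f\|^p\lesssim\|f\|_{p,\alpha}^p\|\varphi\|_{\calN(p,q',s)}^p$. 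In (4b) ($q'=0$) the $J_w$ bound says $\varphi\in H^\infty$, while $H^\infty\subseteq M$ because $A^p_{q-n-1}\subseteq\calN(p,q,s)$ by Lemma \ref{motiSec803} and $M(\calN(p,q,s))=H^\infty$; hence $M=H^\infty$. In (4a) ($q'<0$) the $J_w$ bound reads $|\varphi(w)|\lesssim\|\varphi\|_{M}(1-|w|^2)^{-q'/p}$ with $-q'/p>0$; letting $|w|\to1$ forces $\sup_{|z|=r}|\varphi(z)|\to0$, and since $r\mapsto\sup_{|z|=r}|\varphi(z)|$ is nondecreasing by the maximum modulus principle, $\varphi\equiv0$.

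The step I expect to be the main obstacle is the careful use of the \emph{general} (possibly negative) weight $\alpha$: one must invoke the correct regime of the pointwise size estimate for $A^p_\alpha$ each time, and in particular justify $A^p_\alpha\subseteq H^\infty$ in cases (1) and (2) and check that the logarithmic exponent in case (3) matches \eqref{Sec803100} exactly. Once these size estimates are pinned down, the rest is bookkeeping with the two structural facts $M(\calN(p,q,s))=H^\infty$ and $\calN(p,q,s)\subseteq A^{-q/p}(\B)$, the kernel estimate of Lemma \ref{atomicBergman}, and the elementary maximum-modulus argument in (4a).
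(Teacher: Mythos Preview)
Your proof is correct and follows essentially the same route as the paper's: the constant function $1\in A^p_\alpha$ for the free inclusion, the pointwise growth estimates for $A^p_\alpha$ in the three regimes of $n+1+\alpha$ (from \cite{ZZ}, Theorems 20--22), the test kernels $J_w$ of Lemma \ref{atomicBergman} combined with Proposition \ref{boundaryineq} for the necessity in (4), and the maximum modulus principle in (4a). The only cosmetic difference is that in case (3) the paper verifies $\varphi f\in\calN(p,q,s)$ via the Carleson-measure characterization over $Q_\delta(\xi)$ (Lemma \ref{lemSec802}), whereas you integrate directly against $(1-|\Phi_a(z)|^2)^{ns}d\lambda(z)$; these are equivalent by Proposition \ref{Carleson01}.
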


\begin{proof}
$(1) \& (2)$ It is clear that $M(A_\alpha^p, \calN(p, q, s)) \subseteq \calN(p, q, s)$ since the constant function $\varphi(z) \equiv 1$ is in $A^p_\alpha$.

Conversely, by \cite[Thereom 21, 22(a)]{ZZ}, we know that all the functions in $A^p_\alpha$ are bounded under the assumption of $(1)$ and $(2)$, which implies the desired result.

\medskip

$(3)$ The first claim is clear. Now take any $\varphi \in H(\B)$ satisfying \eqref{Sec803100}. To prove $\varphi \in M(A^p_\alpha, \calN(p, q, s))$, it suffices to show that $\varphi f \in \calN(p, q, s)$ for any $f \in A^P_{-n-1}$. Indeed, by \cite[Theroem 22(b)]{ZZ}, we have for any $\xi \in \SSS$ and $\del>0$,
\begin{eqnarray*}
&&\frac{1}{\del^{ns}} \int_{Q_\del(\xi)} |\varphi(z) f(z)|^p (1-|z|^2)^{q+ns}d\lambda(z)\\
&\lesssim & \frac{1}{\del^{ns}} \int_{Q_\del(\xi)} |\varphi(z)|^p \left(\log \frac{2}{1-|z|^2} \right)^{p-1}(1-|z|^2)^{q+ns}d\lambda(z)\\
&<&\infty,
\end{eqnarray*}
where the last inequality follows from \eqref{Sec803100} and Lemma \ref{lemSec802}. By \eqref{Carleson}, we get the desired result.

\medskip

$(4)$ Take $g \in M(A^p_\alpha, \calN(p, q, s))$. By Lemma \ref{atomicBergman},  we have $gJ_w \in \calN(p, q, s), \forall w \in \B$, and
$$
\|gJ_w\| \le \|M_g\| \|J_w\|_{p, \alpha} \le \|M_g\|.
$$
Thus, by Proposition \ref{boundaryineq}, for each fixed $w \in \B$,
$$
|g(z)J_w(z)| \lesssim \frac{\|gJ_w\|}{(1-|z|^2)^{\frac{q}{p}}} \lesssim \frac{1}{(1-|z|^2)^{\frac{q}{p}}}, \quad \forall z \in \B.
$$
Let $z=w$. We have
\begin{equation} \label{Sec803200}
|g(w)| (1-|w|^2)^{\frac{q-n-1-\alpha}{p}} \lesssim 1.
\end{equation}

\medskip

$(4a)$ The desired claim follows from the maximum modulus principle since it is clear that $\lim\limits_{|w| \to 1^{-}} |g(w)|=0$ by \eqref{Sec803200}.

\medskip

$(4b)$ From \eqref{Sec803400}, we already know that $H^\infty \subseteq M(A^p_{q-n-1}, \calN(p, q, s))$, while $M(A^p_{q-n-1}, \calN(p, q, s)) \subseteq H^\infty$ clearly follows from \eqref{Sec803200}.

\medskip

$(4c)$ The second inclusion clearly follows from \eqref{Sec803200} and the fact that the constant function $\varphi(z) \equiv 1$ belongs to $A^p_\alpha$. We have to show the first inclusion, that is , for any fixed $g \in \calN(p, q-n-1-\alpha, s)$, the multiplication operator $M_g$ is bounded from $A^p_\alpha$ to $\calN(p, q, s)$.

Indeed, for any $f \in A^p_\alpha$, we have
\begin{eqnarray*}
&&\|gf\|_{\calN(p, q, s)}^p=\sup_{a \in \B} \int_\B |f(z)g(z)|^p(1-|z|^2)^q(1-|\Phi_a(z)|^2)^{ns}d\lambda(z)\\
&&\lesssim  \sup_{a \in \B} \int_\B |g(z)|^p(1-|z|^2)^q \cdot \frac{\|f\|_{p, \alpha}^p}{(1-|z|^2)^{n+1+\alpha}}(1-|\Phi_a(z)|^2)^{ns}d\lambda(z)\\
&& \le \|g\|^p_{\calN(p, q-n-1-\alpha, s)}\|f\|_{p, \alpha}^p.
\end{eqnarray*}
Here, in the first inequality, we use the following estimation
$$
|f(z)| \le \frac{C\|f\|_{p, \alpha}}{(1-|z|^2)^{\frac{n+1+\alpha}{p}}},
$$
where $f \in A^p_\alpha$ where $p>0$ and $n+1+\alpha>0$ (see, e.g. \cite[Theroem 20]{ZZ}).
\end{proof}

Similarly, we have the following description on the multipliers between $A^p_\alpha$ and $\calN^0(p, q, s)$.

\begin{thm}
Let $q>0$ and $s>\max\left\{0, 1-\frac{q}{n} \right\}$. Then we have the following assertions:
\begin{enumerate}
\item [(1)] If $p \ge 1$ and $n+1+\alpha<0$, then
$$
M(A^p_\alpha, \calN^0(p, q, s))=\calN^0(p, q, s).
$$

\item [(2)] If $p=1$ and $n+1+\alpha=0$, then
$$
M(A^p_\alpha, \calN^0(p, q, s))=\calN^0(p, q, s).
$$

\item [(3)] If $p>1$ and $n+1+\alpha=0$, then
$$
M(A_\alpha^p, \calN^0(p, q, s)) \subseteq \calN^0(p, q, s).
$$
Conversely, if $\varphi \in H(\B)$ satisfies \eqref{Sec803100} and
$$
\lim_{|a| \to 1} \int_\B |\varphi(z)|^p\left(\log \frac{2}{1-|z|^2} \right)^{p-1} (1-|z|^2)^q(1-|\Phi_a(z)|^2)^{ns}d\lambda(z)=0,
$$
then we have $\varphi \in M(A^p_\alpha, \calN^0(p, q, s))$.

\medskip

\item [(4)] If $p \ge 1$ and $n+1+\alpha>0$, then
\begin{enumerate}
\item[(a)] If $n+1+\alpha>q$, then $M(A^p_\alpha, \calN^0(p, q, s))$ only contains the constant function $\varphi(z) \equiv 0$;
\item[(b)] If $n+1+\alpha=q$, then $M(A^p_\alpha, \calN^0(p, q, s))=H^\infty$;
\item[(c)] If $n+1+\alpha<q$, then
$$
\calN^0(p, q-n-1-\alpha, s) \subseteq M(A^p_\alpha, \calN^0(p, q, s)) \subseteq \calN^0(p, q, s) \cap A^{-\frac{q-(n+1+\alpha)}{p}}(\B).
$$
\end{enumerate}
\end{enumerate}
\end{thm}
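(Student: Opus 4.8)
The plan is to follow the proof of Theorem \ref{4casesMult} essentially line by line, replacing throughout every $\sup_{a \in \B}$ by $\lim_{|a| \to 1}$, every occurrence of ``$(ns)$-Carleson measure'' by ``vanishing $(ns)$-Carleson measure'', and every appeal to Lemma \ref{decom04} by the ``little'' statement Lemma \ref{decom06}; the structural skeleton is identical. The one preliminary fact I would record first is that multiplication by an $H^\infty$ function preserves $\calN^0(p,q,s)$: if $\varphi \in H^\infty$ and $f \in \calN^0(p,q,s)$, then $|\varphi(z) f(z)|^p(1-|z|^2)^{q+ns}d\lambda(z) \le \|\varphi\|_\infty^p\, |f(z)|^p(1-|z|^2)^{q+ns}d\lambda(z)$, and pointwise domination of a vanishing Carleson measure by a constant multiple of another one preserves the vanishing property, so Proposition \ref{Carleson01}(2) gives $\varphi f \in \calN^0(p,q,s)$. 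Together with \eqref{Sec803300} this already yields $H^\infty \subseteq M(\calN^0(p,q,s))$.

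For (1) and (2): since the constant $\varphi \equiv 1 \in A^p_\alpha$, every multiplier lies in $\calN^0(p,q,s)$; conversely, by \cite[Theorems 21 and 22(a)]{ZZ} every $f \in A^p_\alpha$ is bounded in the stated ranges, so if $g \in \calN^0(p,q,s)$ then $gf \in \calN^0(p,q,s)$ by the $H^\infty$-multiplier fact above, giving $\calN^0(p,q,s) \subseteq M(A^p_\alpha, \calN^0(p,q,s))$. For (3), the inclusion $M(A^p_\alpha, \calN^0(p,q,s)) \subseteq \calN^0(p,q,s)$ again uses $1 \in A^p_\alpha$; for the converse I would take $\varphi$ satisfying \eqref{Sec803100} together with its displayed vanishing companion, invoke the pointwise estimate $|f(z)| \lesssim \|f\|_{p,\alpha}\big(\log\frac{2}{1-|z|^2}\big)^{(p-1)/p}$ from \cite[Theorem 22(b)]{ZZ} valid for $f \in A^p_{-n-1}$, and rerun the Carleson-tube estimate from part (3) of Theorem \ref{4casesMult}, now concluding that $|\varphi f|^p(1-|z|^2)^{q+ns}d\lambda$ is a \emph{vanishing} $(ns)$-Carleson measure because the hypothesis on $\varphi$ forces the relevant tube integrals to tend to $0$ uniformly in $\xi$ as the radius shrinks; then Proposition \ref{Carleson01}(2) finishes.

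For (4), the common mechanism is the test functions $J_w$ of Lemma \ref{atomicBergman}: if $g \in M(A^p_\alpha, \calN^0(p,q,s))$ then $gJ_w \in \calN^0(p,q,s) \subseteq \calN(p,q,s)$ with norm $\lesssim \|M_g\|$, so Proposition \ref{boundaryineq} gives $|g(w)|(1-|w|^2)^{(q-n-1-\alpha)/p} \lesssim 1$ exactly as in \eqref{Sec803200}. In case (4a) ($n+1+\alpha>q$) this forces $|g(w)| \to 0$ as $|w| \to 1$, hence $g \equiv 0$ by the maximum modulus principle. In case (4b) ($n+1+\alpha=q$) the same bound gives $M(A^p_{q-n-1}, \calN^0(p,q,s)) \subseteq H^\infty$, while $H^\infty \subseteq M(A^p_{q-n-1},\calN^0(p,q,s))$ follows from Lemma \ref{motiSec803} (which yields $A^p_{q-n-1} \subseteq \calN^0(p,q,s)$) combined with the $H^\infty$-multiplier fact of the first paragraph. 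In case (4c) ($n+1+\alpha<q$) the pointwise bound gives the inclusion into $\calN^0(p,q,s) \cap A^{-\frac{q-(n+1+\alpha)}{p}}(\B)$, the $\calN^0$ part coming again from $1 \in A^p_\alpha$; for the reverse inclusion I would take $g \in \calN^0(p,q-n-1-\alpha,s)$, insert the pointwise bound $|f(z)| \lesssim \|f\|_{p,\alpha}(1-|z|^2)^{-(n+1+\alpha)/p}$ for $f \in A^p_\alpha$ from \cite[Theorem 20]{ZZ}, and check that $\int_\B |g(z)f(z)|^p(1-|z|^2)^q(1-|\Phi_a(z)|^2)^{ns}d\lambda(z) \lesssim \|f\|_{p,\alpha}^p \int_\B |g(z)|^p(1-|z|^2)^{q-n-1-\alpha}(1-|\Phi_a(z)|^2)^{ns}d\lambda(z) \to 0$ as $|a| \to 1$, i.e. $gf \in \calN^0(p,q,s)$. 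The step requiring the most care — the only place where ``copy the proof with sup replaced by lim'' is genuinely delicate — is part (3): one must track the $\log$-weight through the tube estimate and use the little-oh form of \cite[Theorem 45]{ZZ} (equivalently Lemma \ref{lemSec802}) to see that \eqref{Sec803100} plus its vanishing companion really produces the vanishing, not merely bounded, Carleson condition; everything else is a routine transcription of the arguments already given for Theorems \ref{4casesMult}, \ref{distance01} and \ref{distance02}.
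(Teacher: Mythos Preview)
Your proposal is correct and matches the paper's own approach exactly: the paper simply states that the proof ``is an easy modification of Theorem \ref{4casesMult}, and hence is omitted here,'' and what you have written is precisely that modification carried out in detail. The only minor remark is that your opening reference to replacing appeals to Lemma \ref{decom04} by Lemma \ref{decom06} is superfluous, since Theorem \ref{4casesMult} never invokes Lemma \ref{decom04}; the relevant tools are just Proposition \ref{Carleson01}(2), the growth estimates from \cite{ZZ}, Lemma \ref{atomicBergman}, and (for (4b)) Lemma \ref{motiSec803}, all of which you do use correctly.
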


\begin{proof}
The proof for the above theorem is an easy modification of Theorem \ref{4casesMult}, and hence is omitted here.
\end{proof}

\begin{rem}
From the above theorem, we can conclude that $H^\infty=M(\calN^0(p, q, s))$. Indeed, $H^\infty \subseteq M(\calN^0(p, q, s)) $ is clearly by the definition. Conversely, by the above theorem, we have
$$
M(\calN^0(p, q, s)) \subseteq M(A^p_{q-n-1}, \calN^0(p, q, s))=H^\infty,
$$
which implies the desired result.
\end{rem}


\subsubsection{\textbf{The space $M(\Lambda_\beta, \calN(p, q, s))$}}


Next, we study the space of multipliers between $\Lambda_\beta$ and $\calN(p, q, s)$-type spaces. We need the following lemma, which gives an integral  representation of the functions in $\Lambda_\beta$.

\begin{lem} \cite[Theorem 17]{ZZ} \label{holoLipspa}
Suppose $f\in H(\B)$ and $\beta$ is real. If $\re \gamma>-1$ and $n+\gamma-\beta$ is not a negative integer, then $f \in \Lambda_\beta$ if and only if there exists a function $g \in L^\infty(\B)$ such that
$$
f(z)=\int_\B \frac{g(w)dV_\gamma(w)}{(1-\langle z, w \rangle)^{n+1+\gamma-\beta}}
$$
for $z \in \B$.
\end{lem}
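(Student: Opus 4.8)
The plan is to prove the two implications of the ``if and only if'' separately. The \emph{sufficiency} is a direct differentiation-under-the-integral-sign argument combined with the Forelli--Rudin type estimate \cite[Theorem 1.12]{Zhu}. The \emph{necessity} goes through the Bergman-type reproducing formula \cite[Theorem 2.2]{Zhu} together with the invertible fractional radial operators $R^{\gamma,t}$ of \cite[Propositions 1.14 and 1.15]{Zhu}; the admissibility hypothesis ``$n+\gamma-\beta$ is not a negative integer'' is present precisely so that the operator $R^{\gamma,-\beta}$ (and its inverse $R^{\gamma-\beta,\beta}$) is well defined and invertible on $H(\B)$.

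First I would prove the sufficiency. Assume $f(z)=\int_\B g(w)(1-\langle z,w\rangle)^{-(n+1+\gamma-\beta)}\,dV_\gamma(w)$ with $g\in L^\infty(\B)$, and let $k$ be the smallest nonnegative integer with $k>\beta$, so $k-\beta>0$. On compact subsets of $\B$ the integrand and all its $z$-derivatives are bounded while $(1-|w|^2)^{\re\gamma}$ is integrable, so we may differentiate under the integral. A direct computation writes $R_z^{k}(1-\langle z,w\rangle)^{-c}$ as a linear combination of the functions $\langle z,w\rangle^{j}(1-\langle z,w\rangle)^{-c-j}$, $1\le j\le k$; since $|\langle z,w\rangle|\le1$ and $|1-\langle z,w\rangle|<2$ on $\B$, this gives $\big|R_z^{k}(1-\langle z,w\rangle)^{-(n+1+\gamma-\beta)}\big|\lesssim|1-\langle z,w\rangle|^{-(n+1+\re\gamma-\beta+k)}$. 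Therefore
$$
|R^{k}f(z)|\lesssim\|g\|_\infty\int_\B\frac{(1-|w|^2)^{\re\gamma}}{|1-\langle z,w\rangle|^{n+1+\re\gamma+(k-\beta)}}\,dV(w)\lesssim\|g\|_\infty\,(1-|z|^2)^{-(k-\beta)},
$$
where the last inequality is \cite[Theorem 1.12]{Zhu} (valid because $\re\gamma>-1$ and $k-\beta>0$). Together with $|f(0)|\lesssim\|g\|_\infty$ this shows $f\in\Lambda_\beta$, and in fact $\|f\|_{\Lambda_\beta}\lesssim\|g\|_\infty$.

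For the necessity, since $\re\gamma>-1$ we have $n+\gamma\notin\{-1,-2,\dots\}$, and $n+\gamma-\beta\notin\{-1,-2,\dots\}$ by hypothesis; hence by \cite[Propositions 1.14 and 1.15]{Zhu} the operator $R^{\gamma,-\beta}$ is a well-defined continuous invertible operator on $H(\B)$ with inverse $R^{\gamma-\beta,\beta}$ and with the key property $R_z^{\gamma,-\beta}(1-\langle z,w\rangle)^{-(n+1+\gamma)}=(1-\langle z,w\rangle)^{-(n+1+\gamma-\beta)}$. The characterization of $\Lambda_\beta$ through these operators (as in \cite{ZZ}; equivalently, the boundedness of the fractional radial derivative of order $\beta$) gives that $f\in\Lambda_\beta$ if and only if $g:=R^{\gamma-\beta,\beta}f\in L^\infty(\B)$. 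Granting this, fix $f\in\Lambda_\beta$ and put $g=R^{\gamma-\beta,\beta}f\in L^\infty(\B)\subseteq A^1_\gamma$; then \cite[Theorem 2.2]{Zhu} yields $g(z)=\int_\B g(w)(1-\langle z,w\rangle)^{-(n+1+\gamma)}\,dV_\gamma(w)$, and applying $R^{\gamma,-\beta}$ in the variable $z$ (again differentiating under the integral, legitimate because $g$ is bounded and the kernel is smooth in $z$ on compacta) and using the displayed kernel identity we obtain $f=R^{\gamma,-\beta}g$, that is,
$$
f(z)=\int_\B\frac{g(w)}{(1-\langle z,w\rangle)^{n+1+\gamma-\beta}}\,dV_\gamma(w),
$$
which is the desired representation.

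The hard part is the input used in the necessity direction, namely the description of $\Lambda_\beta$ by the operators $R^{\gamma,t}$. If one wants a self-contained proof rather than citing it, one must bound $R^{\gamma-\beta,\beta}f$ uniformly on $\B$ in terms of an integer-order derivative of $f$; this rests on the Forelli--Rudin estimates and on the gamma-function asymptotics for the multipliers $\{c_\alpha(\gamma,t)\}$ defining $R^{\gamma,t}$, and requires the usual care with the finitely many low-order Taylor coefficients of $f$. A convenient preliminary step is to record that the norm of $\Lambda_\beta$ is comparable to $|f(0)|+\sup_{z\in\B}(1-|z|^2)^{N-\beta}|R^{N}f(z)|$ for every integer $N>\beta$, which lets one choose $N$ large and then interpolate down to the fractional order $\beta$ via $R^{\gamma,t}$.
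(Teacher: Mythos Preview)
The paper does not prove this lemma; it is merely quoted from \cite[Theorem~17]{ZZ}, so there is no ``paper's own proof'' to compare against. Your sufficiency argument is fine: differentiation under the integral followed by the Forelli--Rudin estimate \cite[Theorem~1.12]{Zhu} gives $(1-|z|^2)^{k-\beta}|R^kf(z)|\lesssim\|g\|_\infty$ exactly as you wrote.

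The necessity argument, however, has a genuine gap. You assert that $f\in\Lambda_\beta$ if and only if $g:=R^{\gamma-\beta,\beta}f\in L^\infty(\B)$, and then take this holomorphic $g$ as the function in the integral representation. But this equivalence is false. Since $R^{\gamma-\beta,\beta}f$ is holomorphic, the condition $R^{\gamma-\beta,\beta}f\in L^\infty(\B)$ is the same as $R^{\gamma-\beta,\beta}f\in H^\infty$. Already for $\beta=0$ one has $R^{\gamma,0}=\mathrm{Id}$, so your claim would read ``$f\in\Lambda_0$ iff $f\in H^\infty$''; but $\Lambda_0=\calB$ (the Bloch space), and $\calB\neq H^\infty$. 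The same failure occurs for every $\beta$: the correct fractional-derivative characterization of $\Lambda_\beta$ is that $(1-|z|^2)^{t-\beta}R^{s,t}f\in L^\infty$ for some (any) admissible $s$ and some $t>\beta$, \emph{strictly}, not $t=\beta$.

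The fix is precisely to stop insisting that $g$ be holomorphic. Choose $t>0$ so that $h(w):=(1-|w|^2)^{t}R^{\gamma-\beta,t}f(w)$ is bounded (this is the valid characterization of $\Lambda_\beta$). Since $R^{\gamma-\beta,t}f$ is holomorphic and lies in $A^1_{\gamma-\beta+t}$, the reproducing formula \cite[Theorem~2.2]{Zhu} with weight $\gamma-\beta+t$ gives
\[
R^{\gamma-\beta,t}f(z)=c\int_\B\frac{h(w)(1-|w|^2)^{\gamma-\beta}}{(1-\langle z,w\rangle)^{n+1+\gamma-\beta+t}}\,dV(w),
\]
and applying the inverse $R_{\gamma-\beta,t}$ in $z$ (which lowers the kernel exponent by $t$) yields the desired representation of $f$ with the \emph{non-holomorphic} bounded function $g=c\,h$. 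Your last paragraph in fact gestures toward this by mentioning that one may replace $k$ by any larger $N$; carrying that idea through with the fractional operators and the weight $(1-|w|^2)^{t}$ absorbed into $g$ is what is actually needed.
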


\begin{thm}
Let $p \ge 1, q>0$ and $s>\max\left\{0, 1-\frac{q}{n} \right\}$. Then we have the following results.
\begin{enumerate}
\item[(1)] If $\beta>0$, then $M(\Lambda_\beta, \calN(p, q, s))=\calN(p, q, s)$.
\item[(2)] If $\beta=0$, then $M(\Lambda_\beta, \calN(p, q, s)) \subseteq \calN(p, q, s)$. Conversely, if $\varphi \in H(\B)$ satisfying
\begin{equation} \label{Sec803101}
\sup_{a \in \B} \int_\B |\varphi(z)|^p\left(\log \frac{2}{1-|z|^2} \right)^p (1-|z|^2)^q(1-|\Phi_a(z)|^2)^{ns}d\lambda(z)<\infty,
\end{equation}
then $\varphi \in M(\Lambda_\beta, \calN(p, q, s))$.
\end{enumerate}
\end{thm}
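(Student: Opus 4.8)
The plan is to reduce both parts to the Carleson–measure description of $\calN(p,q,s)$ given in Proposition \ref{Carleson01}, together with the identity $M(\calN(p,q,s))=H^\infty$ recorded in \eqref{Sec803300}. For part (1), the first step is to observe that $\Lambda_\beta\subseteq H^\infty$ when $\beta>0$: by Lemma \ref{holoLipspa}, choosing $\gamma$ with $\re\gamma>-1$ and $n+\gamma-\beta$ not a negative integer, every $f\in\Lambda_\beta$ can be written as $f(z)=\int_\B g(w)(1-\langle z,w\rangle)^{-(n+1+\gamma-\beta)}\,dV_\gamma(w)$ with $g\in L^\infty(\B)$, and since here the relevant exponent is $n+1+\gamma-\beta=n+1+\gamma+(-\beta)$ with $-\beta<0$, the estimate \cite[Proposition 1.4.10]{Rud} yields $|f(z)|\lesssim\|g\|_\infty$ uniformly in $z$. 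Consequently, since the constant function $1\in\Lambda_\beta$ we get $M(\Lambda_\beta,\calN(p,q,s))\subseteq\calN(p,q,s)$, while conversely for any $\varphi\in\calN(p,q,s)$ and any $f\in\Lambda_\beta\subseteq H^\infty=M(\calN(p,q,s))$ we have $\varphi f\in\calN(p,q,s)$; hence $\calN(p,q,s)\subseteq M(\Lambda_\beta,\calN(p,q,s))$ and equality follows.

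For part (2) we note that when $\beta=0$ the integer $k$ in the definition of $\Lambda_\beta$ equals $1$, so $\Lambda_0$ coincides with the Bloch space $\calB$. Again $1\in\Lambda_0$ gives the inclusion $M(\Lambda_0,\calN(p,q,s))\subseteq\calN(p,q,s)$. For the converse, fix $\varphi\in H(\B)$ satisfying \eqref{Sec803101}. The key point is that \eqref{Sec803101} is exactly the statement that
\[
d\nu(z):=|\varphi(z)|^p\Bigl(\log\tfrac{2}{1-|z|^2}\Bigr)^p(1-|z|^2)^{q+ns}\,d\lambda(z)
\]
is an $(ns)$-Carleson measure: using $(1-|\Phi_a(z)|^2)^{ns}=\bigl(\tfrac{1-|a|^2}{|1-\langle a,z\rangle|^2}\bigr)^{ns}(1-|z|^2)^{ns}$ as in the proof of Proposition \ref{Carleson01}, the left side of \eqref{Sec803101} is $\sup_{a\in\B}\int_\B\bigl(\tfrac{1-|a|^2}{|1-\langle a,z\rangle|^2}\bigr)^{ns}\,d\nu(z)$, so \cite[Theorem 45]{ZZ} (Lemma \ref{lemSec802}) applies and $\nu\in\calC\calM_{ns}$. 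The second ingredient is the classical growth estimate $|f(z)|\lesssim\|f\|_{\Lambda_0}\log\tfrac{2}{1-|z|^2}$ valid for $f\in\calB$.

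Combining these, for every $f\in\Lambda_0$, every $\xi\in\SSS$ and $0<r<1$,
\[
\frac{1}{r^{ns}}\int_{Q_r(\xi)}|\varphi(z)f(z)|^p(1-|z|^2)^{q+ns}\,d\lambda(z)\;\lesssim\;\|f\|_{\Lambda_0}^p\,\frac{\nu(Q_r(\xi))}{r^{ns}}\;\lesssim\;\|f\|_{\Lambda_0}^p\,\|\nu\|_{\calC\calM_{ns}},
\]
and taking the supremum over $r$ and $\xi$ and invoking \eqref{Carleson} of Proposition \ref{Carleson01} gives $\|\varphi f\|\lesssim\|f\|_{\Lambda_0}<\infty$, i.e. $\varphi f\in\calN(p,q,s)$. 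Hence $\varphi\in M(\Lambda_0,\calN(p,q,s))$, which is the desired sufficiency.

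This argument is essentially a packaging of the Carleson-measure machinery already developed in Section 4, so I do not expect a genuine obstacle; the only points requiring care are (i) matching the log-exponent in the Bloch growth estimate to the exponent $p$ appearing in \eqref{Sec803101}, and (ii) the reformulation of \eqref{Sec803101} as a Carleson condition for $\nu$. It is worth emphasising the structural reason for the asymmetry between (1) and (2): for $\beta>0$ the space $\Lambda_\beta$ sits inside $H^\infty$, so multiplication is governed purely by $H^\infty=M(\calN(p,q,s))$, whereas Bloch functions need not be bounded, which forces the extra logarithmic weight in \eqref{Sec803101} and is why part (2) yields only a sufficient condition rather than a clean identification of $M(\Lambda_0,\calN(p,q,s))$.
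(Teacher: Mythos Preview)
Your proposal is correct and follows essentially the same line as the paper: for (1) you show $\Lambda_\beta\subseteq H^\infty$ via the integral representation of Lemma~\ref{holoLipspa} and Rudin's estimate, and for (2) you combine the Bloch growth bound $|f(z)|\lesssim\|f\|_\calB\log\frac{2}{1-|z|^2}$ with condition~\eqref{Sec803101}. The only cosmetic difference is that in (2) you pass through the Carleson-tube formulation via Proposition~\ref{Carleson01}, whereas the paper estimates the $\sup_{a\in\B}$ integral directly; the two are equivalent by that same proposition.
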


\begin{proof}
$(1)$ Once again, since clearly the constant function $\varphi(z) \equiv 1$ belongs to $\Lambda_\beta$, we have $M(\Lambda_\beta, \calN(p, q, s)) \subseteq \calN(p, q, s)$.

Let $\varphi \in \Lambda_\beta$. We claim that $\varphi \in H^\infty$. Indeed, by Lemma \ref{holoLipspa} and \cite[Proposition 1.4.10]{Rud}, we have
$$
|\varphi(z)| \lesssim \int_\B \frac{(1-|w|^2)^\gamma dV(z)}{(1-\langle z, w\rangle)^{n+1+\gamma-\beta}}<\infty,
$$
where $\gamma$ is some constant satisfying the requirement in Lemma \ref{holoLipspa}. This clearly implies that for any $f \in \calN(p, q, s)$,  we have $f\varphi \in \calN(p, q, s)$. The proof is complete.

\medskip

$(2)$ The first assertion is clear. To show the second assertion, first we note that when $\beta=0$, $\Gamma_\beta$=$\calB$, the classical Bloch space. Let $\varphi \in H(\B)$ satisfying \eqref{Sec803101} and $f \in \calB$. We need to show that $f\varphi \in \calN(p, q, s)$. First we claim that $\varphi \in \calN(p, q, s)$. Indeed,
\begin{eqnarray*}
&&\|\varphi\|^p= \sup_{a \in \B} \int_\B |\varphi(z)|^2(1-|z|^2)^q(1-|\Phi_a(z)|^2)^{ns}d\lambda(z)\\
&=&\sup_{a \in \B} \left(\int_{|z|<0.9}+\int_{0.9<|z|<1} \right) |\varphi(z)|^p(1-|z|^2)^q(1-|\Phi_a(z)|^2)^{ns}d\lambda(z) \\
&\lesssim& \sup_{a \in \B} \int_\B |\varphi(z)|^p\left(\log \frac{2}{1-|z|^2} \right)^p (1-|z|^2)^q(1-|\Phi_a(z)|^2)^{ns}d\lambda(z)<\infty.
\end{eqnarray*}
Thus, by the growth estimation of  $f \in \calB$, we have
\begin{eqnarray*}
&& \sup_{a \in \B}\int_\B |f(z)\varphi(z)|^p(1-|z|^2)^q(1-|\Phi_a(z)|^2)^{ns}d\lambda(z)\\
&\lesssim& \sup_{a \in \B} \int_\B |\varphi(z)|^p\left(\log \frac{2}{1-|z|^2} \right)^p (1-|z|^2)^q(1-|\Phi_a(z)|^2)^{ns}d\lambda(z) \\
&<&\infty.
\end{eqnarray*}
 We get the desired result.
\end{proof}

Finally, we consider the case when $\beta<0$. Let $l=-\beta$. Note that by the definition of $\Lambda_\beta$, in the present case, we have
$$
\Lambda_\beta=A^{-l}(\B).
$$
Thus, in the sequel, instead of using the notation $M(\Lambda_\beta, \calN(p, q, s))$, we consider the space $M(A^{-l}(\B), \calN(p, q, s))$. First we note that for each $w \in \B$, the function
$$
L_w=\frac{(1-|w|^2)^l}{(1-\langle z, w\rangle)^{2l}}
$$
belongs to $A^{-l}(\B)$ and $\sup\limits_{w \in \B} |L_w|_l \le 1$.

We have the following result.

\begin{thm}
Let $p \ge 1, q>0, s>\max\left\{0, 1-\frac{q}{n} \right\}$ and $l>0$. Then we have the following assertions:
\begin{enumerate}
\item[(1)] If $l>\frac{q}{p}$, then $M(A^{-l}(\B), \calN(p, q, s))$ only contains the constant function $\varphi(z) \equiv 0$.
\item[(2)] If $l=\frac{q}{p}$, then $M(A^{-l}(\B), \calN(p, q, s))=H^\infty$.
\item[(3)] If $0<l<\frac{q}{p}$, then
$$
\calN(p, q-pl, s) \subseteq M(A^{-l}(\B), \calN(p, q, s)) \subseteq \calN(p, q, s) \cap A^{-\left(l-\frac{q}{p}\right)}.
$$
\end{enumerate}
\end{thm}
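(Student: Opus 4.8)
The plan is to run the same test-function machinery used for $M(A^p_\alpha,\calN(p,q,s))$ in Theorem \ref{4casesMult}, now with the atoms $L_w(z)=\frac{(1-|w|^2)^l}{(1-\langle z,w\rangle)^{2l}}$, which belong to $A^{-l}(\B)$ with $\sup_{w\in\B}|L_w|_l\le 1$. First I would record the master estimate: if $\varphi\in M(A^{-l}(\B),\calN(p,q,s))$, then $\varphi L_w\in\calN(p,q,s)$ with $\|\varphi L_w\|\le\|M_\varphi\|\,|L_w|_l\le\|M_\varphi\|$, so Proposition \ref{boundaryineq} gives $|\varphi(z)L_w(z)|\lesssim\|M_\varphi\|(1-|z|^2)^{-\frac{q}{p}}$ for all $z\in\B$; evaluating at $z=w$ and using $L_w(w)=(1-|w|^2)^{-l}$ yields
\begin{equation}\label{Multmaster}
|\varphi(w)|\lesssim\|M_\varphi\|\,(1-|w|^2)^{l-\frac{q}{p}},\qquad w\in\B.
\end{equation}
The three regimes then fall out of \eqref{Multmaster} according to the sign of $l-\frac{q}{p}$, together with the observation that $\varphi=\varphi\cdot 1$ and $1\in A^{-l}(\B)$, which already gives $M(A^{-l}(\B),\calN(p,q,s))\subseteq\calN(p,q,s)$ in every case.

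For case (1) ($l>\frac{q}{p}$), \eqref{Multmaster} forces $|\varphi(w)|\to 0$ as $|w|\to 1^-$, so $\varphi\equiv 0$ by the maximum modulus principle, and conversely $0$ is trivially a multiplier. For case (3) ($0<l<\frac{q}{p}$), \eqref{Multmaster} reads $|\varphi(w)|(1-|w|^2)^{\frac{q}{p}-l}\lesssim 1$, i.e. $\varphi\in A^{-\left(l-\frac{q}{p}\right)}(\B)$, which combined with $\varphi\in\calN(p,q,s)$ yields the upper containment; for the lower one I would take $g\in\calN(p,q-pl,s)$ and $f\in A^{-l}(\B)$, use $|f(z)|\le|f|_l(1-|z|^2)^{-l}$, and estimate
\begin{equation*}
\|gf\|^p\le|f|_l^p\,\sup_{a\in\B}\int_\B|g(z)|^p(1-|z|^2)^{q-pl}(1-|\Phi_a(z)|^2)^{ns}\,d\lambda(z)=|f|_l^p\,\|g\|_{\calN(p,q-pl,s)}^p,
\end{equation*}
so that $gf\in\calN(p,q,s)$; note $q-pl>0$ precisely because $l<\frac{q}{p}$. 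Both inclusions in case (3) are routine once \eqref{Multmaster} is in hand.

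The main obstacle is the reverse inclusion $H^\infty\subseteq M(A^{-\frac{q}{p}}(\B),\calN(p,q,s))$ in the balanced case (2), the inclusion $M(A^{-\frac{q}{p}}(\B),\calN(p,q,s))\subseteq H^\infty$ being immediate from \eqref{Multmaster}. Here \eqref{Multmaster} provides neither decay nor membership in a smaller space, so neither of the two previous arguments applies directly. When $s>1$, Proposition \ref{Npqsbigs} identifies $A^{-\frac{q}{p}}(\B)$ with $\calN(p,q,s)$ and the inclusion follows at once from $M(\calN(p,q,s))=H^\infty$, equation \eqref{Sec803300}. For general $s$ one must argue directly: given $\varphi\in H^\infty$ and $f\in A^{-\frac{q}{p}}(\B)$ one has $|\varphi(z)f(z)|^p(1-|z|^2)^q\le\|\varphi\|_\infty^p|f|_{q/p}^p$, and the task is to show that $|\varphi f(z)|^p(1-|z|^2)^{q+ns}\,d\lambda(z)$ is an $(ns)$-Carleson measure, whence $\varphi f\in\calN(p,q,s)$ by Proposition \ref{Carleson01}. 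I would establish this by the integral estimate in the proof of Proposition \ref{Npqsbigs} with $k=\frac{q}{p}$, which reduces the Carleson bound to the finiteness of $\sup_{a\in\B}(1-|a|^2)^{ns}\int_\B\frac{(1-|z|^2)^{ns-n-1}}{|1-\langle a,z\rangle|^{2ns}}\,dV(z)$; controlling this quantity is exactly where the size of $s$ is felt, and it is the only genuinely non-formal step. The remaining verifications — that $L_w\in A^{-l}(\B)$ with uniformly bounded norm and the bookkeeping of exponents — parallel the corresponding steps for $M(A^p_\alpha,\calN(p,q,s))$ and are routine.
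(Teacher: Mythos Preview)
Your treatment of cases (1) and (3), and of the inclusion $M(A^{-q/p}(\B),\calN(p,q,s))\subseteq H^\infty$ in case (2), matches the paper's proof exactly: the test functions $L_w$, the master pointwise estimate obtained by evaluating at $z=w$, and the bound $|f(z)|\le |f|_l(1-|z|^2)^{-l}$ are precisely what the paper uses.

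The gap is in the reverse inclusion $H^\infty\subseteq M(A^{-q/p}(\B),\calN(p,q,s))$ of case (2). Your proposed route --- bound $|\varphi f(z)|^p(1-|z|^2)^q$ by $\|\varphi\|_\infty^p|f|_{q/p}^p$ and then control
\[
\sup_{a\in\B}(1-|a|^2)^{ns}\int_\B\frac{(1-|z|^2)^{ns-n-1}}{|1-\langle a,z\rangle|^{2ns}}\,dV(z)
\]
--- fails as soon as $s\le 1$: already at $a=0$ the integral is $\int_\B(1-|z|^2)^{ns-n-1}\,dV(z)$, which diverges because $ns-n-1\le -1$. This is not a repairable technicality. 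For $\max\{0,1-\tfrac{q}{n}\}<s\le 1$, Corollary~\ref{Hadamard10} gives $\calN(p,q,s)\subsetneq A^{-q/p}(\B)$, so the constant function $1\in H^\infty$ sends some $f\in A^{-q/p}(\B)$ to $1\cdot f=f\notin\calN(p,q,s)$; the claimed inclusion is therefore false in this regime. The paper's own argument for this step writes $\|f\varphi\|\le\|\varphi\|_{H^\infty}\|f\|\le\|\varphi\|_{H^\infty}|f|_{q/p}$ and cites Proposition~\ref{boundaryineq} for the second inequality --- but that proposition gives the \emph{opposite} direction $|f|_{q/p}\lesssim\|f\|$, so the paper's proof carries the same gap. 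The equality in (2) holds as stated only when $s>1$, where Proposition~\ref{Npqsbigs} identifies $A^{-q/p}(\B)$ with $\calN(p,q,s)$ and (as you already observed) everything goes through.
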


\begin{proof}
Take any $g \in M(A^{-l}(\B), \calN(p, q, s))$. Then we have $gL_w \in \calN(p, q, s), \forall w \in \B$. We get
$$
|g(z)L_w(z)| \le \frac{\|gL_w\|}{(1-|z|^2)^{\frac{q}{p}}} \le \frac{\|M_g\||L_w|_l}{(1-|z|^2)^{\frac{q}{p}}}.
$$
Let $z=w$. We have
\begin{equation} \label{Sec80330099}
|g(w)| \lesssim (1-|w|^2)^{l-\frac{q}{p}}.
\end{equation}

\medskip

$(1)$ It is clear that by \eqref{Sec80330099}, $\lim\limits_{|w| \to 1} |g(w)|=0$, which implies $g(z)$ equals to $0$ everywhere by maximum modulus principle.

\medskip

$(2)$ The inclusion $M(A^{-l}(\B), \calN(p, q, s)) \subseteq H^\infty$ clearly follows from \eqref{Sec80330099}. For the converse direction, let $\varphi \in H^\infty$ and $f \in A^{-\frac{q}{p}}(\B)$, and we have
$$
\|f \varphi\| \le \|\varphi\|_{H^\infty} \|f\| \le \|\varphi\|_{H^\infty} |f|_{\frac{q}{p}}<\infty,
$$
where the second inequality follows from Proposition \ref{boundaryineq}.

\medskip

$(3)$ The second inclusion follows clearly from \eqref{Sec80330099}, and hence we omit the proof here. For the first inclusion,
we take and fix a $g \in \calN(p, q-pl, s)$, and then for any $f \in A^{-l}(\B)$, we have
\begin{eqnarray*}
&&\|gf\|_{\calN(p, q, s)}^p=\sup_{a \in \B} \int_\B |g(z)f(z)|^p(1-|z|^2)^q(1-|\Phi_a(z)|^2)^{ns}d\lambda(z)\\
&&\lesssim \sup_{z \in \B}  \int_\B |g(z)|^p(1-|z|^2)^q  \cdot \frac{|f|_l^p}{(1-|z|^2)^{pl}} \cdot (1-|\Phi_a(z)|^2)^{ns}d\lambda(z)\\
&&\le \|g\|_{\calN(p, q-pl, s)}^p |f|_l^p,
\end{eqnarray*}
which implies the desired result.
\end{proof}

The corresponding results for $\calN^0(p, q, s)$ again are immediate by an easy modification, and hence we only state the result as follows.

\begin{thm}
Let $p \ge 1, q>0$, $s>\max\left\{0, 1-\frac{q}{n} \right\}$ and $\beta, l \ge 0$. Then we have the following results.
\begin{enumerate}
\item[(1)] If $\beta>0$, then $M(\Lambda_\beta, \calN^0(p, q, s))=\calN^0(p, q, s)$.
\item[(2)] If $\beta=0$, then $M(\Lambda_\beta, \calN^0(p, q, s)) \subseteq \calN^0(p, q, s)$. Conversely, if $\varphi \in H(\B)$ satisfying \eqref{Sec803101} and
$$
\lim_{|a| \to 1} \int_\B |\varphi(z)|^p\left(\log \frac{2}{1-|z|^2} \right)^p (1-|z|^2)^q(1-|\Phi_a(z)|^2)^{ns}d\lambda(z)=0,
$$
then $\varphi \in M(\Lambda_\beta, \calN^0(p, q, s))$.
\item[(3)] If $l>\frac{q}{p}$, then $M(A^{-l}(\B), \calN^0(p, q, s))$ only contains the constant function $\varphi(z) \equiv 0$.
\item[(4)] If $l=\frac{q}{p}$, then $M(A^{-l}(\B), \calN^0(p, q, s))=H^\infty$.
\item[(5)] If $0<l<\frac{q}{p}$, then
$$
\calN^0(p, q-pl, s) \subseteq M(A^{-l}(\B), \calN^0(p, q, s)) \subseteq \calN^0(p, q, s) \cap A^{-\left(l-\frac{q}{p}\right)}(\B).
$$
\end{enumerate}
\end{thm}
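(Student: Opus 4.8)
The strategy is to obtain the eight assertions by copying, item by item, the proofs of the two preceding theorems --- the one computing $M(\Lambda_\beta,\calN(p,q,s))$ and the one computing $M(A^{-l}(\B),\calN(p,q,s))$ --- and performing throughout the replacement $\sup_{a\in\B}(\cdots)<\infty\ \rightsquigarrow\ \lim_{|a|\to1}(\cdots)=0$. The two structural facts that make this work are Proposition \ref{Carleson01}(2), which characterises $\calN^0(p,q,s)$ by \emph{vanishing} $(ns)$-Carleson measures exactly as Proposition \ref{Carleson01}(1) characterises $\calN(p,q,s)$ by $(ns)$-Carleson measures, and Proposition \ref{littleNpqs}, which says $\calN^0(p,q,s)$ is closed in $\calN(p,q,s)$. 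The ``trivial'' inclusions $M(\Lambda_\beta,\calN^0(p,q,s))\subseteq\calN^0(p,q,s)$ (for $\beta\ge0$) and $M(A^{-l}(\B),\calN^0(p,q,s))\subseteq\calN^0(p,q,s)$ (for $l>0$) are immediate, since the constant function $1$ lies in every $\Lambda_\beta$ and in every $A^{-l}(\B)$.

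For (1), I would first recall that every $f\in\Lambda_\beta$ with $\beta>0$ is bounded: the integral representation of Lemma \ref{holoLipspa} together with \cite[Proposition 1.4.10]{Rud} gives $\|f\|_\infty<\infty$. Then, for $g\in\calN^0(p,q,s)$, the pointwise domination $|fg|^p(1-|z|^2)^q(1-|\Phi_a(z)|^2)^{ns}\le\|f\|_\infty^p\,|g|^p(1-|z|^2)^q(1-|\Phi_a(z)|^2)^{ns}$ and integration yield $\lim_{|a|\to1}\int_\B|fg|^p(\cdots)\,d\lambda=0$, so $fg\in\calN^0(p,q,s)$. For (2), where $\Lambda_0=\calB$, suppose $\varphi$ satisfies \eqref{Sec803101} and its vanishing counterpart $\lim_{|a|\to1}\int_\B|\varphi|^p\bigl(\log\tfrac{2}{1-|z|^2}\bigr)^p(1-|z|^2)^q(1-|\Phi_a(z)|^2)^{ns}\,d\lambda=0$. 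For $f\in\calB$ the standard growth bound $|f(z)|\lesssim\log\tfrac{2}{1-|z|^2}$ (valid for all $z\in\B$) gives $|f\varphi|^p(1-|z|^2)^q(1-|\Phi_a(z)|^2)^{ns}\lesssim|\varphi|^p\bigl(\log\tfrac{2}{1-|z|^2}\bigr)^p(1-|z|^2)^q(1-|\Phi_a(z)|^2)^{ns}$, so integrating and letting $|a|\to1$ gives $f\varphi\in\calN^0(p,q,s)$ (taking $f\equiv1$ also recovers $\varphi\in\calN^0(p,q,s)$).

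For (3)--(5) the lever is the family $L_w(z)=(1-|w|^2)^l(1-\langle z,w\rangle)^{-2l}$, for which $\sup_{w\in\B}|L_w|_l\le1$. Given $g\in M(A^{-l}(\B),\calN^0(p,q,s))$ one has $gL_w\in\calN^0(p,q,s)\subseteq\calN(p,q,s)\subseteq A^{-q/p}(\B)$ by Proposition \ref{boundaryineq}, and evaluating the resulting pointwise estimate at $z=w$ reproduces $|g(w)|\lesssim(1-|w|^2)^{l-q/p}$ (cf.\ \eqref{Sec80330099}). When $l>q/p$ this forces $g(w)\to0$ as $|w|\to1$, hence $g\equiv0$ by the maximum modulus principle; this is assertion (3). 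The same estimate gives the upper inclusions $M(A^{-l}(\B),\calN^0(p,q,s))\subseteq H^\infty$ when $l=q/p$, and $\subseteq A^{-(l-q/p)}(\B)$ when $l<q/p$; combined with the trivial $\subseteq\calN^0(p,q,s)$ this yields the right-hand sides of (4) and (5). For the remaining inclusion in (5), take $g\in\calN^0(p,q-pl,s)$ and $f\in A^{-l}(\B)$; the growth bound $|f(z)|\le C|f|_l(1-|z|^2)^{-l}$ gives $|gf|^p(1-|z|^2)^q(1-|\Phi_a(z)|^2)^{ns}\le C^p|f|_l^p\,|g|^p(1-|z|^2)^{q-pl}(1-|\Phi_a(z)|^2)^{ns}$, whose integral over $\B$ tends to $0$ as $|a|\to1$ because $g\in\calN^0(p,q-pl,s)$, so $gf\in\calN^0(p,q,s)$; the equality $M(A^{-q/p}(\B),\calN^0(p,q,s))=H^\infty$ in (4) is then obtained exactly as in the non-little theorem.

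The step I expect to need the most care --- and the only genuinely non-formal one --- is upgrading the conclusions of the two non-little proofs from ``$<\infty$'' to ``$=0$'': in each converse or lower inclusion of (1), (2), (4) and (5) one must verify that the product actually lands in $\calN^0(p,q,s)$ and not merely in $\calN(p,q,s)$. This is done by dominating the relevant integrand, uniformly in $z$, by a quantity that vanishes in the limit --- the $\bigl(\log\tfrac{2}{1-|z|^2}\bigr)^p$-weighted integral in the $\calB$ case, membership of $g$ in $\calN^0(p,q-pl,s)$ in the $A^{-l}$ case, boundedness of the multiplying factor in cases (1) and (4) --- so that Proposition \ref{Carleson01}(2), or directly the defining condition of $\calN^0(p,q,s)$ via $\lim_{|a|\to1}$, applies. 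Apart from this bookkeeping the argument is a line-by-line transcription, with $\calN^0(p,q,s)$, its vanishing-Carleson characterisation, and its closedness (Propositions \ref{Carleson01}(2) and \ref{littleNpqs}) replacing their $\calN(p,q,s)$-counterparts, and the proof inherits whatever parameter restrictions are already present in the two non-little theorems.
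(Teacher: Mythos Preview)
Your proposal is correct and matches the paper's approach exactly: the paper's entire proof reads ``The corresponding results for $\calN^0(p, q, s)$ again are immediate by an easy modification, and hence we only state the result as follows,'' and your sketch is precisely that modification spelled out---replacing $\sup_{a\in\B}(\cdot)<\infty$ by $\lim_{|a|\to1}(\cdot)=0$ in each of the two preceding non-little theorems, with the pointwise dominations, the test functions $L_w$, and the growth estimates all carried over verbatim.
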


As a conclusion of this subsection, we have the following result with consider the cases $\alpha=q-n-1$ and $\beta=-l=-\frac{q}{p}$.

\begin{thm}
Let $p \ge 1, q>0, s>\max\left\{0, 1-\frac{q}{n} \right\}$ and $\varphi \in H(\B)$. The following statements are equivalent:
\begin{enumerate}
\item[(a)] $\varphi \in H^\infty$;
\item[(b)] $\varphi \calN(p, q, s) \subseteq \calN(p, q, s)$;
\item[(c)] $\varphi \calN^0(p, q, s) \subseteq \calN^0(p, q, s)$;
\item[(d)] $\varphi A^p_{q-n-1} \subseteq \calN(p, q, s)$;
\item[(e)] $\varphi A^p_{q-n-1} \subseteq \calN^0(p, q, s)$;
\item[(f)] $\varphi A^{-\frac{q}{p}}(\B) \subseteq \calN(p, q, s)$;
\item[(g)] $\varphi A^{-\frac{q}{p}}(\B) \subseteq \calN^0(p, q, s)$.
\end{enumerate}
\end{thm}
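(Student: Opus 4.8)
The plan is to obtain this theorem as a corollary: each of (b)--(g) has already, in the preceding subsections, been identified with exactly the multiplier space $H^\infty$, so it suffices to record six ``star-shaped'' equivalences (a)$\Leftrightarrow$(x), $x\in\{b,c,d,e,f,g\}$, rather than running a single chain of implications. No new estimate is needed; the proof will just match parameters to the critical cases of the earlier results.

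First, the two ``diagonal'' cases. The equivalence (a)$\Leftrightarrow$(b) is precisely \eqref{Sec803300}, that is $M(\calN(p,q,s))=H^\infty$, which is the $s_1=s_2$ instance of Theorem \ref{MultNpqs}. The equivalence (a)$\Leftrightarrow$(c) is the identity $H^\infty=M(\calN^0(p,q,s))$ noted just after the $\calN^0$-analogue of Theorem \ref{4casesMult}, obtained there by combining the trivial inclusion $H^\infty\subseteq M(\calN^0(p,q,s))$ with $M(\calN^0(p,q,s))\subseteq M(A^p_{q-n-1},\calN^0(p,q,s))=H^\infty$.

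Next, the Bergman-space sources. Setting $\alpha=q-n-1$ gives $n+1+\alpha=q>0$, so we are in case (4) of Theorem \ref{4casesMult}, and more precisely in its subcase where $n+1+\alpha=q$, which yields $M(A^p_{q-n-1},\calN(p,q,s))=H^\infty$: this is (a)$\Leftrightarrow$(d). The $\calN^0$-version of Theorem \ref{4casesMult}, same subcase, gives in the same way $M(A^p_{q-n-1},\calN^0(p,q,s))=H^\infty$, i.e. (a)$\Leftrightarrow$(e). Finally, for the growth space $A^{-q/p}(\B)$ take $l=q/p$ in the theorem computing $M(A^{-l}(\B),\calN(p,q,s))$: its borderline case $l=q/p$ asserts $M(A^{-q/p}(\B),\calN(p,q,s))=H^\infty$, which is (a)$\Leftrightarrow$(f); and the corresponding $\calN^0$-statement, again with $l=q/p$, gives $M(A^{-q/p}(\B),\calN^0(p,q,s))=H^\infty$, which is (a)$\Leftrightarrow$(g). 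Since every one of (b)--(g) is thereby equivalent to (a), they are mutually equivalent, and the proof is complete.

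All the analytic content --- constructing the test functions $K_w$, $J_w$, $L_w$ (via the atomic decompositions and Lemma \ref{lemSec803}) that force a multiplier to lie in $H^\infty$, and the Carleson-measure embedding estimates that push $\varphi f$ back into the target space --- is already contained in the quoted statements, so there is no genuine obstacle here; the only point requiring care is the bookkeeping that the substitutions $\alpha=q-n-1$ and $l=q/p$ really land in the equality (critical) cases above, and that the standing hypothesis $s>\max\{0,1-q/n\}$ is exactly that of those theorems. If one preferred a more self-contained route, the cycle could instead be run through the nestings $A^p_{q-n-1}\subseteq\calN(p,q,s)\subseteq A^{-q/p}(\B)$ (Corollary \ref{embeddingBercor} and Proposition \ref{boundaryineq}, the first valid once $q>n$) together with monotonicity of $M(\cdot,\cdot)$ in each slot; the only slightly delicate step there is re-deriving $H^\infty\subseteq M(A^{-q/p}(\B),\calN(p,q,s))$ directly, which is the nontrivial half of the borderline case and again rests on Proposition \ref{boundaryineq}. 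I would present the theorem with the short assembly argument above.
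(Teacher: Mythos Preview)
Your proposal is correct and matches the paper's own treatment exactly: the theorem is stated there ``as a conclusion of this subsection'' with the explicit remark that one considers the cases $\alpha=q-n-1$ and $\beta=-l=-q/p$, and no separate proof is given. Your star-shaped assembly from \eqref{Sec803300}, the Remark after the $\calN^0$-analogue of Theorem~\ref{4casesMult}, and the critical subcases (4b) and $l=q/p$ of the multiplier theorems is precisely the intended derivation.
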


\bigskip

\noindent {\bf Acknowledgments.}  The corresponding author was supported by the Macao Science and Technology Development
Fund (No.083/2014/A2) and NSF of China (No. 11471143 and No.11720101003).

\end{document}